\newtheorem{theorem}{Theorem}[section]
\newtheorem*{claim*}{Claim}
\newtheorem*{theorem*}{Theorem}
\newtheorem*{definition*}{Definition}
\newtheorem{corollary}[theorem]{Corollary}
\newtheorem{lemma}[theorem]{Lemma}
\newtheorem{remark}[theorem]{Remark}
\newtheorem{claim}[theorem]{Claim}
\newtheorem{proposition}[theorem]{Proposition}
\newtheorem{definition}[theorem]{Definition}
\newcommand{\comm}[1]{}
\crefname{theorem}{Theorem}{Theorems}
\crefname{proposition}{Proposition}{Propositions}
\crefname{observation}{Observation}{Observations}
\crefname{lemma}{Lemma}{Lemmas}
\crefname{claim}{Claim}{Claims}
\crefname{problem}{Problem}{Problems}
\crefname{conjecture}{Conjecture}{Conjectures}
\crefname{question}{Question}{Questions}
\crefname{example}{Example}{Examples}
\crefname{fact}{Fact}{Facts}
\newcommand{\rake}{{\sf Rake}}
\newcommand{\compress}{{\sf Compress}}
\newcommand{\GC}[1]{{G}_{#1}^\textsf{C}}
\newcommand{\GR}[1]{{G}_{#1}^\textsf{R}}
\newcommand{\VC}[1]{{V}_{#1}^\textsf{C}}
\newcommand{\VR}[1]{{V}_{#1}^\textsf{R}}
\newcommand{\yes}{\textsf{YES}}
\newcommand{\no}{\textsf{NO}}
\newcommand{\Lpump}{ {\ell_{\operatorname{pump}}} }
\newcommand{\simm}{\overset{\star}{\sim}}
\newcommand{\type}{\textsf{Class}_2}
\newcommand{\class}{\textsf{Class}_1}
\renewcommand{\deg}{\operatorname{deg}}
\newcommand{\dom}{\operatorname{dom}}
\newcommand{\local}{\mathsf{LOCAL}}
\newcommand{\localo}{\mathsf{OLOCAL}}
\newcommand{\olocal}{\mathsf{OLOCAL}}
\newcommand{\LOCAL}{\mathsf{LOCAL}}
\newcommand{\dlocal}{\mathsf{DLOCAL}}
\newcommand{\rlocal}{\mathsf{RLOCAL}}
\newcommand{\CONT}{\mathsf{CONTINUOUS}}
\newcommand{\cont}{\mathsf{CONTINUOUS}}
\newcommand{\TOAST}{\mathsf{TOAST}}
\newcommand{\toast}{\TOAST}
\newcommand{\BOREL}{\mathsf{BOREL}}
\newcommand{\MEASURE}{\mathsf{MEASURE}}
\newcommand{\fiid}{\mathsf{fiid}}
\newcommand{\ffiid}{\mathsf{ffiid}}
\newcommand{\borel}{\mathsf{BOREL}}
\newcommand{\continuous}{\mathsf{CONTINUOUS}}
\newcommand{\baire}{\mathsf{BAIRE}}
\newcommand{\measure}{\mathsf{MEASURE}}
\renewcommand{\P}{\textrm{P}}
\newcommand{\E}{\textrm{\textbf{E}}}
\newcommand{\poly}{\operatorname{poly}}
\newcommand{\eps}{\varepsilon}
\newcommand{\fA}{\mathcal{A}}
\newcommand{\fD}{\mathcal{D}}
\newcommand{\fE}{\mathcal{E}}
\newcommand{\fG}{\mathcal{G}}
\newcommand{\fH}{\mathcal{H}}
\newcommand{\fI}{\mathcal{I}}
\newcommand{\fM}{\mathcal{M}}
\newcommand{\fO}{\mathcal{O}}
\newcommand{\fT}{\mathcal{T}}
\newcommand{\fV}{\mathcal{V}}
\newcommand{\ta}{\mathtt{a}}
\newcommand{\tb}{\mathtt{b}}
\newcommand{\tc}{\mathtt{c}}
\newcommand{\td}{\mathtt{d}}
\newcommand{\tx}{\mathtt{x}}
\newcommand{\mP}{\mathbb{P}}
\newcommand{\idgraphcolored}[3]{\mathbf{H}_{#1,#2,#3}}
\newcommand{\idgraph}[3]{H_{#1,#2,#3}}
\newcommand{\idgraphingcolored}[1]{\mathbf{H}_{#1}}
\newcommand{\idgraphing}[1]{\mathcal{H}_{#1}}
\def\leukfrac#1/#2{\leavevmode
               \kern.1em
                \raise.9ex\hbox{\the\scriptfont0 ${}_#1$}
                \hskip -1pt\kern-.1em
                /\kern-.15em\lower.10ex\hbox{\the\scriptfont0 ${}_#2$}}
\def\diam{\mathop{\operator@font diam}\nolimits}
\newcommand{\Aut}{\operatorname{Aut}}
\newcommand{\perfmatch}{\Pi_{\text{pm}}}
\newcommand{\twomatch}{\Pi^2_{\text{pm}}}
\newcommand{\edgegrab}{\Pi_{\text{edgegrab}}}
\newcommand{\deltacol}{\Pi_{\chi,\Delta}}
\newcommand{\vizing}{\Pi_{\chi',\Delta+1}}
\newcommand{\PI}{\operatorname{Alice}}
\newcommand{\PK}{\operatorname{Bob}}
\title{\vspace{-1cm} Local Problems on Trees from the Perspectives of Distributed Algorithms, Finitary Factors, and Descriptive Combinatorics}
\begin{document}

\newcommand*\samethanks[1][\value{footnote}]{\footnotemark[#1]}

\author{
Sebastian Brandt \\
\small ETH Z\"{u}rich \\
\small \texttt{brandts@ethz.ch}\\
\and 
Yi-Jun Chang\thanks{Supported by  Dr.~Max R\"{o}ssler, by the Walter Haefner Foundation, and by the ETH Z\"{u}rich Foundation.} \\
\small ETH Z\"{u}rich \\
\small \texttt{yi-jun.chang@eth-its.ethz.ch} \\
\and 
Jan Greb\'{i}k\thanks{Supported by Leverhulme Research Project Grant RPG-2018-424.} 
\\
\small University of Warwick\\
\small \texttt{Jan.Grebik@warwick.ac.uk}\\
\and 
Christoph Grunau\thanks{Supported by the European Research Council (ERC) under the European
Unions Horizon 2020 research and innovation programme (grant agreement No.~853109). 
}\\
\small ETH Z\"{u}rich \\
\small \texttt{cgrunau@inf.ethz.ch}\\
\and 
V\'{a}clav Rozho\v{n}\samethanks \\
\small ETH Z\"{u}rich \\
\small \texttt{rozhonv@ethz.ch}\\
\and 
Zolt\'{a}n Vidny\'{a}nszky\thanks{Partially supported by the
		National Research, Development and Innovation Office
		-- NKFIH, grants no.~113047, no.~129211 and FWF Grant M2779.}\\
\small California Institute of Technology\\
\small \texttt{vidnyanz@caltech.edu}\\}

\date{}

\maketitle 
\pagenumbering{gobble}   

\vspace{-1cm}
\begin{abstract}
    We study connections between three different fields: distributed local
    algorithms, finitary factors of iid processes, and descriptive combinatorics.
    We focus on two central questions:
    Can we apply techniques from one of the areas to obtain results in another?
    Can we show that complexity classes coming from different areas contain precisely the same problems?
    We give an affirmative answer to both questions in the context of local problems on regular trees:
    
    \begin{enumerate}
        \item We extend the Borel determinacy technique of Marks [Marks -- J. Am. Math. Soc. 2016] coming from descriptive combinatorics and adapt it to the area of distributed computing, thereby obtaining a more generally applicable lower bound technique in descriptive combinatorics and an entirely new lower bound technique for distributed algorithms.
        Using our new technique, we prove deterministic distributed $\Omega(\log n)$-round lower bounds for problems from a natural class of homomorphism problems.
        Interestingly, these lower bounds seem beyond the current reach of the powerful round elimination technique [Brandt -- PODC 2019] responsible for all substantial locality lower bounds of the last years.
        Our key technical ingredient is a novel \emph{ID graph} technique that we expect to be of independent interest; in fact, it has already played an important role in a new  (follow-up)
        lower bound for the Lovász local lemma in the Local Computation Algorithms model from sequential computing [Brandt, Grunau, Rozhoň -- PODC 2021]. 

        \item We prove that a local problem admits a Baire measurable coloring if and only if it admits a local algorithm with local complexity $O(\log n)$, extending the classification of Baire measurable colorings of Bernshteyn [Bernshteyn -- personal communication].
        A key ingredient of the proof is a new and simple characterization of local problems that can be solved in $O(\log n)$ rounds.
        We complement this result by showing separations between complexity classes from distributed computing, finitary factors, and descriptive combinatorics. 
        Most notably, the class of problems that allow a distributed algorithm with sublogarithmic randomized local complexity is incomparable with the class of problems with a Borel solution. 
    \end{enumerate}
    We hope that our treatment will help to view all three perspectives as part of a common theory of locality, in which we follow the insightful paper of [Bernshteyn -- arXiv 2004.04905].
\end{abstract}

\newpage
\tableofcontents

\newpage
\pagenumbering{arabic}   

\section{Introduction}
\label{sec:intro}
In this work, we study local problems on regular trees from three different perspectives. 

\textit{First,} we consider the perspective of distributed algorithms. In distributed computing, the studied setup is a network of computers where each computer can only communicate with its neighbors. Roughly speaking, the question of interest in this area is which problems can be solved with only a few rounds of communication in the underlying network. 

\textit{Second}, we consider the perspective of (finitary) factors of iid processes.
In probability, random processes model systems that appear to vary in a random manner.
These include Bernoulli processes, Random walks etc.
A particular, well-studied, example is the Ising model.

\textit{Third}, we investigate the perspective of descriptive combinatorics. The goal of this area is to understand which constructions on infinite graphs can be performed without using the so-called axiom of choice.

Although many of the questions of interest asked in these three areas are quite similar to each other, no systematic connections were known until an insightful paper of Bernshteyn~\cite{Bernshteyn2021LLL} who showed that results from distributed computing can automatically imply results in descriptive combinatorics.
In this work, we show that the connections between the three areas run much deeper than previously known, both in terms of techniques and in terms of complexity classes.
In fact, our work suggests that it is quite useful to consider all three perspectives as part of a common theory, and we will attempt to present our results accordingly.
We refer the reader to \cref{fig:big_picture_trees} for a partial overview of the rich connections between the three perspectives, some of which are proven in this paper. 

In this work, we focus on the case where the graph under consideration is a regular tree.
Despite its simplistic appearance, regular trees play an important role in each of the three areas, as we will substantiate at the end of this section.
To already provide an example, in the area of distributed algorithms, almost all locality lower bounds are achieved on regular trees.
Moreover, when regarding lower bounds, the property that they already apply on regular trees actually \emph{strengthens} the result---a fact that is quite relevant for our work as our main contribution regarding the transfer of techniques between the areas is a new lower bound technique in the area of distributed computation that is an adaptation and generalization of a technique from descriptive combinatorics.
Regarding our results about the relations between complexity classes from the three areas, we note that such connections are also studied in the context of paths and grids in two parallel papers \cite{grebik_rozhon2021LCL_on_paths,grebik_rozhon2021toasts_and_tails}.

In the remainder of this section, we give a high-level overview of the three areas that we study.
The purpose of these overviews is to provide the reader with a comprehensive picture of the studied settings that can also serve as a starting point for delving deeper into selected topics in those areas---in order to follow our paper, it is not necessary to obtain a detailed understanding of the results and connections presented in the overviews. Necessary technical details will be provided in \cref{sec:preliminaries}.
Moreover, in \cref{sec:contribution}, we present our contributions in detail.

\paragraph{Distributed Computing}
The definition of the $\local$ model of distributed computing by Linial~\cite{linial92LOCAL} was motivated by the desire to understand distributed algorithms in huge networks. 
As an example, consider a huge network of wifi routers. Let us think of two routers as connected by an edge if they are close enough to exchange messages. 
It is desirable that such close-by routers communicate with user devices on different channels to avoid interference.
In graph-theoretic language, we want to properly color the underlying network. 
Even if we are allowed a color palette with $\Delta+1$ colors where $\Delta$ denotes the maximum degree of the graph (which would admit a simple greedy algorithm in a sequential setting), the problem remains highly interesting in the distributed setting, as, ideally, each vertex decides on its output color after only a few rounds of communication with its neighbors, which does not allow for a simple greedy solution. 

The $\local$ model of distributed computing formalizes this setup: we have a large network, where each vertex knows the network's size, $n$, and perhaps some other parameters like the maximum degree $\Delta$. In the case of randomized algorithms, each vertex has access to a private random bit string, while in the case of deterministic algorithms, each vertex is equipped with a unique identifier from a range polynomial in the size $n$ of the network. 
In one round, each vertex can exchange any message with its neighbors and can perform an arbitrary computation. The goal is to find a solution to a given problem in as few communication rounds as possible. 
As the allowed message size is unbounded, a $t$-round $\local$ algorithm can be equivalently described as a function that maps $t$-hop neighborhoods to outputs---the output of a vertex is then simply the output its $t$-hop neighborhood 
mapped to by this function.
An algorithm is correct if and only if the collection of outputs at all vertices constitutes a correct solution to the problem. 

There is a rich theory of distributed algorithms and the local complexity of many problems is understood. 
The case of trees is a highlight of the theory: it is known that any local problem (a class of natural problems we will define later) belongs to one of only very few complexity classes.
More precisely, for any local problem, its randomized local complexity is either $O(1), \Theta(\log^* n), \Theta(\log\log n), \Theta(\log n)$, or $\Theta(n^{1/k})$ for some $k \in \mathbb{N}$. Moreover, the deterministic complexity is always the same as the randomized one, except for the case $\Theta(\log\log n)$, for which the corresponding deterministic complexity is $\Theta(\log n)$ (see \cref{fig:big_picture_trees}).

\begin{figure}
    \centering
    \includegraphics[width=\textwidth]{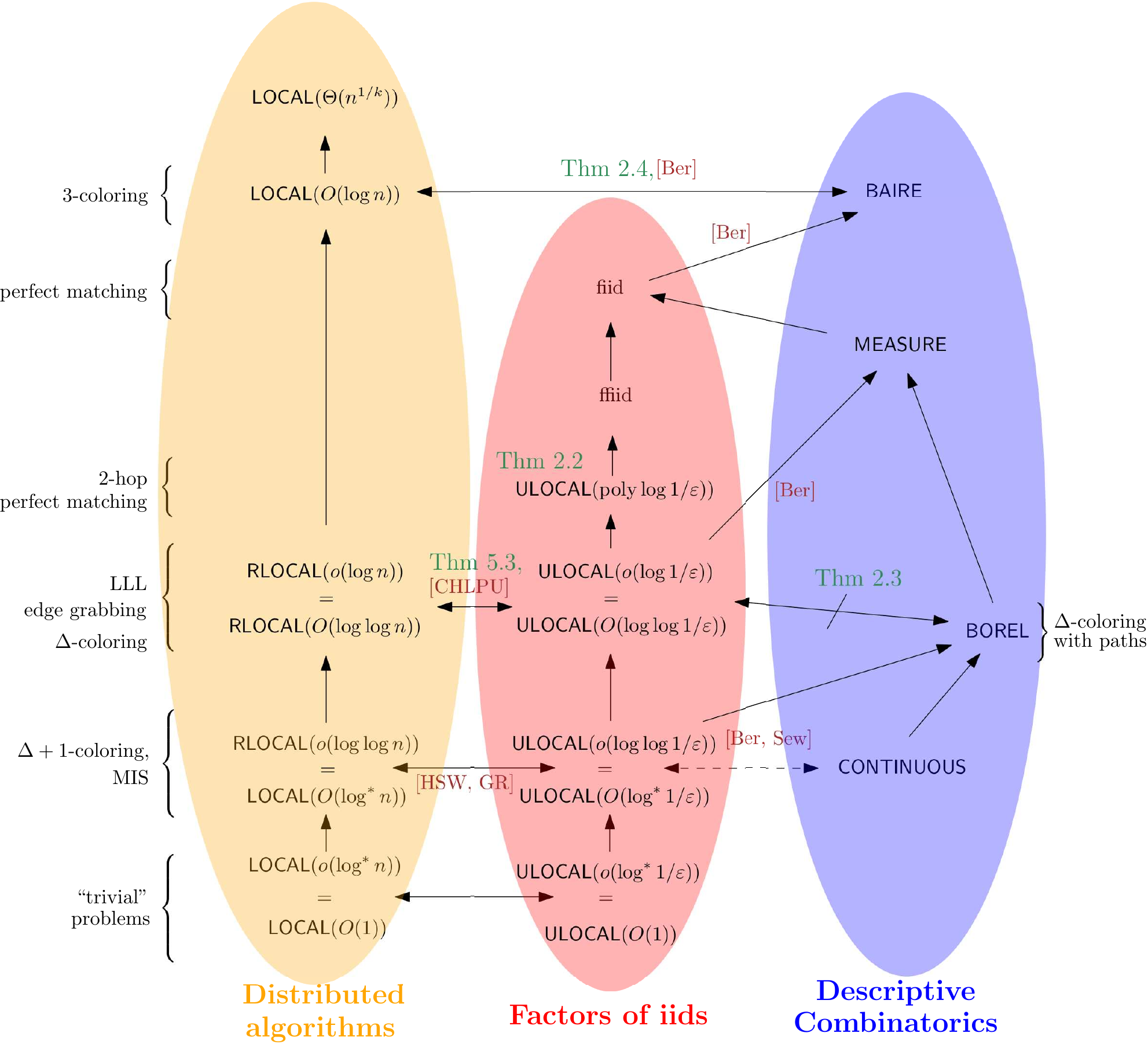}
    \caption{Complexity classes on regular trees considered in the three areas of distributed computing, factors of iid processes/uniform algorithms, and descriptive combinatorics. The left part shows complexity classes of distributed computing. We use the shorthand $\local$ if it does not matter whether we talk about the deterministic or randomized complexity. These two notions differ only for the class of problems of randomized local complexity $O(\log\log n)$, which have deterministic complexity $O(\log n)$. \\
    The uniform complexity classes of sublogarithmic complexity are in correspondence to appropriate classes in the randomized local complexity model, as proven in \cref{sec:separating_examples}. On the other hand, the class fiid is very similar to the class $\measure$ from descriptive combinatorics. The equivalence of the class $\cont$ and $\local(O(\log^* n)) = \olocal(O(\log^* 1/\eps))$ is marked with a dashed arrow as it was proven in case the tree is generated by a group action (think of the tree being equipped with an additional $\Delta$-edge coloring). The inclusion $\local(O(\log^* n)) \subseteq \borel$ however clearly holds also in our setting. The class $\borel$ is incomparable with $\rlocal(O(\log\log n))$, as proven in \cref{sec:separating_examples}.   }
    \label{fig:big_picture_trees}
\end{figure}

\todo{thm 5.1 - 5.3}

\paragraph{(Finitary) Factors of iid Processes and Uniform Algorithms}

In recent years, \emph{factors of iid (fiid) processes} on trees attracted a lot of attention in combinatorics, probability, ergodic theory and statistical physics \cite{ABGGMP,ArankaFeriLaci,BackSzeg,BackVir,backhausz,backhausz2,BGHV,Ball,Bowen1,Bowen2,CPRR,csoka2015invariant,GabLyo,GamarnikSudan,GGP,HarangiVirag,HolroydPoisson,HLS,HPPS,HolroydPeres,HoppenWormald,Kun,RahmanPercolation,RahVir,LyonsTrees,Mester,schmidt2016circular,Timar1,Timar2}.
Intuitively, factors of iid processes are randomized algorithms on, e.g., infinite $\Delta$-regular trees, where each vertex outputs a solution to a problem after it explores random strings on vertices of the whole tree.
As an example, consider the \emph{perfect matching} problem.
An easy parity argument shows that perfect matching cannot be solved by any local randomized algorithm on finite trees.
However, if we allow a small fraction of vertices not to be matched, then, by a result of Nguyen and Onak \cite{nguyen_onak2008matching} (see also \cite{ElekLippner}), there is a constant-round randomized algorithm that produces such a matching on high-girth graphs (where the constant depends on the fraction of unmatched vertices that we allow).
This result can also be deduced from a result of Lyons and Nazarov \cite{lyons2011perfect}, who showed that perfect matching can be described as a factor of iid process on an infinite $\Delta$-regular tree.
The high-level idea behind this connection is that high-girth graphs approximate the infinite $\Delta$-regular tree and constant-round local algorithms approximate factors of iid processes.
This correspondence is formalized in the notion of \emph{Benjamini-Schramm} or \emph{local-global} convergence \cite{benjamini2011recurrence, hatamilovaszszegedy}.
We note that getting only ``approximate'' solutions, that is, solutions where a small fraction of vertices does not have to satisfy the constraints of a given problem, is intrinsic in this correspondence.
Regardless, there are many techniques, such as entropy inequality \cite{backhausz} or correlation decay \cite{backhausz2}, and particular results such as the aforementioned perfect matching problem \cite{lyons2011perfect} that provide lower and upper bounds, respectively, in our setting as well.
We refer the reader to \cite{LyonsTrees,BGHV} for a comprehensive summary of the field.

In this paper, we mostly consider a stronger condition than fiid, namely so-called \emph{finitary} factors of iid (ffiid) processes that are studied in the same context as fiid \cite{holroyd2017one_dependent_coloring,HolroydSchrammWilson2017FinitaryColoring,Spinka}. 
Perhaps surprisingly, the notion of ffiid is identical to the notion of so-called uniform distributed randomized algorithms \cite{Korman_Sereni_Viennot2012Pruning_algorithms_+_oblivious_coloring,grebik_rozhon2021toasts_and_tails} that we now describe. 
We define an uniform local algorithm as a randomized local algorithm that does not know the size of the graph $n$ -- this enables us to run such an algorithm on infinite graphs, where there is no $n$. 
More precisely, we require that each vertex eventually outputs a solution that is compatible with the output in its neighborhood, but the time until the vertex finishes is a potentially unbounded random variable. 
As in the case of classical randomized algorithms, we can now measure the \emph{uniform complexity} of an uniform local algorithm (known as the tail decay of ffiid \cite{HolroydSchrammWilson2017FinitaryColoring}). 
The uniform complexity of an algorithm is defined as the function $t(\eps)$ such that the probability that the algorithm run on a specific vertex needs to see outside its $t(\eps)$-hop neighborhood is at most $\eps$. 
As in the case of classical local complexity, there is a whole hierarchy of possible uniform complexities (see \cref{fig:big_picture_trees}). 

We remark that uniform distributed local algorithms can be regarded as Las Vegas algorithms. The output will always be correct; there is however no fixed guarantee at what point all vertices have computed their final output. On the other hand, a randomized distributed local algorithm can be viewed as a Monte Carlo algorithm as it needs to produce an output after a fixed number of rounds, though the produced output might be incorrect.

\paragraph{Descriptive Combinatorics}
	
	The Banach-Tarski paradox states that a three-dimensional ball of unit volume can be decomposed into finitely many pieces that can be moved by isometries (distance preserving transformations such as rotations and translations) to form two three-dimensional balls each of them with unit volume(!).
    The graph theoretic problem lurking behind this paradox is the following: fix finitely many isometries of $\mathbb{R}^3$ and then consider a graph where $x$ and $y$ are connected if there is an isometry that sends $x$ to $y$.
	Then our task becomes to find a perfect matching in the appropriate subgraph of this graph -- namely, the bipartite subgraph where one partition contains points of the first ball and the other contains points of the other two balls. 
	Banach and Tarski have shown that, with a suitably chosen set of isometries, the axiom of choice implies the existence of such a matching. 
	In contrast, since isometries preserve the Lebesgue measure, the pieces in the decomposition cannot be Lebesgue measurable. 
	Surprisingly, Dougherty and Foreman \cite{doughertyforeman} proved that the pieces in the Banach-Tarski paradox can have the \emph{Baire property}. 
	The Baire property is a topological analogue of being Lebesgue measurable; a subset of $\mathbb{R}^3$ is said to have the Baire property if its difference from some open set is topologically negligible.
	

    Recently, results similar to the Banach-Tarski paradox that lie on the border of combinatorics, logic, group theory, and ergodic theory led to an emergence of a new field often called \textit{descriptive} or \textit{measurable combinatorics}. 
    The field focuses on the connection between the discrete and continuous and is largely concerned with the investigation of graph-theoretic concepts. 
    The usual setup in descriptive combinatorics is that we have a graph with uncountably many connected components, each being a countable graph of bounded degree. 
    For example, in case of the Banach-Tarski paradox, the vertices of the underlying graph are the points of the three balls, edges correspond to isometries, and the degree of each vertex is bounded by the number of chosen isometries. 
    Some of the most beautiful results related to the field include  \cite{laczk, marksunger,measurablesquare,doughertyforeman,marks2016baire,gaboriau,KST,DetMarks,millerreducibility,conley2020borel,csokagrabowski,Bernshteyn2021LLL}, see  \cite{kechris_marks2016descriptive_comb_survey,pikhurko2021descriptive_comb_survey} for recent surveys. 
    
    Importantly, in many of these results, including the Banach-Tarski paradox, graphs where each component is an infinite $\Delta$-regular tree appear naturally. 
    Oftentimes, questions considered in descriptive combinatorics lead to constructing a solution to a local problem in the underlying uncountable graph (in the case of Banach-Tarski, the local problem is perfect matching).  
    The construction needs to be such that the solution of the problem has some additional regularity properties. For example in the case of Banach-Tarski, a solution is possible when the regularity condition is the Baire property, but not if it is Lebesgue measurability.
    In fact, together with Borel measurability these are the most prominent regularity conditions studied in descriptive combinatorics.
    The corresponding complexity classes of local problems that always admit a solution with the respective regularity property are $\borel,\measure,\baire$ (See \cref{fig:big_picture_trees}). 
    In this paper, we moreover consider the setting where each connected component of the underlying graph is a $\Delta$-regular tree. 

	    The connection between distributed computing and descriptive combinatorics arises from the fact that in descriptive combinatorics we care about constructions that do not use the axiom of choice. 
	    In the distributed language, the axiom of choice corresponds to leader election, that is, the constructions in descriptive combinatorics do not allow picking exactly one point in every component.  
	    To get some intuition about the power of the complexity class $\borel$, we note that Borel constructions allow us to alternate countably many times the following two operations. 
	    First, any local algorithm with  constant local complexity can be run. 
	    Second, we have an oracle that provides a maximal independent set (MIS) on any graph that can be constructed locally from the information computed so far \cite{KST}. 
        Note that from the speedup result of \cite{chang_kopelowitz_pettie2019exp_separation} we get that every local problem with local complexity $O(\log^* n)$ can be solved by constant local constructions and \emph{one} call to such an MIS oracle. This implies the inclusion $\local(O(\log^* n)) \subseteq \borel$ in \cref{fig:big_picture_trees} proven in the insightful paper of Bernshteyn \cite{Bernshteyn2021LLL}.  
        The relationship of the class $\measure$ (and of the class $\fiid$ from the discussion of factors) to the class $\borel$ is analogous to the relationship of randomized distributed algorithms to deterministic distributed algorithms.

\paragraph{Local Problems on Regular Trees}

After introducing the three areas of interest in this work, we conclude the section by briefly discussing the kinds of problems we focus on, which are local problems on regular trees.
More precisely, we study \emph{locally checkable labeling (LCL)} problems, which are a class of problems, where the correctness of the solution can be checked locally.
Examples include classical problems from combinatorics such as proper vertex coloring, proper edge coloring, perfect matching, and maximal independent set. 
One main goal of this paper is to understand possible complexity classes of LCLs without inputs on infinite $\Delta$-regular trees and their finite analogues.
We refer the reader to \cref{sec:preliminaries} for a precise definition of a finite $\Delta$-regular tree.

The motivation for studying regular trees in this work stems from different sources: (a) infinite $\Delta$-regular trees are studied in the area of ergodic theory \cite{Bowen1,Bowen2}, random processes \cite{backhausz,backhausz2,lyons2011perfect} and descriptive combinatorics \cite{DetMarks,BrooksMeas}, (b) many lower bounds in distributed computing are proven in regular trees \cite{balliu_et_al:binary_lcl,balliu2019LB,brandt_etal2016LLL,brandt19automatic,Brandt20tightLBmatching,ChangHLPU20,goeoes14_PODC}, and (c) connecting and comparing the techniques of the three areas in this simple setting reveals already deep connections, see \cref{sec:contribution}.

\section{Our Contributions}
\label{sec:contribution}

We believe that our main contribution is presenting all three perspectives as part of a common theory. Our technical contribution is split into three main parts.

\subsection{Generalization of Marks' Technique}\label{sec:intromarks}
In Section \ref{sec:marks} we extend the Borel determinacy technique of Marks \cite{DetMarks}, which was used to prove the nonexistence of Borel $\Delta$-colorings and perfect matchings, to a  broader class of problems, and 
adapt the extended technique to the distributed setting, thereby obtaining a simple method for proving distributed lower bounds.
This method is the first lower bound technique for distributed computing using ideas coming from descriptive combinatorics (see \cite{BernshteynVizing} for a distributed computing upper bound motivated by descriptive combinatorics).
Moreover, we show how to use the developed techniques to obtain both $\borel$ and $\LOCAL$ lower bounds for local problems from a natural class, called homomorphism problems.
Our key technical ingredient for obtaining the mentioned techniques and results is a novel technique based on the notion of an \emph{ID graph}.
We note that a very similar concept to the ID graph was independently discovered by \cite{id_graph}. 

\paragraph{Marks' technique}
In the following we give an introduction to Marks' technique by going through a variant of his proof~\cite{DetMarks,Marks_Coloring} that shows that $\Delta$-coloring has deterministic local complexity $\Omega(\log n)$. The proof already works in the case where the considered regular tree comes with an input $\Delta$-edge coloring. In this case, the output color of a given vertex $u$ can be interpreted as that $u$ ``grabs'' the incident edge of that color. The problem is hence equivalent to the \emph{edge grabbing} problem where every vertex is required to grab an incident edge such that no two vertices grab the same edge. 

We first show the lower bound in the case that vertices do not have unique identifiers but instead are properly colored with $L > \Delta$ colors. 
Suppose there is an algorithm $\fA$ solving the edge grabbing problem with local complexity $t(n) = o(\log n)$, and consider a tree rooted at vertex $u$ of depth $t(n)$; such a tree has less than $n$ vertices, for large enough $n$.
Assume that $u$ has input color $\sigma \in [L]$, and consider, for some fixed edge color $\alpha$, the edge $e$ that is incident to $u$ and has color $\alpha$.
Two players, Alice and Bob, are playing the following game. In the $i$-th round, Alice colors the vertices at distance $i$ from $u$ in the subtree reachable via edge $e$ with colors from $[L]$. Then, Bob colors all other vertices at distance $i$ from $u$ with colors from $[L]$ (see \cref{fig:game}).
Consider the output of $u$ when executing $\fA$ on the obtained colored tree.
Bob wins the game if $u$ grabs the edge $e$, and Alice wins otherwise. 

Note that either Alice or Bob has a winning strategy. 
Given the color $\sigma$ of $u$, if, for each edge color $\alpha$, Alice has a winning strategy in the game corresponding to the pair $(\sigma, \alpha)$, then we can create $\Delta$ copies of Alice and let them play their strategy on each subtree of $u$, telling them that the colors chosen by the other Alices are what Bob played. The result is a coloring of the input tree such that $u$, by definition, does not pick any edge, contradicting the fact that $\fA$ provides a valid solution!
So for every $\sigma$ there is at least one $\alpha$ such that Bob has a winning strategy for the game corresponding to $(\sigma, \alpha)$. By the pigeonhole principle, there are two colors $\sigma_1, \sigma_2$, such that Bob has a winning strategy for both pairs $(\sigma_1, \alpha)$ and $(\sigma_2, \alpha)$. 
But now we can imagine a tree rooted in an edge between vertices $u_1, u_2$ that are colored with colors $\sigma_1, \sigma_2$. We can now take two copies of Bob, one playing at $u_1$ and the other playing at $u_2$ and let them battle it out, telling each copy that the other color from $\{\sigma_1, \sigma_2\}$ and whatever the other copy plays are the moves of Alice. The resulting coloring has the property that both $u_1$ and $u_2$, when executing $\fA$ on the obtained colored tree, grab the edge between them, a contradiction that finishes the proof!

\paragraph{The ID graph}
The downside of the proof is that it does not work in the model with unique identifiers (where the players' moves consist in assigning identifiers instead of colors), since gluing copies of the same player could result in an identifier assignment where the identifiers are not unique. 
One possible remedy is to conduct the whole proof in the context of Borel graphs as was done by Marks. This proves an even stronger statement, namely that $\Delta$-coloring is not in the class $\borel$, but requires additional ad-hoc tricks and a pretty heavy set theoretic tool---Martin's celebrated Borel determinacy theorem~\cite{martin} stating that even for infinite two-player games one of the players has to have a winning strategy if the payoff set is Borel. 
The ID graph enables us to adapt the proof (and its generalization that we develop in \cref{sec:marks}) to the distributed setting, where the fact that one of the players has a winning strategy is obvious. 
Moreover, we use an infinite version of the ID graph to generalize Marks' technique also in the Borel setting. 

Here is how it works: 
The ID graph is a specific graph whose vertices are the possible unique input identifiers (for input graphs of size $n$), that is, numbers from $[n^{O(1)}]$. Its edges are colored with colors from $[\Delta]$ and its girth is $\Omega(\log n)$. When we define the game between Alice and Bob, we require them to label vertices with identifiers in such a way that whenever a new vertex is labeled with identifier $i$, and its already labeled neighbor has identifier $j$, then $ij$ is an edge in the ID graph.
Moreover, the color of edge $ij$ in the ID graph is required to be the same as the color of the edge between the vertices labeled $i$ and $j$ in the tree where the game is played. 
It is straightforward to check that these conditions enforce that even if we let several copies of the same player play, the resulting tree is labeled with unique identifiers. Hence, the same argument as above now finally proves that the deterministic local complexity of $\Delta$-coloring is  $\Omega (\log n)$. 

We note that our ID graph technique is of independent interest and may have applications in many different contexts. 
To give an example from distributed computing, consider the proof of the deterministic $\Omega(\log n)$-round lower bound for $\Delta$-coloring developed by the distributed community~\cite{brandt_etal2016LLL,chang_kopelowitz_pettie2019exp_separation}, which is based on the celebrated round elimination technique. 
Even though the result is deterministic, the proof is quite technical due to the fact that it relies on examining randomized algorithms, for reasons similar to the reasons why Marks' proof does not apply directly to the setting with unique identifiers. 
Fortunately, it can be again streamlined with the use of the ID graph technique. 
Moreover, in a follow-up work, the ID graph technique has already led to a new lower bound for the Lovász local lemma~\cite{brandt_grunau_rozhon2021LLL_in_LCA} in the area of Local Computation Algorithms (which is part of the realm of sequential computation), thereby giving further evidence for the versatility of the technique. 


\paragraph{Marks vs. Round Elimination}
It is quite insightful to compare Marks' technique (and our generalization of it) with the powerful round elimination technique~\cite{brandt19automatic}, which has been responsible for all locality lower bounds of the last years~\cite{brandt_etal2016LLL, balliu2019LB, brandt19automatic, balliu2019hardness_homogeneous, balliu_et_al:binary_lcl, Brandt20tightLBmatching, balliu20rulingset, ChangHLPU20, balliu21dominatingset}. 
While, on the surface, Marks' approach developed for the Borel world may seem quite different from the round elimination technique, there are actually striking similarities between the two methods.
On a high level, in the round elimination technique, the following argument is used to prove lower bounds in the LOCAL model:
If a $T$-round algorithm exists for a problem $\Pi_0$ of interest, then there exists a $(T-1)$-round algorithm for some problem $\Pi_1$ that can be obtained from $\Pi_0$ in a mechanical manner.
By applying this step iteratively, we obtain a problem $\Pi_t$ that can be solved in $0$ rounds; by showing that there is no $0$-algorithm for $\Pi_t$ (which is easy to do if $\Pi_t$ is known), a $(T+1)$-round lower bound for $\Pi_0$ is obtained.

The interesting part regarding the relation to Marks' technique is how the ($T-i-1$)-round algorithms $\fA'$ are obtained from the ($T-i$)-round algorithms $\fA$ in the round elimination framework: in order to execute $\fA'$, each vertex $v$, being aware of its ($T-i-1$)-hop neighborhood, essentially asks whether, for all possible extensions of its view by one hop along a chosen incident edge, there exists some extension of its view by one hop along all other incident edges such that $\fA$, executed on the obtained ($T-i$)-hop neighborhood, returns a certain output at $v$, and then bases its output on the obtained answer.
It turns out that the vertex sets corresponding to these two extensions correspond precisely to two moves of the two players in the game(s) played in Marks' approach: more precisely, in round $T-i$ of a game corresponding to the considered vertex $v$ and the chosen incident edge, the move of Alice consists in labeling the vertices corresponding to the first extension, and the move of Bob consists in labeling the vertices corresponding to the second extension.

However, despite the similarities, the two techniques (at least in their current forms) have their own strengths and weaknesses and are interestingly different in that there are local problems that we know how to obtain good lower bounds for with one technique but not the other, and vice versa.
Finding provable connections between the two techniques is an exciting research direction that we expect to lead to a better understanding of the possibilities and limitations of both techniques.

In \cref{sec:marks} we use our generalized and adapted version of Marks' technique to prove new lower bounds for so-called homomorphism problems. 
Homomorphism problems are a class of local problems that generalizes coloring problems---each vertex is to be colored with some color and there are constraints on which colors are allowed to be adjacent. 
The constraints can be viewed as a graph---in the case of coloring this graph is a clique. 
In general, whenever the underlying graph of the homomorphism problem is $\Delta$-colorable, its deterministic local complexity is $\Omega(\log n)$, because solving the problem would imply that we can solve $\Delta$-coloring too (in the same runtime). 
It seems plausible that homomorphism problems of this kind are the only hard, i.e., $\Omega(\log n)$, homomorphism problems. However, our generalization of Marks' technique asserts that this is not true. 

\begin{theorem}
There are homomorphism problems whose deterministic local complexity on trees is $\Omega(\log n)$ such that the chromatic number of the underlying graph is $2\Delta-2$.
\end{theorem}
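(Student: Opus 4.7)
The plan is to adapt the $\Delta$-coloring lower bound sketched in Section~2.1 (Marks' game combined with the ID graph) to a wider class of homomorphism problems, and then to exhibit an explicit target graph $H$ with $\chi(H) = 2\Delta - 2$ to which the adapted argument applies. For a homomorphism problem with target graph $H$, I set up the same two-player game as for $\Delta$-coloring: fix a root $u$ with input color $\sigma \in [L]$ and an incident edge color $\alpha \in [\Delta]$, let Alice label, level by level, the subtree reached through the $\alpha$-coloured edge and Bob the remaining $\Delta-1$ subtrees, up to depth $t(n)=o(\log n)$. For each pair $(\sigma,\alpha)$ I prescribe a ``Bob-winning set'' $B(\sigma,\alpha)\subseteq V(H)$, and declare Bob to win the game $G(\sigma,\alpha)$ iff the output at $u$ of a purported $t(n)$-round algorithm $\fA$ on the resulting labelled tree lies in $B(\sigma,\alpha)$.

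Replaying Marks' combining argument then yields an $\Omega(\log n)$ lower bound as soon as the family $\{B(\sigma,\alpha)\}$ satisfies two meta-conditions. First, \emph{Alice-side coverage}: $\bigcup_{\alpha}B(\sigma,\alpha)=V(H)$ for every $\sigma$, so that an Alice who wins all of $G(\sigma,1),\dots,G(\sigma,\Delta)$ can be glued into a single tree on which $\fA$'s output at $u$ is forced outside $V(H)$---impossible. Hence for every $\sigma$ some $\alpha(\sigma)\in[\Delta]$ witnesses a Bob victory. Second, \emph{Bob-side incompatibility}: for every such selector $\alpha(\cdot):[L]\to[\Delta]$ there are $\sigma_1\ne\sigma_2$ with $\alpha(\sigma_1)=\alpha(\sigma_2)=:\alpha^{\ast}$ such that no edge of $H$ joins $B(\sigma_1,\alpha^{\ast})$ to $B(\sigma_2,\alpha^{\ast})$; gluing the two corresponding Bob strategies at the endpoints of an $\alpha^{\ast}$-coloured edge then yields a labelled tree on which $\fA$ assigns two endpoints of a tree-edge to a non-edge of $H$, violating the homomorphism constraint. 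The ID graph construction from Section~2.1 lifts the entire argument from the proper $L$-coloring input model to $\LOCAL$ with $\poly(n)$-range unique identifiers verbatim, since the game radius $t(n)=o(\log n)$ is much smaller than the $\Omega(\log n)$ girth of the ID graph.

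The heart of the theorem is then to exhibit a concrete $H$ with $\chi(H)=2\Delta-2$ together with a family $\{B(\sigma,\alpha)\}$ realising both meta-conditions. The number $2\Delta-2$ emerges from the tension between the two: insisting each $B(\sigma,\alpha)$ be an independent set of $H$ would force $\chi(H)\le\Delta$ by (a) alone, so the gadget must instead consist of $L$ \emph{different} covers of $V(H)$ by $\Delta$ sets---indexed by $\sigma$---whose $\alpha^{\ast}$-parts are pairwise non-adjacent across distinct $\sigma$'s (as demanded by (b)), yet whose internal edges can push $\chi(H)$ beyond $\Delta$. I would take $V(H)$ of size roughly $2\Delta - 2$ and build its edge set so that (i) each $B(\sigma,\alpha)$ has size about $\Delta-1$, (ii) the inter-$\sigma$ compatibility demanded by (b) is enforced by the complement of a carefully chosen bipartite-like subgraph, and (iii) the chromatic number lands on exactly $2\Delta-2$. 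A Ramsey-type pigeonhole supplies the needed value of $L$, which remains polynomial in $n$ after composition with the ID graph, closing the argument.

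The main obstacle I anticipate is precisely this third step: realising (a), (b), and $\chi(H)=2\Delta-2$ simultaneously in a single bespoke gadget. Conditions (a) and (b) pull in opposite directions---larger $B$'s help coverage but hurt non-adjacency---and the chromatic-number target forbids the easy option of making $H$ a clique, since hom to $K_{2\Delta-2}$ would be just $(2\Delta-2)$-coloring, trivially solvable in $O(\log^{\ast} n)$ rounds. The genuinely new combinatorial content of the section therefore lies in crafting a graph whose edge structure balances these pressures, and in the accompanying pigeonhole/Ramsey estimate that pins down how many input colors $L$ are required before condition (b) becomes unavoidable.
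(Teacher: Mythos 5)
Your game framework is sound and parallels the paper's ``playability condition'' abstraction, and your use of the ID graph to pass from the $L$-coloring input model to unique identifiers matches the paper. However, you correctly identify the ``genuinely new combinatorial content'' as the explicit gadget $H$ and then stop short of building it---and that gadget is the entire substance of the theorem. The paper resolves this by isolating an abstract property called $\Delta$-(*): a graph $G$ satisfies it if there exist vertex sets $S_0, S_1$ with no edge between them such that deleting either $S_i$ leaves a $(\Delta-1)$-chromatic graph. This single condition simultaneously forces the two pressures you call Alice-side coverage and Bob-side incompatibility to collide: the $(\Delta-1)$-coloring of $V \setminus S_0$ pins $\Delta-1$ of the colorings to ``Alice wins,'' which forces the remaining color to mark both $S_0$ and $S_1$ as ``Bob wins,'' and the lack of edges between $S_0$ and $S_1$ then breaks the gluing step. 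The explicit realization is $H_\Delta$, built from the categorical product $K_{\Delta-1}\times K_{\Delta-1}$, two disjoint copies of $K_{\Delta-1}$, and one apex vertex, with total size $\Delta^2$; it is shown to satisfy $\Delta$-(*) and to have chromatic number exactly $2\Delta - 2$.

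Two concrete points where your sketch goes astray. First, your guess that $|V(H)|$ is about $2\Delta - 2$ is off by an order of magnitude: the quantity $2\Delta - 2$ is the \emph{chromatic number}, while the vertex count is $\Theta(\Delta^2)$. This matters because the tension you describe---$\Delta$ covers of $V(H)$ by roughly independent sets whose intersections must avoid edges---genuinely requires a larger vertex pool, and the categorical product $K_{\Delta-1}\times K_{\Delta-1}$ is precisely what supplies enough ``non-edges'' while still having chromatic number $\Delta-1$. Second, fixing a single ``Bob-winning set'' $B(\sigma,\alpha)$ per pair in advance is more rigid than what the proof actually needs: the paper's games are parameterized by \emph{all} subsets $S\subseteq\Sigma$, and the winning-player map $\Lambda^\sigma_\alpha$ over all $S$ is what gets pigeonholed; the failure of (B) is then \emph{deduced} rather than designed in. Your framing could in principle be made to work, but it would force you to re-derive a pigeonhole over the $\sigma$'s anyway, at which point you have essentially reconstructed the playability formalism. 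The gap, then, is not the ambient machinery but the missing gadget and the missing invariant ($\Delta$-(*)) that makes it verifiable.
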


It is not known how to prove the same lower bounds using round elimination\footnote{Indeed, the descriptions of the problems have comparably large numbers of labels and do not behave like so-called ``fixed points'' (i.e., nicely) under round elimination, which suggests that it is hard to find a round elimination proof with the currently known approaches.}; in fact, as far as we know, these problems are the only known examples of problems on $\Delta$-regular trees for which a lower bound is known to hold but currently not achievable by round elimination. 
Proving the same lower bounds via round elimination is an exciting open problem.

%



\subsection{Separation of Various Complexity Classes}
\label{sec:introseparation}
\paragraph{Uniform Complexity Landscape}
We investigate the connection between randomized and uniform distributed local algorithms, where uniform algorithms are equivalent to the studied notion of finitary factors of iid. 
First, it is simple to observe that local problems with uniform complexity $t(\eps)$ have randomized complexity $t(1 / n^{O(1)})$ -- by definition, every vertex knows its local output after that many rounds with probability $1 - 1/n^{O(1)}$. The result thus follows by a union bound over the $n$ vertices of the input graph.

On the other hand, we observe that on $\Delta$-regular trees the implication also goes in the opposite direction in the following sense. Every problem that has a randomized complexity of $t(n) = o(\log n)$ has an uniform complexity of $O(t(1/\eps))$. 

One could naively assume that this equivalence also holds for higher complexities, but this is not the case. Consider for example the $3$-coloring problem.
It is well-known in the distributed community that $3$-coloring a tree can be solved deterministically in $O(\log n)$ rounds  using the rake-and-compress decomposition~\cite{chang_pettie2019time_hierarchy_trees_rand_speedup,MillerR89}.
On the other hand, there is no uniform algorithm for $3$-coloring a tree. 
If there were such an uniform algorithm, we could run it on any graph with large enough girth and color $99\%$ of its vertices with three colors. This in turn would imply that the high-girth graph has an independent set of size at least $0.99 \cdot n/3$. This is a contradiction with the fact that there exists high-girth graphs with a much smaller independence number~\cite{bollobas}. 

Interestingly, the characterization of Bernshteyn \cite{Bernshteyn_work_in_progress} implies that \emph{any} uniform distributed algorithm can be ``sped up'' to a deterministic local $O(\log n)$ complexity, as we prove in~\cref{thm:MainBaireLog}. 

We show that there are local problems that can be solved by an uniform algorithm but only with a complexity of $\Omega(\log 1/\eps)$. Namely, the problem of constructing a $2$-hop perfect matching on infinite $\Delta$-regular trees for $\Delta \ge 3$ has an uniform local complexity between $\Omega(\log 1/\eps)$ and $O(\poly\log 1/\eps)$. 
Formally, this proves the following theorem. 

\begin{theorem}
$\olocal(O(\log\log 1/\eps)) \subsetneq \olocal(O(\poly\log 1/\eps))$. 
\end{theorem}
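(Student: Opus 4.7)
The plan is to take the $2$-hop perfect matching problem on the infinite $\Delta$-regular tree $T_\Delta$ ($\Delta \geq 3$), denoted $\twomatch$, as the separating problem, as the paragraph preceding the theorem already signals. A $2$-hop perfect matching pairs every vertex with a distinct vertex at graph distance exactly $2$. It suffices to prove (a) $\twomatch \in \olocal(O(\poly\log 1/\eps))$ and (b) $\twomatch \notin \olocal(O(\log\log 1/\eps))$, from which the strict inclusion in the theorem follows.

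For the upper bound (a), I would model $\twomatch$ as a distributed constraint satisfaction problem on $T_\Delta$ with local dependency and invoke a distributed Lov\'asz local lemma algorithm of the Fischer--Ghaffari / Rozho\v{n}--Ghaffari type, adapted to the uniform setting. Each vertex independently samples a candidate partner among its $\Delta(\Delta-1)$ vertices at distance $2$; the resulting violations form a sparse bad set whose connected components can be repaired by recursing on blocks of $\poly\log(1/\eps)$ radius, yielding uniform complexity $O(\poly\log 1/\eps)$.

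For the lower bound (b), by the elementary union-bound observation already recorded in the section (uniform complexity $t(\eps)$ implies randomized $\LOCAL$ complexity $t(1/n^{O(1)})$ on $n$-vertex inputs), it suffices to show that the randomized $\LOCAL$ complexity of $\twomatch$ on finite $\Delta$-regular trees is $\omega(\log\log n)$; by the randomized complexity gap on trees recalled in the introduction, this automatically upgrades to $\Omega(\log n)$, and hence to the uniform lower bound $\Omega(\log 1/\eps) = \omega(\log\log 1/\eps)$. To rule out an $O(\log\log n)$-round randomized algorithm I would apply the generalized Marks technique developed in \cref{sec:marks}: set up an Alice--Bob labeling game on a depth-$t(n)$ subtree in which Bob wins if the central vertex is matched to a particular distance-$2$ partner across a chosen edge, use a pigeonhole argument to extract two ``symmetric'' winning positions for Bob, and glue copies of Bob's strategies via the ID-graph gadget of \cref{sec:intromarks} to produce a single tree in which two vertices both claim to be matched with the same third vertex, contradicting correctness of the algorithm.

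The main obstacle I anticipate is calibrating the Alice--Bob game in step (b) to the $2$-hop nature of the constraint: unlike the edge-grabbing prototype, the relevant conflict lives at distance $2$, so the moves of the two players must be arranged so that the shared distance-$2$ neighbor between two glued subtrees is revealed consistently, and the ID-graph must carry enough edge-labeling structure and girth to certify uniqueness of identifiers along these longer paths. Once the game and the ID-graph are calibrated correctly, the pigeonhole-plus-gluing argument proceeds exactly as in the prototype proof for edge grabbing sketched in \cref{sec:intromarks}, delivering the $\Omega(\log n)$ randomized lower bound and thus the strict separation.
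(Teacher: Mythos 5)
Your upper bound does not go through. The shattering paradigm behind distributed LLL requires that after each vertex independently samples a tentative local output, the set of violated constraints is sparse (components of size $O(\poly\log n)$); for $\twomatch$ this fails at the first step, since when each vertex independently picks a candidate partner among its $\Theta(\Delta^2)$ power-$2$ neighbors, a given vertex's proposal is reciprocated only with probability $\Theta(1/\Delta^2)$, so the set of unmatched vertices is \emph{dense} rather than a sparse shatter-able bad set. More decisively, any LLL-shattering scheme on trees would give randomized $\local$ complexity $O(\log\log n)$, which directly contradicts the lower bound the paper proves, namely that $\twomatch \notin \rlocal(o(\log n))$; so no such scheme can exist. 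The paper's actual upper bound uses none of this: it builds a \emph{one-ended spanning forest} with uniform complexity $O(\poly\log 1/\eps)$ by iterated MIS-style clustering on shrinking sets of ``unsettled'' vertices (\cref{thm:one_ended_forest}), and then solves $\twomatch$ by a deterministic leaves-to-infinity greedy pairing inside each finite subtree of that forest. Two further notes: the paper's $\twomatch$ matches vertices at distance \emph{at most} two, not exactly two, and this slack is what lets the greedy induction close; and the $\poly\log 1/\eps$ cost comes from the forest-decomposition subroutine, not from any LLL machinery.

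On the lower bound, your route is also different from the paper's and is left unresolved. The paper does \emph{not} use the Marks game or the ID graph for $\twomatch$; instead it gives a self-contained parity argument: at a central vertex of a depth-$\Theta(t(n))$ tree, the parities $|S_i|$ of vertices matched within a bounded boundary region in each of the $\Delta$ subtrees are independent and identically distributed, their sum-plus-one parity constraint forces each $|S_i|$ to be odd with high probability, and that in turn forces a consistent ``outward'' orientation at every vertex; following these orientations along a long path yields two vertices both forced to orient away from each other, a contradiction. That lower bound is then transported to the uniform setting via \cref{prop:bee_gees}. You instead propose recalibrating the Marks/ID-graph machinery to a distance-$2$ constraint, which you yourself flag as problematic because the playability framework of \cref{sec:marks} is set up for per-(half-)edge constraints and has no native notion of a shared distance-$2$ witness; you do not actually resolve this, so your lower bound is speculative. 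The parity argument is both the paper's choice and the more economical one here.
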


The uniform  algorithm for this problem is based on a so-called one-ended forest decomposition introduced in \cite{BrooksMeas} in the descriptive combinatorics context. In a one-ended forest decomposition, each vertex selects exactly one of its neighbors as its parent by orienting the corresponding edge outwards. This defines a decomposition of the vertices into infinite trees. We refer to such a decomposition as a one-ended forest decomposition if the subtree rooted at each vertex only contains finitely many vertices. 
Having computed such a decomposition, $2$-hop perfect matching can be solved inductively starting from the leaf vertices of each tree.

We leave the understanding of the uniform complexity landscape in the regime $\Omega(\log 1/\eps)$ as an exciting open problem. In particular, does there exist a function $g(\eps)$ such that each local problem that can be solved by an uniform algorithm has an uniform complexity of $O(g(\eps))$?

\paragraph{Relationship of Distributed Classes with Descriptive Combinatorics}

Bernshteyn recently proved that $\local(\log^* n) \subseteq \borel$ \cite{Bernshteyn2021LLL}. That is, each local problem with a deterministic $\local$ complexity of $O(\log^* n)$ also admits a Borel-measurable solution. A natural question to ask is whether the converse also holds. 
Indeed, it is known that $\local(\log^* n) = \borel$ on paths with no additional input \cite{grebik_rozhon2021LCL_on_paths}. 
We show that on regular trees the situation is different. 
On one hand, a characterization of Bernshteyn \cite{Bernshteyn_work_in_progress} implies that $\borel \subseteq \baire \subseteq \LOCAL(O(\log n))$. 
On the other hand, we show that this result cannot be strengthened by proving the following result. 

\begin{theorem}
$\borel \subsetneq \rlocal(o(\log n))$.
\end{theorem}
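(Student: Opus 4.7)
The plan is to establish both the inclusion and its strictness in $\borel \subsetneq \rlocal(o(\log n))$.

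\textbf{The inclusion $\borel \subseteq \rlocal(o(\log n))$.} I would start from Bernshteyn's characterization invoked for \cref{thm:MainBaireLog}, which yields $\borel \subseteq \baire \subseteq \LOCAL(O(\log n))$. The plan is then to improve the deterministic $O(\log n)$ bound to a randomized $o(\log n)$ bound. The approach has two ingredients: first, observe that the deterministic algorithm produced by the characterization has a rake-and-compress backbone whose only ``global'' step is an MIS-type selection, which can be replaced by a randomized local routine using the private random bits at each vertex; second, invoke the randomized speedup/gap theorem for LCLs on regular trees, which rules out randomized complexities strictly between $\omega(\log \log n)$ and $o(\log n)$. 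Putting both observations together places every Borel-solvable LCL inside $\rlocal(O(\log \log n)) \subseteq \rlocal(o(\log n))$.

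\textbf{The strictness $\rlocal(o(\log n)) \setminus \borel \neq \emptyset$.} I would exhibit $\Delta$-coloring on the $\Delta$-regular tree (with $\Delta \geq 3$) as an explicit separating problem. By Marks' theorem, together with the generalized version developed in \cref{sec:marks}, $\Delta$-coloring of the acyclic $\Delta$-regular Borel graph has no Borel solution, so $\Delta$-coloring is not in $\borel$. On the other hand, an LLL-based randomized algorithm gives $\Delta$-coloring a randomized local complexity of $O(\polylog \log n) = o(\log n)$ on bounded-degree trees, placing $\Delta$-coloring in $\rlocal(o(\log n))$. Combined with the inclusion, this yields the proper containment.

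\textbf{Main obstacle.} The subtle step is the inclusion: converting a Borel construction---a choice-free procedure on an uncountable object---into an \emph{efficient randomized} algorithm on finite trees is not automatic. The most natural route is through the factor-of-iid/finitary-factor perspective: a Borel solution induces (or can be replaced by) a finitary factor whose tail decay transfers to an $o(\log n)$ randomized local algorithm on finite trees. Making this conversion quantitative enough to fall below $\log n$, and then leveraging the randomized gap theorem to rule out the intermediate range between $\omega(\log \log n)$ and $o(\log n)$, is where the bulk of the technical work would concentrate.
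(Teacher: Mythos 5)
You have misread what the theorem asserts. Despite the notation $\subsetneq$, the paper's surrounding prose makes the intended content explicit: ``there exists a local problem that admits a Borel-measurable solution but cannot be solved with a (randomized) $\local$ algorithm running in a sublogarithmic number of rounds,'' and the caption of \cref{fig:big_picture_trees} states that $\borel$ and $\rlocal(O(\log\log n))$ are \emph{incomparable}. So the theorem is establishing $\borel \not\subseteq \rlocal(o(\log n))$. Your proposal argues the opposite containment $\borel \subseteq \rlocal(o(\log n))$ together with strictness via $\Delta$-coloring; the strictness half is a correct but well-known observation (it is half of the incomparability, following from Marks' theorem and the randomized $O(\log\log n)$ upper bound for edge grabbing), and it is not what this theorem is proving.

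More importantly, your claimed inclusion $\borel \subseteq \rlocal(o(\log n))$ is false, and the reasoning behind it fails at the decisive step. The $O(\log n)$ round count of the rake-and-compress algorithm arriving from the $\ell$-full characterization is not a byproduct of an MIS subroutine that randomness can bypass: the decomposition genuinely has $\Theta(\log n)$ layers, and each layer's labeling must wait for its descendants. The randomized speedup theorem on trees collapses $\Theta(\log n)$ to $\Theta(\log\log n)$ only for the LLL-type subclass; by \cref{thm:basicLOCAL} there is a nonempty class of problems whose deterministic \emph{and} randomized complexity is $\Theta(\log n)$ (proper $3$-coloring is the standard example). Indeed, the paper exhibits $\overline{\twomatch}$ as a witness: it lies in $\borel$ (via the Borel one-or-two-ended spanning forest of \cref{thm:OneTwoEndBorel}, which is available precisely because Borel constructions may iterate MIS countably many times), yet a parity argument shows it has randomized complexity $\Omega(\log n)$ on finite $\Delta$-regular trees. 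That is the content of the theorem you were asked to prove, and your proposal does not address it.
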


That is, there exists a local problem that admits a Borel-measurable solution but cannot be solved with a (randomized) $\local$ algorithm running in a sublogarithmic number of rounds. 

Let us sketch a weaker separation, namely that $\borel \setminus \local(O(\log^* n)) \neq \emptyset$. Consider a version of $\Delta$-coloring where a subset of vertices can be left uncolored. However, the subgraph induced by the uncolored vertices needs to be a collection of doubly-infinite paths (in finite trees, this means each path needs to end in a leaf vertex). 
The nonexistence of a fast distributed algorithm for this problem essentially follows from the celebrated $\Omega(\log n)$ deterministic lower bound for $\Delta$-coloring of~\cite{brandt_etal2016LLL}. 
On the other hand, the problem allows a Borel solution. First, sequentially compute $\Delta-2$ maximal independent sets, each time coloring all vertices in the MIS with the same color, followed by removing all the
colored vertices from the graph. In that way, a total of $\Delta-2$ colors are used. Moreover, each
uncolored vertex has at most $2$ uncolored neighbors. 
This implies that the set of uncolored vertices forms a disjoint union of finite paths, one ended
infinite paths and doubly infinite paths. The first two classes can be colored inductively with two
additional colors, starting at one endpoint of each path in a Borel way (namely it can be done by making use of the countably many MISes in larger and larger powers of the input graph). 
Hence, in the end only doubly infinite paths are left uncolored, as desired.  

To show the stronger separation between the classes $\borel$ and $\rlocal(o(\log n))$ we use a variation of the
$2$-hop perfect matching problem. In this variation, some of the vertices can be left unmatched, but similar as in the variant of the $\Delta$-coloring problem described above, the graph induced by all the unmatched vertices needs to satisfy some additional constraints.

We conclude the paragraph by noting that the separation between the classes $\borel$ and $\local(O(\log^* n))$  is not as simple as it may look in the following sense. 
This is because problems typically studied in the $\local$ model with a $\local$ complexity of $\omega(\log^* n)$ like $\Delta$-coloring and perfect matching also do not admit a Borel-measurable solution due to the technique of Marks \cite{DetMarks} that we discussed in \cref{sec:intromarks}.

\subsection{$\local(O(\log n)) = \baire$} 

We already discussed that one of complexity classes studied in descriptive combinatorics is the class $\baire$.
Recently, Bernshteyn proved \cite{Bernshteyn_work_in_progress} that all local problems that are in the complexity class $\baire, \measure$ or $\fiid$ have to satisfy a simple combinatorial condition which we call being $\ell$-full. 
On the other hand, all $\ell$-full problems allow a $\baire$ solution \cite{Bernshteyn_work_in_progress}.
This implies a complete combinatorial characterization of the class $\baire$. 
We defer the formal definition of $\ell$-fullness to Section \ref{sec:baire} as it requires a formal definition of a local problem. 
Informally speaking, in the context of vertex labeling problems, a problem is $\ell$-full if we can choose a subset $S$ of the labels with the following property.
Whenever we label two endpoints of a path of at least $\ell$ vertices with two labels from $S$, we can extend the labeling with labels from $S$ to the whole path such that the overall labeling is valid.
For example, proper $3$-coloring  is $3$-full with $S=\{1,2,3\}$ because for any path of three vertices such that its both endpoints are colored arbitrarily, we can color the middle vertex so that the overall coloring is proper. 
On the other hand, proper $2$-coloring is not $\ell$-full for any $\ell$. 

We complement this result as follows. 
First, we prove that any $\ell$-full problem has local complexity $O(\log n)$, thus proving that all complexity classes considered in the areas of factors of iids and descriptive combinatorics from \cref{fig:big_picture_trees} are contained in $\local(O(\log n))$. 
In particular, this implies that the existence of \emph{any} uniform algorithm implies a local distributed algorithm for the same problem of local complexity $O(\log n)$. 
We obtain this result via the well-known rake-and-compress decomposition~\cite{MillerR89}.

On the other hand, we prove that any problem in the class $\local(O(\log n))$ satisfies the $\ell$-full condition. 
The proof combines a machinery  developed by Chang and Pettie~\cite{chang_pettie2019time_hierarchy_trees_rand_speedup}
with additional nontrivial ideas.
In this proof we construct recursively a sequence of sets of rooted, layered, and partially labeled trees, where the partial labeling is computed by simulating any given $O(\log n)$-round distributed algorithm, 
and then the set  $S$ meeting the $\ell$-full condition is constructed by considering all possible extensions of the partial labeling to complete correct labeling of these trees.

This result implies the following equality:
\begin{theorem}
$\local(O(\log n)) = \baire$. 
\end{theorem}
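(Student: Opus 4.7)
The plan is to prove the two inclusions $\baire \subseteq \local(O(\log n))$ and $\local(O(\log n)) \subseteq \baire$ separately, leveraging Bernshteyn's combinatorial characterization that a local problem lies in $\baire$ if and only if it is $\ell$-full for some $\ell$. Thus each direction reduces to relating $\ell$-fullness and the existence of an $O(\log n)$-round algorithm, and the two directions use genuinely different tools.

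For the direction $\baire \subseteq \local(O(\log n))$, I would proceed as in the excerpt by applying the rake-and-compress decomposition of Miller--Reif to the input tree, which produces in $O(\log n)$ rounds a partition of the vertex set into $O(\log n)$ layers, each layer consisting of either a ``rake'' piece (a small subtree hanging off the surviving graph) or a ``compress'' piece (a long path whose internal vertices all had degree two in the surviving graph). My algorithm then processes layers in reverse order of their removal, maintaining the invariant that after each step the labels already produced belong to the set $S$ witnessing $\ell$-fullness. For rake layers the required labels can be assigned greedily from $S$ at the leaves/subtree pieces, while for a compress layer I use that the two endpoints of each compress path are already labeled with elements of $S$: by making the decomposition remove only paths of length $\ge \ell$ (which costs only a constant factor in the depth), the $\ell$-fullness property supplies an extension using labels from $S$. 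The whole procedure runs in $O(\log n)$ rounds because the decomposition has $O(\log n)$ layers and each layer is processed locally.

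For the direction $\local(O(\log n)) \subseteq \baire$, suppose $\Pi$ is solved by an algorithm $\fA$ in $T = O(\log n)$ rounds. The goal is to distill from $\fA$ a set of labels $S$ such that any path of length $\ge \ell := \ell(T)$ whose endpoints are labeled from $S$ can be extended, using labels from $S$, to a legal labeling; this is exactly the condition of $\ell$-fullness. Following the approach of Chang--Pettie used in the sublogarithmic--to--$O(\log^* n)$ speedup, I would build a family of rooted, layered, partially labeled trees of depth $\Theta(T)$ on which $\fA$ is simulated, and then let $S$ be the set of labels that can appear at the root under all choices of extensions of the partial labeling to a legal complete labeling of the tree. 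The key lemma to prove is that this $S$ is nonempty on every long enough path and that any two choices from $S$ at the endpoints can be bridged in the interior using only labels from $S$; here I would glue layered trees along a long path and use correctness of $\fA$ together with a pigeonhole / counting argument on the possible partial views to get the extendability.

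I expect the second direction to be the main obstacle. The Chang--Pettie machinery was designed to speed up sublogarithmic algorithms rather than to analyze $\Theta(\log n)$ ones, so the technical issue is showing that the set $S$ extracted from the algorithm's behavior really is closed under path-extension in the strong form demanded by $\ell$-fullness, uniformly over input sizes $n$. In particular, care is needed when gluing two layered trees to form an input on which $\fA$'s correctness can be invoked: the gluing must produce a tree of a size for which $\fA$'s guarantees still apply, and the partial labelings along the glued path must be compatible with the random/deterministic view of $\fA$ at the interior vertices. This compatibility step is where the ``additional nontrivial ideas'' beyond Chang--Pettie enter, and it is what I would work out first in detail before writing the rest of the proof.
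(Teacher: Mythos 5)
Your overall roadmap matches the paper's exactly: both directions reduce to Bernshteyn's $\ell$-full characterization, the direction $\baire \subseteq \local(O(\log n))$ is handled by a rake-and-compress decomposition tuned so that compress paths have $\geq \ell$ vertices (so the $\ell$-full extension property applies), and the converse is obtained by running the Chang--Pettie simulation-on-layered-trees machinery to extract a candidate set $S$ of vertex configurations and then arguing extendability by gluing trees along paths. You have also correctly located the real difficulty in the second direction. One caveat: your sketch of that direction is quite thin compared to what the paper actually needs — the paper works with an equivalence relation on rooted/bipolar trees, a pumping lemma for bipolar trees, a fixed-point set $W^\ast$ obtained by iterating a gluing operator $Z_f$ until the set of equivalence classes stabilizes, a sequence of sets $R_i$ of ``reachable'' root configurations that is shown to be monotone and eventually constant, and a feasibility argument for the labeling function $f$ that simulates the $O(\log n)$-round algorithm on trees of size $\poly(w)$ with locally-generated IDs. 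Your phrase ``a pigeonhole / counting argument on the possible partial views'' gestures at the pumping and fixed-point steps but does not pin down the invariants that make the resulting $S$ actually closed under path extension; that is precisely where the ``additional nontrivial ideas'' live. Also note that the $\ell$ you extract must be a constant depending only on $\Pi$ (e.g.\ on the number of $\simm$-classes), not a quantity $\ell(T)$ that grows with the running time or $n$, otherwise the $\ell$-full condition is vacuous.
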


This equality is surprising in that the definitions of the two classes do not seem to have much in common on the first glance!
Moreover, the proof of the equality relies on nontrivial results in both distributed algorithms (the technique of Chang and Pettie~\cite{chang_pettie2019time_hierarchy_trees_rand_speedup}) and descriptive combinatorics (the fact that a hierarchical decomposition, so-called toast, can be constructed in $\baire$, \cite{conleymillerbound}, see Proposition \ref{pr:BaireToast}).


The combinatorial characterization of the local complexity class $\local(O(\log n))$ on $\Delta$-regular trees is interesting from the perspective of distributed computing alone. 
This result can be seen as a part of a large research program aiming at classification of possible local complexities on various graph classes~\cite{balliu2020almost_global_problems,brandt_etal2016LLL,chang_kopelowitz_pettie2019exp_separation,chang_pettie2019time_hierarchy_trees_rand_speedup,chang2020n1k_speedups,Chang_paths_cycles,balliu2021_rooted_trees,balliu_et_al:binary_lcl,balliu2018new_classes-loglog*-log*}. 
That is, we wish not only to understand possible complexity classes (see the left part of \cref{fig:big_picture_trees} for possible local complexity classes on regular trees), but also to find combinatorial characterizations of problems in those classes that allow us to  efficiently decide for a given problem which class it belongs to.  
Unfortunately, even for grids with input labels, it is \emph{undecidable} whether a given local problem can be solved in $O(1)$ rounds~\cite{naorstockmeyer,brandt_grids}, since
local problems on grids can be used to simulate a Turing machine.
This undecidability result does not apply to paths and trees, hence for these graph classes it is still hopeful that we can find simple and useful characterizations for different classes of distributed problems. 

In particular, on paths it is decidable what classes a given local problem belongs to, for all classes coming from the three areas considered here, and this holds even if we allow inputs~\cite{Chang_paths_cycles,grebik_rozhon2021LCL_on_paths}. 
The situation becomes much more complicated when we consider trees. 
Recently, classification results on trees were obtained for so-called binary-labeling problems \cite{balliu_et_al:binary_lcl}. 
More recently, a complete classification was obtained in the case of \emph{rooted regular trees}~\cite{balliu2021_rooted_trees}. 
Although their algorithm takes exponential time in the worst case, the authors provided a practical implementation fast enough to classify many typical problems of interest. 

Much less is known for general, \emph{unoriented} trees, with an arbitrary number of labels. 
In general, deciding the optimal distributed complexity for a local problem on  bounded-degree trees is $\mathsf{EXPTIME}$-hard~\cite{chang2020n1k_speedups},  such a hardness result does not rule out the possibility for having a simple and polynomial-time characterization for the case of \emph{regular trees}, where there is no input and the constraints are placed only on degree-$\Delta$ vertices. 
Indeed, it was stated in~\cite{balliu2021_rooted_trees} as an open question to find such a characterization. Our characterization of $\local(O(\log n)) = \baire$ by $\ell$-full problems makes progress in better understanding the distributed complexity classes on trees and towards answering this open question.

\paragraph{Roadmap}
\hypertarget{par:roadmap}{}
In \cref{sec:preliminaries}, we define formally all the three setups we consider in the paper. 
In \cref{sec:marks} we discuss the lower bound technique of Marks and the new concept of an ID graph.
Next, in \cref{sec:separating_examples} we prove some basic results about the uniform complexity classes and give examples of problems separating some classes from \cref{fig:big_picture_trees}. 
Finally, in \cref{sec:baire} we prove that a problem admits a Baire measurable solution if and only if it admits a distributed algorithm of local complexity $O(\log n)$.

The individual sections can be read largely independently of each other.
Moreover, most of our results that are specific to only one of the three areas can be understood without reading the parts of the paper that concern the other areas.
We encourage the reader interested mainly in one of the areas to skip the respective parts.


\section{Preliminaries}
\label{sec:preliminaries}

In this section, we explain the setup we work with, the main definitions and results. 
The class of graphs that we consider in this work are either infinite $\Delta$-regular trees, or their finite analogue that we define formally in \cref{subsec:local_problems}.

We sometimes explicitly assume $\Delta > 2$. 
The case $\Delta = 2$, that is, studying paths, behaves differently and seems much easier to understand \cite{grebik_rozhon2021LCL_on_paths}. 
Unless stated otherwise, we do not consider any additional structure on the graphs, but sometimes it is natural to work with trees with an input $\Delta$-edge-coloring. 

\subsection{Local Problems on $\Delta$-regular trees}
\label{subsec:local_problems}

The problems we study in this work are locally checkable labeling (LCL) problems, which, roughly speaking, are problems that can be described via local constraints that have to be satisfied in a suitable neighborhood of each vertex.
In the context of distributed algorithms, these problems were introduced in the seminal work by Naor and Stockmeyer~\cite{naorstockmeyer}, and have been studied extensively since.
In the modern formulation introduced in~\cite{brandt19automatic}, instead of labeling vertices or edges, LCL problems are described by labeling half-edges, i.e., pairs of a vertex and an incident edge.
This formulation is very general in that it not only captures vertex and edge labeling problems, but also others such as orientation problems, or combinations of all of these types.
Before we can provide this general definition of an LCL, we need to introduce some definitions.
We start by formalizing the notion of a half-edge.

\begin{definition}[Half-edge]
\label{def:half-edge}
    A \emph{half-edge} is a pair $(v, e)$ where $v$ is a vertex, and $e$ an edge incident to $v$.
    We say that a half-edge $(v, e)$ is \emph{incident} to a vertex $w$ if $w = v$, we say that a vertex $w$ is \emph{contained} in a half edge $(v,e)$ if $w=v$, and we say that $(v, e)$ \emph{belongs} to an edge $e'$ if $e' = e$.
    We denote the set of all half-edges of a graph $G$ by $H(G)$.
    A \emph{half-edge labeling} is a function $c \colon H(G) \to \Sigma$ that assigns to each half-edge an element from some label set $\Sigma$.
\end{definition}

In order to speak about finite $\Delta$-regular trees, we need to consider slightly modified definition of a graph.
We think of each vertex to be contained in $\Delta$-many half-edges, however, not every half edge belongs to an actual edge of the graph.
That is half-edges are pairs $(v,e)$, but $e$ is formally not a pair of vertices.
Sometimes we refer to these half-edges as \emph{virtual} half-edges.
We include a formal definition to avoid confusions. See also \cref{fig:Delta_reg_tree}. 


\begin{definition}[$\Delta$-regular trees]
    A tree $T$, finite or infinite is a \emph{$\Delta$-regular tree} if either it is infinite and $T=T_\Delta$, where $T_\Delta$ is the unique infinite $\Delta$-regular tree, that is each vertex has exactly $\Delta$-many neighbors, or it is finite of maximum degree $\Delta$ and each vertex $v\in T$ of degree $d\le \Delta$ is contained in $(\Delta-d)$-many virtual half-edges.
    
    Formally, we can view $T$ as a triplet $(V(T), E(T), H(T))$, where $(V(T),E(T))$ is a tree of maximum degree $\Delta$ and $H(T)$ consists of real half-edges, that is pairs $(v,e)$, where $v\in V(T)$, $e\in E(T)$ and $e$ is incident to $v$, together with some virtual edges, in the case when $T$ is finite, such that each vertex $v\in V(T)$ is contained in exactly $\Delta$-many half-edges (real or virtual).
\end{definition}

\begin{figure}
    \centering
    \includegraphics{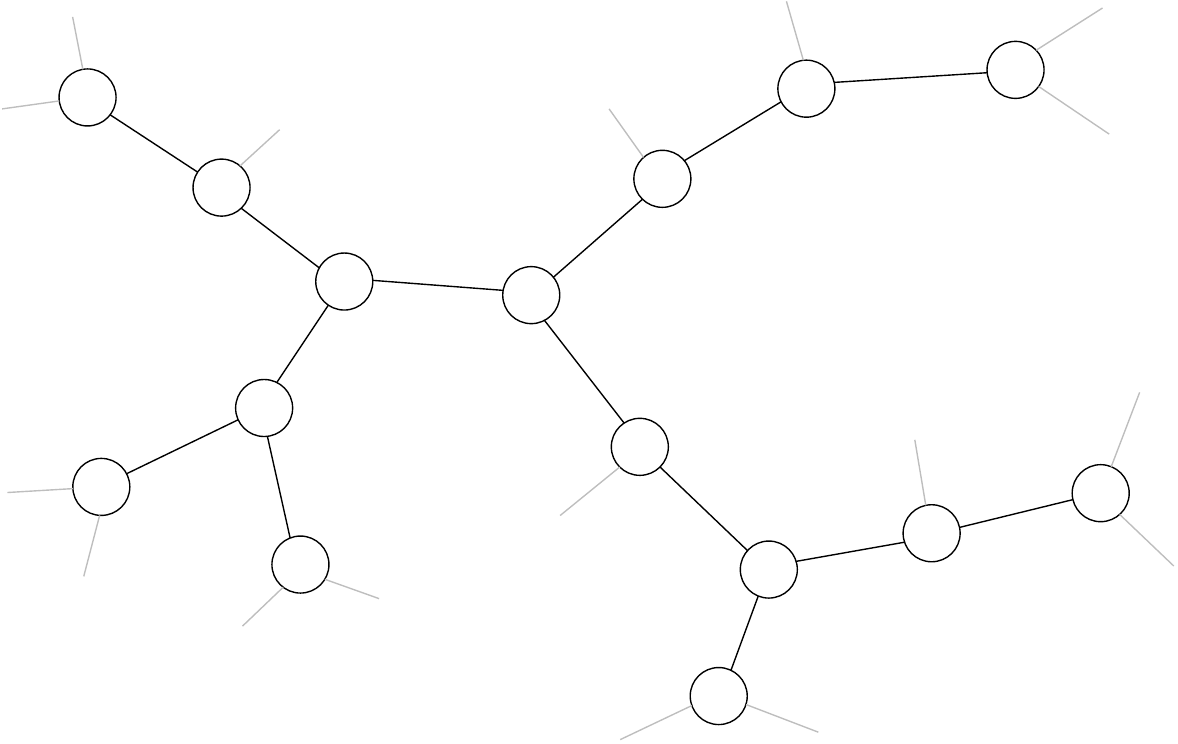}
    \caption{A $3$-regular tree on 15 vertices. Every vertex has $3$ half-edges but not all half-edges lead to a different vertex. }
    \label{fig:Delta_reg_tree}
\end{figure}

As we are considering trees in this work, each LCL problem can be described in a specific form that provides two lists, one describing all label combinations that are allowed on the half-edges incident to a vertex, and the other describing all label combinations that are allowed on the two half-edges belonging to an edge.\footnote{Every problem that can be described in the form given by Naor and Stockmeyer~\cite{naorstockmeyer} can be equivalently described as an LCL problem in this list form, by simply requiring each output label on some half-edge $h$ to encode all output labels in a suitably large (constant) neighborhood of $h$ in the form given in~\cite{naorstockmeyer}.}
We arrive at the following definition for LCLs on $\Delta$-regular trees.\footnote{Note that the defined LCL problems do not allow the correctness of the output to depend on input labels.}

\begin{definition}[LCLs on $\Delta$-regular trees]\label{def:LCL_trees}
A \emph{locally checkable labeling problem}, or \emph{LCL} for short, is a triple $\Pi=(\Sigma,\fV,\fE)$, where $\Sigma$ is a finite set of labels, $\fV$ is a subset of unordered cardinality-$\Delta$ multisets\footnote{Recall that a multiset is a modification of the concept of sets, where repetition is allowed.} of labels from $\Sigma$ , and $\fE$ is a subset of unordered cardinality-$2$ multisets of labels from $\Sigma$.

We call $\fV$ and $\fE$ the \emph{vertex constraint} and \emph{edge constraint} of $\Pi$, respectively.
Moreover, we call each multiset contained in $\fV$ a \emph{vertex configuration} of $\Pi$, and each multiset contained in $\fE$ an \emph{edge configuration} of $\Pi$.

Let $T$ be a $\Delta$-regular tree and $c:H(T)\to \Sigma$ a half-edge labeling of $T$ with labels from $\Sigma$.
We say that $c$ is a \emph{$\Pi$-coloring}, or, equivalently, a correct solution for $\Pi$, if, for each vertex $v$ of $T$, the multiset of labels assigned to the half-edges incident to $v$ is contained in $\fV$, and, for each edge $e$ of $T$, the cardinality-$2$ multiset of labels assigned to the half-edges belonging to $e$ is an element of $\fE$.
\end{definition}

An equivalent way to define our setting would be to consider $\Delta$-regular trees as commonly defined, that is, there are vertices of degree $\Delta$ and vertices of degree $1$, i.e., leaves.
In the corresponding definition of LCL one would consider leaves as unconstrained w.r.t. the vertex constraint, i.e., in the above definition of a correct solution the condition ``for each vertex  $v$'' is replaced by ``for each non-leaf vertex $v$''.
Equivalently, we could allow arbitrary trees of maximum degree $\Delta$ as input graphs, but, for vertices of degree $< \Delta$, we require the multiset of labels assigned to the half-edges to be extendable to some cardinality-$\Delta$ multiset in $\fV$.
When it helps the exposition of our ideas and is clear from the context, we may make use of these different but equivalent perspectives.

We illustrate the difference between our setting and ``standard setting'' without virtual half-edges on the perfect matching problem.
A standard definition of the perfect matching problem is that some edges are picked in such a way that each vertex is covered by exactly one edge.
It is easy to see that there is no local algorithm to solve this problem on the class of finite trees (without virtual half-edges), this is a simple parity argument.
However, in our setting, every vertex needs to pick exactly one half-edge (real or virtual) in such a way that both endpoints of each edge are either picked or not picked.
We remark that in our setting it is not difficult to see that (if $\Delta>2$), then this problem can be solved by a local deterministic algorithm of local complexity $O(\log(n))$.

\subsection{The $\local$ model}

In this section, we define local algorithms and local complexity.
We discuss the general relation between the classical local complexity and the uniform local complexity. Recall that when we talk about distributed algorithm on $\Delta$-regular trees, the algorithm has access to $n$, the size of the tree. The measure of complexity is the classical local complexity. On the other hand, when we talk about uniform distributed algorithms on $\Delta$-regular trees, we talk about an infinite $\Delta$-regular tree and the measure of complexity is the uniform local complexity. 
We start with the classical notions.
Recall that $B(v,t)$ is the $t$-hop neighborhood of a vertex $v$ in an underlying graph $G$.

\begin{definition}[Local algorithm]
\label{def:local_algorithm}
A \emph{distributed local algorithm} $\fA$ of local complexity $t(n)$ is a function defined on all possible $t(n)$-hop neighborhoods of a vertex. Applying an algorithm $\fA$ on an input graph $G$ means that the function is applied to a $t(n)$-hop neighborhood $B(u, t(n))$ of each vertex $u$ of $G$. The output of the function is a labeling of the half-edges around the given vertex. The algorithm also takes as input the size of the input graph $n$. 
\end{definition}

\begin{definition}[Local complexity]
\label{def:local_complexity}
We say that an LCL problem $\Pi$ has a deterministic local complexity $t(n)$ if there is a local algorithm $\fA$ of local complexity $t(n)$ such that when run on the input graph $G$, with each of its vertices having a unique identifier from $[n^{O(1)}]$, $\fA$ always returns a valid solution to $\Pi$.  
We also say $\Pi \in \dlocal( O(t(n))$. 

The randomized complexity is define analogously, but instead of unique identifiers, each vertex of $G$ has an infinite random string. The solution of $\fA$ needs to be a valid solution with probability $1 - 1/n$. 
We also say $\Pi \in \rlocal( O(t(n))$. 
\end{definition}

Whenever we talk about \emph{local} complexity on $\Delta$-regular trees, we always tacitly think about the class of finite $\Delta$-regular trees. We use the notation $\local(O(t(n)))$ whenever the deterministic and the randomized complexity of the problems are the same up to constant factor (cf. \cref{thm:basicLOCAL}). 

\paragraph{Classification of Local Problems on $\Delta$-regular Trees}

There is a lot of work aiming to classify all possible local complexities of LCL problems on bounded degree trees. This classification was recently finished.
Even though the results in the literature are stated for the classical notion of finite trees, we note that it is easy to check that the same picture emerges if we restrict ourselves to $\Delta$-regular trees according to our definition.

\begin{theorem}[Classification of local complexities of LCLs on $\Delta$-regular trees \cite{naorstockmeyer,chang_kopelowitz_pettie2019exp_separation,chang_pettie2019time_hierarchy_trees_rand_speedup,chang2020n1k_speedups,balliu2020almost_global_problems,balliu2019hardness_homogeneous,brandt_grunau_rozhon2021classification_of_lcls_trees_and_grids}]
\label{thm:basicLOCAL}
Let $\Pi$ be an LCL problem. Then the deterministic/randomized local complexity of $\Pi$ on $\Delta$-regular trees is one of the following:
\begin{enumerate}
    \item $O(1)$,
    \item $\Theta(\log^*n)$,
    \item $\Theta(\log n)$ deterministic and $\Theta(\log\log n)$ randomized,
    \item $\Theta(\log n)$,
    \item $\Theta(n^{1/k})$ for $k \in \mathbb{N}$.
\end{enumerate}
\end{theorem}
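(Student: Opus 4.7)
The plan is to assemble the statement as a compilation result: each of the five listed complexity classes corresponds to a range that must be separated from its neighbors by a gap theorem, and the classification amounts to citing one such gap theorem for each of the four ``forbidden bands'' that lie between consecutive listed complexities. I would organize the proof as a sequence of four speedup/gap invocations, applied in the order dictated by the size of the complexity.

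First, I would invoke the classical Naor--Stockmeyer speedup: any LCL with deterministic local complexity $o(\log^* n)$ is in fact solvable in $O(1)$ rounds. The argument is a Ramsey-type pigeonhole on $t(n)$-hop neighborhoods labeled with identifiers from $[n^{O(1)}]$: for $t(n) = o(\log^* n)$ the ID space is huge enough that there must be an order-invariant sub-assignment on which the algorithm already commits to a fixed output, and an order-invariant algorithm on a regular tree is constant-time. Second, I would invoke the exponential separation of Chang--Kopelowitz--Pettie, which on trees establishes both (a) any LCL with randomized complexity $o(\log \log n)$ has deterministic complexity $O(\log^* n)$, and (b) any LCL with deterministic complexity $o(\log n)$ has randomized complexity $O(\log\log n)$. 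These two statements together eliminate the bands $(\log^* n, \log\log n)$ in randomized complexity and $(\log^* n, \log n)$ in deterministic complexity, and simultaneously explain precisely when the two measures diverge. Third, I would apply the time-hierarchy theorem of Chang--Pettie on trees, built on the rake-and-compress decomposition, which rules out any complexity strictly between $\Theta(\log n)$ and $\Theta(n^{1/k})$ by means of a recursive speedup: a sub-$n^{1/k}$ algorithm on a tree can be accelerated by a $\log$-factor until it collides with the logarithmic regime. Fourth, within the polynomial range, I would combine the subsequent work of Chang, Balliu--Brandt--Olivetti--Suomela, Balliu--Hirvonen et al., and the classification work of Brandt--Grunau--Rozho\v{n} to show that the only achievable polynomial complexities are $\Theta(n^{1/k})$ for $k\in\mathbb{N}$, and that no further randomized-versus-deterministic gaps appear beyond the $\log n$ vs.\ $\log\log n$ phenomenon already isolated in step two.

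The main obstacle I anticipate is not mathematical but bookkeeping: each cited result is phrased in a slightly different model (classical bounded-degree trees with actual leaves, rooted or unrooted trees, sometimes with input labels, sometimes not), whereas our setting uses $\Delta$-regular trees with virtual half-edges as in \cref{def:LCL_trees}. I would therefore need to verify that each speedup theorem transfers across this convention. This transfer is routine in both directions: given an LCL in our sense, the corresponding LCL in the classical sense is obtained by treating degree-$<\Delta$ vertices as unconstrained, while given a classical LCL on trees of maximum degree $\Delta$, the corresponding $\Delta$-regular LCL is obtained by relaxing the vertex constraint at low-degree vertices to multisets of real-half-edge labels that extend to some cardinality-$\Delta$ multiset in $\fV$. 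Both reductions preserve the local checkability of the problem and the existence of local algorithms with the same asymptotic complexity, so the five classes and their gaps survive the translation unchanged.
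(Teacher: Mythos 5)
Your proposal is correct and matches the paper's treatment: the paper states this theorem as a compilation of the cited gap/speedup results and remarks only that the transfer from the standard bounded-degree-tree model to the $\Delta$-regular-with-virtual-half-edges model is routine, exactly as you argue. The one detail the paper flags that you elide is that the known problems of complexity $\Theta(n^{1/k})$ have degree-dependent constraints and so do not literally fit \cref{def:LCL_trees}, but that only affects whether class 5 is nonempty, not the validity of the classification.
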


We do not know whether the complexity classes $\Theta(n^{1/k})$ are non-empty on \emph{$\Delta$-regular} trees. The $2 \frac{1}{2}$ coloring problems $\mathcal{P}_k$  are known to have complexity $\Theta(n^{1/k})$ on \emph{bounded-degree} trees~\cite{chang_pettie2019time_hierarchy_trees_rand_speedup}. However, as the correctness criterion of $\mathcal{P}_k$ depends on the degree of vertices, it does not fit into the class of LCL problems that we consider in \cref{def:LCL_trees}. 
Nevertheless, as can be seen in \cref{fig:big_picture_trees}, all classes of factors of iid and from descriptive combinatorics are already contained in the class $\local(O(\log n))$.

\paragraph{Uniform Algorithms}
As we mentioned before, when talking about local complexities, we always have in mind that the underlying graph is finite.
In particular, the corresponding algorithm knows the size of the graph.
On infinite $\Delta$-regular trees, or infinite graphs in general, we use the following notion \cite{HolroydSchrammWilson2017FinitaryColoring,Korman_Sereni_Viennot2012Pruning_algorithms_+_oblivious_coloring}. 

\begin{definition}
An \emph{uniform local algorithm} $\fA$ is a function that is defined on all possible (finite) neighborhoods of a vertex.
For some neighborhoods it outputs a special symbol $\emptyset$ instead of a labeling of the half-edges around the central vertex.
Applying $\fA$ on a graph $G$ means that for each vertex $u$ of $G$ the function is applied to $B(u,t)$, where $t$ is the minimal number such that $\fA(u,t)\not=\emptyset$.
We call $t$ the coding radius of $\fA$, and denote it, as a function on vertices, as $R_\fA$.
\end{definition}

We define the corresponding notion of \emph{uniform local complexity} for infinite $\Delta$-regular trees where each vertex is assigned an infinite random string.

\begin{definition}[Uniform local complexity \cite{HolroydSchrammWilson2017FinitaryColoring}]
\label{def:uniform_complexity}
We say that the \emph{uniform local (randomized) complexity} of an LCL problem $\Pi$ is $t(\eps)$ if there is an uniform local algorithm $\fA$ such that the following hold on the infinite $\Delta$-regular tree.
Recall that $R_\fA$ is the random variable measuring the coding radius of a vertex $u$, that is, the distance $\fA$ needs to look at to decide the answer for $u$. Then, for any $0 < \eps < 1$:
\[
\P(R_\fA \ge t(\eps)) \leq \eps.
\]
We also say $\Pi \in \localo(O(t(\eps))$.
\end{definition}

We finish by stating the following lemma that bounds the uniform complexity of concatenation of two uniform algorithm (we need to be little bit more careful and cannot just add the complexites up). 

\begin{lemma}[Sequential Composition]
\label{lem:sequential_composition}
Let $A_1$ and $A_2$ be two distributed uniform algorithms with an uniform local complexity of $t_1(\eps)$ and $t_2(\eps)$, respectively. 
Let $A$ be the sequential composition of $A_1$ and $A_2$. That is, $A$ needs to know the output that $A_2$ computes when the local input at every vertex is equal to the output of the algorithm $A_1$ at that vertex. Then, $t_A(\eps) \leq t_{A_1} \left( \frac{\eps/2}{\Delta^{t_{A_2}(\eps/2) + 1}}\right) + t_{A_2}(\eps/2)$.
\end{lemma}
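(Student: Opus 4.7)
The plan is to prove the claim by a straightforward two‑level union bound. Fix $\eps>0$ and a vertex $v$ in the infinite $\Delta$-regular tree. Set
\[
r_2 := t_{A_2}(\eps/2) \qquad\text{and}\qquad r_1 := t_{A_1}\!\left(\frac{\eps/2}{\Delta^{r_2+1}}\right),
\]
and I will argue that with probability at least $1-\eps$, the coding radius $R_A(v)$ of the composed algorithm $A$ at $v$ is at most $r_1+r_2$.

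The key structural observation is the following. To compute $A$'s output at $v$ it suffices to compute $A_2$'s output at $v$ on the intermediate labelling produced by $A_1$. If $R_{A_2}(v)\le r_2$, this in turn requires only the intermediate labels at vertices of $B(v,r_2)$. If additionally $R_{A_1}(u)\le r_1$ for every $u\in B(v,r_2)$, then the intermediate label at each such $u$ is determined by the underlying iid randomness inside $B(u,r_1)\subseteq B(v,r_1+r_2)$. Hence, on the event
\[
\mathcal E \;:=\; \{R_{A_2}(v)\le r_2\}\cap\bigcap_{u\in B(v,r_2)}\{R_{A_1}(u)\le r_1\},
\]
we have $R_A(v)\le r_1+r_2$, so it remains to bound $\mathbb{P}(\mathcal E^c)$ by $\eps$.

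For this I use a union bound. By the uniform‑complexity bound for $A_2$ with parameter $\eps/2$, the first conjunct fails with probability at most $\eps/2$. For the second, I use the standard estimate $|B(v,r_2)|\le \Delta^{r_2+1}$ valid in any $\Delta$-regular tree, together with the fact that for each individual vertex $u$ the uniform‑complexity bound for $A_1$ with parameter $\eps/(2\Delta^{r_2+1})$ yields $\mathbb{P}(R_{A_1}(u)>r_1)\le \eps/(2\Delta^{r_2+1})$. A union bound over the $\le\Delta^{r_2+1}$ vertices of $B(v,r_2)$ gives that the second conjunct fails with probability at most $\eps/2$. Adding the two, $\mathbb{P}(\mathcal E^c)\le\eps$, which is exactly the required bound. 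I expect no substantive obstacle; the only minor point of care is to interpret $R_{A_2}$ correctly in the composition, since $A_2$ is being fed the (non‑iid, spatially correlated) output of $A_1$ rather than fresh iid labels. This is resolved by viewing $A$ as one uniform algorithm drawing randomness from an independent pair of iid label fields—one consumed by $A_1$ and one used by $A_2$ on top of the intermediate labelling—so the coding‑radius bounds for $A_1$ and $A_2$ apply separately to disjoint sources of randomness and the union bound above goes through verbatim.
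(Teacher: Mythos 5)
Your proof is correct and follows essentially the same argument as the paper's: set $r_2 = t_{A_2}(\eps/2)$ and $r_1 = t_{A_1}(\eps/2 / \Delta^{r_2+1})$, union-bound the failure of the $A_1$ coding radius over the at most $\Delta^{r_2+1}$ vertices of $B(v,r_2)$, and intersect with the event that $A_2$'s coding radius at $v$ is at most $r_2$. The only difference is your closing remark about viewing $A$ as drawing from two independent iid label fields so that the two coding-radius bounds apply to disjoint randomness; this is a reasonable clarification of a point the paper leaves implicit, but it does not change the substance of the argument.
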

\begin{proof}
Consider some arbitrary vertex $u$. Let $E_1$ denote the event that the coding radius of $\mathcal{A}_1$ is at most $t_{A_1} \left( \frac{\eps/2}{\Delta^{t_{A_2}(\eps/2) + 1}}\right)$ for all vertices in the $t_{A_2}(\eps/2)$-hop neighborhood around $u$. As the $t_{A_2}(\eps/2)$-hop neighborhood around $u$ contains at most $\Delta^{t_{A_2}(\eps/2) + 1}$ vertices, a union bound implies that $P(E_1) \geq 1 - \eps/2$. Moreover, let $E_2$ denote the event that the coding radius of algorithm $\mathcal{A}_2$ at vertex $u$ is at most $t_{A_2}(\eps/2)$. By definition, $P(E_2) \geq 1 - \eps/2$. Moreover, if both events $E_1$ and $E_2$ occur, which happens with  probability at least $1 - \eps$, then the coding radius of algorithm $\mathcal{A}$ is at most $t_{A_1} \left( \frac{\eps/2}{\Delta^{t_{A_2}(\eps/2) + 1}}\right) + t_{A_2}(\eps/2)$, thus finishing the proof.
 \end{proof}

\subsection{Descriptive combinatorics}
\label{subsec:descriptivecombinatorics}

Before we define formally the descriptive combinatorics complexity classes, we give a high-level overview on their connection to distributing computing for the readers more familiar with the latter.

The complexity class that partially captures deterministic local complexity classes is called $\borel$ (see also Remark \ref{rem:cont}).
First note that by a result of Kechris, Solecki and Todor\v{c}evi\'c \cite{KST} the \emph{maximal independent set problem} (with any parameter $r\in \mathbb{N}$) is in this class for any bounded degree graph.\footnote{That is, it is possible to find a Borel maximal independent set, i.e., a maximal independent set which is, moreover, a Borel subset of the vertex set.}
In particular, this yields that $\borel$ contains the class $\local(O(\log^* n))$ by the characterization of \cite{chang_kopelowitz_pettie2019exp_separation}, see \cite{Bernshteyn2021LLL}. Moreover, as mentioned before, $\borel$ is closed under countably many iterations of the operations of finding maximal independent set (for some parameter that might grow) and of applying a constant local rule that takes into account what has been constructed previously.\footnote{It is in fact an open problem, whether this captures fully the class $\borel$. However, note that an affirmative answer to this question would yield that problems can be solved in an ``effective" manner in the Borel context, which is known not to be the case in unbounded degree graphs \cite{todorvcevic2021complexity}.}

To get a better grasp of what this means, consider for example the proper vertex $2$-coloring problem on half-lines. It is clear that no local algorithm can solve this problem.
However, as it is possible to determine the starting vertex after countably many iterations of the maximal independent set operation, we conclude that this problem is in the class $\borel$.
The idea that $\borel$ can compute some unbounded, global, information will be implicitly used in all the construction in \cref{sec:separating_examples} that separate $\borel$ from local classes.

The intuition behind the class $\measure$ is that it relates in the same way to the class $\borel$, as randomized local algorithms relate to deterministic ones.
In particular, the operations that are allowed in the class $\measure$ are the same as in the class $\borel$ but the solution of a given LCL can be incorrect on a measure zero set.

The class $\baire$ can be considered as a topological equivalent of the measure theoretic class $\measure$, that is, a solution can be incorrect on a topologically negligible set.
The main difference between the classes $\measure$ and $\baire$ is that in the later there is a hierarchical decomposition that is called \emph{toast}.
(Note that this phenomenon is present in the case of $\measure$ exactly on so-called amenable graphs. It is also tightly connected with the notion of hyperfiniteness \cite{conleymillerbound,gao2015forcing}.)
The independence of colorings on a tree together with this structure allows for a combinatorial characterization of the class $\baire$, which was proven by Bernshteyn \cite{Bernshteyn_work_in_progress}, see also \cref{sec:baire}.

Next we formulate the precise definitions.
We refer the reader to \cite{pikhurko2021descriptive_comb_survey,kechris_marks2016descriptive_comb_survey,Bernshteyn2021LLL,kechrisclassical}, or to \cite[Introduction,~Section~4.1]{grebik_rozhon2021LCL_on_paths} and \cite[Section~7.1,~7.2]{grebik_rozhon2021toasts_and_tails} for intuitive examples and standard facts of descriptive set theory.
In particular, we do not define here the notion standard Borel/probability space, a Polish topology, a Borel probability measure, Baire property etc.

Let $\fG$ be a Borel graph of bounded maximum degree on a standard Borel space $X$.
In this paper we consider exclusively acyclic $\Delta$-regular Borel graphs and we refer to them as \emph{$\Delta$-regular Borel forests}.
It is easy to see that the set of half-edges (see Definition \ref{def:half-edge}) is naturally a standard Borel space, we denote this set by $H(\fG)$.
Thus, it makes sense to consider Borel labelings of $H(\fG)$.
Moreover, if $\mathcal{G}$ is a $\Delta$-regular Borel forest and $\Pi=(\Sigma,\fV,\mathcal{E})$ is an LCL, we can also decide whether a coloring $f:\mathcal{H}(\fG)\to \Sigma$ is a solution to $\Pi$ as in Definition \ref{def:LCL_trees}. 
Similarly, we say that the coloring $f$ solves $\Pi$, e.g., on a $\mu$-conull set for some Borel probability measure $\mu$ on $X$ if there is a Borel set $C\subseteq X$ such that $\mu(C)=1$, the vertex constraints are satisfied around every $x\in C$ and the edge constraints are satisfied for every $x,y\in C$ that form an edge in $\fG$.

\begin{definition}[Descriptive classes]
\label{def:descriptive}
Let $\Pi=(\Sigma,\fV,\fE)$ be an LCL.
We say that $\Pi$ is in the class $\borel$ if for every acyclic $\Delta$-regular Borel graph $\fG$ on a standard Borel space $X$, there is a Borel function $f:H(\fG)\to \Sigma$ that is a $\Pi$-coloring of $\fG$.

We say that $\Pi$ is in the class $\baire$ if for every acyclic $\Delta$-regular Borel graph $\fG$ on a standard Borel space $X$ and every compatible Polish topology $\tau$ on $X$, there is a Borel function $f:H(\fG)\to \Sigma$ that is a $\Pi$-coloring of $\fG$ on a $\tau$-comeager set.

We say that $\Pi$ is in the class $\measure$ if for every acyclic $\Delta$-regular Borel graph $\fG$ on a standard Borel space $X$ and every Borel probability measure $\mu$ on $X$, there is a Borel function $f:H(\fG)\to \Sigma$ that is a $\Pi$-coloring of $\fG$ on a $\mu$-conull set.
\end{definition}

The following claim follows directly from the definition.

\begin{claim}\label{cl:basicDC}
We have $\borel\subseteq \measure, \baire$.
\end{claim}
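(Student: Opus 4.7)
The plan is to observe that this claim is essentially tautological once one unpacks the definitions, and to record the single one-line argument explicitly. The key point is that in both the $\measure$ and $\baire$ definitions, the coloring is only required to satisfy the constraints on a $\mu$-conull (resp.\ $\tau$-comeager) subset of $X$, whereas in the $\borel$ definition the coloring must satisfy the constraints everywhere. Since the whole space $X$ is trivially both $\mu$-conull and $\tau$-comeager, any Borel solution witnessing $\Pi\in\borel$ automatically witnesses $\Pi\in\measure$ and $\Pi\in\baire$.

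More concretely, I would proceed as follows. Fix $\Pi=(\Sigma,\fV,\fE)\in\borel$. To show $\Pi\in\measure$, let $\fG$ be an arbitrary acyclic $\Delta$-regular Borel graph on a standard Borel space $X$ and let $\mu$ be an arbitrary Borel probability measure on $X$. By the assumption $\Pi\in\borel$, applied to this $\fG$, there exists a Borel function $f\colon H(\fG)\to\Sigma$ that is a $\Pi$-coloring of $\fG$ (with the vertex and edge constraints satisfied at every vertex and every edge). Setting $C:=X$ we have a Borel set with $\mu(C)=1$ on which all constraints are satisfied, so $f$ witnesses $\Pi\in\measure$. The proof of $\Pi\in\baire$ is identical with $\mu$-conull replaced by $\tau$-comeager: the entire space $X$ is by definition $\tau$-comeager (its complement $\emptyset$ is meager), so the same $f$ works.

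Since no step in this argument requires anything beyond parsing Definition~\ref{def:descriptive}, there is no real obstacle; the only thing worth emphasizing in the write-up is that the Borel graph $\fG$ (and topology/measure) in the $\measure$ and $\baire$ definitions are arbitrary but the \emph{same} kind of object as in the $\borel$ definition, so the quantifier structures match and one may directly feed $\fG$ into the $\borel$ hypothesis.
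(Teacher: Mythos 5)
Your proof is correct and is exactly the argument the paper has in mind — the paper simply states that the claim ``follows directly from the definition'' and leaves the unpacking implicit, which is precisely what you carried out.
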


Recently Bernshteyn \cite{Bernshteyn2021LLL,Bernshteyn2021local=cont} proved several results that connect distributed computing with descriptive combinatorics.
Using the language of complexity classes we can formulate some of the result as inclusions in \cref{fig:big_picture_trees}.

\begin{theorem}[\cite{Bernshteyn2021LLL}]
We have
\begin{itemize}
    \item $\local(O(\log^*(n)))\subseteq \borel$,
    \item $\rlocal(o(\log(n))) \subseteq \measure, \baire$.
\end{itemize}
In fact, these inclusions hold for any reasonable class of bounded degree graphs.
\end{theorem}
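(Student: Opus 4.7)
The plan is to prove the two inclusions separately, each via the structural characterization of the corresponding distributed complexity class. For $\local(O(\log^* n)) \subseteq \borel$, the key tool is the speedup result of Chang--Kopelowitz--Pettie: any LCL admitting a deterministic $O(\log^* n)$-round algorithm is in fact solvable by (i)~computing a maximal independent set in some bounded power $\mathcal{G}^k$ of the input graph, followed by (ii)~applying a rule of constant radius. In the Borel setting, the Kechris--Solecki--Todor\v{c}evi\'c theorem guarantees the existence of a Borel MIS on any bounded-degree Borel graph. Since the $k$-th power $\mathcal{G}^k$ of a Borel forest still has bounded degree (at most $\Delta^{O(k)}$) and is itself Borel, a Borel MIS on $\mathcal{G}^k$ exists. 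Composing this Borel MIS with the constant-radius post-processing rule---which is automatically a Borel function since it only reads a uniformly bounded neighborhood---yields a Borel $\Pi$-coloring. Hence $\Pi\in\borel$.

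For the second inclusion, fix a randomized algorithm $\mathcal{A}$ of local complexity $t(n) = o(\log n)$ with global failure probability $1/n$. On an infinite bounded-degree Borel forest $\mathcal{G}$, I would equip each vertex with an iid uniform $[0,1]$ random label, playing the role of $\mathcal{A}$'s private random bits. For any fixed parameter $n$, the algorithm $\mathcal{A}_n$ becomes a Borel function of these labels with local dependency radius bounded by $t(n)$. The essential point is that for any single vertex $v$, the probability that the output $\mathcal{A}_n$ violates a $\Pi$-constraint at $v$ is at most $1/n \to 0$. Taking $n \to \infty$ and invoking a weak-compactness argument on the compact space $\Sigma^{H(\mathcal{G})}$ of output labelings (equipped with the product topology), one extracts a limiting distribution supported almost surely on correct $\Pi$-colorings. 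The measurable selection yields a Borel labeling that solves $\Pi$ on a $\mu$-conull set, proving $\Pi\in\measure$. For the $\baire$ case, one replaces ``measure-one'' by ``comeager'' throughout and uses that the set of iid label assignments on which $\mathcal{A}_n$ succeeds at a given vertex is simultaneously of full measure and comeager (being determined by a locally-finite condition on the labels); intersecting over all vertices and all parameters $n$ produces the required comeager set.

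The main obstacle I anticipate is making the limit argument fully rigorous in both regularity classes at once. One must argue that the $n$-indexed algorithms can be amalgamated into a single measurable (resp.\ Baire-measurable) function on $\mathcal{G}$ whose error set is genuinely null or meager, not merely one where each vertex fails with vanishing probability. This requires some care because $t(n) = o(\log n)$ is a sublogarithmic but still growing radius, so the naive diagonal extraction can destroy measurability. I expect Bernshteyn's argument to proceed by exploiting the local nature of the output rule together with a countable generating family for the Borel/Baire $\sigma$-algebra, so that the measurable/comeager structure is preserved under the limit. The final statement about ``any reasonable class of bounded degree graphs'' is an automatic byproduct, since none of the ingredients (KST, the speedup theorem, or the random-labeling argument) used the fact that $\mathcal{G}$ is a tree.
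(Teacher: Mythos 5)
Your argument for the first inclusion matches the sketch given in the paper's introduction and Bernshteyn's original proof: reduce a deterministic $O(\log^* n)$-round algorithm to a single maximal independent set computation on a bounded power of the graph followed by a constant-radius rule (the Chang--Kopelowitz--Pettie speedup), and realize the MIS Borel-measurably via Kechris--Solecki--Todor\v{c}evi\'c. That part is correct.

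The second inclusion is where there is a genuine gap. The compactness argument you describe extracts, at best, a shift-invariant probability measure on $\Sigma^{H(\fG)}$ supported on $\Pi$-colorings; this is evidence for $\fiid$, not for $\measure$. The definition of $\measure$ quantifies over \emph{every} acyclic $\Delta$-regular Borel graph $\fG$ and \emph{every} Borel probability measure $\mu$, and asks for a single Borel function $f:H(\fG)\to\Sigma$ correct on a $\mu$-conull set; a weak$^*$ limit of output distributions on the shift space does not produce such an $f$, and the ``measurable selection'' you invoke is exactly the missing step, not a routine one. Worse, the approach does not adapt to $\baire$ at all: comeagerness in that class is with respect to a Polish topology $\tau$ on the vertex set $X$, not on any space of random strings, so the sentence ``the set of iid label assignments on which $\mathcal{A}_n$ succeeds at $v$ is comeager'' is not even well-typed in that setting. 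The argument that actually works is via the Lov\'asz Local Lemma, and it needs no diagonal limit over $n$. Fix a \emph{single} sufficiently large $n$: the bad event $B_v$ that $\mathcal{A}_n$ violates a constraint at $v$ has $\Pr[B_v]\le 1/n$, while the dependency degree of these events is $\Delta^{O(t(n))}=n^{o(1)}$ because $t(n)=o(\log n)$, so the LLL criterion holds with polynomial slack. Bernshteyn's measurable and Baire-measurable LLL then \emph{deterministically} selects a Borel assignment of the private random labels avoiding every $B_v$ on a $\mu$-conull (resp.\ comeager) set, and postcomposing with the constant-radius output rule of $\mathcal{A}_n$ gives the required $\Pi$-coloring. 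The LLL step is precisely where the measurability and category structure survive, and it is the idea your proposal is missing.
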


\begin{remark}
\label{rem:cont}
The ``truly'' local class in descriptive combinatorics is the class $\CONT$.
Even though we do not define this class here\footnote{To define the class $\continuous$, rather than asking for a continuous solution on all possible $\Delta$-regular Borel graphs, one has to restrict to a smaller family of graphs, otherwise the class trivializes. To define precisely this family is somewhat inconvenient, and not necessary for our arguments.}, and we refer the reader to \cite{Bernshteyn2021local=cont,GJKS,grebik_rozhon2021toasts_and_tails} for the definition and discussions in various cases, we mention that the inclusion \ \[\local(O(\log^* n))\subseteq \CONT \tag{*}\] holds in most reasonable classes of bounded degree graphs, see \cite{Bernshteyn2021LLL}.
This also applies to our situation
Recently, it was shown by Bernshteyn \cite{Bernshteyn2021local=cont}, and independently by Seward \cite{Seward_personal}, that $(*)$ can be reversed for Cayley graphs of finitely generated groups.
This includes, e.g., natural $\Delta$-regular trees with proper edge $\Delta$-coloring, as this is a Cayely graph of free product of $\Delta$-many $\mathbb{Z}_2$ induced by the standard generating set.
It is, however, not clear whether it $(*)$ can be reversed in our situation, i.e., $\Delta$-regular trees without any additional labels.


\end{remark}

\subsection{Random processes}

We start with an intutitve description of fiid processes. Let $T_\Delta$ be the infinite $\Delta$-regular tree.
Informally, \emph{factors of iid processes (fiid)} on $T_\Delta$ are infinite analogues of local randomized algorithms in the following way.
Let $\Pi$ be an LCL and $u\in T_\Delta$.
In order to solve $\Pi$, we are allowed to explore the whole graph, and the random strings that are assigned to vertices, and then output a labeling of half-edges around $u$.
If such an assignment is a measurable function and produces a $\Pi$-coloring almost surely, then we say that $\Pi$ is in the class $\fiid$.
Moreover, if every vertex needs to explore only finite neighborhood to output a solution, then we say that $\Pi$ is in the class $\ffiid$.
Such processes are called \emph{finitary} fiid (\emph{ffiid}). There is also an intimate connection between ffiid and uniform algorithms.
This is explained in \cite[Section 2.2]{grebik_rozhon2021toasts_and_tails}.
Informally, an ffiid process that almost surely solves $\Pi$ is, in the language of distributed computing, an uniform local algorithm that solves $\Pi$.
This allows us to talk about uniform local complexity of an ffiid.
In the rest of the paper we interchange both notions freely with slight preference for the distributed computing language.

Now we define formally these classes, using the language of probability.
We denote by $\Aut(T_\Delta)$ the automorphism group of $T_\Delta$.
An \emph{iid process} on $T_\Delta$ is a collection of iid random variables $Y=\{Y_v\}_{v\in V(T_\Delta)}$ indexed by vertices, or edges, half-edges etc, of $T_\Delta$ such that their joint distribution is invariant under $\Aut(T_\Delta)$.
We say that $X$ is a \emph{factor of iid process (fiid)} if $X=F(Y)$, where $F$ is a measurable $\Aut(T_\Delta)$-invariant map and $Y$ is an iid process on $T_\Delta$.\footnote{We always assume $Y=(2^\mathbb{N})^{T_\Delta}$ endowed with the product Lebesgue measure.}
Moreover, we say that $X$ is a \emph{finitary factor if iid process (ffiid)} if $F$ depends with probability $1$ on a finite (but random) radius around each vertex.
We denote as $R_F$ the random variable that assigns minimal such radius to a given vertex, and call it the \emph{coding radius} of $F$.
We denote the space of all $\Pi$-colorings of $T_\Delta$ as $X_\Pi$.
This is a subspace of $\Sigma^{H(T_\Delta)}$ that is invariant under $\Aut(T_\Delta)$.

\begin{definition}
We say that an LCL $\Pi$ is in the class $\fiid$ ($\ffiid$) if there is an fiid (ffiid) process $X$ that almost surely produces elements of $X_\Pi$.

Equivalently, we can define $\ffiid = \bigcup_f \localo(f(\eps))$ where $f$ ranges over all functions. That is, $\ffiid$ is the class of problems solvable by any uniform distributed algorithm. 

\end{definition}


It is obvious that $\ffiid\subseteq \fiid$.
The natural connection between descriptive combinatorics and random processes is formulated by the inclusion $\measure\subseteq \fiid$.
While this inclusion is trivially satisfied, e.g., in the case of $\Delta$-regular trees with proper edge $\Delta$-coloring, in our situation we need a routine argument that we include for the sake of completeness in \cref{app:ktulu}.

\begin{restatable}{lemma}{FIIDinMEASURE}\label{thm:FIIDinMEASURE}
Let $\Pi$ be an LCL such that $\Pi\in \measure$.
Then $\Pi\in \fiid$.
\end{restatable}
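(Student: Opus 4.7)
The plan is to realize the iid process on $T_\Delta$ as a Borel probability measure on a standard Borel space carrying a $\Delta$-regular acyclic Borel graph, apply the $\measure$ hypothesis there, and then pull the resulting $\Pi$-coloring back to $T_\Delta$ as an fiid. Concretely, I would introduce the space $Y$ of isomorphism classes $[T,v,\omega]$ of rooted labeled $\Delta$-regular trees (where $T\cong T_\Delta$, $v\in V(T)$, and $\omega:V(T)\to 2^{\mathbb N}$, with equivalence being root- and label-preserving isomorphism); a canonical BFS encoding shows $Y$ is a standard Borel space. I equip $Y$ with the Borel graph $\fG$ whose edges are ``root-shifts'' $[T,v,\omega]\sim_{\fG}[T,v',\omega]$ with $v,v'$ neighbors in $T$. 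This $\fG$ is $\Delta$-regular and acyclic because the connected component of $[T,v,\omega]$ is $\{[T,v',\omega]:v'\in V(T)\}$, which as a graph is isomorphic to $T\cong T_\Delta$. Let $X:=(2^{\mathbb N})^{V(T_\Delta)}$ carry the product Lebesgue measure $\nu$, and define $\mu$ on $Y$ as the pushforward of $\nu$ under $\omega\mapsto [T_\Delta,v_0,\omega]$ for a fixed reference vertex $v_0\in V(T_\Delta)$; vertex-transitivity of $\Aut(T_\Delta)$ makes $\mu$ independent of $v_0$.

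Next, I would apply the hypothesis $\Pi\in\measure$ to $(\fG,\mu)$ to obtain a Borel $f:H(\fG)\to\Sigma$ solving $\Pi$ on a $\mu$-conull set $A\subseteq Y$, and define the candidate fiid $F:X\to\Sigma^{H(T_\Delta)}$ by
\[
F(\omega)(v,e)\;:=\;f\bigl([T_\Delta,v,\omega],\,\tilde e\bigr),
\]
where $\tilde e$ is the $\fG$-half-edge at $[T_\Delta,v,\omega]$ corresponding to $e$. Borel-measurability of $F$ is inherited from $f$. The $\Aut(T_\Delta)$-invariance of the law of $F$ comes from the identity $(T_\Delta,v,\omega\circ\phi^{-1})\cong(T_\Delta,\phi^{-1}(v),\omega)$ in $Y$, witnessed by the isomorphism $\phi^{-1}$, for every $\phi\in\Aut(T_\Delta)$, so $f$ returns identical values on the corresponding half-edges. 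Almost-sure correctness reduces, via this invariance and vertex-transitivity, to correctness at the single vertex $v_0$, which is exactly the statement that $\omega\mapsto[T_\Delta,v_0,\omega]$ pulls $A$ back to a $\nu$-conull subset of $X$; a countable union over vertices and edges of $T_\Delta$ then yields the global $\Pi$-coloring statement.

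The hard part will be the purely measure-theoretic bookkeeping: verifying that $Y$ is genuinely a standard Borel space, that $\fG$ is a Borel subset of $Y\times Y$, and that $\omega\mapsto[T_\Delta,v_0,\omega]$ is Borel with the intended pushforward. One small side issue is that a rooted labeled tree with a nontrivial root-fixing automorphism would make this pullback many-to-one; it is handled by discarding the null set of $\omega$'s whose iid $2^{\mathbb N}$-valued labeling admits any non-identity automorphism, using that such continuous-type labels almost surely break all symmetries of $T_\Delta$.
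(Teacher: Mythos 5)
Your proposal is correct and is essentially the same construction as the paper's: your space $Y$ of isomorphism classes $[T,v,\omega]$ is precisely the paper's quotient $(2^{\mathbb N})^{T_\Delta}/\Aut^*(T_\Delta)$ by the compact root-stabilizer, your root-shift graph is the paper's $\fT$, your pushforward measure is $\lambda/\Aut^*(T_\Delta)$, and the pullback through $\omega\mapsto[T_\Delta,v,\omega]$ is the paper's composition with the quotient map. Your closing remark about discarding the null set of $\omega$'s with nontrivial automorphisms is a useful elaboration of the step the paper compresses into ``a standard argument shows.''
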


\subsection{Specific Local Problems}

Here we list some well-known local problems that were studied in all three considered areas. We describe the results known about these problems with respect to classes from \cref{fig:big_picture_trees}. 

\paragraph{Edge Grabbing}
We start by recalling the well-known problem of edge grabbing (a close variant of the problem is known as sinkless orientation\cite{brandt_etal2016LLL}). 
In this problem, every vertex should mark one of its half-edges (that is, grab an edge) in such a way that no edge can be marked by two vertices. 
It is known that $\edgegrab  \not\in \local(O(\log^* n))$ \cite{brandt_etal2016LLL} but $\edgegrab \in \rlocal(O(\log\log n))$.

Similarly, $\edgegrab \not \in \borel$ by \cite{DetMarks}, but $\edgegrab\in \measure$ by \cite{BrooksMeas}: to see the former, just note that if a $\Delta$-regular tree admits proper $\Delta$-colorings of both edges and vertices, every vertex can grab an edge of the same color as the color of the vertex. Thus, $\edgegrab\in \borel$ would yield that $\Delta$-regular Borel forests with Borel proper edge-colorings admit a Borel proper $\Delta$-coloring, contradicting \cite{DetMarks}.

This completes the complexity characterization of $\edgegrab$ as well as the proper vertex $\Delta$-coloring with respect to classes in \cref{fig:big_picture_trees}.

\paragraph{Perfect Matching}
Another notorious LCL problem, whose position in \cref{fig:big_picture_trees} is, however, not completely understood, is the perfect matching problem $\perfmatch$.
Recall that the perfect matching problem $\perfmatch$ asks for a matching that covers all vertices of the input tree.\footnote{In our formalism this means that around each vertex exactly one half-edge is picked.}
It is known that $\perfmatch \in \fiid$ \cite{lyons2011perfect}, and it is easy to see that $\perfmatch \not\in \rlocal(O(\log\log(n)))$ (we will prove a stronger result in \cref{pr:AddingLineLLL}).
Marks proved \cite{DetMarks} that it is not in $\borel$, even when the underlying tree admits a Borel bipartition.
It is not clear if $\perfmatch$ is in $\ffiid$, nor whether it is in $\measure$.

\paragraph{Graph Homomorphism}
We end our discussion with LCLs that correspond to graph homomorphisms (see also the discussion in Section \ref{sec:marks}).
These are also known as edge constraint LCLs.
Let $G$ be a finite graph.
Then we define $\Pi_G$ to be the LCL that asks for a homomorphism from the input tree to $G$, that is, vertices are labeled with vertices of $G$ in such a way that edge relations are preserved.
There are not many positive results except for the class $\baire$.
It follows from the result of Bernshteyn \cite{Bernshteyn_work_in_progress} (see \cref{sec:baire}) that $\Pi_G\in \baire$ if and only if $G$ is not bipartite.
The main negative results can be summarized as follows.
An easy consequence of the result of Marks \cite{DetMarks} is that if $\chi(G)\le \Delta$, then $\Pi_G\not \in \borel$.
In this paper, we describe examples of graphs of chromatic number up to $2\Delta-2$ such that the corresponding homomorphism problem is not in $\borel$, see \cref{sec:marks}.
The theory of \emph{entropy inequalities} see \cite{backhausz} implies that if $G$ is a cycle on more than $9$ vertices, then $\Pi_G\not\in \fiid$.

\section{Generalization of Marks' technique}
\label{sec:marks}

In this section, we first develop a new way of proving lower bounds in the $\local$ model based on a generalization of a technique of Marks \cite{DetMarks}. Then, we use ideas arising in the finitary setting---connected to the adaptation of Marks' technique---to obtain new results back in the Borel context.
For an introduction to Marks' technique and a high-level discussion about the challenges in adapting the technique to the standard distributed setting as well as our solution via the new notion of an ID graph, we refer the reader to \cref{sec:intromarks}.

The setting we consider in this section is $\Delta$-regular trees that come with a proper $\Delta$-edge coloring with colors from $[\Delta]$.
All lower bounds discussed already hold under these restrictions (and therefore also in any more general setting).

Recall that an LCL $\Pi=(\Sigma,\fV,\fE)$ is given by specifying a list of allowed vertex configurations and a list of allowed edge configurations (see Definition \ref{def:LCL_trees}).
To make our lower bounds as widely applicable as possible, we replace the latter list by a separate list for each of the $\Delta$ edge colors; in other words, we consider LCLs where the correctness of the output is allowed to depend on the input that is provided by the edge colors.
Hence, formally, in this section, an LCL is more generally defined: it is a tuple $\Pi=(\Sigma,\fV,\fE)$ where $\Sigma$ and $\fV$ are as before, while $\fE = (\fE_{\alpha})_{\alpha \in [\Delta]}$ is now a $\Delta$-tuple of sets $\fE_{\alpha}$ consisting of cardinality-$2$ multisets.
Similarly as before, a half-edge labeling (see Definition \ref{def:descriptive} for the Borel context) with labels from $\Sigma$ is a correct solution for $\Pi$ if, for each vertex $v$, the multiset of labels assigned to the half-edges incident to $v$ is contained in $\fV$, and, for each edge $e$, the multiset of labels assigned to the half-edges belonging to $e$ is contained in $\fE_{\alpha}$, where $\alpha$ is the color of the edge $e$.

The idea of our approach is to identify a condition that an LCL $\Pi$ necessarily has to satisfy if it is solvable in $O(\log^* n)$ rounds in the $\local$ model.
Showing that $\Pi$ does not satisfy this condition then immediately implies an $\Omega(\log n)$ deterministic and $\Omega(\log \log n)$ randomized lower bound, by Theorem~\ref{thm:basicLOCAL}.

In order to define our condition we need to introduce the notion of a configuration graph: a \emph{configuration graph} is a $\Delta$-tuple $\mathbb{P}=(\mathbb{P}_\alpha)_{\alpha\in \Delta}$ of graphs, where the vertex set of each of the graphs $\mathbb{P}_\alpha$ is the set of subsets of $\Sigma$, and there is an edge in $\mathbb{P}_\alpha$ connecting two vertices $S,T$ if and only if there are $\ta\in S$ and $\tb\in T$ such that $\{\ta,\tb\}\in {\fE}_\alpha$, note that loops are allowed.
(Naturally, we will consider any two vertices of different $\mathbb{P}_\alpha$ to be distinct, even if they correspond to the same subset of $\Sigma$.)

Now we are set to define the aforementioned condition. The intuition behind the playability condition is the following: assume that there exists a local algorithm $\mathcal{A}$ that solves $\Pi$ using the $t$ neighbourhood of a given vertex. We are going to define a family of two player games. The game will depend on some $S \subseteq \Sigma$. $\PI$ and $\PK$  will assign labels (or IDs) to the vertices in the $t$-neighbourhood (in some way specified later on, depending on $\alpha \in [\Delta]$). When the assignment is complete, we evaluate $\mathcal{A}$ on the root of the obtained labelled graph, this way obtaining an element $s$ of $\Sigma$, and decide who is the winner based on $s 
\in S$ or not. Naturally, it depends on $S$ and $\alpha$, which player has a winning strategy. This gives rise to a two coloring of vertices of $\mathbb{P}_\alpha$  by colors $\PI$ and $\PK$. The failure of the playability condition will guarantee that using a strategy stealing argument one can derive a contradiction.

\begin{definition}[Playability condition]\label{def:Playability}
We say that an LCL $\Pi$ is \emph{playable} if for every $\alpha\in [\Delta]$ there is a coloring $\Lambda_{\alpha}$ of the vertices of $\mathbb{P}_{\alpha}$ with two colors $\{\PI,\PK\}$ such that the following conditions are satisfied:
\begin{enumerate}
    \item [(A)] For any tuple $(S_{\alpha})_{\alpha \in [\Delta]} \in V(\mP_1) \times \dots \times V(\mP_{\Delta})$ satisfying $\Lambda_\alpha(S_\alpha)=\PI$ for each $\alpha \in [\Delta]$, there exist $\ta_\alpha\not\in S_\alpha$ such that $\{\ta_\alpha\}_{\alpha\in \Delta}\in {\fV}$, and
    \item [(B)] for any $\alpha \in [\Delta]$, and any tuple $(S,T) \in V(\mP_{\alpha}) \times V(\mP_{\alpha})$ satisfying $\Lambda_\alpha(S)=\Lambda_\alpha(T)=\PK$, we have that $(S,T)$ is an edge of $\mathbb{P}_{\alpha}$.
\end{enumerate}

\end{definition}

Our aim in this section is to show the following general results.

\begin{theorem}\label{th:MainLOCAL}
Let $\Pi$ be an LCL that is not playable.
Then $\Pi$ is not in the class $\local(O(\log^* n))$.
\end{theorem}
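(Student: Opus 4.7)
The plan is to argue by contradiction. Assume $\Pi\in\local(O(\log^* n))$; by \cref{thm:basicLOCAL} it suffices to fix a deterministic algorithm $\fA$ with local complexity $t(n)=O(\log^* n)$. The goal is to extract from $\fA$ a coloring $\Lambda=(\Lambda_\alpha)_\alpha$ of the configuration graphs satisfying both (A) and (B), contradicting non-playability.

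\emph{The ID graph.} First, I would construct an edge-colored graph $H$ on $N=n^{O(1)}$ vertices whose edges are partitioned as $E(H)=\bigsqcup_{\alpha\in[\Delta]}E_\alpha$, satisfying: (i) every vertex is incident to at least one edge of each color; (ii) the ``colored girth''---the length of any non-backtracking closed walk whose consecutive edges lie in different $E_\alpha$'s---exceeds $4t(n)+O(1)$; and (iii) for every $\alpha\in[\Delta]$ and every $R\subseteq[N]$ with $|R|\ge N/K$, where $K:=2^{\Delta\cdot 2^{|\Sigma|}}$, the subgraph $H_\alpha[R]$ contains at least one edge. Such an $H$ exists for $n$ large via a standard probabilistic construction using $\Delta$ independent random $d$-regular graphs with $d=d(\Pi)$ a sufficiently large constant. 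The point of (ii) is that because the input tree is properly $\Delta$-edge-colored, any tree path translates to a non-backtracking walk in $H$; hence any two vertices within distance $O(t(n))$ in the tree must carry distinct identifiers under any labeling respecting $H$.

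\emph{Extracting the coloring.} For each triple $(r,\alpha,S)\in[N]\times[\Delta]\times\mathcal{P}(\Sigma)$, consider the following finite two-player perfect-information game $G_{r,\alpha,S}$. Fix a properly $\Delta$-edge-colored rooted tree of depth $t(n)$, with root $u$ pre-assigned identifier $r$, and let $e_\alpha$ be the half-edge at $u$ of color $\alpha$. In round $i=1,\dots,t(n)$, Alice labels the vertices at distance $i$ from $u$ in the subtree through $e_\alpha$ with identifiers from $[N]$, while Bob labels those at distance $i$ in the other subtrees; a new identifier $j$ placed at a vertex $v$ with parent of identifier $i'$ and connecting edge color $\beta$ must satisfy $i'j\in E_\beta$. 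After round $t(n)$ the $t(n)$-hop neighborhood of $u$ is fully labeled, $\fA$ returns an output, and we let $\sigma\in\Sigma$ be its label on $(u,e_\alpha)$. Bob wins iff $\sigma\in S$. Being a finite game of perfect information, $G_{r,\alpha,S}$ is determined; set $\Lambda_{r,\alpha}(S)=\PI$ if Alice wins and $\PK$ otherwise. There are only $K$ possible tuples $(\Lambda_{r,\alpha})_\alpha$, so pigeonhole yields a tuple $\Lambda^*$ and a set $R^*\subseteq[N]$ of size $\ge N/K$ on which $\Lambda_{r,\cdot}\equiv\Lambda^*$. Because $\Pi$ is not playable, $\Lambda^*$ must violate either (A) or (B).

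\emph{Deriving the contradiction.} Suppose first that (A) fails, witnessed by $(S_\alpha)_\alpha$ with $\Lambda^*_\alpha(S_\alpha)=\PI$ for every $\alpha$ and no $(\ta_\alpha)_\alpha\in\fV$ satisfying $\ta_\alpha\notin S_\alpha$. Pick any $r\in R^*$, form a $\Delta$-regular tree rooted at $u$ with $\mathrm{id}(u)=r$, and run the $\Delta$ Alice-winning strategies for $\{G_{r,\alpha,S_\alpha}\}_\alpha$ in parallel on the $\Delta$ subtrees at $u$: each Alice labels her own subtree, interpreting the identifiers placed by the other $\Delta-1$ Alices as ``Bob's moves'' in her game. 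Property (ii) makes the composite identifier assignment globally injective on $B(u,t(n))$, so it is a valid input to $\fA$; since each Alice wins, $\fA$ outputs $\ta_\alpha\notin S_\alpha$ on each $(u,e_\alpha)$, and correctness of $\fA$ forces $\{\ta_\alpha\}_\alpha\in\fV$---contradicting the failure of (A). Suppose instead that (B) fails, witnessed by $\alpha, S, T$. By (iii) there exist $r_1,r_2\in R^*$ with $r_1 r_2\in E_\alpha$; build a tree with a distinguished color-$\alpha$ edge between $u_1,u_2$ carrying identifiers $r_1,r_2$ and run the Bob-winning strategies for $G_{r_1,\alpha,S}$ and $G_{r_2,\alpha,T}$ in parallel, each Bob interpreting the other Bob's labels as Alice's moves; the simulated first Alice move on each side is precisely the identifier of the opposite root, legal because $r_1 r_2\in E_\alpha$. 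Property (ii) again gives identifier uniqueness on $B(u_1,t(n))\cup B(u_2,t(n))$; evaluating $\fA$ at $u_1$ and $u_2$ produces labels $\ta\in S$ and $\tb\in T$ on the two half-edges of the distinguished edge, and correctness forces $\{\ta,\tb\}\in\fE_\alpha$---contradicting the failure of (B). The main obstacle will be the simultaneous realization of properties (ii) and (iii) in $H$ together with the careful bookkeeping that the gluing of strategies yields a syntactically valid, uniquely-identified input to $\fA$; this is exactly the content of the ID-graph technique and is what allows Marks' determinacy argument to be transported from the Borel setting to the finitary distributed one with unique identifiers.
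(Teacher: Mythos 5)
Your proposal is correct and follows essentially the same route as the paper's proof: construct an ID graph with high girth and small per-color independence ratio, define the same family of finite determined games at the root of a $t$-hop tree, use the pigeonhole principle to find many identifiers with a common coloring tuple $(\Lambda^\sigma_\alpha)_\alpha$, glue Alice's winning strategies to establish condition (A), and glue Bob's winning strategies across an $\alpha$-edge of the ID graph to refute condition (B). The only cosmetic difference is that you present the argument as a case split on which of (A)/(B) fails, whereas the paper first proves (A) always holds (Proposition~\ref{pr:(A)satisfied}) and then refutes (B); your "colored-girth" requirement in place of ordinary girth and the constant $K=2^{\Delta\cdot 2^{|\Sigma|}}$ match the paper's $N=2^{\sum_\alpha|V(\mathbb{P}_\alpha)|}$.
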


Using ideas from the proof of this result, we can formulate an analogous theorem in the Borel context.
Let us mention that while \cref{th:MainLOCAL} is a consequence of \cref{thm:MainBorel} by \cite{Bernshteyn2021LLL}, we prefer to state the theorems separately.
This is because the proof of the Borel version of the theorem uses heavily complex results such as the Borel determinacy theorem, theory of local-global limits and some set-theoretical considerations about measurable sets.
This is in a stark contrast with the proof of \cref{th:MainLOCAL} that uses `merely' the existence of large girth graphs and determinacy of finite games.
However, the ideas surrounding these concepts are the same in both cases.

\begin{theorem}\label{thm:MainBorel}
Let $\Pi$ be an LCL that is not playable.
Then $\Pi$ is not in the class $\borel$.
\end{theorem}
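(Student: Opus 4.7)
I would argue by contradiction, assuming $\Pi$ is not playable yet $\Pi\in\borel$. Following Marks~\cite{DetMarks}, take the Schreier graph $\fG$ of the shift action of $\Gamma=\mathbb{Z}/2\ast\cdots\ast\mathbb{Z}/2$ ($\Delta$ factors) on the free part of $2^{\Gamma}$. This is a $\Delta$-regular Borel forest with a canonical Borel proper $\Delta$-edge colouring, so the hypothesis $\Pi\in\borel$ furnishes a Borel $\Pi$-coloring $f\colon H(\fG)\to\Sigma$. The strategy is to use $f$ together with Martin's Borel determinacy theorem~\cite{martin} to manufacture an Alice/Bob colouring $\Lambda=(\Lambda_\alpha)_{\alpha\in[\Delta]}$ witnessing playability of $\Pi$, contradicting the assumption.

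\textbf{The games.} For each edge colour $\alpha\in[\Delta]$, each subset $S\subseteq\Sigma$, and each base point $x\in X$, I would define the infinite two-player game $G^x_{\alpha,S}$ in direct analogy with the distributed game from \cref{sec:intromarks}: $\PI$ labels, layer by layer, the vertices lying in the subtree of $\fG$ reachable from $x$ via the $\alpha$-coloured edge at $x$, while $\PK$ simultaneously labels the remaining $\Delta-1$ subtrees hanging off $x$. Crucially, the moves take values not in a finite identifier set but in an \emph{infinite} ID graph; this is the Borel analogue of the ID-graph trick, ensuring that strategies designed in different games can later be glued into a single legal play. Bob wins $G^x_{\alpha,S}$ iff $f$ sends the half-edge at $x$ of colour $\alpha$ into $S$. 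Since $f$ is Borel, the payoff set is Borel, so by Martin's theorem the game is determined. Using the homogeneity of the shift action together with a standard $0$--$1$ argument (the set of $x$ for which $\PI$ wins is $\Gamma$-invariant and Borel, hence comeager or meager in each orbit closure), the identity of the winner depends only on $(\alpha,S)$; call it $\Lambda_\alpha(S)$.

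\textbf{Contradiction via (A) and (B).} Since $\Pi$ is not playable, $\Lambda$ must violate (A) or (B). Suppose (A) fails: there exist $(S_\alpha)_\alpha$ with $\Lambda_\alpha(S_\alpha)=\PI$ for all $\alpha$, yet no tuple $(\ta_\alpha)_\alpha$ with $\ta_\alpha\notin S_\alpha$ lies in $\fV$. Fix any $x$ and launch $\Delta$ copies of $\PI$, one per subtree of $x$, each running her winning strategy; feed to each copy, as her opponent's moves, the moves produced by the other $\Delta-1$ copies of $\PI$. The infinite-ID-graph conditions guarantee that the composed labelling is legal in every single game, so reading $f$ at the $\Delta$ half-edges at $x$ yields a tuple $(\ta_\alpha)_\alpha$ with $\ta_\alpha\notin S_\alpha$ that nevertheless must belong to $\fV$ (as $f$ is a $\Pi$-coloring), contradicting the hypothesised failure of (A). Suppose instead (B) fails at colour $\alpha$ for some $S,T$: pick any $\alpha$-coloured edge $\{u,v\}$ and pit a winning $\PK$-strategy for $G^u_{\alpha,S}$ against a winning $\PK$-strategy for $G^v_{\alpha,T}$, each player treating the opponent's moves as the adversary's; the resulting $f$-labels at the two half-edges of $\{u,v\}$ lie in $S$ and $T$ respectively, contradicting that no such pair is admissible in $\fE_\alpha$.

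\textbf{Main obstacle.} The delicate step is the design of the infinite ID graph and the verification that strategies can be composed. In the finite distributed setting of \cref{sec:intromarks}, girth and edge-colour consistency of the ID graph ensured that interleaving moves of several copies of the same player produces a globally legal input; in the Borel setting one needs an infinite ID graph rich enough to accommodate arbitrary Borel winning strategies (which may exploit global information) while still forbidding any accidental clash between copies. A secondary technical point is the upgrade from local determinacy to a single $\Lambda$ independent of the base point $x$, which is where the freeness and homogeneity of the $\Gamma$-action, combined with Borel measurability of the ``$\PI$-wins'' set, are essential.
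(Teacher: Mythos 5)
Your high-level plan matches the shape of the paper's argument -- set up Marks-style games, use Borel determinacy, and derive a contradiction from the failure of (A) or (B) via composing winning strategies -- but three concrete pieces do not cohere, and the paper resolves them in a genuinely different way.

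First, your winning condition is not a function of the play. You fix a base point $x$ in the Schreier graph, let the moves range over an (unspecified) infinite ID graph, and then declare that Bob wins iff $f$ sends the $\alpha$-half-edge at $x$ into $S$; but $f$ is a fixed Borel map on half-edges of the Schreier graph, so this value is determined before any move is made. The underlying confusion is a type mismatch: the game produces a labeling of a rooted $\Delta$-regular tree by ID-graph vertices, whereas $f$ is defined on $H(\fG)$ for the Schreier graph $\fG$, and nothing connects the two. In Marks' original argument the moves themselves are entries of a point of $2^\Gamma$, so the play literally builds a point at which $f$ is then evaluated; once you decouple the moves from the alphabet of the shift, you lose that bridge. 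The paper closes this gap by abandoning the Schreier graph entirely: it builds a bespoke Borel $\Delta$-regular acyclic graph whose vertices are pairs $(x,A)$ with $x$ a point of the ID graphing $\idgraphingcolored{r}$ and $A$ a properly edge-colored $\Delta$-regular subtree of $\idgraphing{r}$ through $x$. The game's moves build exactly such a tree $A$, so each completed play determines a vertex $(x,A)$ of $\fG$, and the Borel solution is evaluated there. That is the precise sense in which the ID graphing replaces the Schreier graph rather than supplementing it.

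Second, your ``homogeneity plus $0$--$1$ law'' step to make $\Lambda$ independent of $x$ does not go through. The set of $x$ at which $\PI$ wins $G^x_{\alpha,S}$ is not $\Gamma$-invariant: the game at $x$ and at $\gamma\cdot x$ evaluate the Borel solution at different vertices, and a Borel solution need not be equivariant, so there is no reason the winner is preserved under the action. The paper does not attempt this; instead it proves that the map $x\mapsto(\Lambda^x_\alpha)_{\alpha}$ is \emph{measurable} by observing that the sets $\{x: \PI \text{ wins}\}$ lie in the algebra generated by game quantifiers applied to Borel sets, which are provably $\Delta^1_2$ and hence measurable by Solovay and Fenstad--Norman. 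Since the map has finite range ($\le N=2^{\sum_\alpha |V(\mathbb{P}_\alpha)|}$ values), pigeonhole gives a set $Y$ of $\mu$-measure greater than $r$ on which it is constant, where $r$ was chosen in advance to be $1/(N+1)$. Then the crucial quantitative property of the ID graphing -- that every $\alpha$-monochromatic independent set has measure at most $r$ -- forces $Y$ to contain two vertices spanning an $\alpha$-edge, which is exactly what the two-Bob composition needs. This is a measure-theoretic pigeonhole argument, not a $0$--$1$ law, and the small-independence-ratio property of the ID graphing is doing the work you hoped invariance would do.

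Third, and most importantly, you flag the design of the infinite ID graph as the ``delicate step'' and the place where strategies must compose legally, but then you leave it entirely unspecified, while simultaneously keeping the Schreier graph. That is precisely the gap. The paper's construction of $\idgraphingcolored{r}$ (as a local-global limit of the finite ID graphs), and the construction of $\fG$ from it, make legality of composed plays automatic: because $\idgraphing{r}$ is acyclic, any two strategies pitched against one another produce a tree $A$ that is again a legal subgraph of the ID graphing, and hence a legitimate vertex of $\fG$. In your version, one would have to argue that the composed ID-graph labels correspond to a legal point of the free part of $2^\Gamma$, which is exactly the ``non-free'' obstruction Marks had to handle with ad hoc tricks -- the thing the ID-graph technique is designed to sidestep.
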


The main application of these abstract results is to find lower bounds for \emph{(graph) homomorphism LCLs} aka \emph{edge constraint LCLs}.
We already defined this class in \cref{subsec:local_problems} but we recall the definition in the formalism of this section to avoid any confusion.
Let $G$ be a finite graph.
Then $\Pi_G=(\Sigma,\fV,\mathcal{E})$ is defined by letting
\begin{enumerate}
\item \label{p:vertex} ${\fV}=\{({\ta},\dots,{\ta}):\ta\in \Sigma\}$,
\item $\Sigma=V(G)$,
\item $\forall \alpha \in [\Delta] \ (\mathcal{E}_\alpha=E(G)).$
\end{enumerate}
An LCL for which \eqref{p:vertex} holds is called a \emph{vertex-LCL}. There is a one-to-one correspondence between vertex-LCLs for which $\forall 
\alpha,\beta \ (\mathcal{E}_\alpha=\mathcal{E}_\beta)$ and LCLs of the form $\Pi_G$. (Indeed, to vertex-LCLs with this property one can associate a graph $G$ whose vertices are the labels in $\Sigma$ and where two vertices $\ell, \ell'$ are connected by an edge if and only if $\{ \ell, \ell' \} \in \fE$.)
Note that if $\Pi$ is a vertex-LCL, then condition (A) in Definition~\ref{def:Playability} is equivalent to the statement that no tuple $\{S_\alpha\}_{\alpha\in \Delta}$ that satisfies the assumption of (A) can cover $\Sigma$, i.e., that $\Sigma \nsubseteq S_1 \cup \dots \cup S_{\Delta}$ for such a tuple.
Moreover, if $\Pi_G$ is a homomorphism LCL, then $\mathbb{P}_\alpha=\mathbb{P}_{\beta}$ for every $\alpha,\beta\in \Delta$.

The simplest example of a graph homomorphism LCL is given by setting $G$ to be the clique on $k$ vertices; the associated LCL $\Pi_G$ is simply the problem of finding a proper $k$-coloring of the vertices of the input graph.
Now we can easily derive Marks' original result from our general theorem.

\begin{corollary}[Marks \cite{DetMarks}]\label{cor:detmarks}
Let $G$ be a finite graph that has chromatic number at most $\Delta$. Then $\Pi_G$ is not playable. In particular, there is a $\Delta$-regular Borel forest that have Borel chromatic number $\Delta+1$. 
\end{corollary}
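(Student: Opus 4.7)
The plan is to unpack the playability condition for the homomorphism LCL $\Pi_G$ and show that a proper $\Delta$-coloring $c \colon V(G) \to [\Delta]$ of $G$ directly obstructs the existence of any playability colorings $(\Lambda_\alpha)_{\alpha \in [\Delta]}$; the Borel statement then follows immediately from \cref{thm:MainBorel}. Recall that for the homomorphism LCL $\Pi_G$ the label set is $\Sigma = V(G)$, the vertex constraint consists only of constant multisets $(\ta,\ldots,\ta)$, and the edge constraint on every color $\alpha$ is $E(G)$. Consequently, condition (A) in \cref{def:Playability} specializes, for a tuple $(S_\alpha)_{\alpha\in[\Delta]}$ with all $\Lambda_\alpha(S_\alpha)=\PI$, to the existence of a single $\ta\in\Sigma$ with $\ta\notin S_\alpha$ for every $\alpha$; equivalently, $\Sigma\not\subseteq S_1\cup\cdots\cup S_\Delta$. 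Condition (B) in turn asserts that any two Bob-labeled sets $S,T\in V(\mathbb{P}_\alpha)$ must be joined by an edge of $\mathbb{P}_\alpha$, which, unwrapping the definition of the configuration graph for $\Pi_G$, means there exist $a\in S$ and $b\in T$ with $\{a,b\}\in E(G)$.

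Assume for contradiction that $\Pi_G$ is playable, and let $(\Lambda_\alpha)_{\alpha\in[\Delta]}$ be witnessing colorings. Fix a proper $\Delta$-coloring $c\colon V(G)\to[\Delta]$ (possibly with empty color classes) and set $C_\alpha := c^{-1}(\alpha)\subseteq\Sigma$, so that the sets $C_1,\ldots,C_\Delta$ partition $\Sigma$ into independent sets of $G$. I then perform a simple case analysis on the values $\Lambda_\alpha(C_\alpha)$.

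If $\Lambda_\alpha(C_\alpha)=\PI$ for every $\alpha\in[\Delta]$, then condition (A) applied to the tuple $(C_\alpha)_{\alpha\in[\Delta]}$ produces some $\ta\in\Sigma\setminus\bigcup_{\alpha} C_\alpha$; but the color classes cover $\Sigma$, which is a contradiction. Otherwise, there exists some $\alpha\in[\Delta]$ with $\Lambda_\alpha(C_\alpha)=\PK$. Applying condition (B) to the pair $(S,T)=(C_\alpha,C_\alpha)$, we conclude that $(C_\alpha,C_\alpha)$ is an edge of $\mathbb{P}_\alpha$, i.e., there exist $a,b\in C_\alpha$ with $\{a,b\}\in E(G)$; this contradicts the fact that $C_\alpha$ is an independent set in $G$. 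Either way we reach a contradiction, so $\Pi_G$ is not playable.

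For the second assertion, \cref{thm:MainBorel} now yields a $\Delta$-regular Borel forest on which $\Pi_G$ admits no Borel solution. Specializing to $G=K_\Delta$ (the clique on $\Delta$ vertices, which is $\Delta$-chromatic), a Borel solution to $\Pi_{K_\Delta}$ is exactly a Borel proper $\Delta$-coloring of the forest; its nonexistence, together with the trivial Borel $(\Delta+1)$-coloring obtained from the greedy algorithm after taking countably many maximal independent sets (cf.\ \cite{KST}), shows that this Borel forest has Borel chromatic number exactly $\Delta+1$. The only place where real work happens is in \cref{thm:MainBorel}; the corollary itself is a purely combinatorial extraction, and the only minor subtlety is recognizing that the constant-tuple structure of $\fV$ forces condition (A) to reduce to a covering statement for $\Sigma$.
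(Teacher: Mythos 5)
Your proof is correct and is essentially the paper's own argument: you take the color classes of a proper $\Delta$-coloring as the $\Delta$ independent sets, use condition (B) with $S=T=C_\alpha$ to force $\Lambda_\alpha(C_\alpha)=\PI$, and then use condition (A) (in its specialized covering form for vertex-LCLs) to derive the contradiction; the split into two cases is just a cosmetic reorganization of the same logic. The derivation of the second assertion from \cref{thm:MainBorel} together with the standard $\leq\Delta+1$ upper bound from \cite{KST} is also the intended reading.
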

\begin{proof}
Let $A_1,\dots, A_\Delta$ be some independent sets that cover $G$, and $\Lambda_1, \dots, \Lambda_{\Delta}$ arbitrary colorings of the vertices of $\mP_1, \dots, \mP_{\Delta}$, respectively, with colors from $\{\PI,\PK\}$.
It follows that $\Lambda_\alpha(A_\alpha)=\PI$ for every $\alpha\in \Delta$, since otherwise condition (B) in Definition~\ref{def:Playability} is violated with $S=T=A_\alpha$.
But then condition (A) does not hold.
\end{proof}

As our main application we describe graphs with chromatic number larger than $\Delta$ such that $\Pi_G$ is not playable.
This rules out the hope that the complexity of $\Pi_G$ is connected with chromatic number being larger than $\Delta$.
In Section \ref{subsec:homlcls} we show the following.
Note that the case $k=\Delta$ is \cref{cor:detmarks}.

\begin{theorem}
\label{th:homomorphism}
Let $\Delta > 2$ and $\Delta < k \leq 2\Delta-2$. There exists a graph $G_k$ with $\chi(G_k)= k$, such that $\Pi_{G_k}$ is not playable and $\Pi_{G_k} \in \rlocal(O(\log\log n ))$. In particular, $\Pi_{G_k}\not\in \borel$ and $\Pi_{G_k}\not\in\local(O(\log^*(n)))$.
\end{theorem}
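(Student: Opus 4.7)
The plan is to carry out three steps: construct the family $\{G_k\}$, prove non-playability of $\Pi_{G_k}$ (so that Theorems~\ref{th:MainLOCAL} and~\ref{thm:MainBorel} yield the lower bounds), and exhibit a randomized $O(\log \log n)$-round algorithm for $\Pi_{G_k}$.

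For the construction and the non-playability argument, I would define $G_k$ explicitly on $k$ labels --- for instance, by starting from $K_k$ and removing a carefully chosen set of edges, or via a Mycielski- or Zykov-style gadget --- so as to guarantee $\chi(G_k) = k$ while still producing a rich family of independent sets. I would then fix an arbitrary candidate coloring $(\Lambda_\alpha)_{\alpha\in[\Delta]}$ and assume, without loss of generality, that each $\Lambda_\alpha$ satisfies condition (B) of Definition~\ref{def:Playability}. Two structural observations drive the argument. First, every set in a clique of $\mathbb{P}_\alpha$ must contain an edge of $G_k$ (equivalently, admit a self-loop in $\mathbb{P}_\alpha$), so every independent set of $G_k$ is forced to the Alice side. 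Second, two non-independent subsets with no $G_k$-edge between them cannot both lie in the same clique of $\mathbb{P}_\alpha$, so at least one of them must be Alice. A finite case analysis on the possible ``shapes'' of the Bob cliques in $\mathbb{P}$ --- driven by the specific structure of $G_k$ --- should then yield, for every choice of $(\Lambda_\alpha)$, an explicit $\Delta$-tuple $(S_1, \ldots, S_\Delta)$ of Alice sets with $\bigcup_\alpha S_\alpha = V(G_k)$, witnessing the failure of condition (A).

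For the upper bound I would apply the distributed Lov\'asz Local Lemma: assign each vertex of the input tree an independent uniform random color from $[k]$, and for each tree edge $e$ declare the bad event that the two endpoint colors form a non-edge of $G_k$. The bad-event probability is $p = 1 - 2|E(G_k)|/k^2$ and the dependency is at most $2(\Delta-1)$. For $\Delta < k \le 2\Delta - 2$ and $G_k$ chosen dense enough in the construction, the criterion of the randomized distributed LLL is satisfied, yielding an algorithm of local complexity $O(\log \log n)$; the cap $k \le 2\Delta - 2$ appears exactly as the regime in which the LLL criterion can be met on a $\Delta$-regular tree. The ``in particular'' clause then follows because non-playability, together with Theorems~\ref{th:MainLOCAL} and~\ref{thm:MainBorel}, rules out membership in $\borel$ and in $\local(O(\log^* n))$.

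The main obstacle is producing a single explicit construction meeting all three requirements simultaneously: chromatic number exactly $k$, enough ``forced-Alice'' structure to defeat every Bob clique in the playability game, and enough density for the LLL upper bound. These requirements are in tension, since denser $G_k$ reduces the family of independent sets available to Alice, so I expect that the construction will have to be tailored to $k$ and $\Delta$ and that the non-playability analysis will proceed by a combinatorial case split depending on the chosen gadget rather than a single uniform formula.
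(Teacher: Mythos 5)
Your proposal identifies the right three-step structure, but each step has a genuine gap, and two of them are not just incomplete but point in a direction that would not work.

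\textbf{The construction and the non-playability argument.} You correctly observe that condition (B), applied with $S=T$, forces every independent set of $G_k$ onto the Alice side. But your plan is then to exhibit $\Delta$ Alice sets that cover $V(G_k)$ and so violate (A). Since the only sets you can force to be Alice in this way are the independent ones, that strategy can only produce a cover of $V(G_k)$ when $\chi(G_k)\le\Delta$ --- which is exactly the regime this theorem is trying to go beyond. For $k>\Delta$ you need a different endgame, and the paper's is to derive a contradiction with condition (B), not (A): one introduces a combinatorial property of $G$ (the paper calls it $\Delta$-(*), asking for two sets $S_0,S_1$ with no edge between them whose complements are $(\Delta-1)$-colorable), uses the forced-Alice observation on $\Delta-1$ of the $\Lambda_\alpha$ to push both $S_0$ and $S_1$ to Bob in the remaining $\Lambda_\Delta$, and then the fact that $S_0,S_1$ span no edge kills (B). You also do not actually produce $G_k$; the paper builds an explicit extremal graph $H_\Delta$ (roughly $K_{\Delta-1}\times K_{\Delta-1}$ augmented by two copies of $K_{\Delta-1}$ and an apex) with property $\Delta$-(*) and $\chi(H_\Delta)=2\Delta-2$, and then takes $G_k$ to be a vertex-induced subgraph of $H_\Delta$, using that non-playability is preserved under passing to subgraphs and that deleting a vertex drops the chromatic number by at most one. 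A Mycielski/Zykov gadget is not obviously compatible with the playability game, so you would still have all the work ahead of you.

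\textbf{The upper bound.} The direct LLL calculation does not close. With a uniformly random color in $[k]$ per vertex, the per-edge bad probability is $p=1-2|E(G_k)|/k^2\ge 1/k$, and the dependency degree is $d=2(\Delta-1)$. In the target regime $k\le 2\Delta-2$ this gives $p\,d\ge(2\Delta-2)/k\ge 1$, so no LLL criterion (symmetric, Shearer, or the polynomial criterion used in the $O(\log\log n)$ tree-LLL algorithms) is satisfied --- indeed, even for $G_k=K_\Delta$ on a $\Delta$-regular tree, $pd\approx 2$, and $\Delta$-coloring is \emph{not} an LLL instance in the usual sense. The paper sidesteps all of this: since $k\ge\Delta$, one can choose $G_k$ so that it contains $K_\Delta$ as a subgraph, and then $\Pi_{G_k}$ reduces to $\Delta$-coloring the tree, which is already known to lie in $\rlocal(O(\log\log n))$. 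That one-line reduction replaces your entire LLL argument and does not require any density tuning of $G_k$.
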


Interestingly, recent results connected to counterexamples to Hedetniemi's conjecture yield the same statement asymptotically, as $\Delta \to \infty$ (see Remark \ref{r:hedet}).

\begin{remark}
It can be shown that for $\Delta=3$ both the Chv\' atal and Gr\" otsch graphs are suitable examples for $k=4$.
\end{remark}

\begin{remark}
As another application of his technique Marks showed in \cite{DetMarks} that there is a Borel $\Delta$-regular forest that does not admit Borel perfect matching.
This holds even when we assume that the forest is Borel bipartite, i.e., it has Borel chromatic number $2$.
In order to show this result Marks works with free joins of colored hyperedges, that is, Cayley graphs of free products of cyclic groups.
One should think of two types of triangles ($3$-hyperedges) that are joined in such a way that every vertex is contained in both types and there are no cycles.
We remark that the playability condition can be formulated in this setting.
Similarly, one can derive a version of \cref{thm:MainBorel}.
However, we do not have any application of this generalization.
\end{remark}

\subsection{Applications of playability to homomorphism LCLs}\label{subsec:homlcls}

In this section we find the graph examples from \cref{th:homomorphism}.
First we introduce a condition $\Delta$-(*) that is a weaker condition than having chromatic number at most $\Delta$, but still implies that the homomorphism LCL is not playable. Then, we will show that--similarly to the way the complete graph on $\Delta$-many vertices, $K_\Delta$, is maximal among graphs of chromatic number $\leq \Delta$--there exists a maximal graph (under homorphisms) with property $\Delta$-(*). Recall that we assume $\Delta >2$.

\begin{figure}
    \centering
    \includegraphics[width=.6\textwidth]{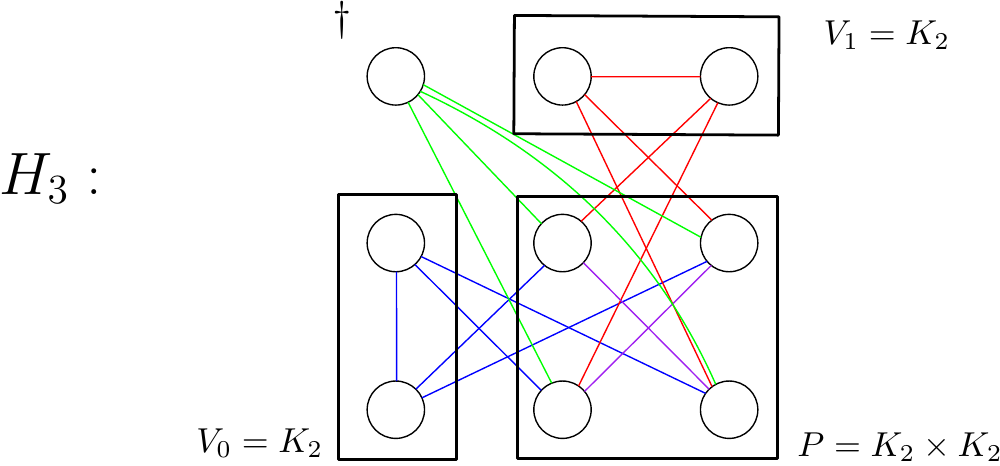}
    \caption{The maximal graph with the property $\Delta$-(*) for $\Delta=3$.}
    \label{fig:h3}
\end{figure}

\begin{definition}[Property $\Delta$-(*)]
Let $\Delta>2$ and $G=(V,E)$ be a finite graph.
We say that $G$ satisfies property $\Delta$-(*) if there are sets $S_0,S_1\subseteq V$ such that $G$ restricted to $V \setminus S_i$ has chromatic number at most $(\Delta-1)$ for $i\in \{0,1\}$, and there is no edge between $S_0$ and $S_1$.
\end{definition}

Note that $\chi(G) \leq \Delta$ implies $\Delta$-(*): indeed, if $A_1,\dots,A_\Delta$ are independent sets that cover $V(G)$, we can set $S_0=S_1=A_1$.

On the other hand, we claim that if $G$ satisfies $\Delta$-(*) then $\chi(G)\le 2\Delta-2$. In order to see this, take $S_0,S_1 \subseteq V(G)$ witnessing $\Delta$-(*). Then, as there is no edge between $S_0$ and $S_1$, so in particular, between $S_0 \setminus S_1$ and $S_0 \cap S_1$, it follows that the chromatic number of $G$'s restriction to $S_0$ is $\leq \Delta-1$. But then we can construct proper $\Delta-1$-colorings of $S_0$ and $V(G) \setminus S_0$, which shows our claim.

\begin{proposition}\label{pr:CONdition*}
Let $G$ be a graph satisfying $\Delta$-(*).
Then $\Pi_G$ is not playable.
\end{proposition}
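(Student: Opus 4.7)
The plan is to argue by contradiction: assume $\Pi_G$ is playable, with colorings $\Lambda_{\alpha}$ of $V(\mP_{\alpha})$ for each $\alpha \in [\Delta]$, and derive a contradiction from the witnesses of $\Delta$-(*).

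The first key observation is that for every independent set $A \subseteq V(G)$ and every $\alpha \in [\Delta]$, we must have $\Lambda_\alpha(A) = \PI$. Indeed, $A$ has no self-loop in $\mP_\alpha$ (since $\mP_\alpha$ is determined by $E(G)$ for a homomorphism LCL, and independence of $A$ means no edge of $G$ lies inside $A$). So if $\Lambda_\alpha(A) = \PK$, then applying condition (B) with $S=T=A$ forces the non-existent self-loop $(A,A)$ to be present, a contradiction. In particular, every singleton (and more generally every independent set) is colored $\PI$ under every $\Lambda_\alpha$.

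Now let $S_0, S_1 \subseteq V(G)$ be the sets witnessing property $\Delta$-(*); for each $i \in \{0,1\}$, pick independent sets $A_1^{(i)}, \dots, A_{\Delta-1}^{(i)}$ that witness $\chi(G \setminus S_i) \le \Delta - 1$, so that $V(G) = S_i \cup A_1^{(i)} \cup \dots \cup A_{\Delta-1}^{(i)}$. For each $i \in \{0,1\}$ and each index $\alpha^* \in [\Delta]$, form the tuple $(T_1, \dots, T_{\Delta}) \in V(\mP_1) \times \dots \times V(\mP_{\Delta})$ obtained by placing $S_i$ at coordinate $\alpha^*$ and the sets $A_1^{(i)}, \dots, A_{\Delta-1}^{(i)}$, in some order, at the remaining $\Delta - 1$ coordinates. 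By the first observation, each $A_j^{(i)}$ is colored $\PI$ by the appropriate $\Lambda_\alpha$; and by construction $T_1 \cup \dots \cup T_\Delta = V(G)$. Since $\Pi_G$ is a vertex-LCL, condition (A) translates to $\Sigma \not\subseteq \bigcup_\alpha T_\alpha$, which fails here. So (A) forces $\Lambda_{\alpha^*}(S_i) = \PK$. As $\alpha^*$ and $i$ were arbitrary, we conclude $\Lambda_\alpha(S_0) = \Lambda_\alpha(S_1) = \PK$ for every $\alpha \in [\Delta]$.

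Finally, fix any $\alpha$ and apply condition (B) to $S = S_0$ and $T = S_1$: it requires $(S_0, S_1)$ to be an edge of $\mP_\alpha$, i.e., that there exist $\ta \in S_0$ and $\tb \in S_1$ with $\{\ta, \tb\} \in E(G)$. But this directly contradicts the defining property of $\Delta$-(*), namely that no edge of $G$ runs between $S_0$ and $S_1$. This contradiction shows that no such playability colorings $\Lambda_\alpha$ can exist, proving the proposition. There is no substantial obstacle here; the only subtlety is recognizing that independent sets are forced to be $\PI$-colored (via the self-loop argument) and arranging the tuple in step (A) so that $S_i$ can occupy any coordinate, which is what makes $\Lambda_\alpha(S_i) = \PK$ uniformly in $\alpha$.
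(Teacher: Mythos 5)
Your proof is correct and follows the same strategy as the paper: independent sets are forced to color $\PI$ (via condition (B) with $S=T$ and the absence of a self-loop), covering tuples then force $\Lambda_\alpha(S_0)=\Lambda_\alpha(S_1)=\PK$ via condition (A), and condition (B) applied to $(S_0,S_1)$ gives the contradiction since no edge joins them. The only difference is cosmetic — you establish $\Lambda_\alpha(S_i)=\PK$ for \emph{every} $\alpha$ by permuting the position of $S_i$ in the tuple, whereas the paper only needs this for a single fixed coordinate $\alpha=\Delta$.
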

\begin{proof}
Fix $S_0,S_1$ as in the definition of $\Delta$-(*) and assume for a contradiction that colorings $\Lambda_1, \dots, \Lambda_\Delta$ as described in Definition~\ref{def:Playability} exist.
By Property $\Delta$-(*), there exist independent sets $A_1, \dots, A_{\Delta-1}$ such that $S_0$ together with the $A_i$ covers $G$, i.e., such that $S_0 \cup A_1 \cup \dots \cup A_{\Delta-1} = V(G)$.
For each $\alpha \in [\Delta-1]$, we must have $\Lambda_\alpha(A_{\alpha})=\PI$, since otherwise condition (B) in Definition~\ref{def:Playability} is violated.
Consequently $\Lambda_{\Delta}(S_0)=\PK$, otherwise condition (A) is violated.
Similarly $\Lambda_\Delta(S_1)=\PK$.
This shows that $\Lambda_\Delta$ does not satisfy condition (B) with $S = S_0$ and $T = S_1$.
\end{proof}

Next we describe maximal examples of graphs that satisfy the condition $\Delta$-(*). That is, we define a graph $H_\Delta$ that satisfies $\Delta$-(*), its chromatic number is $2\Delta-2$ and every other graph that satisfies $\Delta$-(*) admits a homomorphism in $H_\Delta$.
The graph $H_{3}$ is depicted in \cref{fig:h3}.

Recall that the (categorical) product $G \times H$ of graphs $G,H$ is the graph on $V(G) \times V(H)$, such that $((g,h),(g',h')) \in E(G \times H)$ iff $(g,g') \in E(G)$ and $(h,h') \in E(H)$.
  
  Write $P$ for the product $K_{\Delta-1}\times K_{\Delta-1}$. Let $V_0$ and $V_1$ be vertex disjoint copies of $K_{\Delta-1}$.
We think of vertices in $V_i$ and $P$ as having labels from $[\Delta-1]$ and $[\Delta-1] \times [\Delta-1]$, respectively.
The graph $H_\Delta$ is the disjoint union of $V_0$, $V_1$, $P$ and an extra vertex $\dagger$ that is connected by an edge to every vertex in $P$, and additionally, if $v$ is a vertex in $V_0$ with label $i\in [\Delta-1]$, then we connect it by an edge with $(i',j)\in P$ for every $i'\not=i$ and $j\in [\Delta-1]$, and if $v$ is a vertex in $V_1$ with label $j\in \Delta-1$, then we connect it by an edge with $(i,j')\in P$ for every $j'\not=j$ and $i \in [\Delta-1]$.

\begin{proposition}\label{pr:MinimalExample}
\begin{enumerate}
    \item \label{prop:hdelta}$H_\Delta$ satisfies $\Delta$-(*).
    \item \label{prop:hdeltachrom} $\chi(H_\Delta)=2\Delta-2$.
    \item \label{prop:hdeltamax} A graph $G$ satisfies $\Delta$-(*) if and only if it admits a homomorphism to $H_\Delta$.
\end{enumerate}

\end{proposition}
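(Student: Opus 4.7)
For part (\ref{prop:hdelta}), I would take $S_0 := V_0 \cup \{\dagger\}$ and $S_1 := V_1 \cup \{\dagger\}$. No edge of $H_\Delta$ joins $S_0$ to $S_1$, because by construction the only neighbors of vertices in $V_0$, $V_1$, and $\dagger$ all lie in $P$. For the chromatic condition, $V(H_\Delta) \setminus S_0 = V_1 \cup P$ admits the proper $(\Delta-1)$-coloring sending $u \in V_1$ with label $j$ to color $j$ and $(i,j) \in P$ to $j$ (the clique $V_1$ gets all distinct colors, the $P$-adjacency $i \neq i', j \neq j'$ forces the second coordinates to differ, and $u$ with label $j$ is adjacent to $(i,j')$ only if $j' \neq j$); the case of $V(H_\Delta) \setminus S_1$ is symmetric.

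For the easy direction of (\ref{prop:hdeltamax}), given a homomorphism $\phi: G \to H_\Delta$, I set $S_i := \phi^{-1}(V_i \cup \{\dagger\})$: any edge of $G$ between $S_0$ and $S_1$ would map to an edge of $H_\Delta$ between $V_0 \cup \{\dagger\}$ and $V_1 \cup \{\dagger\}$ (of which there are none, and $H_\Delta$ is loopless), and $\phi$ restricts to a homomorphism $G \setminus S_i \to V_{1-i} \cup P$, which has chromatic number $\le \Delta - 1$. Conversely, given witnesses $S_0, S_1$ and proper colorings $g_i: G \setminus S_i \to [\Delta-1]$, I partition $V(G)$ into the four pieces $S_0 \cap S_1$, $S_0 \setminus S_1$, $S_1 \setminus S_0$, $V(G) \setminus (S_0 \cup S_1)$, and send them respectively to $\dagger$, to the vertex in $V_0$ labeled $g_1(v)$, to the vertex in $V_1$ labeled $g_0(v)$, and to $(g_1(v), g_0(v)) \in P$. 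The no-edge-between-$S_0$-and-$S_1$ hypothesis rules out most mixed cases; the one substantive check is that an edge inside $V(G) \setminus (S_0 \cup S_1)$ has distinct $g_0$- and $g_1$-values at its endpoints, so it maps to an edge of $P$.

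For part (\ref{prop:hdeltachrom}), the upper bound $\chi(H_\Delta) \le 2\Delta - 2$ is immediate from (\ref{prop:hdelta}) together with the observation before the proposition that $\Delta$-(*) implies $\chi \le 2\Delta - 2$. For the lower bound I fix a proper $c$-coloring $f$ of $H_\Delta$ and set $c_i := f(v_i)$, $d_j := f(u_j)$, $A_t := f(V_t)$, and $k := |A_0 \cap A_1|$, so that $|A_0 \cup A_1| = 2\Delta - 2 - k$. The core observation is that $f(i,j)$ must avoid $(A_0 \setminus \{c_i\}) \cup (A_1 \setminus \{d_j\})$; a short case analysis then shows that the colors in $A_0 \cup A_1$ available at $(i,j)$ are $\emptyset$ precisely when $c_i \neq d_j$ are both in $A_0 \cap A_1$, so in that situation $f(i,j)$ must lie outside $A_0 \cup A_1$.

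The proof then splits on $k$. For $k = 0$ we already have $c \ge |A_0 \cup A_1| = 2\Delta - 2$. For $k \ge 2$, the pairs $(i,j)$ with $c_i \ne d_j$ both in $A_0 \cap A_1$ form, via the bijections $i \mapsto c_i$ and $j \mapsto d_j$ between $\{i : c_i \in A_0 \cap A_1\}, \{j : d_j \in A_0 \cap A_1\}$ and $A_0 \cap A_1$, a copy of the off-diagonal of $K_k \times K_k$; this subgraph contains the $k$-clique $\{(a, a+1 \bmod k) : a \in [k]\}$, so at least $k$ colors outside $A_0 \cup A_1$ are needed, yielding $c \ge (2\Delta - 2 - k) + k = 2\Delta - 2$. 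I expect the case $k = 1$ to be the main obstacle, since the previous forbidden-pair argument becomes vacuous; here, writing $A_0 \cap A_1 = \{z\}$ with $c_{i^*} = d_{j^*} = z$, I would assume for contradiction that $f$ uses only colors in $A_0 \cup A_1$. The availability analysis then forces $f(i^*, j^*) = z$, $f(i^*, j) = d_j$ for $j \ne j^*$, and $f(i, j^*) = c_i$ for $i \ne i^*$, so $f(P) \supseteq A_0 \cup A_1$; but then $f(\dagger)$, which must avoid $f(P)$, has no legal value in $A_0 \cup A_1$, contradicting the assumption. This argument uses $\dagger$ essentially---without it, one can check that $V_0 \cup V_1 \cup P$ is in fact $(2\Delta - 3)$-colorable---which is what motivates including $\dagger$ in the construction of $H_\Delta$.
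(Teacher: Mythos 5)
Your proof is correct, and parts \eqref{prop:hdelta} and \eqref{prop:hdeltamax} are essentially the same as the paper's: the same choice of $S_0, S_1$, the same pullback for the easy direction of \eqref{prop:hdeltamax}, and the same four-case homomorphism $\Theta$ for the converse (your labeling convention is the mirror image of the paper's, which is immaterial since the construction is symmetric in the two factors). Part \eqref{prop:hdeltachrom} is where you take a genuinely different route. The paper first uses $\dagger$ to force $|c(V(P))| \le 2\Delta-4$, then argues by pigeonhole that one cannot have both a rainbow row and a rainbow column (that would account for $2\Delta-3$ colors on $P$), so WLOG every row repeats a color $\alpha_i$; these $\alpha_i$ are pairwise distinct and all forbidden on $V_1$, so $V_1$'s clique forces $\Delta-1$ additional colors. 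You instead read off the constraint from $V_0, V_1$ directly — $f(i,j)$ must avoid $(A_0\setminus\{c_i\})\cup(A_1\setminus\{d_j\})$ — and then split on $k = |A_0\cap A_1|$: for $k=0$ the bound is immediate, for $k\ge 2$ you exhibit a $k$-clique in $P$ forced entirely outside $A_0\cup A_1$, and for $k=1$ you trace the forced colors on the row $i^*$ and column $j^*$ to show $f(P)\supseteq A_0\cup A_1$, so $\dagger$ needs a fresh color. Both arguments use $\dagger$, but yours isolates exactly where it is indispensable ($k=1$ only) and gives the extra, correct observation that $V_0\cup V_1\cup P$ alone is $(2\Delta-3)$-colorable, which the paper does not note; the paper's rainbow argument is slightly more uniform (no case split) at the cost of being less transparent about the role of $\dagger$.
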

\begin{proof}
\eqref{prop:hdelta} Set $S_0=V(V_0)\cup \{\dagger\}$ and $S_1=V(V_1)\cup \{\dagger\}$.
By the definition there are no edges between $S_0$ and $S_1$.
Consider now, e.g., $V(H_\Delta)\setminus S_0$.
Let $A_j$ consist of all elements in $P$ that have second coordinate equal to $j$ together with the vertex in $V_1$ that has the label $j$.
By the definition, the set $A_i$ is independent and $\bigcup_{i\in [\Delta-1]}A_i$ covers $H_\Delta\setminus S_0$, and similarly for $S_1$.

\eqref{prop:hdeltachrom} By \eqref{prop:hdelta} and the claim after the definition of $\Delta$-(*), it is enough to show that $\chi(H_\Delta)\geq 2\Delta-2$. Towards a contradiction, assume that $c$ is a proper coloring of $H_\Delta$ with $<2\Delta-2$-many colors. Note the vertex $\dagger$ guarantees that  $|c(V(P))|\leq 2\Delta-4$, and also $\Delta-1 \leq |c(V(P))|$.

First we claim that there are no indices $i,j\in [\Delta-1]$ (even with $i=j$) such that $c(i,r)\not=c(i,s)$ and $c(r,j)\not=c(s,j)$ for every $s\not=r$: indeed, otherwise, by the definition of $P$ we had $c(i,r)\not=c(s,j)$ for every $r,s$ unless $(i,r)=(s,j)$, which would the upper bound on the size of $c(V(P))$.

Therefore, without loss of generality, we may assume that for every $i\in [\Delta-1]$ there is a color $\alpha_i$ and two indices $j_i \neq j'_i$ such that $c(i,j_i)=c(i,j'_i)=\alpha_i$. It follows form the definition of $P$ and $j_i \neq j'_i$ that
 $\alpha_i\not=\alpha_{i'}$ whenever $i\not= i'$. 
 
 Moreover, note that any vertex in $V_1$ is connected to at least one of the vertices $(i,j_i)$ and $(i,j'_i)$, hence none of the colors $\{\alpha_i\}_{i \in [\Delta-1]}$ can appear on $V_1$. Consequently, since $V_1$ is isomorphic to $K_{\Delta-1}$ we need to use at least $\Delta-1$ additional colors, a contradiction.

\eqref{prop:hdeltamax} First note that if $G$ admits a homomorphism into $H_\Delta$, then the pullbacks of the sets witnessing  $\Delta$-(*) will witness that $G$ has $\Delta$-(*).

Conversely, let $G$ be a graph that satisfies $\Delta$-(*).
Fix the corresponding sets $S_0,S_1$ together with $(\Delta-1)$-colorings $c_0,c_1$ of their complements.
We construct a homomorphism $\Theta$ from $G$ to $H_\Delta$. Let
\[\Theta(v)=
\begin{cases} \dagger &\mbox{if } v \in S_0 \cap S_1, \\
c_0(v) & \mbox{if } v \in S_1 \setminus S_0, \\
c_1(v) & \mbox{if } v \in S_0 \setminus S_1,
\\
(c_0(v),c_1(v)) & \mbox{if } v \not \in S_0 \cup S_1.
\end{cases} \]

Observe that $S=S_0\cap S_1$ is an independent set such that there is no edge between $S$ and $S_0 \cup S_1$.
Using this observation, one easily checks case-by-case that $\Theta$ is indeed a homomorphism.
\end{proof}

Now, combining what we have so far, we can easily prove Theorem \ref{th:homomorphism}.

\begin{proof}[Proof of Theorem \ref{th:homomorphism}]
It follows that $\Pi_{H_\Delta}$ is not playable from \cref{pr:MinimalExample}, \cref{pr:CONdition*}. It is easy to see that if for a graph $G$ the LCL $\Pi_G$ is not playable then $\Pi_{G'}$ is not playable for every subgraph $G'$ of $G$. Since erasing a vertex decreases the chromatic number with at most one, for each $k\leq 2\Delta-2$ there is a subgraph $G_k$ of $H_\Delta$ with $\chi(G_k)=k$, such that $\Pi_{G_k}$ is not playable.

It follows from Theorems \ref{th:MainLOCAL} and \ref{thm:MainBorel} that there is a $\Delta$-regular Borel forest that admits no Borel homomorphism to any graph of $G_k$ and that $\Pi_{G_k} \not \in \local(O(\log^*(n)))$.

Finally, note that if $k \geq \Delta$ then $G_k$ can be chosen so that it contains $K_\Delta$, yielding $\Pi_{G_k} \in \rlocal(O(\log\log(n))$.
\end{proof}

\begin{remark}
\label{r:hedet}Recall that Hedetniemi's conjecture is the statement that if $G,H$ are finite graphs then $\chi(G \times H)=\min\{\chi(G),\chi(H)\}$. This conjecture has been recently disproven by Shitov \cite{shitov}, and strong counterexamples have been constructed later (see, \cite{tardif2019note,zhu2021note}). We claim that these imply for $\varepsilon>0$ the existence of finite graphs $H$ with $\chi(H) \geq (2-\varepsilon)\Delta$ to which $\Delta$-regular Borel forests cannot have a homomorphism in BOREL, for every large enough $\Delta$. Indeed, if a $\Delta$-regular Borel forest admitted a Borel homomorphism to each finite graph of chromatic number at least $(2-\varepsilon)\Delta$, it would have such a homomorphism to their product as well. Thus, we would obtain that the chromatic number of the product of any graphs of chromatic number $(2-\varepsilon)\Delta$ is at least $\Delta+1$. This contradicts Zhu's result \cite{zhu2021note}, which states that the chromatic number of the product of graphs with chromatic number $n$ can drop to $\approx \frac{n}{2}$.

\end{remark}

\begin{remark}
A natural attempt to construct graphs with large girth and not playable homomorphisms problem would be to consider random $d$-regular graphs of size $n$ for a large enough $n$. However, it is not hard to see that setting $\Lambda(A)=\PI$ if and only if $|A|<\frac{n}{d}$ shows that this approach cannot work.
\end{remark}

\subsection{Proof of \cref{th:MainLOCAL}}\label{sec:marksproof}

In this section we prove Theorem \ref{th:MainLOCAL}, by applying Marks' game technique in the LOCAL setting. In order to define our games, we will need certain auxiliary graphs, the so-called \emph{ID graphs}. 
The purpose of these graphs is to define a ``playground'' for the games that we consider.
Namely, vertices in the game are labeled by vertices from the ID graph in such a way that the at the end we obtain a homomorphism from our underlying graph to the ID graph. 



\begin{definition}
A pair $\idgraphcolored{n}{t}{r}=(\idgraph{n}{t}{r},c)$ is called an \emph{ID graph}, if
\begin{enumerate}
    \item \label{p:idgirth} $\idgraph{n}{t}{r}$ is graph with girth at least $2t+2$,
    \item \label{p:idsize} $|V(\idgraph{n}{t}{r})|\le n$,
    \item \label{p:idlabels} $c$ is a $\Delta$-edge-coloring of $\idgraph{n}{t}{r}$, such that every vertex is adjacent to at least one edge of each color,
    \item \label{p:idratio} for each $\alpha \in [\Delta]$ the ratio of a maximal independent set of $\idgraph{n}{t}{r}^{\alpha}=(V(\idgraph{n}{t}{r}), E(\idgraph{n}{t}{r}) \cap c^{-1}(\alpha))$ is at most $r$ (i.e., $\idgraph{n}{t}{r}^\alpha$ is the graph formed by $\alpha$-colored edges).
\end{enumerate}
\end{definition}

Before we define the game we show that ID graphs exist.

\begin{proposition}\label{pr:IDgraph}
Let $t_n\in o(\log(n))$, $r>0$, $\Delta\geq 2$.
Then there is an ID graph $\idgraphcolored{n}{t_n}{r}$ for every $n\in \mathbb{N}$ sufficiently large.
\end{proposition}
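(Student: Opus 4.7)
The plan is to construct $\idgraphcolored{n}{t_n}{r}$ probabilistically, by overlaying $\Delta$ independent random $d$-regular graphs on $[n]$ and coloring each edge by the graph it came from.

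\textbf{Step 1.} First I would fix a constant $d = d(r)$ large enough that a uniformly random $d$-regular graph on $N$ vertices has independence ratio strictly less than $r/2$ with probability tending to $1$ as $N \to \infty$. Such a $d$ exists by classical results on random regular graphs: the independence ratio of a random $d$-regular graph is asymptotically at most $\frac{2 \ln d}{d}(1+o(1))$, which tends to $0$ as $d \to \infty$. Taking $d \geq 1$ also guarantees every vertex has positive degree in each color class, needed for condition (3).

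\textbf{Step 2.} I would then let $H$ be the (multi)graph on vertex set $[n]$ obtained by sampling $\Delta$ independent configuration-model $d$-regular graphs $H_1, \ldots, H_\Delta$, with the coloring $c$ assigning color $\alpha$ to every edge of $H_\alpha$. Conditions (2) and (3) are immediate from the construction. Condition (4) on the independence ratio of each color class $H_\alpha$ is inherited from Step 1, by a union bound over the $\Delta$ color classes.

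\textbf{Step 3.} The main remaining task is the girth condition (1). A standard first-moment computation in the configuration model shows that the expected number of cycles of length at most $g$ in a configuration-model $D$-regular graph is $O(D^g / g)$; the same bound, up to constants, holds for a union of $\Delta$ independent such models with $D = \Delta d$. Since $\Delta d$ is a fixed constant and $g := 2t_n + 1 = o(\log n)$, this count is $n^{o(1)} = o(n)$. Hence with positive probability $H$ has at most $o(n)$ cycles of length $\leq g$, which we kill by deleting one vertex from each. The deletion removes $o(n)$ vertices and can only decrease the independence number of each $H_\alpha$, so the independence ratio of each color class stays below $r$ after an $(1+o(1))$ correction. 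Replacing $n$ by $\lceil n(1+o(1))\rceil$ in the construction then yields a graph on at most $n$ vertices with all four properties.

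\textbf{Main obstacle.} The trickiest point is that the cycle-killing surgery can destroy the ``every vertex sees every color'' condition: a surviving vertex may lose its last edge of some color when its neighbors are deleted. The remedy is to oversample in Step 1 -- start each $H_\alpha$ as a $2d$-regular graph, so that each vertex has $2d$ edges of every color initially and losing a vanishing fraction of them almost surely keeps at least one edge of each color alive at every vertex. Any residual color-deficient vertex can be discarded, and the total loss remains $o(n)$, leaving the other conditions intact.
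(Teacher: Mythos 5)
Your construction is the same as the paper's (overlay $\Delta$ independent configuration-model $d$-regular graphs, color by origin), and your handling of the independence-ratio condition via Bollob\'as's bound is also the same in spirit. Where you diverge is the girth condition: the paper invokes the McKay--Wormald formula for $\Pr[\text{girth} > g]$ in the configuration model, shows this probability is at least $\exp(-n^{o(1)})$, and checks that it dominates the failure probability of the independence-ratio event so that both good events coexist with positive probability --- no surgery. You instead run a first-moment argument for short cycles and then delete a vertex from each. This is more elementary (no McKay--Wormald), which is a genuine advantage if it works.

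However, the surgery step has a real gap. After deleting the $o(n)$ cycle-breaking vertices, you acknowledge that a surviving vertex might lose its last edge of some color, and propose to (a) oversample to $2d$-regular so this is ``almost surely'' avoided, and (b) discard any residual color-deficient vertices. Neither part is justified. For (a), the deletion set is determined by the short cycles of the very graph you sampled, so the event ``all $2d$ $\alpha$-neighbors of $v$ lie in the deleted set'' is not independent of the sampling; you would need either a decorrelation estimate or a union bound over candidate deletion sets, and the latter fails once the deletion set has size $n^{\Omega(1)}$ (which $n^{o(1)}$ permits). For (b), discarding deficient vertices can orphan further vertices, and the cascade is not obviously bounded: the deterministic counting argument only gives that the number of $\alpha$-deficient vertices outside a set $D$ is at most $|D|$, which yields $|D_{k+1}| \le (1+\Delta)|D_k|$ and hence exponential growth in the number of rounds, with no guarantee of stopping at $o(n)$. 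One would need an expansion-type input (or a second-moment computation pinning down that, with high probability, no vertex is deficient at all) to close this. The paper sidesteps the entire issue by never deleting anything.
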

\begin{proof}
We use the \emph{configuration model} for regular random graphs, see \cite{wormald1999models}.
This model is defined as follows.
Let $n$ be even.
Then a $d$-regular random sample on $n$-vertices is just a union of $d$-many independent uniform random perfect matchings.
Note that in this model we allow parallel edges.

It was proved by Bollob\'as \cite{bollobas} that the independence ratio of a random $d$-regular graph is at most $\frac{2\log(d)}{d}$ a.a.s.
Moreover, this quantity is concentrated by an easy application of McDiarmid's result \cite{mcdiarmid2002concentration}, i.e., 
\begin{equation}
    \P\left(\left|X-\E(X)\right|\ge \sqrt{n}\right)<2\exp\left(-\frac{n}{d}\right),
\end{equation}
where $X$ is the random variable that counts the size of a maximal independent set.
Therefore for fixed $n$ large enough we have that the independence ratio of a random sample is at most $\frac{3\log(d)}{d}$ with probability at least $\left(1-2\exp\left(-\frac{n}{d}\right)\right)$.


Pick a $d$ large enough such that $\frac{3\log(d)}{d}<r$.
Now for an $n$ large enough take $\Delta$-many independent samples of random $d$-regular graphs according to the configuration model.
Note that this is a random sample from the configuration model for $\Delta d$-regular graphs.
We define $c$ to be equal to $\alpha\in [\Delta]$ on edges of the $\alpha$-th sample.
Then condition (4) is satisfied with probability at least
$$\left(1-2\exp\left(-\frac{n}{d}\right)\right)^\Delta.$$

It remains to show that the girth condition is satisfied.
Recall that we assume $t_n\in o(\log n)$.
Then we have $(\Delta d-1)^{2t_n-1}\in o(n)$.
Using \cite[Corollary~1]{mckayshortcycles} we have that the probability of having girth at least $t_n$ is
\begin{equation}
\begin{split}
    \exp\left(-\sum_{a=3}^{t_n}\frac{(\Delta d-1)^a}{2a}+o(1)\right)\ge & \exp\left(-(\Delta d)^{t_n}\right) \\
    \ge & \exp(-o(n)) \\
    > & 1-\left(1-2\exp\left(-\frac{n}{d}\right)\right)^\Delta
\end{split}
\end{equation}
as $n\to \infty$.
This shows that there exists such a graph $\idgraphcolored{t_n}{n}{r}$ with non-zero probability.
\end{proof}


Next, we define the games. As mentioned before, the games are going to depend on the following parameters: an algorithm $\fA_n$ of local complexity $t\in o(\log(n))$, an ID graph $\idgraphcolored{n}{t}{r}$, $\alpha\in \Delta$, $\sigma\in V(\idgraph{n}{t}{r})$ and $S \subseteq \Sigma$.
(We will view $\fA_n$, $\idgraphcolored{n}{t}{r}$ as fixed, and the rest of the parameters as variables).

The game
$$\mathbb{G}(\fA_n,n,t,\idgraphcolored{n}{t}{r})[\alpha,\sigma,S]$$
is defined as follows: two players, $\PI$ and $\PK$ assign labels to the vertices of a rooted $\Delta$-regular tree of diameter $t$. The labels are vertices of $\idgraph{n}{t}{r}$ and the root is labeled $\sigma$. In the $k$-th round, where $0<k\le t$, first $\PI$ labels vertices of distance $k$ from the root on the side of the $\alpha$ edge.
After that, $\PK$ labels all remaining vertices of distance $k$, etc (see \cref{fig:game}). We also require the assignment of labels to give rise to an edge-color preserving homomorphism to $\idgraphcolored{n}{t}{r}$. (For example, if it is $\PI$'s turn to label a neighbor of some vertex $v$, that has been assigned a label $\rho\in V(\idgraph{n}{t}{r})$ in the previous round, along an edge that has color $\beta$, then the allowed labels are only those that span a $\beta$ edge with $\rho$ in $\idgraph{n}{t}{r}$).

By property \eqref{p:idsize} of the ID graph, we can fix an injective map from $V(\idgraph{n}{t}{r})$ to $[n]$.
Now, we say that $\PI$ wins an instance of the game iff $\fA_n$ applied to the produced labeling of the rooted tree does {\bf not} produce an element of $S$ on the half edge that starts in the root and has edge color $\alpha$.
Note that this is well defined thanks to our assumption on the girth of $\idgraph{n}{t}{r}$.
Let us define $\Lambda^{\sigma}_{\alpha}(S)$ to be $\PI$ or $\PK$ depending on who has a winning strategy in the game $$\mathbb{G}(\fA_n,n,t,\mathbf{H}_{n,t,r})[\alpha,\sigma,S].$$
Since the game is finite, we have that
$$\Lambda^\sigma_\alpha:\mathbb{P}_{\alpha}\to \{\PI,\PK\}$$
is well-defined for all $\sigma\in V(\idgraph{n}{t}{r})$ and $\alpha\in [\Delta]$.

\begin{figure}
    \centering
    \includegraphics[]{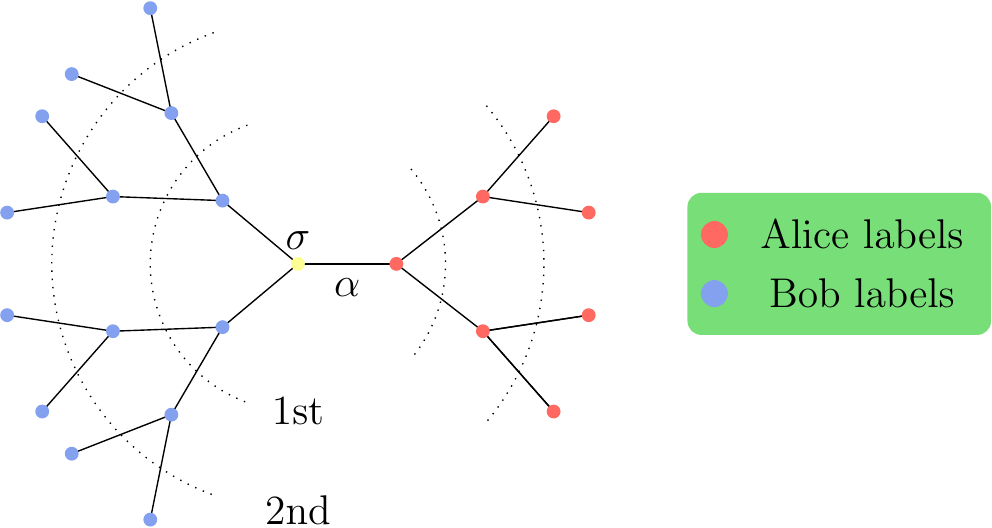}
    \caption{The game $\mathbb{G}(\fA_n,n,t,\idgraphcolored{n}{t}{r})[\alpha,\sigma,S]$}
    \label{fig:game}
\end{figure}

\begin{proposition}\label{pr:(A)satisfied}
Let $\fA_n$ be an algorithm of local complexity $t\in o(\log(n))$ that solves $\Pi$ and $\sigma\in V(\idgraph{n}{t}{r})$.
Then $(\Lambda^{\sigma}_{\alpha})_{\alpha\in \Delta}$ satisfies (A) in \cref{def:Playability}.
\end{proposition}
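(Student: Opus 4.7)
The plan is a strategy-stealing / diagonal argument in the spirit of Marks' original proof, but carried out in the finitary setting using the ID graph to ensure global uniqueness of identifiers. Suppose that $(S_\alpha)_{\alpha\in[\Delta]}$ is a tuple with $\Lambda^\sigma_\alpha(S_\alpha) = \PI$ for every $\alpha$, and fix for each $\alpha$ a winning strategy $\tau_\alpha$ for $\PI$ in the game $\mathbb{G}(\fA_n,n,t,\idgraphcolored{n}{t}{r})[\alpha,\sigma,S_\alpha]$. Consider a single rooted $\Delta$-regular tree $T$ of depth $t$ rooted at $u$, with the root labeled by $\sigma$, and run all $\Delta$ strategies $\tau_1,\dots,\tau_\Delta$ simultaneously on $T$: in each round $k = 1,\dots,t$ and for each color $\alpha$, strategy $\tau_\alpha$ labels the distance-$k$ vertices of $T$ lying in the $\alpha$-subtree of $u$, treating the labels already produced by the other $\tau_\beta$ in earlier rounds as the moves of $\PK$ in its own game.

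Two things need to be checked about this simulation. First, it is well-defined: because in each individual game $\PI$ moves first in every round, the move of $\tau_\alpha$ in round $k$ depends only on moves from rounds $1,\dots,k-1$, so we can synchronize all $\Delta$ strategies round by round and each $\tau_\alpha$ sees a history consistent with an honest play of its own game against some $\PK$ (namely the $\PK$ whose round-$\ell$ moves in subtrees $\beta\ne\alpha$ are exactly the round-$\ell$ moves produced by $\tau_\beta$). Second, the resulting labeling $L\colon V(T)\to V(\idgraph{n}{t}{r})$ is an edge-color-preserving homomorphism into $\idgraph{n}{t}{r}$, simply because every single strategy is required to play such labels. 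By property~\eqref{p:idgirth} of the ID graph, any two vertices of $T$ at distance $\le 2t$ must receive distinct $L$-labels: otherwise, the unique path between them would project under $L$ to a nontrivial closed walk in $\idgraph{n}{t}{r}$ of length $\le 2t$, producing a cycle of length $\le 2t$, contradicting girth $\ge 2t+2$. Since $T$ itself has diameter $\le 2t$, $L$ is injective on $V(T)$; composing with the fixed injection $V(\idgraph{n}{t}{r}) \hookrightarrow [n]$ and extending $T$ to a finite $\Delta$-regular tree on $n$ vertices with fresh unique identifiers on the remaining vertices yields a valid input instance for $\fA_n$ whose $t$-hop neighborhood of $u$ is exactly $(T,L)$.

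To conclude, since $\fA_n$ correctly solves $\Pi$, the $\Delta$ labels $\ta_\alpha$ it outputs on the half-edges incident to $u$ (where $\ta_\alpha$ denotes the label on the half-edge of color $\alpha$) form a multiset $\{\ta_\alpha\}_{\alpha\in[\Delta]} \in \fV$. But the restriction of the simulation to the $\alpha$-subtree, together with $u$ and the root's label $\sigma$, is precisely an honest play of $\mathbb{G}[\alpha,\sigma,S_\alpha]$ in which $\PI$ followed the winning strategy $\tau_\alpha$; therefore $\ta_\alpha \notin S_\alpha$ for every $\alpha$. This gives the tuple required by condition~(A) of Definition~\ref{def:Playability}.

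The only real obstacle is the verification of the parallel simulation, which is also the step that motivated the introduction of the ID graph in the first place: for the argument to yield a \emph{valid} input to $\fA_n$ one needs the combined labeling to be globally injective on the $t$-hop ball around $u$, and the girth and bounded-size properties of $\idgraphcolored{n}{t}{r}$ (whose existence is guaranteed by Proposition~\ref{pr:IDgraph}) are exactly what make both the local moves of each $\tau_\alpha$ and the resulting global labeling simultaneously consistent with the identifier model.
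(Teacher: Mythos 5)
Your proof is correct and takes essentially the same route as the paper's: you pit the $\Delta$ Alice winning strategies against one another, round by round, to produce a single labeled tree, and then invoke the correctness of $\fA_n$ on the resulting input to extract a valid vertex configuration $(\ta_\alpha)_{\alpha\in[\Delta]}\in\fV$ with $\ta_\alpha\notin S_\alpha$. The only difference is expository: you spell out the synchronization of the parallel simulation and the injectivity argument via the girth bound of $\idgraph{n}{t}{r}$, details the paper compresses into ``letting these strategies play against each other in the obvious way.''
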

\begin{proof}
Assume that $(S_\alpha)_{\alpha\in \Delta}$ is such that $\Lambda^\sigma_\alpha(S_\alpha)=\PI$.
This means that $\PI$ has winning strategy in all the games corresponding to $S_\alpha$.
Letting these strategies play against each other in the obvious way, produces a labeling of the tree.
Since $\fA_n$ solves $\Pi$ it has to output labeling of half edges $(\ta_{\alpha})_{\alpha\in \Delta}\in \mathcal{N}$, where $\ta_\alpha$ is a label on the half edge that start at the root and has color $\alpha$.
Note that we must have $\ta_\alpha\not\in S_\alpha$ by the definition of winning strategy for $\PI$.
This shows that (A) of Definition \ref{def:Playability} holds.
\end{proof}

We are ready to prove Theorem~\ref{th:MainLOCAL}. 
\begin{proof}[Proof of Theorem \ref{th:MainLOCAL}]
Let $(\fA_n)_{n\in \mathbb{N}}$ be a sequence of algorithms of local complexity $t_n\in o(\log(n))$ that solve $\Pi$ and $N\in \mathbb{N}$ be the number of all possible colorings of vertices of $(\mathbb{P}_{\alpha})_{\alpha\in \Delta}$ with two colors, that is, $N=2^{\sum_\alpha |V(\mathbb{P}_\alpha)|}$.
Set $r:=\frac{1}{N+1}$.
By \cref{pr:IDgraph} there exists an ID graph $\mathbf{H}_{n,t,r}$. Thus, the games above are well-defined and we can construct the functions $(\Lambda^\sigma_\alpha)_{\alpha\in \Delta}$. Since there are only $N$ possibilities for such a sequence, there exists a set $X \subseteq V(\idgraph{n}{t}{r})$ of relative size greater than $r$, such that for all $\sigma \in X$ the sequence of functions $(\Lambda^\sigma_\alpha)_{\alpha\in \Delta}$ is the same.

Since $\Pi$ is not playable, we can find an edge color $\alpha\in \Delta$ and sets $S,T\in \mathbb{P}_\alpha$ such that $\Lambda^\sigma_{\alpha}$ does not satisfy (B) from \cref{def:Playability} with $S,T$ for every $\sigma\in X$.
Note that this is because (A) is always satisfied by \cref{pr:(A)satisfied}.

By property \eqref{p:idratio} of the ID graph, there exist $\sigma_0,\sigma_1\in X$ that span an $\alpha$ edge in $\idgraph{n}{t}{r}$.
We let the winning strategies of $\PK$ in the games
$$\mathbb{G}(\fA_n,n,t,\mathbf{H}_{n,t,r})[\alpha,\sigma_0,S], \ \mathbb{G}(\fA_n,n,t,\mathbf{H}_{n,t,r})[\alpha,\sigma_1,T],$$
play against each other, where we start with an $\alpha$ edge with endpoints labeled by $\sigma_0,\sigma_1$.
This produces a labeling of the vertices that have distance at most $t$ from either of the endpoints of the edge (intuitively, the tree ``rooted'' at this edge of ``diameter'' $t+\frac{1}{2}$).
Now applying $\fA_n$ on the vertex with label $\sigma_0$ produces $\ta_0\in S$ on the the half edge that starts at this vertex and has color $\alpha$.
Similarly, we produce $\ta_1\in T$.
However, $(\ta_0,\ta_1)\not\in {\bf E}_\alpha$ by the definition of an edge in $\mathbb{P}_{\alpha}$.
This shows that $\fA_n$ does not solve $\Pi$.
\end{proof}

\subsection{Proof of \cref{thm:MainBorel}}
\label{subsec:local_to_borel}

We show how to use an infinite analogue of the ID graph to prove our main result in the Borel context, \cref{thm:MainBorel}.
Finding such a graph/graphing is in fact much easier in this context.

\begin{definition}
Let $r>0$. $\idgraphingcolored{r}=(\idgraphing{r},c)$ is an \emph{ID graphing}, if

\begin{enumerate}
    \item $\idgraphing{r}$ is an acyclic locally finite Borel graphing on a standard probability measure space $(X,\mu)$,
    \item \label{p:idgraphinglabels} $c$ is a Borel $\Delta$-edge-coloring of $\idgraphing{r}$, such that every vertex is adjacent to at least one edge of each color,
    \item \label{p:idgraphingratio} for each $\alpha \in [\Delta]$ the $\mu$-measure of a maximal independent set of $\idgraphing{r}^{\alpha}=(V(\idgraphing{r}), E(\idgraphing{r}) \cap c^{-1}(\alpha))$ is at most $r$.
\end{enumerate}
\end{definition}

\begin{proposition}
\label{p:idgraphing}
For each $r>0$ there exists an ID graphing $\idgraphingcolored{r}$.
\end{proposition}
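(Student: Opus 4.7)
The plan is to realize $\idgraphingcolored{r}$ as a Schreier graph of a Bernoulli shift action of a free group, exactly mirroring the finite construction of Proposition~\ref{pr:IDgraph} (where the configuration model plays the role of the free group Bernoulli shift). First I would fix $d \in \mathbb{N}$ large enough that $\tfrac{\ln(2d+1)}{2d+1} < r$, set $k := \Delta \cdot d$, and consider the Bernoulli shift of the free group $F_k$ on the standard probability space $(X,\mu) := ([0,1]^{F_k}, \mathrm{Leb}^{\otimes F_k})$. This action is free on a conull Borel subset, which I identify with $X$. I let $\idgraphing{r}$ be the associated Schreier graph: $x \sim y$ iff $y = s^{\pm 1} \cdot x$ for some generator $s$. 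Each orbit of $F_k$ carries a copy of the Cayley graph of $F_k$, namely the $2k$-regular tree, so $\idgraphing{r}$ is acyclic, locally finite, and Borel, which settles condition (1).

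Next I would partition the generators of $F_k$ into $\Delta$ blocks $\mathcal{G}_1, \ldots, \mathcal{G}_\Delta$ of size $d$, and define the Borel edge coloring $c$ by assigning color $\alpha$ to each edge of the form $\{x, s^{\pm 1} \cdot x\}$ with $s \in \mathcal{G}_\alpha$. Every vertex is incident to exactly $2d \geq 1$ edges of each color, verifying condition (2). Moreover, the subgraphing $\idgraphing{r}^{\alpha}$ is precisely the Schreier graph of the subgroup $\langle \mathcal{G}_\alpha \rangle \cong F_d$ acting freely on $X$, so each component is a $2d$-regular tree.

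It remains to find, for each $\alpha \in [\Delta]$, a Borel maximal independent set $I_\alpha \subseteq X$ in $\idgraphing{r}^{\alpha}$ with $\mu(I_\alpha) \leq r$. The idea is to apply a finitary-factor-of-iid greedy construction: equip each vertex with an auxiliary iid uniform $[0,1]$-label (independent of the Bernoulli labels defining the action) and let $I_\alpha$ consist of those vertices $v$ such that no $\idgraphing{r}^{\alpha}$-neighbor of $v$ with smaller label already lies in $I_\alpha$; on the $2d$-regular tree this fixed-point rule is pointwise determined by finite "ascending" paths, hence Borel. By invariance of the labeling under the $F_d$-action and ergodicity of the Bernoulli shift, $\mu(I_\alpha)$ equals the single-vertex inclusion probability, which for the $2d$-regular tree is $\tfrac{\ln(2d+1)}{2d+1} + o(1)$ as $d \to \infty$ (the standard Wormald/McKay calculation for greedy independent sets, which transfers to the tree setting by local convergence of random $2d$-regular graphs). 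For $d$ chosen as above this is $\leq r$, verifying condition (3).

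The main obstacle is the quantitative step: extracting the precise upper bound $\tfrac{\ln(2d+1)}{2d+1} + o(1)$ on the expected density of the greedy fiid MIS on the $2d$-regular tree. I would either quote this directly from the fiid literature (analogous to Lauer–Wormald/Hoppen–Wormald type analyses, which extend to free group Bernoulli shifts by standard local-global limit arguments) or, if a self-contained argument is preferable, use a coarser bound via an auxiliary Borel proper $(2d{+}1)$-coloring produced by Kechris–Solecki–Todorčević: take a color class of measure $\leq \tfrac{1}{2d+1}$ and enlarge it greedily to an MIS, verifying that the enlargement only adds a controlled (still $o_d(1)$) amount of measure because of the sparsity of uncovered vertices.
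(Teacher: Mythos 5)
Your plan starts in a reasonable direction---realizing the graphing as a Bernoulli Schreier graph of a free group does produce an acyclic, locally finite Borel graph with a Borel $\Delta$-edge-coloring in which every vertex meets each color---but the argument for condition~(3) misreads what is required and, as written, proves the wrong statement. The definition of an ID graphing, read alongside the finite \cref{pr:IDgraph} and the way it is used in the proof of \cref{thm:MainBorel}, requires that \emph{every} Borel independent set in $\idgraphing{r}^{\alpha}$ has $\mu$-measure at most $r$ (i.e.\ the measurable independence ratio is at most $r$): the proof of \cref{thm:MainBorel} produces a Borel set $Y$ with $\mu(Y)>r$ and must be able to conclude that $Y$ contains an $\alpha$-edge. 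Exhibiting one particular small Borel MIS via a greedy fiid rule (or by enlarging a color class from Kechris--Solecki--Todor\v{c}evi\'c) does not bound the measure of other Borel independent sets at all; the $2d$-regular-tree graphing supports many inequivalent Borel maximal independent sets of varying measures, and the greedy one is typically among the \emph{smallest}, so this step leaves the actual requirement untouched.

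To close the gap you need an upper bound on the measure of an \emph{arbitrary} Borel independent set in the Bernoulli graphing of $F_d$, and this is exactly what the comparison with random regular graphs supplies: any measurable independent set of density $\rho$ in the Bernoulli graphing yields, by local-global (sofic) approximation, independent sets of density $\approx\rho$ in random $2d$-regular graphs, whence $\rho$ is bounded by Bollob\'as's $\frac{2\log(2d)}{2d}+o(1)$. In other words, the step you flagged as ``the main obstacle'' is not a density computation for a greedy process but precisely the local-global transfer the paper invokes directly---the paper simply takes $\idgraphing{r}$ to be a local-global limit of the finite ID graphs from \cref{pr:IDgraph}, and the preservation of the independence-ratio bound under such limits is the content that makes the proof work. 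Once you replace your greedy-MIS step with this transfer argument, your Schreier-graph construction and the paper's limit construction become essentially the same proof, differing only in whether the free-group model or the configuration model is used as the finite approximation.
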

\begin{proof}
Let $\idgraphing{r}$ be a \emph{local-global limit} of the random graphs constructed in \cref{pr:IDgraph} (see, e.g., \cite{hatamilovaszszegedy} for the basic results about local-global convergence).
It is not hard to check that this limit satisfies the required properties.
\end{proof}


Now we are ready to prove the theorem. The proof will closely follow the argument given in the proof of the LOCAL version, i.e., Theorem \ref{th:MainLOCAL}, but can be understood without reading the latter.

\begin{proof}[Proof of \cref{thm:MainBorel}]
Let $\Pi$ be an LCL that is not playable, and $N\in \mathbb{N}$ be the number of all possible colorings of vertices of $(\mathbb{P}_{\alpha})_{\alpha\in \Delta}$ with two colors, that is, $N=2^{\sum_\alpha |V(\mathbb{P}_\alpha)|}$.
Set $r:=\frac{1}{N+1}$.

We define a Borel acyclic $\Delta$-regular graph $\fG$, with edges properly colored by $\Delta$, that does not admit a Borel solution of $\Pi$.
Vertices of $\fG$ are pairs $(x,A)$, where $x\in X$ is a vertex of $\idgraphing{r}$ and $A$ is a countable subgraph of $\idgraphing{r}$ that is a $\Delta$-regular tree that contains $x$ and the edge coloring of $\idgraphingcolored{r}$ induces a proper edge coloring of $A$.
We say that $(x,A)$ and $(y,B)$ are connected by an $\alpha$-edge in $\fG$ if $A=B$, $x,y$ are adjacent in $A$ and the edge that connects them has color $\alpha\in \Delta$.

Suppose for a contradiction that $\mathcal{A}$ is a Borel function that solves $\Pi$ on $\mathcal{G}$.

Next, we define a family of games parametrized by $\alpha \in [\Delta]$, $x \in V(\idgraphing{r})$ and $S \subseteq \Sigma$. For the reader familiar with Marks' construction, let us point out that for a fixed $x$, the games are analogues to the ones he defines, with the following differences: allowed moves are vertices of the ID graphing $\idgraphing{r}$ and restricted by its edge relation, and the winning condition is defined by a set of labels, not just merely one label.

So, the game
$$\mathbb{G}(\mathcal{A},\idgraphingcolored{r})[\alpha,x,S]$$ 
is defined as follows: $\PI$ and $\PK$ alternatingly label vertices of a $\Delta$-regular rooted tree. The root is labelled by $x$, and the labels come from $V(\idgraphing{r})$. In the $k$-th round, first $\PI$ labels vertices of distance $k$ from the root on the side of the $\alpha$ edge.
After that, $\PK$ labels all remaining vertices of distance $k$, etc (see  \cref{fig:game}). We also require the assignment of labels to give rise to an edge-color preserving homomorphism to $\idgraphingcolored{r}$. 

It follows from the acyclicity of $\idgraphing{r}$ that a play of a game determines a $\Delta$-regular rooted subtree of $\idgraphing{r}$ to which the restriction of the edge-coloring is proper. That is, it determines a vertex $(x,A)$ of $\mathcal{G}$. Let $\PI$ win iff the output of $\mathcal{A}$ on the half-edge determined by $(x,A)$ and the color $\alpha$ is \textbf{not} in $S$.

Define the function $\Lambda^x_{\alpha}:\mathbb{P}_\alpha\to \{\PI,\PK\}$ assigning the player to some $S \in V(\mathbb{P}_\alpha)$ who has a winning strategy in $\mathbb{G}(\mathcal{A},\idgraphingcolored{r})[\alpha,x,S]$.
Note that, since $\idgraphing{r}$ is locally finite, each player has only finitely many choices at each position. Thus, it follows from Borel Determinacy Theorem that $\Lambda^x_{\alpha}$ is well defined.

Now we show the analogue of Proposition \ref{pr:(A)satisfied}.

\begin{proposition}\label{pr:Borel(A)satisfied}
$(\Lambda^{x}_{\alpha})_{\alpha\in \Delta}$ satisfies (A) in \cref{def:Playability}.
\end{proposition}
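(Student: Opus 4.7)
The plan is to mimic the strategy-stealing argument of Proposition \ref{pr:(A)satisfied} in the Borel, potentially infinite-game setting. First, I would fix a tuple $(S_\alpha)_{\alpha \in [\Delta]}$ with $\Lambda^x_\alpha(S_\alpha) = \PI$ for each $\alpha$, and, for each $\alpha$, select a winning strategy $\tau_\alpha$ for $\PI$ in the game $\mathbb{G}(\mathcal{A}, \idgraphingcolored{r})[\alpha, x, S_\alpha]$. My goal is to use these $\Delta$ strategies to simultaneously construct a single $\Delta$-regular rooted labeled tree whose root is $x$ and whose induced edge-coloring is proper, thereby producing a genuine vertex $(x, A)$ of $\mathcal{G}$.

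The gluing of the strategies proceeds by induction on the round index $k$. In round $k$ of game $[\alpha, x, S_\alpha]$, the strategy $\tau_\alpha$ (playing as $\PI$) labels the distance-$k$ vertices on the $\alpha$-side of the root. I would feed these labels, for each $\beta \neq \alpha$, into game $[\beta, x, S_\beta]$ as the portion of $\PK$'s round-$k$ move that lies on the $\alpha$-side of that game. Since $\tau_\alpha$'s round-$(k+1)$ move depends only on the history of its own game up through round $k$, and that history is pinned down by the already-constructed rounds of the combined play, the recursion is well-defined. After countably many rounds, a complete labeling of the $\Delta$-regular tree rooted at $x$ has been produced, and its image is an edge-color preserving homomorphic copy $A$ of this tree inside $\idgraphing{r}$; thus $(x, A)$ is a vertex of $\mathcal{G}$.

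Next, I would apply $\mathcal{A}$ at $(x, A)$ and denote by $\ta_\alpha$ the output label on the half-edge belonging to the $\alpha$-colored edge incident to $(x, A)$. Since $\mathcal{A}$ is assumed to solve $\Pi$ on all of $\mathcal{G}$, the multiset $\{\ta_\alpha\}_{\alpha \in [\Delta]}$ is forced to lie in $\fV$. Moreover, the combined play, viewed from the perspective of game $[\alpha, x, S_\alpha]$, is a legal play in which $\PI$ has faithfully followed the winning strategy $\tau_\alpha$, so by definition of ``winning'' we must have $\ta_\alpha \notin S_\alpha$. Together, these two facts are exactly condition (A) of \cref{def:Playability}.

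The only place where real care is needed is the bookkeeping of the simultaneous play: one must verify by induction that every move in every game respects the edge-color preserving homomorphism constraint into $\idgraphingcolored{r}$, and that the resulting subgraph $A$ of $\idgraphing{r}$ is indeed a $\Delta$-regular tree on which the inherited edge-coloring is proper. These are however immediate consequences of the acyclicity and local $\Delta$-regularity of $\idgraphing{r}$ together with the legality of the individual strategies, so no new idea beyond the LOCAL analogue is required; the Borel determinacy theorem already did the heavy lifting in the very definition of $\Lambda^x_\alpha$.
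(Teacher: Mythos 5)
Your proposal is correct and matches the paper's argument exactly: the paper also glues the $\Delta$ winning strategies for $\PI$ against one another to produce a vertex $(x,A)$ of $\mathcal{G}$, then invokes correctness of $\mathcal{A}$ to get $\{\ta_\alpha\}_{\alpha\in[\Delta]}\in\fV$ and the definition of a winning strategy to get $\ta_\alpha\notin S_\alpha$. You have merely spelled out the round-by-round bookkeeping that the paper dismisses as ``letting these strategies play against each other in the obvious way.''
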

\begin{proof}
Assume that $(S_\alpha)_{\alpha\in \Delta}$ is such that $\Lambda^x_\alpha(S_\alpha)=\PI$.
This means that $\PI$ has winning strategy in all the games corresponding to $S_\alpha$. Letting these strategies play against each other in the obvious way, produces a vertex $(x,A)$ of $\mathcal{G}$.
Since $\fA$ solves $\Pi$ it has to output labeling of half edges $(\ta_{\alpha})_{\alpha\in \Delta}\in \mathcal{N}$, where $\ta_\alpha$ is a label on the half edge that start at $(x,A)$ and has color $\alpha$.
Note that we must have $\ta_\alpha\not\in S_\alpha$ by the definition of winning strategy for $\PI$.
This shows that (A) of Definition \ref{def:Playability} holds.
\end{proof}

 Let $f$ be 
 defined by $$x\mapsto (\Lambda^x_{\alpha})_{\alpha\in\Delta}.$$ Note that $f$ has a finite range. Using the fact that the allowed moves for each player can be determined in a Borel way, uniformly in $x$, it is not hard to see that for each element $s$ in the range, $f^{-1}(s)$ is in the algebra generated by sets that can be obtained by applying game quantifiers to Borel sets (for the definition see \cite[Section 20.D]{kechrisclassical}). It has been shown by Solovay \cite{Solovay} and independently by Fenstad-Norman \cite{fenstadnorman} that such sets are provably $\Delta^1_2$, and consequently, measurable. Therefore, $f$ is a measurable map.
 
 As the number of sequences of functions $(\Lambda^x_{\alpha})_{\alpha\in\Delta}$ is $\leq N$, by the choice of $r$, there exists a Borel set $Y$ with $\mu(Y) > r$, such that $f$ is constant on $Y$.
 
Since $\Pi$ is not playable Proposition \ref{pr:Borel(A)satisfied} gives that there is an $\alpha\in \Delta$ and $S,T\in \mathbb{P}_\alpha$ that violate condition (B).
Recall that the measure of an independent Borel set of $\idgraphing{r}^\alpha$ is at most $r$.
That means that there are $x,y\in Y$ such that $(x,y)$ is an $\alpha$-edge in $\idgraphing{r}$.
We let the winning strategies of $\PK$ in the games
$$\mathbb{G}(\mathcal{A},\idgraphingcolored{r})[\alpha,x,S], \ \mathbb{G}(\mathcal{A},\idgraphingcolored{r})[\alpha,y,T],$$
play against each other, where we start with an $\alpha$ edge with endpoints labeled by $x,y$.
This produces a labeling of a $\Delta$-regular tree $A$ with labels from $\idgraphing{r}$ such that $(x,A)$ and $(y,A)$ span an $\alpha$-edge in $\idgraphing{r}$.
Applying $\mathcal{A}$ on the half-edge determined by $(x,A)$ and the color $\alpha$, we obtain $\ta_0\in S$.
Similarly, we produce $\ta_1\in T$ for $(y,A)$.
However, $(\ta_0,\ta_1)\not\in {\bf E}_\alpha$ by the definition of an edge in $\mathbb{P}_{\alpha}$.
This shows that $\fA$ does not produce a Borel solution to $\Pi$.
\end{proof}

\begin{remark}
One can give an alternative proof of the existence of a Borel $\Delta$-regular forest that does not admit a Borel homomorphism to $G_k$ that avoids using the graphing $\idgraphing{r}$ and uses the original example described by Marks \cite{DetMarks} instead.
The reason is that the example graphs $G_k$ contain $K_\Delta$, as discussed in the proof of \cref{th:homomorphism}.
This takes care of the ``non-free'' as in the argument of Marks.
Then it is enough to use the pigeonhole principle on $\mathbb{N}$ instead of the independence ratio reasoning.
\end{remark}

\section{Separation Of Various Complexity Classes}
\label{sec:separating_examples}

In this section we provide examples of several problems that separate some classes from \cref{fig:big_picture_trees}.
The examples show two things. First, we have $\localo(\poly\log 1/\eps) \not= \localo(O(\log\log 1/\eps))$. 
This shows that there are problems such that their worst case complexity is at least $\Theta(\log n)$ on finite $\Delta$-regular trees, but their average local complexity is constant. 
Second, we show that there are problems in the class $\borel$ that are not in the class $\rlocal(O(\log\log n)) = \rlocal(o(\log n))$, on $\Delta$-regular trees. This shows that one cannot in general hope that results from (Borel) measurable combinatorics can be turned into very efficient (sublogarithmic) distributed algorithms. 

\subsection{Preliminaries}
\label{subsec:preliminaries_separating}

We first show that there is a strong connection between randomized local complexities and uniform local complexities. Afterwards, we introduce a generic construction that turns any LCL into a new LCL. We later use this generic transformation to construct an LCL that is contained in the set $\borel \setminus \rlocal(O(\log \log n))$.

\paragraph{Uniform vs Local Randomized Complexity} 

We will now discuss the connections between uniform and randomized complexities. 
Note that the easy part of the connection between the two concepts is turning uniform local algorithms into randomized ones, as formalized in the following proposition. 

\begin{proposition}
\label{prop:bee_gees}
We have $\olocal(t(\eps)) \subseteq \rlocal(t(1/n^{O(1)}))$. 
\end{proposition}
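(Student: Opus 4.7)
The plan is a straightforward union bound argument. Given a uniform local algorithm $\fA$ solving an LCL $\Pi$ with uniform complexity $t(\eps)$, I will exhibit a randomized local algorithm $\fA'$ of local complexity $T := t(1/n^{c})$ for a suitable constant $c \geq 2$. At each vertex $v$, the algorithm $\fA'$ inspects its $T$-hop neighborhood, simulates $\fA$ there, and outputs $\fA$'s label if $\fA$ has coding radius at most $T$ at $v$ (that is, $R_{\fA}(v) \leq T$), and outputs an arbitrary default label otherwise.

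For the correctness analysis, the definition of uniform complexity gives, for each individual vertex $v$, the bound $\P(R_{\fA}(v) > T) \leq 1/n^{c}$. A union bound over the $n$ vertices of the input tree then shows that the probability that some vertex $v$ has $R_{\fA}(v) > T$ is at most $n \cdot 1/n^{c} = 1/n^{c-1}$. Choosing $c = 2$ yields an overall failure probability of at most $1/n$, matching the threshold in the definition of $\rlocal$. On the complementary high-probability event, $\fA'$ reproduces exactly the output of $\fA$, which by assumption is a valid $\Pi$-coloring.

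The only conceptual subtlety is that the uniform complexity bound is phrased on the infinite $\Delta$-regular tree $T_\Delta$, while randomized local complexity is measured on finite $\Delta$-regular trees, which may contain virtual half-edges. I would bridge this gap by a routine coupling argument: graft an infinite $\Delta$-regular subtree endowed with fresh i.i.d.\ random labels onto every virtual half-edge of the finite input tree, producing an infinite $\Delta$-regular tree $\widetilde{T}$ together with an i.i.d.\ labeling. Each vertex $v$'s $T$-hop neighborhood in $\widetilde{T}$ is identically distributed to a neighborhood in $T_\Delta$, so the per-vertex bound on $R_{\fA}(v)$ transfers to the finite setting and the union bound goes through. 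I do not anticipate any real obstacle; the proof is essentially a one-line observation, with the coupling being the only point requiring a brief formal justification.
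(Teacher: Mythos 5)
Your proof is correct and follows essentially the same route as the paper's: simulate the uniform algorithm by extending the finite tree past each virtual half-edge with an artificial infinite subtree and fresh randomness, set the threshold $\eps = 1/n^{O(1)}$, and union-bound over the $n$ vertices. Your write-up is slightly more explicit about the fallback output when the coding radius is exceeded and about the coupling with $T_\Delta$, but there is no substantive difference in approach.
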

\begin{proof}
We claim that an uniform local algorithm $\fA$ with an uniform local complexity of $t(\eps)$ can be turned into a local randomized algorithm $\fA'$ with a local complexity  of $t(1/n^{O(1)})$. 

The algorithm $\fA'$ simulates $\fA$ on an infinite $\Delta$-regular tree -- each vertex $u$ of degree less than $\Delta$ in the original tree pretends that the tree continues past its virtual half-edges and the random bits of $u$ are used to simulate the random bits in this virtual subtree. 
Choosing $\eps = 1/n^{O(1)}$, one gets that the probability of $\fA'$ needing to look further than $t(\eps)$ for any vertex is bounded by $1/n^{O(1)}$, as needed in the definition of the randomized local complexity. 
\end{proof}

On the other hand, we will use the following proposition from \cite{grebik_rozhon2021toasts_and_tails}. It informally states that the existence of \emph{any} uniform local algorithm together with the existence of a sufficiently fast randomized local algorithm for a given LCL $\Pi$ directly implies an upper bound on the uniform complexity of $\Pi$.
\begin{proposition}[\cite{grebik_rozhon2021toasts_and_tails}]
\label{prop:local->uniform}
Let $\fA$ be an uniform local algorithm solving an LCL problem $\Pi$ such that its \emph{randomized} local complexity on finite $\Delta$-regular trees is $t(n)$ for $t(n) = o(\log n)$. 
Then, the uniform local complexity of $\fA$ on infinite $\Delta$-regular trees is $O(t(1/\eps))$.
\end{proposition}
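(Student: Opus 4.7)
The plan is to directly bound the coding radius of $\fA$ on the infinite tree by comparing it to $\fA$'s behavior as a randomized algorithm on a carefully chosen finite tree. The key observation is that an uniform local algorithm is specified as a function on finite neighborhoods, so its decision to terminate at a given vertex is determined by the local structure and random bits in a ball around that vertex. If I can embed the relevant ball into a finite tree where the randomized guarantee on $\fA$ applies, I can transfer that guarantee to the infinite setting.

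First I would fix $\eps \in (0,1)$ and a vertex $v$ of the infinite $\Delta$-regular tree $T_\Delta$, and set $n := \lceil 1/\eps \rceil$. Since $t(n) = o(\log n)$, the ball $B(v, t(n))$ in $T_\Delta$ has size at most $\Delta^{O(t(n))} = n^{o(1)}$, so for $\eps$ small enough it fits comfortably inside any finite $\Delta$-regular tree on $n$ vertices. I would then construct a finite $\Delta$-regular tree $T^*$ of size $n$ (in the sense of \cref{subsec:local_problems}, allowing virtual half-edges near the boundary) containing a vertex $v'$ such that $B_{T^*}(v', t(n))$ is isomorphic, as an unlabelled rooted tree, to $B(v, t(n))$ and contains no virtual half-edges. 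Finally, I would couple the random strings on $T^*$ and $T_\Delta$ so that corresponding vertices under the isomorphism share their strings.

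Next, I would invoke the hypothesis that the randomized local complexity of $\fA$ on finite $\Delta$-regular trees is $t(n)$: viewing $\fA$ as a randomized algorithm by capping the coding radius at $t(n)$, the probability that $\fA$ fails to produce a valid $\Pi$-coloring of $T^*$ is at most $1/n \le \eps$. In particular, with probability at least $1-\eps$, every vertex of $T^*$ — and in particular $v'$ — finishes within coding radius $t(n)$. Because $\fA$'s termination at $v'$ depends only on $B_{T^*}(v', t(n))$ and the random bits there, the coupling implies that under the same event $\fA$ also terminates at $v$ in $T_\Delta$ within radius $t(n)$. Hence $\mathbb{P}(R_\fA(v) > t(n)) \le \eps$, and since $n = \lceil 1/\eps \rceil$ this yields the uniform complexity bound $O(t(1/\eps))$.

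The only steps requiring attention are the construction of $T^*$ together with the coupling — which is routine given the size estimate $|B(v, t(n))| = n^{o(1)} \ll n$ — and making sure that the statement "$\fA$ has randomized complexity $t(n)$" really does imply termination of $\fA$ within radius $t(n)$ at \emph{every} vertex of $T^*$ with probability $\ge 1-1/n$. The latter follows from the natural way of interpreting a uniform algorithm as a randomized one: cap the coding radius at $t(n)$ and declare failure otherwise, so that the $(1-1/n)$-correctness guarantee subsumes global termination within $t(n)$. Neither step is deep, so the argument goes through cleanly.
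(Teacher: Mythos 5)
The paper does not prove this proposition---it cites it from \cite{grebik_rozhon2021toasts_and_tails}---so there is no in-paper proof to compare against, but your argument is correct and is the natural direct transfer argument. The two load-bearing points are exactly the ones you flag: (i) $|B(v,t(n))|=\Delta^{O(t(n))}=n^{o(1)}\ll n$ for $n$ large, so you can embed a fully branching ball of radius $t(n)$ around some vertex $v'$ inside a finite $\Delta$-regular tree $T^*$ on $n$ vertices and couple the random strings; and (ii) the event $\{R_\fA(v')>t(n)\}$ is measurable with respect to the coupled bits on that ball alone, so its probability on $T^*$ and on $T_\Delta$ coincide. Given those, $\P(R_\fA(v)>t(n))\le\P(\fA_{t(n)}\text{ fails on }T^*)\le 1/n\le\eps$ with $n=\lceil 1/\eps\rceil$, which is the stated bound (and large $\eps$ is absorbed into the $O(\cdot)$ since $R_\fA<\infty$ a.s.).

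The only point that deserves to be stated with a bit more care is the convention you identify at the end. For a uniform algorithm, the phrase ``its randomized local complexity is $t(n)$'' has content only if the capped algorithm $\fA_{t(n)}$ \emph{fails by definition} whenever some vertex has not terminated by radius $t(n)$; if instead $\fA_{t(n)}$ were allowed to output an arbitrary placeholder label on non-termination, the hypothesis would not control the coding radius at all (the placeholder could accidentally be globally correct while $R_\fA$ is enormous), and the proposition would be false. You choose the right convention and say why, so the argument goes through; I would just make this the explicit, up-front definition of ``randomized local complexity of a uniform algorithm'' rather than a remark at the end, since it is precisely the bridge that makes the hypothesis usable.

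One small hygiene point: the passage from $t(\lceil 1/\eps\rceil)$ to $O(t(1/\eps))$ implicitly assumes $t$ is (essentially) non-decreasing, which is harmless and standard but worth a word.
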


\cref{prop:local->uniform} makes our life simpler, since we can take a known randomized distributed local algorithm with local complexity $g(n) = o(\log n)$ and check if it works without the knowledge of $n$. If yes, this automatically implies that the  uniform local complexity of the algorithm is $h(\eps) = O(g(1/\eps))$. 
In particular, combining \cref{prop:local->uniform} with the work of \cite{chang_kopelowitz_pettie2019exp_separation,chang_pettie2019time_hierarchy_trees_rand_speedup} and verifying that the algorithms of 
Fischer and Ghaffari~\cite[Section 3.1.1]{fischer-ghaffari-lll} and  Chang~et~al.~\cite[Section 5.1]{ChangHLPU20} work without the knowledge of $n$, we obtain the following result.

\begin{theorem}
\label{thm:classification_local_uniform_trees}
We have:
\begin{itemize}
    \item $\local(O(1)) = \localo(O(1))$
    \item $\rlocal(O(\log^* n)) = \localo(O(\log^* 1/\eps))$
    \item $\rlocal(O(\log\log n)) = \localo(O(\log\log 1/\eps))$
\end{itemize}
Moreover, there are no other possible uniform local complexities for $t(\eps) = o(\log 1/\eps)$. 
\end{theorem}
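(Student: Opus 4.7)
The plan is to combine the two transfer results already stated—\cref{prop:bee_gees} (uniform implies randomized at error $1/n^{O(1)}$) and \cref{prop:local->uniform} (any uniform algorithm with sufficiently fast randomized complexity has uniform complexity $O(t(1/\eps))$)—with the classification \cref{thm:basicLOCAL} of randomized local complexities on $\Delta$-regular trees. Each of the three equalities splits into two inclusions which I would prove separately, and the final ``no other complexities'' statement will then be a short corollary.

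For the forward inclusions, I would use \cref{prop:bee_gees}. If $\Pi \in \localo(f(\eps))$ with $f \in \{O(1), O(\log^* 1/\eps), O(\log\log 1/\eps)\}$, then $\Pi \in \rlocal(f(1/n^{O(1)}))$, and in each of the three cases $f(1/n^{O(1)})$ equals $O(1)$, $O(\log^* n)$, or $O(\log\log n)$ respectively. For the reverse inclusions, I would invoke \cref{prop:local->uniform}: it suffices to exhibit \emph{some} uniform algorithm solving $\Pi$, because the proposition automatically upgrades the randomized bound $t(n) = o(\log n)$ into the uniform bound $O(t(1/\eps))$. The key input is that the known algorithms achieving the randomized complexities in each class do not in fact require the value $n$. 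Specifically, for $\rlocal(O(\log^* n))$ we use the fact (from the classification machinery of~\cite{chang_kopelowitz_pettie2019exp_separation,chang_pettie2019time_hierarchy_trees_rand_speedup}) that any such problem admits a deterministic $O(\log^* n)$ algorithm; such algorithms are based on iterated color reduction from the identifiers/random bits and can be executed without knowing $n$. For $\rlocal(O(\log\log n))$, one verifies that the Fischer–Ghaffari algorithm~\cite[Section 3.1.1]{fischer-ghaffari-lll} and the Chang~et~al.\ algorithm~\cite[Section 5.1]{ChangHLPU20} are in fact ``uniform-compatible'' (they use only local information and random bits, never the global parameter $n$). The case $\local(O(1)) = \localo(O(1))$ is immediate in both directions.

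For the final sentence, suppose $\Pi \in \localo(t(\eps))$ with $t(\eps) = o(\log 1/\eps)$. Applying \cref{prop:bee_gees} gives $\Pi \in \rlocal(t(1/n^{O(1)}))$, and $t(1/n^{O(1)}) = o(\log n)$ by assumption. By \cref{thm:basicLOCAL}, the only randomized complexities strictly below $\Theta(\log n)$ are $O(1)$, $\Theta(\log^* n)$, and $\Theta(\log\log n)$. Hence $\Pi$ lies in one of the three randomized classes on the right-hand sides of the three equalities, and therefore—by the equalities just proved—its uniform complexity is one of $O(1)$, $O(\log^* 1/\eps)$, or $O(\log\log 1/\eps)$.

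The main obstacle, and essentially the only nontrivial verification, is the claim that the known randomized algorithms for the $O(\log^* n)$ and $O(\log\log n)$ classes really can be executed without access to $n$; everything else is a direct composition of \cref{prop:bee_gees}, \cref{prop:local->uniform}, and \cref{thm:basicLOCAL}. I would therefore devote most of the write-up to inspecting the cited algorithms and confirming that $n$ enters only through error-probability bookkeeping, which is replaced in the uniform setting by the coding-radius tail bound.
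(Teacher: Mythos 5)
Your proposal matches the paper's proof essentially step for step: \cref{prop:bee_gees} gives all three forward inclusions and (combined with \cref{thm:basicLOCAL}) the ``no other complexities'' statement, \cref{prop:local->uniform} gives the reverse inclusions once the candidate algorithms are seen to run without knowledge of $n$, and the paper likewise isolates the $n$-independence of the Korman--Sereni--Viennot/Holroyd--Schramm--Wilson coloring routine and the Fischer--Ghaffari plus Chang~et~al.\ LLL routine as the only nontrivial verifications (the latter carried out in Appendix~\ref{sec:LLL}). One small wording caution: as stated in \cref{prop:local->uniform} it is not enough to exhibit ``some'' uniform algorithm and separately cite the known randomized bound for $\Pi$---the uniform algorithm itself must have randomized complexity $t(n)=o(\log n)$ when handed $n$, which is exactly what your subsequent sentence about the known algorithms not using $n$ supplies, so the argument is fine.
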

We note that the first two items are proven in \cite{grebik_rozhon2021toasts_and_tails}.
For the third item it suffices by known reductions to find an uniform algorithm
solving a version of the distributed Lov\'asz Local Lemma (LLL) on so-called tree-structured dependency graphs considered in~\cite{ChangHLPU20}. 

\begin{proof}[Proof sketch]
The proof follows from \cref{prop:bee_gees} and the following ideas. 
The first item follows from the fact that any local algorithm with local complexity $O(1)$ can simply be made uniform. More specifically, there exists a constant $n_0$ --- depending only on the problem $\Pi$ --- such that the algorithm, being told the size of the graph is $n_0$, is correct on \emph{any} graph of size $n \ge n_0$. 

For the second item, by the work of \cite{chang_kopelowitz_pettie2019exp_separation,chang_pettie2019time_hierarchy_trees_rand_speedup}, it suffices to check that there is an uniform distributed $(\Delta + 1)$-coloring algorithm with uniform local complexity $O(\log^* 1/\eps)$. Such an algorithm was given in \cite{HolroydSchrammWilson2017FinitaryColoring}, or follows from the work in \cite{Korman_Sereni_Viennot2012Pruning_algorithms_+_oblivious_coloring} and \cref{prop:local->uniform}. 

Similarly, for the third item, by the work of \cite{chang_pettie2019time_hierarchy_trees_rand_speedup} it suffices to check that there is an uniform distributed algorithm for a specific LLL problem on trees
with uniform local complexity $O(\log\log 1/\eps)$.
Such an algorithm can be obtained by combining the randomized \emph{pre-shattering} algorithm of Fischer and Ghaffari~\cite[Section 3.1.1]{fischer-ghaffari-lll} and the deterministic \emph{post-shattering} algorithm of Chang~et~al.~\cite[Section 5.1]{ChangHLPU20} in a \emph{graph shattering} framework~\cite{barenboim2016locality}, which solves the LLL problem 
with local complexity $O(\log\log n)$.
By \cref{prop:local->uniform}, it suffices to check that this algorithm can be made to work even if it does not know the size of the graph $n$. We defer the details to \cref{sec:LLL}.
This finishes the proof of \cref{thm:classification_local_uniform_trees}. 
\end{proof}

\paragraph{Adding Paths}
Before we proceed to show some separation results, we define a certain construction that turns any LCL problem $\Pi$ into a new LCL problem $\overline{\Pi}$ with the following property. If the original problem $\Pi$ cannot be solved by a fast local algorithm, then the same holds for $\overline{\Pi}$. However, $\overline{\Pi}$ might be strictly easier to solve than $\Pi$ for $\borel$ constructions. 

\begin{definition}
Let $\Pi=(\Sigma,\fV,\fE)$ be an LCL. 
We define an LCL $\overline{\Pi}=(\Sigma',\fV',\fE')$ as follows.
Let $\Sigma'$ be $\Sigma$ together with one new label.
Let $\fV'$ be the union of $\fV$ together with any cardinality-$\Delta$ multiset that contains the new label exactly two times.
Let $\fE'$ be the union of $\fE$ together with the cardinality-$2$ multiset that contains the new label twice.
\end{definition}

In other words, the new label determines doubly infinite lines in infinite $\Delta$-regular trees, or lines that start and end in virtual half-edges in finite $\Delta$-regular trees.
Moreover, a vertex that is on such a line does not have to satisfy any other vertex constraint.
We call these vertices \emph{line-vertices} and each edge on a line a \emph{line-edge}.


\begin{proposition}
\label{pr:AddingLineLLL}
Let $\Pi$ be an LCL problem such that $\overline{\Pi}\in \rlocal(t(n))$ for $t(n)  = o(\log(n))$. Then also $\Pi \in \rlocal(O(t(n)))$.
\end{proposition}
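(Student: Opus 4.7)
The plan is to show that a fast randomized algorithm for $\overline{\Pi}$ can be converted to one for $\Pi$ with constant-factor overhead. Fix $\fA = \{\fA_n\}$ for $\overline{\Pi}$ with randomized complexity $t(n) = o(\log n)$ and failure probability at most $1/n$ on inputs of size $n$; I will construct $\fA'_n$ for $\Pi$ of complexity $O(t(n))$. On input $T$ of size $n$, $\fA'_n$ sets $N = n^3$ and simulates $\fA_N$ on the canonical embedding of $T$ into $T_\Delta$: each virtual half-edge of $T$ is completed into an external subtree, with random bits generated by the incident boundary vertex of $T$ and broadcast $t(N)$ hops; since $T$ is a tree the external subtrees dangling from different boundary vertices are disjoint, so no conflict arises. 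Each vertex of $T$ then computes $\fA_N$'s output on its $t(N)$-ball locally, giving total complexity $O(t(N)) = O(t(n))$ using that $t(n^3) = O(t(n))$ for each complexity class in \cref{thm:basicLOCAL}.

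Correctness has two ingredients. First, a standard argument shows that $\fA_N$'s local error at any vertex $v\in T$ is at most $1/N$ (by completing $v$'s $t(N)$-ball in $T$ to a $t(N)$-ball in an auxiliary size-$N$ graph and invoking the global error bound of $\fA_N$). A union bound over the $O(n)$ $\overline{\Pi}$-constraints inside $T$ then yields that $\fA_N$ violates some $\overline{\Pi}$-constraint inside $T$ with probability $O(n/N) = O(1/n^2)$. Second, I claim that with probability $1 - o(1/n)$ no vertex of $T$ receives a line half-edge label, which then makes the output a valid $\Pi$-solution on $T$. By automorphism invariance of $T_\Delta$ under iid bits, the probability $p$ that a vertex is a line vertex is the same at every vertex, and by symmetry each specific half-edge is a line half-edge with probability $q = 2p/\Delta$. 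The $\overline{\Pi}$-edge constraint forces line half-edges to propagate along paths in $T_\Delta$, so if $v$ is line then, on both sides of the line, some vertices $w, w'$ at distance $D = 2t(N)+1$ from $v$ must each carry a specific half-edge (pointing back toward $v$) that is line. Since the pairwise distances among $v, w, w'$ exceed $2t(N)$, their $t(N)$-balls are disjoint and the events that these three specific half-edges are line are mutually independent, each with probability $q$. Marginalizing over the random choice of line path using symmetry and accounting for $\fA_N$'s validity failures along the length-$D$ path (total probability $O(D/N)$ by union bound) produces a self-bounding inequality of the form
\[
p' \le p' \cdot (2p/\Delta)^2 + O(t(N)/N),
\]
where $p' = p/\binom{\Delta}{2}$. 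Since $\Delta > 2$ gives $(2p/\Delta)^2 \le (2/\Delta)^2 < 1$, this contractive inequality forces $p = O(t(N)/N)$. Consequently, the probability that \emph{some} half-edge of $v$ carries a line label is also $O(t(N)/N)$ (since this event exceeds the line-vertex event by at most the local vertex-constraint error $1/N$). A union bound over the $n$ vertices of $T$ yields line-label probability $O(n t(N)/N) = O(t(n)/n^2) = o(1/n)$. Combining the two error sources gives total failure $\le 1/n$ and hence $\Pi \in \rlocal(O(t(n)))$.

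The main obstacle is rigorously formalizing the line-propagation step, because the vertices $w, w'$ at distance $D$ depend on the algorithm's random choices at the intermediate vertices and are not fixed in advance. The way around this is to work with the \emph{unambiguous} events ``a specific half-edge of a specific vertex is a line half-edge,'' whose probability is uniformly $q = 2p/\Delta$ by automorphism invariance, and to sum over possible line paths using the symmetry of $T_\Delta$. This turns the ostensibly path-dependent propagation statement into a product of independent endpoint events, and combining this with the $O(t(N)/N)$ bound on local validity failures along the path produces the contractive inequality that drives the proof. Once this step is in hand, verifying $t(n^3) = O(t(n))$ for each complexity in the classification and combining the error bounds is routine.
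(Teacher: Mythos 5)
Your high-level reduction mirrors the paper's: boost the success probability of the algorithm for $\overline{\Pi}$, prove that line labels occur at any fixed vertex with probability $o(1/n)$, then union-bound over vertices and constraints. The first and last ingredients are sound. The gap is in the middle claim, specifically in the ``self-bounding inequality'' $p' \le p'\cdot(2p/\Delta)^2 + O(t(N)/N)$, which you assert but do not derive, and which I do not see how to obtain.

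You correctly identify the obstacle that the exit vertices $w,w'$ depend on the algorithm's random choices. The workaround you propose, summing over the possible line paths using symmetry, does not repair it. If you decompose the event ``$v$ is a line vertex via a fixed pair of incident edges'' according to the path $P$ that the line traces out to distance $D=2t(N)+1$ in both directions, each conditional event does force three specific, pairwise $>2t(N)$-separated half-edges to be line, hence has probability at most $q^3=(2p/\Delta)^3$ by the independence of disjoint $t(N)$-balls. But there are $\Theta((\Delta-1)^{2D})$ such paths, so after the union over paths you obtain
\[
p' \;\le\; (\Delta-1)^{2(D-1)}\,(2p/\Delta)^3 + O(t(N)/N),
\]
not the inequality you wrote. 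Since $D=\Theta(t(N))$, the coefficient $(\Delta-1)^{2(D-1)}$ is enormous and this bound is vacuous unless one already knows $p$ is exponentially small in $t(N)$ --- which is precisely what you are trying to prove. There is no way to obtain a contractive coefficient of size $(2p/\Delta)^2<1$ by this route; the exponential multiplicity of paths is unavoidable once you work with per-half-edge events.

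The paper avoids the path-dependence entirely via a parity statistic. For each subtree $T_i$ of a root $u$, let $Y_i$ be the parity of the number of line edges in $T_i$ crossing a fixed sphere (at depth $5t(n)$). Each $Y_i$ is a deterministic function of the output in a bounded annulus, and the $Y_i$ are mutually independent; crucially, validity of the output on $\overline{\Pi}$ forces that either all $Y_i=0$ (when $u$ is not a line vertex) or exactly two of the $Y_i$ equal $1$ (when $u$ is). If $\P(Y_i=1)$ were not polynomially small, then with too-high probability at least three of the iid $Y_i$'s would equal $1$, contradicting the correctness guarantee. This yields the needed bound on the line-vertex probability with a single application of independence and no summation over paths. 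Replacing your per-half-edge events with this parity statistic is the missing idea; the rest of your argument then closes along the lines you sketched.
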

\begin{proof}
Let $\overline{\fA}$ be a randomized $\local$ algorithm that solves $\overline{\Pi}$ in $t(n) = o(\log n)$ rounds with probability at least $1 - 1/n^C$ for some sufficiently large constant $C$. We will construct an algorithm $\fA$ for $\Pi$ with complexity $t(n)$ that is correct with probability $1 - \frac{4}{n^{C/3 - 2}}$. The success probability of $\fA$ can then be boosted by ``lying to it'' that the number of vertices is $n^{O(1)}$ instead of $n$; this increases the running time by at most a constant factor and boosts the success probability back to $1 - 1/n^C$. 

Consider a $\Delta$-regular rooted finite tree $T$ of depth $10t(n)$ and let $u$ be its root vertex. 
Note that $|T| \le \Delta^{10t(n)+1} < n$, for $n$ large enough.

We start by proving that when running $\overline{\fA}$ on the tree $T$, then $u$ is most likely not a line-vertex. This observation then allows us to turn $\overline{\fA}$ into an algorithm $\fA$ that solves $\Pi$ on any $\Delta$-regular input tree.
Let $X$ be the indicator of $\fA$ marking $u$ as a line-vertex. Moreover, for $i \in [\Delta]$, let $Y_i$ be the indicator variable for the following event. The number of line edges in the $i$-th subtree of $u$ with one endpoint at depth $5t(n)$ and the other endpoint at depth $5t(n) + 1$ is odd.

By a simple parity argument we can relate the value of $X$ with the values of $Y_1,Y_2,\ldots,Y_\Delta$ as follows. If $X = 0$, that is, $u$ is not a line-vertex, then all of the $Y_i$'s have to be $0$, as each path in the tree $T$ is completely contained in one of the $\Delta$ subtrees of $u$. On the other hand, if $u$ is a line-vertex, then there exists exactly one path, the one containing $u$, that is not completely contained in one of the $\Delta$ subtrees of $u$. This in turn implies that exactly two of the $\Delta$ variables $Y_1,Y_2,\ldots,Y_\Delta$ are equal to $1$.

The random variables $Y_1,Y_2,\ldots,Y_\Delta$ are identically distributed and mutually independent. Hence, if $\P(Y_i = 1) > \frac{1}{n^{C/3}}$, then the probability that there are at least $3$ $Y_i$'s equal to $1$ is strictly greater than $\left(\frac{1}{n^{C/3}}\right)^3 = \frac{1}{n^C}$. This is a contradiction, as in that case $\overline{\fA}$ does not produce a valid output, which according to our assumption happens with probability at most $\frac{1}{n^C}$. 

Thus, we can conclude that $\P(Y_i = 1) \leq \frac{1}{n^{C/3}}$. By a union bound, this implies that all of the $Y_i$'s are zero with probability at least $\frac{1}{n^{C/3 - 1}}$, and in that case $u$ is not a line-vertex. 

Finally, the algorithm $\fA$ simulates $\overline{\fA}$ as if the neighborhood of each vertex was a $\Delta$-regular branching tree up to depth at least $t(n)$. 

It remains to analyze the probability that $\fA$ produces a valid solution for the LCL problem $\Pi$. To that end, let $v$ denote an arbitrary vertex of the input tree. The probability that the output of $v$'s half edges satisfy the vertex constraint of the LCL problem $\overline{\Pi}$ is at least $1 - 1/n^C$. Moreover, in the case that $v$ is not a line-vertex, which happens with probability at least $1 - \frac{1}{n^{C/3 - 1}}$, the output of $v$'s half edges even satisfy the vertex constraint of the LCL problem $\Pi$. Hence, by a union bound the vertex constraint of the LCL problem $\Pi$ around $v$ is satisfied with probability at least $1 - \frac{2}{n^{C/3 - 1}}$. With exactly the same reasoning, one can argue that the probability that the output of $\fA$ satisfies the edge constraint of the LCL problem $\Pi$ at a given edge is also at least $1 - \frac{2}{n^{C/3 - 1}}$. Finally, doing a union bound over the $n$ vertex constraints and $n-1$ edge constraints it follows that $\fA$ produces a valid solution for $\Pi$ with probability at least $1 - \frac{4}{n^{C/3 - 2}}$.
\end{proof}

\begin{remark}
The ultimate way how to solve LCLs of the form $\overline{\Pi}$ is to find a spanning forest of lines.
This is because once we have a spanning forest of lines, then we might declare every vertex to be a line-vertex and label every half-edge that is contained on a line with the new symbol in $\overline{\Pi}$.
The remaining half-edges can be labeled arbitrarily in such a way that the edge constraints are satisfied.

Put otherwise, once we are able to construct a spanning forest of lines and $\fE\not=\emptyset$, where $\Pi=(\Sigma,\fV,\fE)$, then $\overline{\Pi}$ can be solved.
Moreover, in that case the complexity of $\overline{\Pi}$ is upper bounded by the complexity of finding the spanning forest of lines.

Lyons and Nazarov \cite{lyons2011perfect} showed that $\perfmatch\in \fiid$ and it is discussed in  \cite{LyonsTrees} that the construction can be iterated $(\Delta-2)$-many times. After removing each edge that is contained in one of the $\Delta-2$ perfect matchings each vertex has a degree of two.
Hence, it is possible to construct a spanning forest of lines as fiid.
Consequently, $\overline{\Pi}\in \fiid$ for every $\Pi=(\Sigma,\fV,\fE)$ with $\fE\not=\emptyset$.
\end{remark}

\subsection{$\local(O(\log^* n)) \not= \borel$}

We now formalize the proof that $\local(O(\log^* n)) \not= \borel$ from \cref{sec:introseparation}. 
Recall that $\deltacol$ is the proper vertex $\Delta$-coloring problem.
The following claim directly follows by  \cref{pr:AddingLineLLL} together with the fact that $\deltacol\not\in \local(O(\log^* n))$.

\begin{claim}
We have $\overline{\deltacol}\not \in \local(O(\log^* n))$.
\end{claim}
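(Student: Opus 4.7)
The plan is to argue by contraposition, using Proposition \ref{pr:AddingLineLLL} as a black box. Suppose, towards a contradiction, that $\overline{\deltacol} \in \local(O(\log^* n))$. Since any deterministic $\local$ algorithm is in particular a randomized $\local$ algorithm (just ignore the random bits and invoke whatever identifiers are given), this immediately gives $\overline{\deltacol} \in \rlocal(O(\log^* n))$. Moreover, $\log^* n = o(\log n)$, so the hypothesis of Proposition \ref{pr:AddingLineLLL} is satisfied with $\Pi = \deltacol$ and $t(n) = \log^* n$.

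Applying the proposition, we conclude $\deltacol \in \rlocal(O(\log^* n))$. However, by the classification of \cref{thm:basicLOCAL}, the randomized local complexity of $\deltacol$ on $\Delta$-regular trees is $\Theta(\log \log n)$, which is $\omega(\log^* n)$. This contradicts $\deltacol \in \rlocal(O(\log^* n))$, completing the argument.

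There is essentially no obstacle here: the entire content of the claim is packaged inside Proposition \ref{pr:AddingLineLLL}, whose proof (given earlier) handles the nontrivial parity argument showing that on large enough $\Delta$-regular rooted trees the root is almost surely not a line-vertex, so that any fast algorithm for $\overline{\deltacol}$ can be converted into one for $\deltacol$ by simulation on imagined $\Delta$-regular extensions. The only thing that needs verifying is that $\deltacol$ fits the hypothesis of the proposition as an LCL in our half-edge formalism (vertex constraint $\{(c,\dots,c) : c \in [\Delta]\}$ and edge constraint $\{\{c,c'\} : c \neq c'\}$, so that $\fE \neq \emptyset$ and the construction of $\overline{\deltacol}$ is well-defined), and that we have the quoted lower bound for $\deltacol$, both of which are immediate from \cref{subsec:local_problems} and \cref{thm:basicLOCAL} respectively.
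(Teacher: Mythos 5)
Your argument is correct and matches the paper's intended proof: the paper's own justification is the one-liner "this follows from \cref{pr:AddingLineLLL} together with the fact that $\deltacol \notin \local(O(\log^* n))$," and you have simply spelled out the same chain — pass to $\rlocal$, apply the adding-lines proposition, contradict a known lower bound for $\Delta$-coloring. One small imprecision in your write-up: \cref{thm:basicLOCAL} by itself only lists the \emph{possible} complexity classes on $\Delta$-regular trees, it does not tell you which one $\deltacol$ lands in; to conclude that its randomized complexity is $\omega(\log^* n)$ you additionally need the known $\Omega(\log\log n)$ randomized lower bound for $\Delta$-coloring (from \cite{brandt_etal2016LLL}, equivalently the fact the paper cites that $\deltacol \notin \local(O(\log^* n))$). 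With that citation patched in, the proof is complete.
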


On the other hand we show that adding lines helps to find a Borel solution.
This already provides a simple example of a problem in the set $\borel \setminus \local(O(\log^* n))$.

\begin{proposition}
We have $\overline{\deltacol} \in \borel$. 
\end{proposition}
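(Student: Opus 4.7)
The plan is to turn every vertex of the Borel $\Delta$-regular forest $T$ either into a properly colored vertex using the palette $\{1,\dots,\Delta\}$, or into a line-vertex sitting on a doubly infinite path. First, I will iteratively construct Borel maximal independent sets $I_1,\dots,I_{\Delta-2}$, where each $I_i$ is a Borel MIS of the Borel subforest $T\setminus(I_1\cup\dots\cup I_{i-1})$; the existence of such Borel MISes follows from the Kechris--Solecki--Todor\v{c}evi\'c theorem~\cite{KST}. I then assign color $i$ to every vertex of $I_i$, so all $\Delta$ half-edges incident to such a vertex receive label $i$.

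By maximality, any vertex $v$ not placed in any $I_i$ has at least one neighbor in each $I_i$, and hence retains at most two uncolored neighbors in $T$. Consequently, the Borel subforest $T'$ induced on the uncolored vertices has maximum degree $2$, so each of its components is an isolated vertex, a finite path, a one-ended infinite path, or a doubly infinite path. The component type of each vertex can be detected in a Borel fashion from the degrees within $T'$ together with the (Borel) predicate asserting that the $T'$-component extends infinitely on each given side.

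Next, I will color the isolated vertices, the finite paths, and the one-ended infinite paths with the two remaining colors $\Delta-1$ and $\Delta$. For a one-ended infinite path the unique degree-$1$ endpoint is a Borel function of the vertex, so coloring by parity of distance to this endpoint is immediate. For finite paths (and isolated vertices) I plan to use Borel MISes in the graph powers $T^k$ for $k\ge 1$ to Borel-select one canonical vertex per finite component, and then 2-color by parity of distance from this canonical vertex. Finally, every vertex lying on a doubly infinite path becomes a line-vertex: the two half-edges along the path get the new ``line'' label, while each of the remaining $\Delta-2$ half-edges connects to some MIS-neighbor colored $c\in\{1,\dots,\Delta-2\}$ and is assigned an arbitrary label from $\{1,\dots,\Delta\}\setminus\{c\}$.

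The main obstacle will be the Borel verification of the 2-coloring on finite components, which I will handle through the standard countably-many-MISes-in-graph-powers strategy of~\cite{KST}. Once this is in place, checking the vertex and edge constraints of $\overline{\deltacol}$ reduces to a short case analysis: vertices in $I_i$ and vertices on finite or one-ended paths each satisfy the original $\Delta$-coloring vertex constraint, while line-vertices satisfy the added constraint by construction (exactly two ``line'' labels); for edges, line-line edges trivially belong to the new edge configuration, and edges of the remaining types (MIS-MIS, MIS-path, MIS-line, and path-path) all have endpoints carrying distinct colors in $\{1,\dots,\Delta\}$ by the choices above.
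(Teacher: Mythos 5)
Your proof is correct and follows essentially the same route as the paper: iteratively peel off $\Delta-2$ Borel maximal independent sets via~\cite{KST}, observe the residual Borel subforest has maximum degree $2$, $2$-color the finite and one-ended components in a Borel way, and use the new label to mark doubly infinite paths. You fill in two details the paper leaves implicit --- the countably-many-MISes-in-powers argument for Borel-selecting a basepoint in each finite component, and the assignment of the $\Delta-2$ non-line half-edges at a line-vertex so that the edge constraints against the MIS-neighbors are met --- both of which are handled correctly.
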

\begin{proof}
By \cite{KST}, every Borel graph of finite maximum degree admits a Borel maximal independent set.
Let $\fG$ be a Borel $\Delta$-regular acyclic graph on a standard Borel space $X$.
By an iterative application of the fact above we find Borel sets $A_1,\dots,A_{\Delta-2}$ such that $A_1$ is a maximal independent set in $\fG$ and $A_i$ is a maximal independent set in $\fG\setminus \bigcup_{j<i}A_j$ for every $i>1$.
We think of $A_1,\dots,A_{\Delta-2}$ as color classes for the first $\Delta-2$ colors.
Let $B=X\setminus \bigcup_{i\in [\Delta-2]}A_i$ and let $\fH$ be the subgraph of $\fG$ determined by $B$. 
It is easy to see that the maximum degree of $\fH$ is $2$.
In particular, $\fH$ consists of finite paths, one-ended infinite paths or doubly infinite paths.
We use the extra label in $\overline{\deltacol}$ to mark the doubly infinite paths.
It remains to use the remaining $2$ colors in $[\Delta]$ to define a proper vertex $2$-coloring of the finite and one-ended paths.
It is a standard argument that this can be done in a Borel way and it is easy to verify that, altogether, we found a Borel $\overline{\deltacol}$-coloring of $\fG$.
\end{proof}

\subsection{Examples and Lower Bound}

In this subsection we define two LCLs and show that they are not in the class $\rlocal(o(\log n))$.
In the next subsection we show that one of them is in the class $\localo(\poly\log 1/\eps)$ and the other in the class $\borel$.
Both examples that we present are based on the following relaxation of the perfect matching problem.

\begin{definition}[Perfect matching in power-$2$ graph]
Let $\twomatch$ be the perfect matching problem in the power-$2$ graph, i.e., in the graph that we obtain from the input graph by adding an edge between any two vertices that have distance at most $2$ in the input graph.
\end{definition}

We show that $\twomatch$ is not contained in $\rlocal(o(\log n))$. \cref{pr:AddingLineLLL} then directly implies that $\overline{\twomatch}$ is not contained in $\rlocal(o(\log n))$ as well.
On the other hand, we later use a one-ended spanning forest decomposition to show that $\twomatch$ is in $\localo(\poly\log 1/\eps)$ and a one or two-ended spanning forest decomposition to show that $\overline{\twomatch}$ is in $\borel$.

We first show the lower bound result.
The proof is based on a simple parity argument as in the proof of \cref{pr:AddingLineLLL}.

\begin{theorem}
The problems $\twomatch$ and $\overline{\twomatch}$ are not in the class $\rlocal(o(\log n))$.
\end{theorem}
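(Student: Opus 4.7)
The plan is to prove the lower bound for $\twomatch$ directly and then deduce the $\overline{\twomatch}$ case from \cref{pr:AddingLineLLL}. Throughout I fix a randomized $\local$ algorithm $\fA$ of claimed local complexity $t(n) = o(\log n)$ with success probability at least $1 - 1/n^C$, aiming for a contradiction.

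The first step is to record a parity invariant that any valid $\twomatch$-solution must satisfy. Given any set $S$ of vertices, each vertex of $S$ lies in exactly one matching pair, and this pair contributes $2$ to $|S|$ if both endpoints are in $S$ and $1$ otherwise; thus $|S| \equiv Y \pmod 2$, where $Y$ is the number of matching pairs with exactly one endpoint in $S$. I will apply this with $S$ being the set of vertices of the first subtree $T^{(1)}$ of a designated root $r$ at depth strictly greater than $D := 5t(n)$. Since matching partners are at tree-distance at most $2$, every such pair is contained in $T^{(1)}$ and its endpoints have depths in $\{D-1, D, D+1, D+2\}$; in particular $Y$ is entirely determined by $\fA$'s output at vertices of depth at most $D+2$. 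This is the direct power-$2$ analogue of the parity argument for perfect matching, and the verification is routine.

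The second step is to construct two $n$-vertex finite $\Delta$-regular trees $T_1, T_2$ that agree up to depth $D' := D + t(n) + 2 = 6t(n) + 2 = o(\log n)$ but differ in the parity of $|S|$. Concretely, fix a complete $\Delta$-regular ``top'' of depth larger than $D'$ (so that all interior vertices up to depth $D'$ have real degree $\Delta$ and no virtual half-edges appear near the cut), and define $T_1$ and $T_2$ by attaching one extra vertex as a child of a chosen leaf at depth $D'$ in the first subtree of $r$ (for $T_1$) or in the second subtree of $r$ (for $T_2$); the remaining vertices are padded identically in both trees so that $|T_1|=|T_2|=n$. By construction $|S|$ changes by exactly one between $T_1$ and $T_2$, hence flips parity, while $T_1$ and $T_2$ agree on all vertices of depth at most $D'$. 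Now couple the random bits on the shared vertices. Every vertex of depth at most $D+2$ in either tree has its entire $t(n)$-neighborhood inside the shared region, so $\fA$'s output on such vertices is bit-for-bit identical in $T_1$ and $T_2$, and therefore the realized $Y$ is the same number in both runs. The parity identity forces $Y \equiv |S| \pmod 2$ separately for each tree, and the parities of $|S|$ disagree, so on at least one of the two trees the output is invalid. Under every coupling this gives $\P(\fA \text{ fails on } T_1) + \P(\fA \text{ fails on } T_2) \geq 1$, which contradicts the $1/n^C$ failure bound for $n$ large enough. The statement for $\overline{\twomatch}$ then follows from \cref{pr:AddingLineLLL}.

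The main obstacle I anticipate is getting the ``safety margin'' right: because $\twomatch$ matches vertices at tree-distance up to $2$, a boundary-crossing pair at depth $D$ can involve vertices one extra hop beyond the cut, so the shared top of $T_1$ and $T_2$ must extend to depth $D + t(n) + 2$ rather than the more natural $D + t(n)$. This two-hop extension is the only substantive place where moving from the ordinary perfect matching problem to its power-$2$ variant complicates the classical parity-based lower bound; once the margin is set correctly and the trees are chosen so that no virtual half-edges appear in the cut region, the rest of the proof follows the template of \cref{pr:AddingLineLLL}.
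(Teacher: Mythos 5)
Your high-level idea — a two-tree indistinguishability argument based on a parity identity — is elegant and genuinely different from the paper's proof (which works on a single fully branching tree of depth $10t(n)$, exploits the independence and symmetry of $\Delta$ subtree parities $Y_1,\dots,Y_\Delta$ to conclude they are usually all $0$, and then derives a contradiction by propagating an ``orientation'' along a path of length $2t(n)+3$). Unfortunately, your parity identity breaks down in exactly the regime that your construction exploits, and this is a genuine gap rather than an off-by-one.

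The problem is the treatment of virtual half-edges. In the paper's formalism for finite $\Delta$-regular trees, $\twomatch$ must (like $\perfmatch$, as the paper discusses explicitly) allow a vertex near the boundary of the tree to be ``matched'' through a virtual half-edge at distance one or two; otherwise the LCL would be unsolvable on, say, any tree with an odd number of vertices, and the theorem would be vacuous. Your stated invariant ``each vertex of $S$ lies in exactly one matching pair'' is therefore false in general: the correct count is $|S| = 2P + Y + R$, so $|S| \equiv Y + R \pmod 2$, where $R$ is the number of vertices of $S$ matched virtually. Your set $S$ — all vertices of $T^{(1)}$ at depth strictly greater than $D$ — extends all the way to the leaves, which is precisely where virtual matches live, so $R$ is not zero in general. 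Worse, $R$ is determined by the algorithm's behavior near the leaves, which is exactly the region where $T_1$ and $T_2$ differ (the extra vertex is attached there). The coupling therefore controls $Y$ but gives you no control over $R_{T_1}$ versus $R_{T_2}$; the calculation $|S|_{T_1} - |S|_{T_2} = 1$ together with $Y_{T_1} = Y_{T_2}$ only yields $R_{T_1} \not\equiv R_{T_2} \pmod 2$, which is not a contradiction but simply a consequence of the two matchings differing near the attachment point. The paper sidesteps this by restricting $S_i$ to vertices within distance $2t(n)$ of the root in a tree of depth $10t(n)$: in that interior region no virtual half-edge is within two hops, so every vertex there is forced to have a real match, and the clean parity count holds. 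That restriction, however, makes the two-tree approach unusable (the trees would then agree on $S_i$), which is why the paper needs the more elaborate single-tree argument. Separately and less importantly, your safety margin is off by one: the vertex at depth $D' = D + t(n) + 2$ to which the new child is attached has a half-edge (real versus virtual) that changes between $T_1$ and $T_2$, and this vertex lies at distance exactly $t(n)$ from, hence inside the $t(n)$-ball of, a vertex at depth $D+2$; you would need the shared complete region to reach depth at least $D + t(n) + 3$. But this is cosmetic, whereas the virtual-match issue is the substantive obstruction.
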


\begin{proof}
Suppose that the theorem statement does not hold for $\twomatch$.
Then, by \cref{thm:basicLOCAL}, there is a distributed randomized algorithm solving $\twomatch$ with probability at least $1 - 1/n$. By telling the algorithm that the size of the input graph is $n^{2\Delta}$ instead of $n$, we can further boost the success probability to $1 - 1/n^{2\Delta}$. The resulting round complexity is at most a constant factor larger, as $\Delta = O(1)$.
We can furthermore assume that the resulting algorithm $\fA$ will always output for each vertex $v$ a vertex $M(v) \neq v$ in $v$'s $2$-hop neighborhood, even if $\fA$ fails to produce a valid solution. The vertex $M(v)$ is the vertex $v$ decides to be matched to, and if $\fA$ produces a valid solution it holds that $M(M(v)) = v$.

Consider a fully branching $\Delta$-regular tree $T$ of depth $10t(n)$. Note that $|T| < n$ for $n$ large enough. Let $u$ be the root of $T$ and $T_i$ denote the $i$-th subtree of $u$ (so that $u \not\in T_i$).
Let $S_i$ denote the set of vertices $v$ in $T_i$ such that both $v$ and $M(v)$ have distance at most $2t(n)$ from $u$.
Note that $M(v)$ is not necessarily a vertex of $T_i$.
Let $Y_i$ be the indicator of whether $S_i$ is even.

Observe that, if $\fA$ does not fail on the given input graph, then the definition of the $S_i$ implies that every vertex in $\{ u \} \cup \bigcup_{i \in [\Delta]} S_i$ is matched to a vertex in $\{ u \} \cup \bigcup_{i \in [\Delta]} S_i$.
Hence, the number of vertices in $\{ u \} \cup \bigcup_{i \in [\Delta]} S_i$ is even, which implies that we cannot have $Y_1 = Y_2 = \ldots = Y_\Delta = 1$ unless $\fA$ fails.
Note that $Y_i$ depends only on the output of $\fA$ at the vertices in $T_i$ that have a distance of precisely $2t(n) - 1$ or $2t(n)$ to $u$ (as all other vertices in $T_i$ are guaranteed to be in $S_i$). 
Hence, the events $Y_i = 1$, $i \in [\Delta]$, are independent, and, since $\P(Y_1 = 1) = \ldots = \P(Y_{\Delta} = 1)$ (as all vertices in all $T_i$ see the same topology in their $t(n)$-hop view), we obtain $(\P(Y_1 = 1))^{\Delta} \le 1/n^{2\Delta}$, which implies $\P(Y_i = 1) \le 1/n^2$, for any $i \in [\Delta]$.


Hence, with probability at least $1 - \Delta /n^2$ we have $Y_1=Y_2=\ldots = Y_\Delta = 0$, by a union bound.
Let $v_1, v_2, \dots, v_\Delta$ denote the neighbors of $u$ with $v_i$ being the root of subtree $T_i$.
Note that if $\fA$ does not fail and $Y_1=Y_2=\ldots = Y_\Delta = 0$, then it has to be the case that $u$ is matched to a vertex in some subtree $T_i$ (not necessarily $v_i$), while any vertex from $N_{-i}(u) := \{v_1, v_2, \dots, v_{i-1}, v_{i+1}, \dots, v_\Delta\}$ is matched with another vertex from $N_{-i}(u)$ (hence, we already get that $\Delta$ needs to be odd).
This is because each subtree needs to have at least one vertex matched to a different subtree (since $|S_i|$ is odd for each $i \in [\Delta]$).
If $M(u)$ is a vertex in $T_i$ and, for any $v \in N_{-i}(u)$, we have $M(v) \in N_{-i}(u)$, we say that $u$ orients the edge going to $v_i$ outwards.

Consider a path $(u_0 = u, u_1, \dots, u_k)$, where $k = 2t(n) + 3$.
By the argument provided above, the probability that $u$ orients an edge outwards is at least $1 - \Delta/n^2 - 1/n^{2\Delta}$, and, if $u$ orients an edge outwards, the probability that this edge is not $uu_1$ is $(\Delta - 1)/\Delta$, by symmetry.
Hence, we obtain (for sufficiently large $n$) that $\P(\fE_1) \geq 1/2$, where $\fE_1$ denotes the event that vertex $u$ orients an edge different from $uu_1$ outwards.
With an analogous argument, we obtain that $\P(\fE_2) \geq 1/2$, where $\fE_2$ denotes the event that vertex $u_k$ orients an edge different from $u_ku_{k-1}$ outwards.
Since $\fE_1$ and $\fE_2$ are independent (due to the fact that the distance between any neighbor of $u$ and any neighbor of $u_k$ is at least $2t(n) + 1$), we obtain $\P(\fE_1 \cap \fE_2) \geq 1/4$.
Moreover, the probability that all vertices in $\{ u_1, \dots, u_{k-1} \}$ orient an edge outwards is at least $1 - (2t(n)+2)(\Delta/n^2 + 1/n^{2\Delta})$, which is at least $4/5$ for sufficiently large $n$.
Thus, by a union bound, there is a probability of at least $1 - 3/4 - 1/5 - 1/n^{2\Delta}$ that $\fA$ does not fail, all vertices in $(u, u_1, \dots, u_k)$ orient an edge outwards, and the outward oriented edges chosen by $u$ and $u_k$ are not $uu_1$ and $u_ku_{k-1}$, respectively.
For sufficiently large $n$, the indicated probability is strictly larger than $0$.
We will obtain a contradiction and conclude the proof for $\twomatch$ by showing that there is no correct solution with the described properties.

Assume in the following that the solution provided by $\fA$ is correct and $u, u_1, \dots, u_k$ satisfy the mentioned properties.
Since $u$ orients an edge different from $uu_1$ outwards, $u_1$ must be matched to a neighbor of $u$, which implies that $u_1$ orients edge $u_1u$ outwards.
Using an analogous argumentation, we obtain inductively that $u_j$ orients edge $u_ju_{j-1}$ outwards, for all $2 \leq j \leq k$.
However, this yields a contradiction to the fact that the edge that $u_k$ orients outwards is different from $u_ku_{k-1}$, concluding the proof (for $\twomatch$).

The theorem statement for $\overline{\twomatch}$ follows by applying \cref{pr:AddingLineLLL}.


\end{proof}



\subsection{Upper Bounds Using Forest Decompositions}

We prove an upper bound for the problems $\twomatch$ and $\overline{\twomatch}$ defined in the previous subsection.
We use a technique of decomposing the input graph in a spanning forest with some additional properties, i.e., one or two ended. This technique was used in \cite{BrooksMeas} to prove Brooks' theorem in a measurable context.
Namely, they proved that if $\fG$ is a Borel $\Delta$-regular acyclic graph and $\mu$ is a Borel probability measure, then it is possible to erase some edges of $\fG$ in such a way that the remaining graph is a one-ended spanning forest on a $\mu$-conull set.
It is not hard to see that one can solve $\twomatch$ on one-ended trees in an inductive manner, starting with the leaf vertices. Consequently we have $\twomatch\in \measure$.
We provide a quantitative version of this result and thereby show that the problem of constructing a one-ended forest is contained in $\localo(\poly\log 1/\eps)$.
A variation of the construction shows that it is possible to find a one or two-ended spanning forest in a Borel way.
This allows us to show that $\overline{\twomatch}\in \borel$.
As an application of the decomposition technique we show that Vizing's theorem, i.e., proper $\Delta+1$ edge coloring $\Pi_{\chi',\Delta+1}$, for $\Delta=3$ is in the class $\localo(\poly \log 1/\eps)$.

\subsubsection{Uniform Complexity of the One-Forest Decomposition }
We start with the definition of a one-ended spanning forest.
It can be viewed as a variation of the edge grabbing problem $\edgegrab$.
Namely, if a vertex $v$ grabs an incident edge $e$, then we think of $e$ as being oriented away from $v$ and $v$ selecting the other endpoint of $e$ as its parent.
Suppose that $\fT$ is a solution of $\edgegrab$ on $T_\Delta$.
We denote with $\fT^{\leftarrow}(v)$ the subtree with root $v$.

\begin{definition}[One-ended spanning forest]
We say that a solution $\fT$ of $\edgegrab$ is a \emph{one-ended spanning forest} if $\fT^{\leftarrow}(v)$ is finite for every $v\in T_\Delta$ (see \cref{fig:one_ended}). 

\end{definition}

\begin{figure}
    \centering
    \includegraphics[width=.4\textwidth]{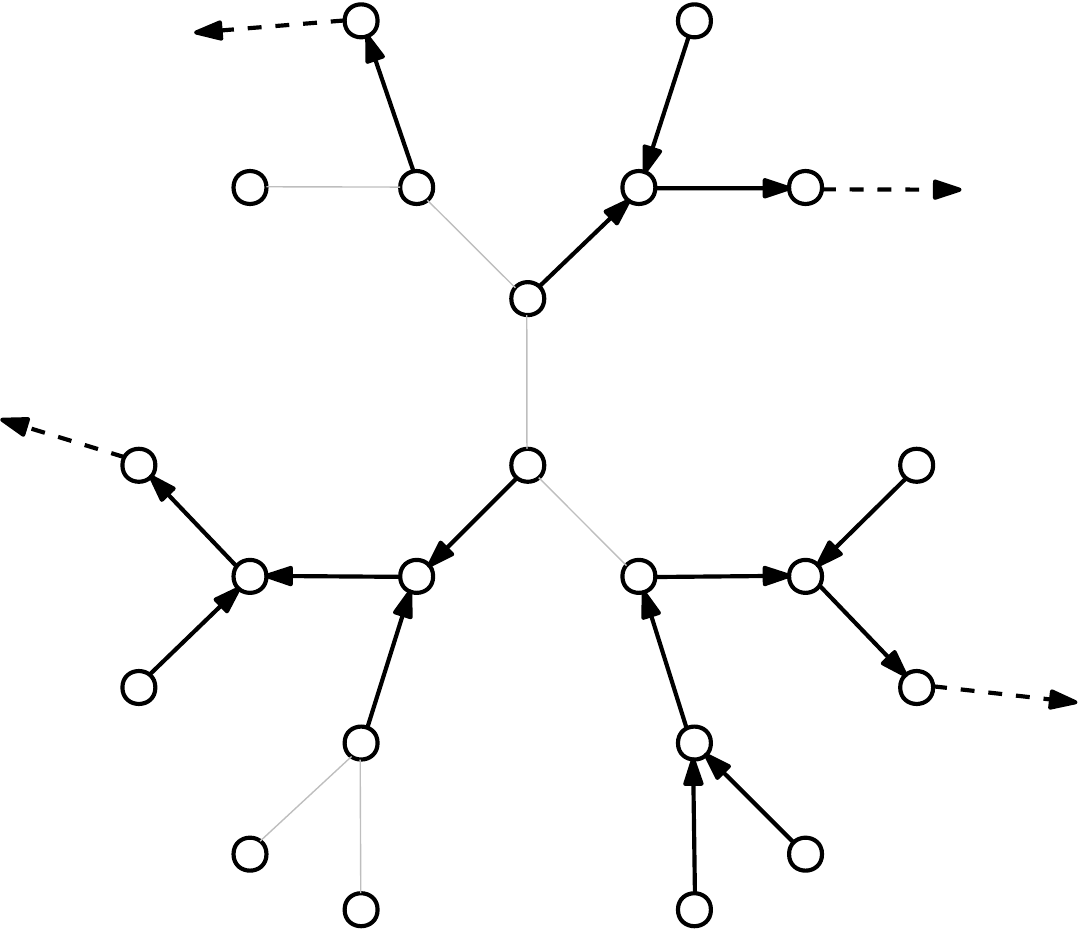}
    \caption{An example of a one-ended forest decomposition of an infinite 3-regular tree. }
    \label{fig:one_ended}
\end{figure}

Note that on an infinite $\Delta$-regular tree every connected component of a one-ended spanning forest must be infinite.
Furthermore, recall that we discussed above that it is possible to construct a one-ended spanning forest in $\measure$ by \cite{BrooksMeas}.
Next we formulate our quantitative version.

\begin{restatable}{theorem}{oneendedtreetail}
\label{thm:one_ended_forest}
The one-ended spanning forest can be constructed by an uniform local algorithm  with an uniform local complexity of $O(\poly\log1/\eps)$. 
More precisely, there is an uniform distributed algorithm $\fA$ that computes a one-ended spanning forest and if we define $R(v)$ to be the smallest coding radius that allows $v$ to compute $\fT^{\leftarrow}(v)$, then
\[
P(R(v) > O(\poly \log 1/\eps)) \le \eps. 
\]
\end{restatable}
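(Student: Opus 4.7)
The plan is to build a hierarchical (``toast-style'') decomposition of $T_\Delta$ into a nested sequence of finite clusters and then derive the one-ended spanning forest from this hierarchy. For $k = 0, 1, 2, \dots$ set $r_k = 2^k$; using the uniform MIS algorithm provided by \cref{thm:classification_local_uniform_trees} applied on the $r_k$-th power graph, compute a distance-$r_k$ maximal independent set $I_k$ with failure parameter $\eps_k := \eps/2^{k+2}$. Each such call has uniform complexity $O(r_k \cdot \poly\log(1/\eps_k))$ on $T_\Delta$, and yields a Voronoi-style partition $\fC_k$ in which each cluster has diameter at most $2r_k$ and hence size at most $\Delta^{O(r_k)}$.

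Next, I would make the sequence $\fC_0, \fC_1, \dots$ genuinely nested by the standard trick of forcing each level-$k$ cluster to inherit the level-$(k+1)$ cluster of its $I_k$-center (this costs only an extra $O(r_{k+1})$ in coding radius per level). Given the nested hierarchy, define for each $v$ the level $k(v) := \min\{k \ge 0 : C_v^{(k+1)} \supsetneq C_v^{(k)}\}$, and let $v$'s parent be the unique neighbor of $v$ on the $T_\Delta$-path from $v$ to the $I_{k(v)+1}$-center of $C_v^{(k(v)+1)}$. A routine verification shows that this defines a solution of $\edgegrab$ whose reverse-subtree $\fT^{\leftarrow}(v)$ is contained in $v$'s cluster at level $k(v)$, hence is finite and of size at most $\Delta^{O(r_{k(v)})}$; cycles are impossible because parent pointers strictly decrease graph distance to the relevant cluster center.

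For the tail bound, set $K := \lceil C \log\log(1/\eps) \rceil$ for a suitable constant $C$. A union bound over the levels $k \le K$ using the $\eps_k$ above (combined with \cref{lem:sequential_composition}) shows that with probability at least $1 - \eps/2$ all MIS computations relevant to $v$ terminate within total coding radius $\sum_{k \le K} O(r_k \cdot \poly\log(1/\eps_k)) = \poly\log(1/\eps)$. A separate probabilistic argument --- essentially that at each doubling of scale a random vertex has a constant probability of witnessing its cluster grow via an $I_k$-neighbor at the correct distance --- bounds $\P(k(v) > K) \le \eps/2$. On the intersection of the two good events, $\fT^{\leftarrow}(v) \subseteq C_v^{(K)}$, whose diameter is $O(r_K) = \poly\log(1/\eps)$, and $v$ can locally certify the whole subtree from its $\poly\log(1/\eps)$-hop view, yielding $\P(R(v) > \poly\log(1/\eps)) \le \eps$.

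The main obstacle will be the second step: ensuring genuine nesting of the $\fC_k$ while keeping the coding radius bounded, and, crucially, proving that $v$ can \emph{certify} the entire set $\fT^{\leftarrow}(v)$ --- not merely determine its own parent pointer --- from a $\poly\log(1/\eps)$-ball. Certification requires excluding the possibility that some distant vertex is transitively routed into $v$; handling this amounts to showing that the relevant cluster boundary at level $k(v)$ is visible from $v$, which is where the explicit diameter control on $\fC_{k(v)}$ from the first step is used. Controlling the probability that $k(v)$ is not too large (so that the relevant cluster is not too big) is the second technical point; this requires a careful choice of tie-breaking rule and of the sequence $r_k$ so that the ``growth event'' at each scale occurs with probability bounded away from zero.
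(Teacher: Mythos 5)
Your approach (nested Voronoi clusters built from MIS calls at geometric scales $r_k=2^k$, then reading parent pointers off the nesting hierarchy) is a genuinely different construction from the paper's, which instead iteratively \emph{settles} vertices with a rake-and-compress-like process, maintaining a carefully controlled ``unsettled'' subgraph with minimum degree $2$ and a designated set of degree-$\Delta$ \emph{hub} vertices. That difference in mechanism is not itself a problem. The problem is quantitative, and it is fatal to the tail bound as stated.

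You claim a constant escape probability per level and conclude $\P(k(v)>K)\le\eps/2$ with $K=O(\log\log(1/\eps))$. Those two assertions are arithmetically incompatible: if the probability of ``not growing'' at each level is a fixed $p<1$, then after $K$ levels you only get $p^{K}$, and forcing $p^{K}\le\eps/2$ requires $K=\Omega(\log(1/\eps))$, \emph{not} $O(\log\log(1/\eps))$. Plugging this into the schedule $r_k=2^k$ yields a coding radius of order $r_K=2^{\Omega(\log(1/\eps))}=(1/\eps)^{\Omega(1)}$, which is polynomial in $1/\eps$, not polylogarithmic. So either the scale schedule or the per-level escape bound has to be upgraded, and in fact both must be upgraded jointly, which is exactly where the substance of the paper's proof lies.

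What the paper does differently is use a \emph{doubly exponential} scale schedule $d_i=2^{2^i}$, and then prove that after round $i$ the probability a fixed vertex is still unsettled is at most $\Delta^{-\Omega(d_i/d_{i-1})}=\Delta^{-\Omega(2^{2^{i-1}})}$, i.e.\ doubly exponentially small in $i$. This is \emph{not} a generic property of a Voronoi decomposition; it is forced by an expansion argument: because the induced graph on unsettled vertices always has minimum degree $2$, and contains a degree-$\Delta$ hub vertex within distance $O(d_{i-1})$ of every unsettled vertex, a ball of radius $d_i/2$ around a level-$i$ cluster center must contain $\Delta^{\Omega(d_i/d_{i-1})}$ unsettled vertices. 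Nothing in your $r_k$-net construction enforces an analogous expansion of the ``still-growing'' set between levels, and without it the escape probability really is at best a constant per level, which, as computed above, is not enough. You flag the ``growth event occurs with constant probability'' step as a technical point to be handled carefully; it is actually the crux, and it needs a strictly stronger conclusion than constant probability.

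Beyond the parameters, two of the worries you already raise yourself are also places where your construction has no analogue of the paper's machinery. First, cycle-freeness: your argument ``parent pointers strictly decrease distance to the relevant cluster center'' does not immediately rule out cycles, because ``the relevant cluster center'' depends on the vertex, and two adjacent vertices may be pointing toward different centers at different levels $k(u)\ne k(v)$. The paper avoids this by orienting each settled vertex toward the \emph{nearest still-unsettled vertex in the unsettled subgraph}, and then showing $\fT^{\leftarrow}(v)\cap U_i=\emptyset$ directly from that rule (Invariant (2) in the proof). Second, certification of $\fT^{\leftarrow}(v)$: showing $\fT^{\leftarrow}(v)\subseteq C_v^{(k(v))}$ requires controlling vertices $u$ with $k(u)\ne k(v)$ that might be routed through $v$; the paper handles this by the invariant that the orientation constructed at round $i$ only adds edges between vertices that were both unsettled before round $i$ and one of which becomes settled, which bounds $|\fT^{\leftarrow}(v)|$ by the size of a ball of radius $O(d_{i})$ at the round $v$ settles. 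Your nesting hierarchy would need a comparably explicit invariant, and I do not see it. In short, the high-level idea (hierarchical clustering, parent toward the next-level center) is reasonable, but the proof requires (i) a doubly exponential scale schedule, (ii) a proof that the not-yet-determined set shrinks doubly exponentially per level (which on trees the paper obtains from a min-degree-$2$/hub expansion argument), and (iii) an explicit invariant that localizes $\fT^{\leftarrow}(v)$; none of these three is present in the sketch.
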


The formal proof of \cref{thm:one_ended_forest} can be found in \cref{sec:formalproof}.
We first show the main applications of the result.
Namely, we show that $\twomatch$ can be solved inductively starting from the leaves of the one-ended trees. This proves that $\twomatch$ is in $\localo(\poly \log 1/\eps)$ and hence $\localo(\poly \log 1/\eps) \setminus \rlocal(o(\log n))$ is non-empty.

\begin{theorem}\label{thm:twomatchFFIID}
The problem $\twomatch$ is in the class $\measure$ and $\localo(O(\poly \log 1/\eps))$.
\end{theorem}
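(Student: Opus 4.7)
The plan is to reduce $\twomatch$ to the one-ended spanning forest decomposition from \cref{thm:one_ended_forest} and then solve the matching by a bottom-up induction along each finite subtree $\fT^{\leftarrow}(v)$. The key observation is that in any rooted tree all siblings sit at distance $2$ from each other, and every child sits at distance $1$ from its parent, so both ``sibling-to-sibling'' and ``parent-to-child'' pairings are legal edges of the power-$2$ graph underlying $\twomatch$.

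First I would invoke \cref{thm:one_ended_forest} to obtain, by an uniform distributed algorithm with uniform local complexity $O(\operatorname{polylog}(1/\eps))$, a one-ended spanning forest $\fT$ of $T_\Delta$. Recall this algorithm additionally provides each vertex $v$ with knowledge of the entire finite subtree $\fT^{\leftarrow}(v)$. I would then run the following deterministic local rule on top of $\fT$: for each vertex $v$, inductively on the size of $\fT^{\leftarrow}(v)$, assume that the subtree rooted at each child of $v$ has been processed so that every descendant of $v$ is already matched except possibly the children themselves. Let $U_v$ be the set of children of $v$ still unmatched after their subtrees are processed. If $|U_v|$ is even, pair up the children in $U_v$ into $|U_v|/2$ sibling pairs (each pair at distance $2$ via $v$) and leave $v$ unmatched; if $|U_v|$ is odd, form $(|U_v|-1)/2$ sibling pairs and match the remaining child with $v$ itself (distance $1$). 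A simple induction then guarantees that every proper descendant of $v$ is matched, and $v$'s final status (matched or not matched from below) is fully determined by $v$'s subtree structure.

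The final matching of a vertex $v$ is sealed the moment its parent $p(v)$ executes the rule above: either $v$ was already matched during the processing of $\fT^{\leftarrow}(v)$, or $v\in U_{p(v)}$ and $p(v)$ pairs $v$ with a sibling or with itself. Since every parent chain is infinite (one-endedness) and every $\fT^{\leftarrow}(\cdot)$ is finite, every vertex is matched in finite time, so the resulting labeling is a valid $\twomatch$ coloring of $T_\Delta$. To determine its own final partner, $v$ needs only to see the subtree $\fT^{\leftarrow}(p(v))$, whose radius is bounded by a function of $|\fT^{\leftarrow}(p(v))|$; combining the tail bound on $|\fT^{\leftarrow}(\cdot)|$ from \cref{thm:one_ended_forest} with the sequential composition lemma (\cref{lem:sequential_composition}) gives an overall uniform local complexity of $O(\operatorname{polylog}(1/\eps))$, establishing $\twomatch \in \localo(O(\operatorname{polylog}(1/\eps)))$.

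For the $\measure$ statement, the plan is to replace the uniform algorithm for the forest by its measurable analogue: by the result of \cite{BrooksMeas}, on any $\Delta$-regular Borel forest equipped with a Borel probability measure $\mu$ one can construct a one-ended spanning forest on a $\mu$-conull set in a Borel way. The bottom-up matching rule above is a Borel function of this forest (each subtree is finite, so the rule is decidable in a countable union of Borel pieces stratified by subtree size), and it yields a valid $\twomatch$ coloring on the same conull set, so $\twomatch \in \measure$. The only subtle point I anticipate is the book-keeping that turns the ``process $\fT^{\leftarrow}(v)$, then look at $p(v)$'' idea into a clean bound on the coding radius; this is where the sequential composition lemma does the work, and the rest is routine.
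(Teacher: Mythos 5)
Your proposal is correct and follows essentially the same route as the paper: compute a one-ended spanning forest via \cref{thm:one_ended_forest}, then match bottom-up along each finite subtree $\fT^{\leftarrow}(v)$ by pairing unmatched children with siblings (distance $2$) or with the parent (distance $1$), with the odd/even parity case split; the $\measure$ claim likewise reduces to the Borel one-ended forest of \cite{BrooksMeas}. The only cosmetic difference is that you route the coding-radius bookkeeping through \cref{lem:sequential_composition} whereas the paper appeals directly to the ``moreover'' clause of \cref{thm:one_ended_forest} plus the observation that $v$ only needs $\fT^{\leftarrow}(w)$ for neighbors $w$; both are fine.
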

\begin{proof}
First we show that every infinite one-ended directed tree $T$ admits an inductive solution from the leaves to the direction of infinity. More precisely, we define a power-$2$ perfect matching on the infinite one-ended directed tree $T$ such that the following holds. Let $v$ be an arbitrary vertex and $T^{\leftarrow}(v)$ the finite subtree rooted at $v$. Then, all vertices in $T^{\leftarrow}(v)$, with the possible exception of $v$, are matched to a vertex in $T^{\leftarrow}(v)$. Moreover, for each vertex in $T^{\leftarrow}(v) \setminus \{v\}$, it is possible to determine to which vertex it is matched by only considering the subtree $T^{\leftarrow}(v)$. 
We show that it is possible to define a perfect matching in that way by induction on the height of the subtree $T^{\leftarrow}(v)$. 

We first start with the leaves and declare them as unmatched in the first step of the induction.

Now, consider an arbitrary vertex $v$ with $R \geq 1$ children that we denote by $v_1, v_2, \ldots, v_R$.
By the induction hypothesis, all vertices in $T^{\leftarrow}(v_i) \setminus \{v_i\}$ are matched with a vertex in $T^{\leftarrow}(v_i)$ for $i \in [R]$ and it is possible to compute that matching given $T^{\leftarrow}(v)$. However, some of the children of $v$ are possibly still unmatched. If the number of unmatched children of $v$ is even, then we simply pair them up in an arbitrary but fixed way and we leave $v$ unmatched. Otherwise, if the number of unmatched children of $v$ is odd, we pair up $v$ with one of its unmatched children and we pair up all the remaining unmatched children of $v$ in an arbitrary but fixed way. This construction clearly satisfies the criteria stated above. In particular, the resulting matching is a perfect matching, as every vertex gets matched eventually.

By \cref{thm:one_ended_forest} there is an uniform distributed algorithm of complexity $O(\poly \log 1/\eps)$ that computes a one-ended spanning forest $\fT$ on $T_\Delta$.
Moreover, every vertex $v\in T_\Delta$ can compute where it is matched with an uniform local complexity of $O(\poly \log 1/\eps)$.
This follows from the discussion above, as a vertex $v$ can determine where it is matched once it has computed $\fT^{\leftarrow}(w)$ for each $w$ in its neighborhood.
Hence, $\twomatch$ is in the class $\localo(O(\poly \log 1/\eps))$.

Similarly, the one-ended spanning forest $\fT\subseteq \fG$ can be constructed in the class $\measure$ by \cite{BrooksMeas}.
By the discussion above, this immediately implies that $\twomatch$ is in the class $\measure$. 
\end{proof}

\subsubsection{Decomposition in $\borel$}
Next, we show that a similar, but weaker, decomposition can be done in a Borel way.
Namely, we say that a subset of edges of an acyclic infinite graph $G$, denoted by $\fT$, is a \emph{one or two-ended spanning forest} if every vertex $v\in G$ is contained in an infinite connected component of $\fT$ and each infinite connected component of $\fT$ has exactly one or two directions to infinity.
Let $S$ be such a connected component.
Note that if $S$ has one end, then we can solve $\twomatch$ on $S$ as above.
If $S$ has two ends, then $S$ contains a doubly infinite path $P$ with the property that erasing $P$ splits $S\setminus P$ into finite connected components.
In order to solve $\overline{\twomatch}$ we simply solve $\twomatch$ on the finite connected components of $S\setminus P$ (with some of the vertices in $S\setminus P$ being potentially matched with vertices on the path $P$) and declare vertices on $P$ to be line vertices.

The high-level idea to find a one or two-ended spanning forest is to do the same construction as in the measure case and understand what happens with edges that do not disappear after countably many steps.
A slight modification in the construction guarantees that what remains are doubly infinite paths.

\begin{restatable}{theorem}{oneendedforestborel}
\label{thm:OneTwoEndBorel}
Let $\fG$ be a Borel $\Delta$-regular forest. 
Then there is a Borel one or two-ended spanning forest $\fT\subseteq \fG$.
\end{restatable}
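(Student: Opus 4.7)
The plan is to carry out the iterative Borel-MIS construction of a one-ended spanning forest from \cite{BrooksMeas} (which is there achieved on a $\mu$-conull set), and to show that in the fully Borel context the edges it fails to orient form a disjoint union of doubly infinite paths---these will become the two-ended components of $\fT$, while all edges the construction does orient yield one-ended components. The passage preceding the statement of the theorem explicitly signals that only a \emph{slight modification} of the measure-theoretic procedure is needed.

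Concretely, set $A_0 := V(\fG)$ and iterate \cite{KST} to pick, at each stage $n \geq 1$, a Borel maximal $3^n$-independent subset $A_n \subseteq A_{n-1}$ inside the induced subgraph $\fG \upharpoonright A_{n-1}$. For each $v \in A_{n-1} \setminus A_n$ orient, within $\fG \upharpoonright A_{n-1}$, the first edge of the shortest path from $v$ to its nearest $A_n$-representative, breaking ties with a fixed Borel linear order on $V(\fG)$. Let $\fT$ be the union of all stage-$n$ orientations, and let $P := \bigcap_n A_n$ denote the Borel \emph{persistent set}. By the same bookkeeping as in \cite{BrooksMeas}, every non-persistent vertex has a finite rooted subtree in $\fT$ and lies in a one-ended component; the only remaining obstacle is to control the structure of $P$ together with the $\fG$-edges internal to $P$.

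The structural heart of the argument is to show that a \emph{slight coordinated modification} of the tie-breaking rule guarantees that the induced subgraph of $\fG$ on $P$ has maximum degree $\leq 2$ and no isolated vertices. Since $\fG$ is a forest, this persistent subgraph is then a disjoint union of finite paths, one-ended rays, and doubly infinite paths. A further application of \cite{KST} to the degree-$\leq 2$ persistent subgraph identifies in a Borel way the endpoints of its finite and one-ended components; orienting edges away from these endpoints absorbs such components into one-ended tree-components of $\fT$. What survives are exactly the doubly infinite persistent paths, which become the two-ended components, while the finite non-persistent subtrees hanging off $P$ form the pendant one-ended parts. Verifying that every vertex lies in some infinite component then reduces to the analogous check in \cite{BrooksMeas}.

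The main obstacle will be the design of this coordinated tie-breaking and the verification of the degree-$\leq 2$ and no-isolated-vertex properties for $P$. Without coordination, a persistent vertex could acquire three or more persistent neighbors or be isolated inside $P$, and in either case some vertex would fail to lie in an infinite component of $\fT$. The remedy will be to couple the stage-$n$ choices across the $\Delta$ neighbors of each vertex using the Borel linear order, so that at each stage a Borel ``vote'' among candidates simultaneously rules out too many persistent neighbors and rules in at least two; acyclicity of $\fG$ plays a central role here, since it prevents competing coordination demands from interfering across disjoint branches. Once this delicate combinatorial step is in place, the remaining assembly of $\fT$ follows the measure-theoretic template of \cite{BrooksMeas} essentially verbatim.
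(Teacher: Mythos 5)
Your proposal has a genuine gap that makes the central construction unworkable as stated.

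The issue is with the structure of the persistent set $P = \bigcap_n A_n$. By definition $P \subseteq A_1$, and $A_1$ is a maximal $3$-independent subset of $\fG \upharpoonright A_0 = \fG$, i.e.\ any two distinct elements of $A_1$ are at $\fG$-distance at least $3$. Hence $P$ is $3$-independent in $\fG$, so the induced subgraph of $\fG$ on $P$ has \emph{no edges at all}: every vertex of $P$ is isolated in that induced subgraph. Your claimed goal --- that a ``slight coordinated modification of the tie-breaking rule guarantees that the induced subgraph of $\fG$ on $P$ has maximum degree $\leq 2$ and no isolated vertices'' --- is therefore impossible. The tie-breaking rule governs which edge is oriented for each vertex in $A_{n-1}\setminus A_n$; it has no influence whatsoever on which vertices belong to $\bigcap_n A_n$. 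Even if you instead modified the choice of the sets $A_n$, the constraint $P \subseteq A_1$ with $A_1$ being $3$-independent is unavoidable in your framework, so $P$ can never carry the doubly infinite paths you want to extract from it. The ``Borel vote'' coordination step, which you correctly flag as the delicate heart of the argument, is not merely difficult to carry out: the object it is supposed to produce cannot exist.

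The paper's actual proof takes a different route precisely to avoid this dead end. Rather than pruning with $3^n$-independent vertex sets in $\fG$, it works with \emph{stars} $S(x)$ around vertices $x$ of degree $>2$: at stage $k$ one takes a Borel MIS $\fM_k$ in an auxiliary graph $\fH_k$ whose vertices are the degree-$>2$ vertices of the current graph and whose edges join $x,y$ when they are linked by a path with at most one inner degree-$>2$ vertex, and one removes and canonically orients $S(x)$ for all $x\in \fM_k$. The crucial structural lemma is that after $\omega$ stages the remaining graph $\fG_\infty$ has minimum degree $\geq 2$ and, moreover, every remaining degree-$>2$ vertex has at least one neighbor-component that is a one-ended ray; this makes its star infinite and allows a clean canonical orientation. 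A finite $(\Delta+1)$-coloring of the remaining degree-$>2$ vertices then disposes of them in $\Delta+1$ more steps, after which what is left outside the domain of the orientation is exactly a disjoint union of doubly infinite lines. This structural lemma --- which has no analogue in your setup --- is the place where the real work is done, and no amount of tie-breaking cleverness in the $3^n$-independence construction substitutes for it.
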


The proof of \cref{thm:OneTwoEndBorel} can be found in \cref{sec:oneortwo}.
We remark that the notation used in the proof is close to the notation in the proof that gives a measurable construction of a one-ended spanning forest in \cite{BrooksMeas}. 
Next, we show more formally how \cref{thm:OneTwoEndBorel} implies that $\overline{\twomatch}\in \borel$.

\begin{theorem}
The problem $\overline{\twomatch}$ is in the class $\borel$.
\end{theorem}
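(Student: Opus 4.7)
The plan is to invoke Theorem \ref{thm:OneTwoEndBorel} to obtain a Borel one or two-ended spanning forest $\fT \subseteq \fG$, and then to build a Borel $\overline{\twomatch}$-coloring of $\fG$ component by component of $\fT$, handling one-ended and two-ended components separately.

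On a one-ended component $S$ of $\fT$, I would apply the leaf-to-root inductive matching from the proof of Theorem \ref{thm:twomatchFFIID} verbatim: at each vertex $v$ the match decision depends only on the finite subtrees $\fT^{\leftarrow}(v)$ and $\fT^{\leftarrow}(u)$ (with $u$ the $\fT$-parent of $v$), which are Borel functions of $\fT$. This already yields a valid $\twomatch$-solution on $S$ without ever using the new line label, hence a valid $\overline{\twomatch}$-coloring of $S$.

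On a two-ended component $S$ of $\fT$, I would first Borel-identify the unique doubly infinite trunk $P \subseteq S$ as the set of $v \in S$ whose removal leaves at least two infinite components of $S \setminus \{v\}$; here the predicate ``a given $\fT$-subtree is infinite'' is the countable intersection of the Borel predicates ``the subtree has depth at least $n$'', hence Borel, and so is membership in $P$. At each $p \in P$ I would then assign the new line label to the two half-edges of $p$ lying along $P$, placing the vertex configuration at $p$ into $\fV'$ via the permissive clause that accepts any multiset containing the line label exactly twice. The graph $S \setminus P$ is a disjoint union of finite trees, each attached to a unique trunk vertex $p \in P$ by exactly one $\fT$-edge, and I would match inside each such branch by the same inductive rule as in the one-ended case, with the convention that if the root $r$ of the branch would remain unmatched, we instead match $r$ to $p$ along the $\fT$-edge $rp$. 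Because line-vertices are exempt from the $\twomatch$-vertex constraint, a single $p$ may serve as a direct match partner for arbitrarily many of its branch roots simultaneously without spoiling any local constraint, while each such half-edge pair is already a valid $\fE$-configuration; the remaining half-edges, both the still unused ones at line-vertices and all half-edges of edges in $\fG \setminus \fT$, are labeled with the standard ``unused direction'' label of $\twomatch$.

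The main obstacle I anticipate is the Borel identification of the trunk together with the verification that the two-ended branch-matching is compatible with the permissive line-vertex constraint; once that step is secured, every subsequent choice depends only on finite, Borel-definable local data, so the resulting half-edge labeling is automatically a Borel function of $\fT$ and $P$, completing the argument that $\overline{\twomatch} \in \borel$.
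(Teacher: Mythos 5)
Your proof follows essentially the same route as the paper's: invoke \cref{thm:OneTwoEndBorel} to obtain a Borel one or two-ended spanning forest, reuse the leaf-to-root inductive matching from \cref{thm:twomatchFFIID} on one-ended components, and in the two-ended case identify the doubly infinite trunk in a Borel way, assign the line label along it, match each finite branch inductively towards the trunk, and absorb an unmatched branch root by matching it to its trunk vertex (harmless because line-vertices are exempt from the $\twomatch$ vertex constraint, so several branch roots may match the same trunk vertex). The only thing you add is the explicit Borel characterization of the trunk as the set of vertices whose removal leaves two infinite components, which the paper merely asserts is possible; otherwise the two arguments coincide.
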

\begin{proof}

Let $\fT$ be the one or two-ended spanning forest given by \cref{thm:OneTwoEndBorel} and $S$ be a connected component of $\fT$.
If $S$ is one-ended, then we use the first part of the proof of \cref{thm:twomatchFFIID} to find a solution of $\twomatch$ on $S$.
Since deciding that $S$ is one-ended as well as computing an orientation towards infinity can be done in a Borel way, this yields a Borel labeling.

Suppose that $S$ is two-ended.
Then, there exists a doubly infinite path $P$ in $S$ with the property that the connected components of $S\setminus P$ are finite.
Moreover, it is possible to detect $P$ in a Borel way.
That is, declaring the vertices on $P$ to be line vertices and using the special symbol in $\overline{\twomatch}$ for the half-edges on $P$ yields a Borel measurable labeling.
Let $C\not=\emptyset$ be one of the finite components in $S\setminus P$ and $v\in C$ be the vertex of distance $1$ from $P$.
Orient the edges in $C$ towards $v$, this can be done in a Borel way since $v$ is uniquely determined for $C$.
Then the first part of the proof of \cref{thm:twomatchFFIID} shows that one can inductively find a solution to $\twomatch$ on $C$ in such a way that all vertices, possibly up to $v$, are matched.
If $v$ is matched, then we are done.
If $v$ is not matched, then we add the edge that connects $v$ with $P$ to the matching. Note that it is possible that multiple vertices are matched with the same path vertex, but this is not a problem according to the definition of $\overline{\twomatch}$. 
It follows that this defines a Borel function on $H(\fG)$ that solves $\overline{\twomatch}$.
\end{proof}

\subsubsection{Vizing's Theorem for $\Delta=3$}
Finding a measurable or local version of Vizing's Theorem, i.e., proper edge $(\Delta+1)$-coloring $\vizing$, was studied recently in \cite{grebik2020measurable,weilacher2021borel,BernshteynVizing}.
It is however not known, even on trees, whether $\vizing$ is in $\rlocal(O(\log\log n))$.
Here we use the one-ended forest construction to show that $\vizing$ is in the class $\localo(\poly\log 1/\eps)$ for $\Delta=3$.

\begin{proposition}
Let $\Delta=3$. We have $\vizing \in\localo(\poly \log 1/\eps)$.
\end{proposition}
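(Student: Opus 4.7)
The plan is to solve the problem in three stages, using \cref{thm:one_ended_forest} to reduce $4$-edge-coloring of $T_3$ to two much easier sub-problems.

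First, apply \cref{thm:one_ended_forest} to compute a one-ended spanning forest $\fT$ of uniform local complexity $O(\poly\log 1/\eps)$. Each vertex $v$ then has exactly one grabbed ``up-edge'' $e_v=(v,p(v))$, together with $c(v)\in\{0,1,2\}$ in-neighbors (the ``children'' of $v$ in $\fT$) and $s(v)=2-c(v)$ remaining ``sibling edges'' $(v,w)$, i.e., edges that neither endpoint grabbed. The sibling graph $E_s:=E(T_3)\setminus E(\fT)$ therefore has maximum degree at most $2$ and, being a subforest of $T_3$, is a disjoint union of paths; this is where the restriction $\Delta=3$ enters crucially, since only here is the sibling graph simple enough to color in a $\log^*$-like way.

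Second, properly $3$-edge-color $E_s$. Its line graph is again a disjoint union of paths, so the task reduces to a $3$-vertex-coloring of a max-degree-$2$ graph, performable by the classical uniform Linial-style procedure in $O(\log^* 1/\eps)$ rounds (as in the third item of \cref{thm:classification_local_uniform_trees}). Composing with the preceding construction of $\fT$ via \cref{lem:sequential_composition} yields a coloring of $E_s$ with combined uniform complexity still $O(\poly\log 1/\eps)$.

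Third, extend the coloring to the $\fT$-edges by induction up the forest: a vertex $v$ colors its up-edge $e_v$ once every vertex in $\fT^{\leftarrow}(v)\setminus\{v\}$ has done so. When $v$'s turn arrives, the already-colored edges incident to $v$ are the $c(v)$ up-edges of its children (colored when the children were processed) and the $s(v)=2-c(v)$ sibling edges at $v$ (colored in stage two), altogether using at most $c(v)+s(v)=2$ distinct colors. Hence at least $4-2=2$ colors of the palette remain free, and any such free color can be assigned to $e_v$; consistency with the eventual processing of $p(v)$ is automatic because $p(v)$ will later see $e_v$ as one of its down-edges and avoid its color.

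For the uniform complexity of the final stage, $v$ can execute its inductive choice as soon as it has inspected $\fT^{\leftarrow}(v)$, the sibling edges incident to this subtree, and the sibling coloring on that one-hop enlargement; both ingredients satisfy $\poly\log$-tail bounds by \cref{thm:one_ended_forest} and by stage two, respectively. The main technical hurdle is this last bookkeeping: composing the three uniform algorithms via \cref{lem:sequential_composition} blows up the failure parameter polynomially, but since $\poly\log\bigl(\Delta^{O(\poly\log 1/\eps)}/\eps\bigr)=O(\poly\log 1/\eps)$, the overall uniform local complexity of the combined algorithm remains $O(\poly\log 1/\eps)$, as claimed.
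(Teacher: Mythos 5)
Your first two stages match the paper's argument exactly (compute the one-ended forest, then $3$-edge-color the non-tree sibling edges, which form paths, via a uniform Linial-type routine). The third stage, however, deviates from the paper in a way that introduces a real gap. You have each vertex $v$ color its own up-edge $e_v=(v,p(v))$ once $\fT^{\leftarrow}(v)\setminus\{v\}$ is finished, and you only count the already-colored edges at the $v$ end of $e_v$ (at most $2$). But $e_v$ is also incident to $p(v)$, so its color must avoid the colored edges at $p(v)$'s end as well. There are two resulting problems. First, the count is wrong: $p(v)$ may have one sibling edge, already colored in stage two, so $e_v$ can have $3$ (not $2$) colored neighbors. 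Second and more seriously, if $p(v)$ has two children $v$ and $v'$, then $e_v$ and $e_{v'}$ are adjacent at $p(v)$, yet neither $v$ nor $v'$ lies in the other's subtree $\fT^{\leftarrow}(\cdot)$, so each makes its decision seeing only its own side; with a canonical rule such as ``smallest free color'' they can pick the same color, violating properness at $p(v)$. Your claim that ``consistency with the eventual processing of $p(v)$ is automatic'' only addresses the pair $(e_v,e_{p(v)})$ and says nothing about the pair $(e_v,e_{v'})$, which is exactly where the conflict arises and which is never revisited by any later step.

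The paper's proof sidesteps this by reversing the orientation of the greedy step: each vertex $v$ colors the (at most two) tree edges \emph{below} it, i.e.\ the edges to its children, rather than its own up-edge. Then both contested decisions at any internal vertex are made by a single authority sequentially, so no sibling conflict can occur, and the yet-uncolored up-edge of $v$ guarantees that at most $3$ of the $\le 4$ neighbors of the edge being colored are already colored, leaving at least one of the $\Delta+1=4$ colors free. If you want to keep your ``color-your-own-up-edge'' formulation, you would need an extra tie-breaking mechanism between siblings (e.g., having $v$ also read $\fT^{\leftarrow}(v')$, or a symmetry-breaking rule), which would need to be justified and incorporated into the coding-radius analysis; as written, the argument does not establish properness.
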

\begin{proof}[Proof sketch.]
By \cref{thm:one_ended_forest}, we can compute a one-ended forest decomposition $\fT$ with uniform local complexity $O(\poly \log 1/\eps)$. 
Note that every vertex has at least one edge in $\fT$, hence the edges in $G \setminus \fT$ form paths. 
These paths can be $3$-edge colored by using the uniform version of Linial's coloring algorithm. This algorithm has an uniform local complexity of $O(\log^* 1/\eps)$. By \cref{lem:sequential_composition}, the overall complexity is $O(\poly \log (\Delta^{\log^* 1/\eps} / \eps) + \log^* 1/\eps)) = O(\poly \log 1/\eps)$. 

Finally, we color the edges of $T$. We start from the leaves and color the edges inductively. In particular, whenever we consider a vertex $v$ we color the at most two edges in $T^{\leftarrow}(v)$ below $v$. 
Note that there always is at least one uncolored edge going from $v$ in the direction of $T$. Hence, we can color the at most two edges below $v$ in $T^{\leftarrow}(v)$ greedily -- each one neighbors with at most $3$ colored edges at any time.  
\end{proof}

\subsection{Proof of \cref{thm:one_ended_forest}} 
\label{sec:formalproof}

In this section we formally prove \cref{thm:one_ended_forest}.

\oneendedtreetail*

We follow the construction of the one-ended spanning forest in \cite{BrooksMeas}.
The construction proceeds in rounds. Before each round, some subset of the vertices have already decided which incident edge to grab, and we refer to these vertices as settled vertices. All the remaining vertices are called unsettled. The goal in each round is to make a large fraction of the unsettled vertices settled. To achieve this goal, our construction relies on certain properties that the graph induced by all the unsettled vertices satisfies. 

One important such property is that the graph induced by all the unsettled vertices is expanding. That is, the number of vertices contained in the neighborhood around a given vertex grows exponentially with the radius. The intuitive reason why this is a desirable property is the following. Our algorithm will select a subset of the unsettled vertices and clusters each unsettled vertex to the closest selected vertex. As the graph is expanding and any two vertices in the selected subset are sufficiently far away, there will be a lot of edges leaving a given cluster. For most of these inter-cluster edges, both of the endpoints will become settled. This in turn allows one to give a lower bound on the fraction of vertices that become settled in each cluster.

To ensure that the graph induced by all the unsettled vertices is expanding, a first condition we impose on the graph induced by all the unsettled vertices is that it has a minimum degree of at least $2$. While this condition is a first step in the right direction, it does not completely suffice to ensure the desired expansion, as an infinite path has a minimum degree of $2$ but does not expand sufficiently. Hence, our algorithm will keep track of a special subset of the unsettled vertices, the so-called hub vertices. Each hub vertex has a degree of $\Delta$ in the graph induced by all the unsettled vertices. That is, all of the neighbors of a hub vertex are unsettled as well. Moreover, each unsettled vertex has a bounded distance to the closest hub vertex, where the specific upper bound on the distance to the closest hub vertex increases with each round. As we assume $\Delta > 2$, the conditions stated above suffice to show that the graph induced by all the unsettled vertices expands.

Next, we explain in more detail how a vertex decides which edge to grab. Concretely, if a vertex becomes settled, it grabs an incident edge such that the other endpoint of that edge is strictly closer to the closest unsettled vertex as the vertex itself. For example, if a vertex becomes settled and there is exactly one neighbor that is still unsettled, then the vertex will grab the edge that the vertex shares with its unsettled neighbor. Grabbing edges in that way, one can show two things. First, for a settled vertex $v$, the set $T^{\leftarrow}(v)$ of vertices behind $v$ does not change once $v$ becomes settled. The intuitive reason for this is that the directed edges point towards the unsettled vertices and therefore the unsettled vertices lie before $v$ and not after $v$. Moreover, one can also show that $T^{\leftarrow}(v)$ only contains finitely many vertices. The reason for this is that at the moment a vertex $v$ becomes settled, there exists an unsettled vertex that is sufficiently close to $v$, where the exact upper bound on that distance again depends on the specific round in which $v$ becomes settled. 

What remains to be discussed is at which moment a vertex decides to become settled. As written above, in each round the algorithm considers a subset of the unsettled vertices and each unsettled vertex is clustered to the closest vertex in that subset. This subset only contains hub vertices and it corresponds to an MIS on the graph with the vertex set being equal to the set of hub vertices and where two hub vertices are connected by an edge if they are sufficiently close in the graph induced by all the unsettled vertices. Now, each cluster center decides to connect to exactly $\Delta$ different neighboring clusters, one in each of its $\Delta$ subtrees. Remember that as each cluster center is a hub vertex, all of its neighbors are unsettled as well. Now, each unsettled vertex becomes settled except for those that lie on the unique path between two cluster centers such that one of the cluster centers decided to connect to the other one.

\paragraph{Construction}
The algorithm proceeds in rounds. After each round, a vertex is either settled or unsettled and a settled vertex remains settled in subsequent rounds. Moreover, some unsettled vertices are so-called hub vertices. We denote the set of unsettled, settled and hub vertices after the $i$-th round with $U_i$, $S_i$ and $H_i$, respectively. We set $U_0 = H_0 = V$ prior to the first round and we always set $S_i = V \setminus U_i$.  \cref{fig:constr1} illustrates the situation after round $i$.  Moreover, we denote with $O_i$ a partial orientation of the edges of the infinite $\Delta$-regular input tree $T$. In the beginning, $O_0$ corresponds to the partial orientation with no oriented edges. Each vertex is incident to at most $1$ outwards oriented edge in $O_i$. If a vertex is incident to an outwards oriented edge in $O_i$, then the other endpoint of the edge will be its parent in the one-ended forest decomposition. For each vertex $v$ in $S_i$, we denote with $T_{v,i}$ the smallest set that satisfies the following conditions. First, $v \in T_{v,i}$. Moreover, if $u \in T_{v,i}$ and $\{w,u\}$ is an edge that according to $O_i$ is oriented from $w$ to $u$, then $w \in T_{v,i}$. We later show that $T_{v,i}$ contains exactly those vertices that are contained in the subtree $\fT^\leftarrow(v)$.

\begin{figure}[H]
    \centering
    \includegraphics{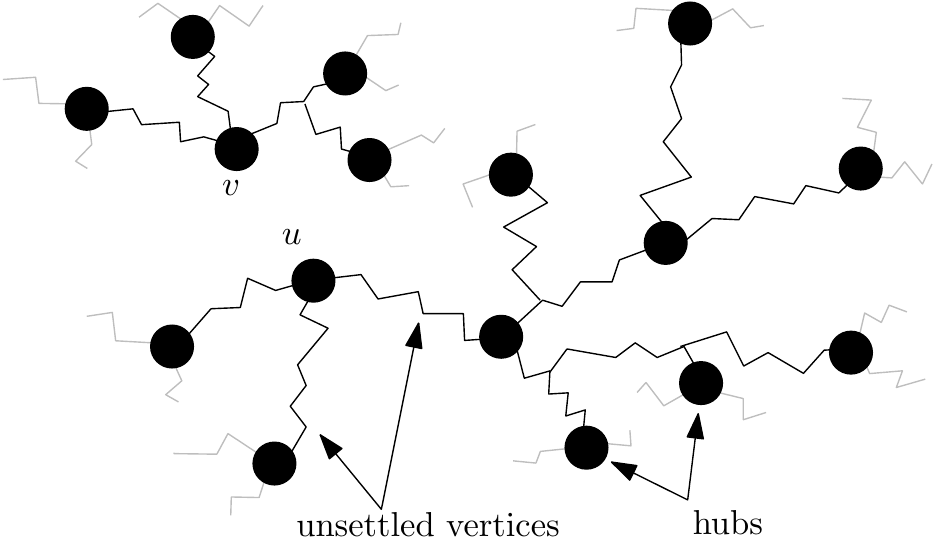}
    \caption{The picture illustrates the situation after some round $i$ of the construction. The vertices on the paths are all contained in $U_i$, while the vertices corresponding to the big black dots are additionally contained in $H_i$. Note that the length of each path is at least $d_i$. However, the distance between $u$ and $v$ in the original graph can be much smaller.}
    \label{fig:constr1}
\end{figure}

After the $i$-th round, the construction satisfies the following invariants.

\begin{enumerate}
    
    \item For each $v \in S_{i-1}$, we have $T_{v,i} = T_{v,i-1}$.
    
    \item Let $v$ be an arbitrary vertex in $S_i$. Then, $T_{v,i}$ contains finitely many vertices and furthermore $T_{v,i} \subseteq S_i$.
    
    \item The minimum degree of the graph $T[U_i]$ is at least $2$ and each vertex in $H_i$ has a degree of $\Delta$ in $T[U_i]$.
    
    \item Each vertex in $U_i$ has a distance of at most $\sum_{j=0}^i d_j$ to the closest vertex in $H_i$ in the graph $T[U_i]$.
\end{enumerate}

We now describe how to compute $U_i, S_i$, $H_i$ and $O_i$. The construction during the $i$-th round is illustrated in $\cref{fig:constr2}$. Note that we can assume that the invariants stated above are satisfied after the $(i-1)$-th round. In the $i$-th round we have a parameter $d_i$ that we later set to $2^{2^i}$.

\begin{enumerate}
    \item $H_i$ is a subset of $H_{i-1}$ that satisfies the following property. No two vertices in $H_i$ have a distance of at most $d_i$ in $T[U_{i-1}]$. Moreover, each vertex in $H_{i-1} \setminus H_i$ has a distance of at most $d_i$ to the closest vertex in $H_i$ in the graph $T[U_{i-1}]$. Note that we can compute $H_i$ by computing an MIS in the graph with vertex set $H_{i-1}$ and where two vertices are connected iff they have a distance of at most $d_i$ in the graph $T[U_{i-1}]$. Hence, we can use Ghaffari's MIS algorithm \cite{ghaffari2016MIS} to compute the set $H_i$.
    \item Next, we describe how to compute $U_i$.  
    We assign each vertex $u \in U_{i-1}$ to the closest vertex in the graph $T[U_{i-1}]$ that is contained in $H_i$, with ties being broken arbitrarily. We note that there exists a node in $H_i$ with a distance of at most $(\sum_{j=0}^{i-1} d_j) + d_i$ to $u$. To see why, note that Invariant $(4)$ from round $i - 1$ implies that there exists a vertex $w$ in $H_{i-1}$ with a distance of at most $\sum_{j=0}^{i-1} d_j$ to $u$. We are done if $w$ is also contained in $H_i$. If not, then it follows from the way we compute $H_i$ that there exists a vertex in $H_i$ with a distance of at most $d_i$ to $w$, but then the triangle inequality implies that the distance from $u$ to that vertex is at most $(\sum_{j=0}^{i-1} d_j) + d_i$, as desired. For each vertex $v \in H_i$, we denote with $C(v)$ the set of all vertices in $U_{i-1}$ that got assigned to $v$. Now, let $E_v$ denote the set of edges that have exactly one endpoint in $C(v)$. We can partition $E_v$ into $E_{v,1} \sqcup E_{v,2} \sqcup \ldots \sqcup E_{v,\Delta}$, where for $\ell \in [\Delta]$, $E_{v,\ell}$ contains all the edges in $E_v$ that are contained in the $\ell$-th subtree of $v$. 
    Invariants $(3)$ and $(4)$ after the $(i-1)$-th round imply that $E_{v,\ell} \neq \emptyset$. Moreover, $E_{v,\ell}$ contains only finitely many edges, as we have shown above that the cluster radius is upper bounded by $\sum_{j=0}^{i} d_j$.
    
    Now, for $\ell \in [\Delta]$, we choose an edge $e_\ell$ uniformly at random from the set $E_{v,\ell}$. Let $u_\ell \neq v$ be the unique vertex in $H_i$ such that one endpoint of $e_\ell$ is contained in $C(u_\ell)$. We denote with $P_{v,\ell}$ the set of vertices that are contained in the unique path between $v$ and $u_\ell$. Finally, we set $U_i = \bigcup_{v \in H_i, \ell \in [\Delta]} P_{v,\ell}$.
    \item It remains to describe how to compute the partial orientation $O_i$. All edges that are oriented in $O_{i-1}$ will be oriented in the same direction in $O_i$. Additionally, we orient for each vertex $u$ that got settled in the $i$-th round, i.e., $u \in S_i \cap U_{i-1}$, exactly one incident edge away from $u$. Let $w$ be the closest vertex to $u$ in $T[U_{i-1}]$ that is contained in $U_i$, with ties being broken arbitrarily. We note that it follows from the discussions above that the distance between $u$ and $w$ in the graph $T[U_{i-1}]$ is at most $\sum_{j=0}^{i} d_j$. We now orient the edge incident to $u$ that is on the unique path between $u$ and $w$ outwards. We note that this orientation is well-defined in the sense that we only orient edges that were not oriented before and that we don't have an edge such that both endpoints of that edge want to orient the edge outwards.
\end{enumerate}

\begin{figure}
    \centering
    \includegraphics{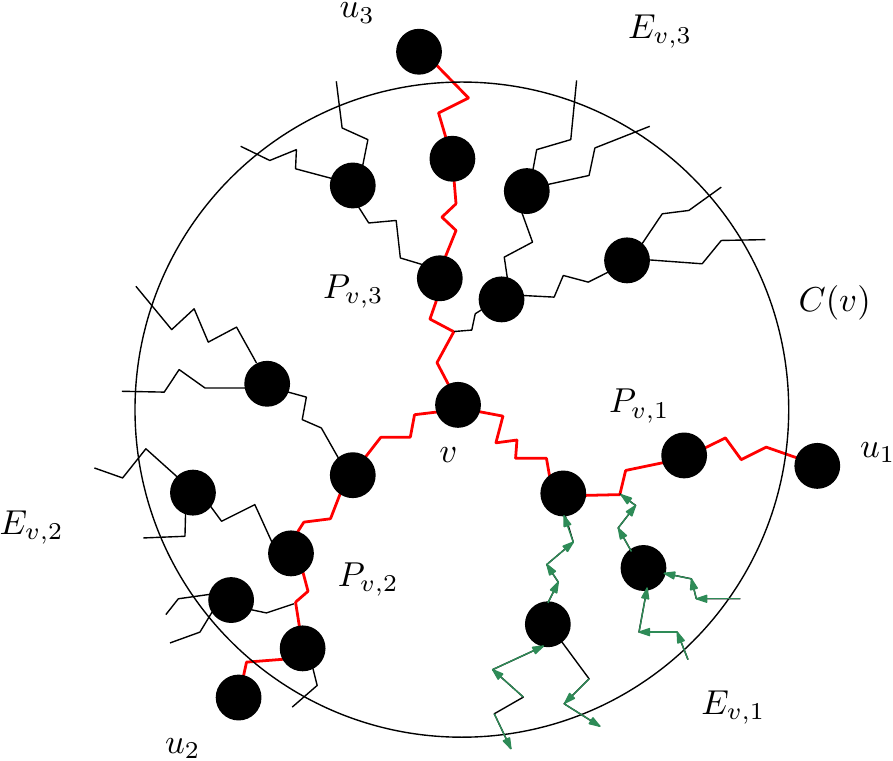}
    \caption{The picture illustrates the situation during some round $i$ of the construction. The vertices on all paths are contained in $U_{i-1}$ and the vertices corresponding to the big black dots are contained in $H_{i-1}$. The vertices $v$, $u_1$, $u_2$ and $u_3$ are the only vertices in the picture that are contained in $H_i$. All the vertices on the red paths remain unsettled during round $i$ while all the vertices on the black paths become settled. Each vertex that becomes settled during the $i$-th round orients one incident edge outwards, illustrated by the green edges. Each green edge is oriented towards the closest vertex that is still unsettled after the $i$-th round. We note that some of the edges in the picture are oriented away from $v$.}
    \label{fig:constr2}
\end{figure}

We now prove by induction that our procedure satisfies all the invariants.
Prior to the first round, all invariants are trivially satisfied. Hence, it remains to show that the invariants hold after the $i$-th round, given that the invariants hold after the $(i-1)$-th round. 

\begin{enumerate}

    \item Let $v \in S_{i-1}$ be arbitrary. As $O_i$ is an extension of $O_{i-1}$, it follows from the definition of $T_{v,i-1}$ and $T_{v,i}$ that $T_{v,i-1} \subseteq T_{v,i}$. Now assume that $T_{v,i} \not \subseteq T_{v,i-1}$. This would imply the existence of an edge $\{u,w\}$ such that $u \in T_{v,i-1}$ and the edge was oriented during the $i$-th round from $w$ to $u$. As $u \in T_{v,i-1}$, Invariant $(2)$ from the $(i-1)$-th round implies $u \in S_{i-1}$. This is a contradiction as during the $i$-th round only edges with both endpoints in $U_{i-1}$ can be oriented. Therefore it holds that $T_{v,i} = T_{v,i-1}$, as desired.

    \item Let $v \in S_i$ be arbitrary. If it also holds that $v \in S_{i-1}$, then the invariant from round $i-1$ implies that $T_{v,i-1} = T_{v,i}$ contains finitely many vertices and $T_{v,i-1} \subseteq S_{i-1} \subseteq S_i$. Thus, it suffices to consider the case that $v \in S_i \cap U_{i-1}$. Note that $T_{v,i} \cap U_i = \emptyset$. Otherwise there would exist an edge that is oriented away from a vertex in $U_i$ according to $O_i$, but this cannot happen according to the algorithm description. Hence, $T_{v,i} \subseteq S_i$. Now, for the sake of contradiction, assume that $T_{v,i}$ contains infinitely many vertices. From the definition of $T_{v,i}$, this implies that there exists a sequence of vertices $(v_k)_{k \geq 1}$ with $v = v_1$ such that for each $k \geq 1$, $\{v_{k+1},v_k\}$ is an edge in $T$ that is oriented from $v_{k+1}$ to $v_k$ according to $O_i$. For each $k \geq 1$ we have $v_k \in S_i$. We furthermore know that $v_k \in U_{i-1}$, as otherwise $T_{v_k,i} = T_{v_k,i-1}$ would contain infinitely many vertices, a contradiction. From the way we orient the edges, a simple induction proof implies that the closest vertex of $v_k$ in the graph $T[U_{i-1}]$ that is contained in $U_i$ has a distance of at least $k$. However, we previously discussed that the distance between $v_k$ and the closest vertex in $U_i$ in the graph $T[U_{i-1}]$ is at most $\sum_{j=0}^{i} d_j$. This is a contradiction. Hence, $T_{v,i}$ contains finitely many vertices.
    
    \item It follows directly from the description of how to compute $U_i$ and $H_i$ that the minimum degree of the graph $T[U_i]$ is at least $2$ and that each vertex in $H_i$ has a degree of $\Delta$ in $T[U_i]$.
    
    \item Let $u \in U_i$ be arbitrary. We need to show that $u$ has a distance of at most $\sum_{j=0}^i d_j$ to the closest vertex in $H_i$ in the graph $T[U_i]$. Let $v$ be the vertex with $u \in C(v)$. We know that there is no vertex in $H_i$ that is closer to $u$ in $T[U_{i-1}]$ than $v$. Hence, the distance between $u$ and $v$ is at most $\sum_{j=0}^i d_j$ in the graph $T[U_{i-1}]$. Moreover, from the description of how we compute $U_i$, it follows that all the vertices on the unique path between $u$ and $v$ are contained in $U_i$. Hence, each vertex in $U_i$ has a distance of at most $\sum_{j=0}^i d_j$ to the closest vertex in $H_i$ in the graph $T[U_i]$, as desired.
    \end{enumerate}

We derive an upper bound on the coding radius of the algorithm in three steps. First, for each $i \in \mathbb{N}$ and $\eps > 0$, we derive an upper bound on the coding radius for computing with probability at least $1 - \eps$ for a given vertex in which of the sets $S_i, U_i$ and $H_i$ it is contained in and for each incident edge whether and how it is oriented in $O_i$, given that each vertex receives as additional input in which of the sets $S_{i-1}, U_{i-1}$ and $H_{i-1}$ it is contained in and for each incident edge whether and how it is oriented in $O_{i-1}$. Given this upper bound on the coding radius, we can use the sequential composition lemma (\cref{lem:sequential_composition}) and a simple induction proof to give an upper bound on the coding radius for computing with probability at least $1 - \eps$ for a given vertex in which of the sets $S_i, U_i$ and $H_i$ it is contained in and for each incident edge whether and how it is oriented in $O_i$, this time without providing any additional input. 

Second, we analyze after how many rounds a given vertex is settled with probability at least $1- \eps$ for a given $\eps > 0$.  

Finally, we combine these two upper bounds to prove  \cref{thm:one_ended_forest}.

\begin{lemma}
\label{lem:tail1}
For each $i \in \mathbb{N}$ and $\eps \in (0,0.01]$, let $g_i(\eps)$ denote the smallest coding radius such that with probability at least $1 - \eps$ one knows for a given vertex in which of the sets $S_i, U_i$ and $H_i$ it is contained in and for each incident edge whether and how it is oriented in $O_i$, given that each vertex receives as additional input in which of the sets $S_{i-1}, U_{i-1}$ and $H_{i-1}$ it is contained in and for each incident edge whether and how it is oriented in $O_{i-1}$. It holds that $g_i(\eps) = O(d^2_i \log (\Delta/\eps))$.
\end{lemma}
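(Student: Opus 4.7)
The plan is to break the single round of the construction into its natural subroutines and bound the coding radius of each. After round $i-1$, the local state at each vertex (its membership in $U_{i-1}, S_{i-1}, H_{i-1}$ and the orientations in $O_{i-1}$) is assumed to be known. To compute the round-$i$ state, I would execute three phases sequentially: (1) the MIS computation that yields $H_i$, (2) the clustering of each $u \in U_{i-1}$ to the nearest vertex of $H_i$ inside $T[U_{i-1}]$, and (3) the random selection of one outgoing edge $e_\ell \in E_{v,\ell}$ per hub $v \in H_i$ and direction $\ell \in [\Delta]$, followed by the local test ``is my vertex on one of the paths $P_{v,\ell}$?'' that produces $U_i$, $S_i$, and the new orientations in $O_i$.

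For phase (1), I would work in the auxiliary ``MIS graph'' $\fM$ whose vertex set is $H_{i-1}$ and in which two vertices are adjacent iff their $T[U_{i-1}]$-distance is at most $d_i$. Since $T[U_{i-1}] \subseteq T$ has maximum degree $\Delta$, the maximum degree of $\fM$ is at most $\Delta^{d_i}$, and one round of communication in $\fM$ is simulated by $d_i$ rounds of communication in $T$. Apply a standard randomized MIS algorithm (e.g.\ the uniform version of Luby/Ghaffari) whose coding radius in $\fM$ is $O(\log(\Delta^{d_i}) + \log(1/\eps)) = O(d_i \log \Delta + \log(1/\eps))$; translated back to $T$ this gives coding radius $O(d_i \cdot (d_i\log\Delta + \log(1/\eps))) = O(d_i^2\log(\Delta/\eps))$ for phase (1), which dominates.

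For phases (2) and (3) the analysis is much cheaper. By Invariant~(4) from round $i-1$ together with the choice $d_i = 2^{2^i}$, we have $\sum_{j=0}^i d_j \le 2d_i$, so every cluster has radius $O(d_i)$ in $T[U_{i-1}]$. Hence once a vertex knows its hub assignment it can read out the sets $E_{v,\ell}$ and the random choices $e_\ell$ of any hub it might be associated with by looking only $O(d_i)$ hops away; the same $O(d_i)$-hop view also determines whether the vertex lies on some path $P_{v,\ell}$ and, if settled, which edge to orient outwards. Finally I would compose the three phases with \cref{lem:sequential_composition}: splitting the failure probability as $\eps/3$ per phase blows up each radius only by an additive $O(\log(\Delta^{d_i})) = O(d_i \log\Delta)$ term (from the $1/\Delta^{t}$ factor in the composition lemma applied to the $O(d_i)$-radius subsequent phases), which is absorbed into the $O(d_i^2\log(\Delta/\eps))$ bound.

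The main obstacle is phase (1): we need a randomized MIS algorithm that, on a graph of maximum degree $\Delta^{d_i}$, has coding radius $O(d_i\log\Delta + \log(1/\eps))$ as a genuine uniform algorithm (not merely as a LOCAL algorithm that depends on~$n$). I expect to either cite an off-the-shelf uniform MIS of this complexity or, failing that, to write out a simple Luby-style uniform variant and verify its tail bound directly; the rest of the argument is essentially bookkeeping of radii and a union bound over the $O(\Delta^{d_i})$ cluster vertices whose local state we need.
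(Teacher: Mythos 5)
Your proposal is correct and follows essentially the same approach as the paper: simulate Ghaffari's uniform MIS algorithm on the auxiliary graph over $H_{i-1}$ of degree $\Delta^{O(d_i)}$, pay an $O(d_i)$-round simulation overhead per auxiliary round, observe that the remaining cluster/orientation bookkeeping only needs an $O(d_i)$-hop view once MIS membership is known, and conclude with a union bound over the $\Delta^{O(d_i)}$ vertices in that neighborhood. The paper cites \cite{ghaffari2016MIS}, Theorem 1.1 for exactly the per-vertex tail bound you flag as the "main obstacle," and does the final bookkeeping via a direct union bound rather than routing it through the sequential composition lemma, but the two are interchangeable here.
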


\begin{proof}
 When running Ghaffari's uniform MIS algorithm on some graph $G'$ with maximum degree $\Delta'$, each vertex knows with probability at least $1-\eps$ whether it is contained in the MIS or not after $O(\log (\Delta') + \log (1/\eps))$ communication rounds in $G'$ (\cite{ghaffari2016MIS}, Theorem 1.1). For the MIS computed during the $i$-th round, $\Delta' =\Delta^{O(d_i)}$ and each communication round in $G'$ can be simulated with $O(d_i)$ communication rounds in the tree $T$. Hence, to know for a given vertex with probability at least $1-\eps$ whether it is contained in the MIS or not, it suffices consider the $O(d_i^2\log(\Delta) + d_i\log(1/\eps))$-hop neighborhood around that vertex. Now, let $u$ be an arbitrary vertex. In order to compute in which of the sets $S_i, U_i$ and $H_i$ $u$ is contained in and for each incident edge of $u$ whether and how it is oriented in $O_i$, we not only need to know whether $u$ is in the MIS or not. However, it suffices if we know for all the vertices in the $O(d_i)$-hop neighborhood of $u$ whether they are contained in the MIS or not (on top of knowing for each vertex in the neighborhood in which of the sets $S_{i-1}, U_{i-1}$ and $H_{i-1}$ it is contained in and for each incident edge whether and how it is oriented in $O_{i-1}$). Hence, by a simple union bound over the at most $\Delta^{O(d_i)}$ vertices in the $O(d_i)$-hop neighborhood around $u$, we obtain $g_i(\eps) = O(d_i) + O(d_i^2\log(\Delta) + d_i\log(\Delta^{O(d_i)}/\eps)) = O(d_i^2 \log(\Delta/\eps))$.
\end{proof}

\begin{lemma}
\label{lem:tail2}
For each $i \in \mathbb{N}$ and $\eps \in (0,0.01]$, let $h_i(\eps)$ denote the smallest coding radius such that with probability at least $1 - \eps$ one knows for a given vertex in which of the sets $S_i, U_i$ and $H_i$ it is contained in and for each incident edge whether and how it is oriented in $O_i$. Then, there exists a constant $c$ independent of $\Delta$ such that $h_i(\eps) \leq 2^{2^{i+2}} \cdot(c\log(\Delta))^i \log(\Delta/\eps)$.
\end{lemma}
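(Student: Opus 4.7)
The plan is to prove the bound by induction on $i$, applying the sequential composition lemma (\cref{lem:sequential_composition}) to bootstrap from the per-round bound of \cref{lem:tail1} together with the inductive hypothesis for round $i-1$.

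For the base case $i=0$, note that $U_0 = H_0 = V$, $S_0 = \emptyset$, and $O_0$ contains no oriented edges. So every vertex knows its round-$0$ data with coding radius $0$, i.e.\ $h_0(\eps) = 0$, which trivially satisfies the claimed bound.

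For the inductive step, observe that an algorithm computing the round-$i$ data for a vertex $v$ can be written as the sequential composition of two algorithms: $\mathcal{A}_2$, which computes the round-$(i-1)$ membership and orientation data, of uniform complexity at most $h_{i-1}(\cdot)$; and $\mathcal{A}_1$, which, given that data as input, computes the round-$i$ data, of uniform complexity at most $g_i(\cdot)$ by \cref{lem:tail1}. Applying \cref{lem:sequential_composition} yields
\[
h_i(\eps) \;\le\; g_i\!\left(\frac{\eps/2}{\Delta^{h_{i-1}(\eps/2)+1}}\right) + h_{i-1}(\eps/2).
\]
Using $g_i(\eps') = O(d_i^2 \log(\Delta/\eps'))$ with $d_i^2 = 2^{2^{i+1}}$, and substituting $\eps' = (\eps/2)/\Delta^{h_{i-1}(\eps/2)+1}$, the first term becomes $O\bigl(2^{2^{i+1}}(\log\Delta)\, h_{i-1}(\eps/2) + 2^{2^{i+1}} \log(\Delta/\eps)\bigr)$. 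Feeding in the inductive hypothesis
\[
h_{i-1}(\eps/2) \;\le\; 2^{2^{i+1}} (c\log\Delta)^{i-1} \log(2\Delta/\eps),
\]
the dominant contribution is
\[
2^{2^{i+1}} \cdot (\log\Delta) \cdot 2^{2^{i+1}} (c\log\Delta)^{i-1} \log(2\Delta/\eps)
\;=\; 2^{2^{i+2}} (c\log\Delta)^{i} \log(2\Delta/\eps) / c,
\]
using $2^{2^{i+1}} \cdot 2^{2^{i+1}} = 2^{2^{i+2}}$. Choosing the universal constant $c$ large enough to absorb the hidden constant from $O(\cdot)$, the additive lower-order terms, the factor $\log(2\Delta/\eps) \le 2\log(\Delta/\eps)$ (valid for $\eps \le 0.01$), and the residual $h_{i-1}(\eps/2)$ term, we conclude $h_i(\eps) \le 2^{2^{i+2}}(c\log\Delta)^i \log(\Delta/\eps)$, completing the induction.

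The main obstacle is purely bookkeeping: verifying that the doubly-exponential factor $2^{2^{i+1}}$ from $d_i^2$ combines with the inductive factor $2^{2^{i+1}}$ to exactly $2^{2^{i+2}}$ (which it does, since $2 \cdot 2^{i+1} = 2^{i+2}$), and choosing $c$ uniformly in $i$ so that the $\log\Delta$ gained per level is absorbed cleanly by $(c\log\Delta)^i$. Once one chases these constants, no further ideas are needed.
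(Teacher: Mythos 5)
Your proof follows the same route as the paper's: induct on $i$, compose the per-round algorithm (bounded via \cref{lem:tail1}) with the computation of round-$(i-1)$ data via \cref{lem:sequential_composition}, and chase the doubly-exponential constants. The bookkeeping is done correctly and the choice of $c$ is uniform in $i$, as needed. One formal issue: you have swapped the roles of the two algorithms when invoking \cref{lem:sequential_composition}. In that lemma, $A_1$ is the algorithm that runs first (its output feeds $A_2$) and it is $A_1$ that receives the shrunken error parameter, because its output must be correct at every vertex in the ball of radius $t_{A_2}(\eps/2)$. In your decomposition the first-run algorithm computes the round-$(i-1)$ data, so the lemma gives
\[
h_i(\eps) \;\le\; h_{i-1}\!\left(\frac{\eps/2}{\Delta^{g_i(\eps/2)+1}}\right) + g_i(\eps/2),
\]
not $g_i(\cdot)$ applied to the shrunken error plus $h_{i-1}(\eps/2)$ as you wrote. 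Your version does not follow from \cref{lem:sequential_composition} as stated (and need not hold for an arbitrary pair of algorithms; it would fail, e.g., if $g_i$ were constant in $\eps$). In the present setting, however, both $g_i(\eps)$ and the inductive bound on $h_{i-1}(\eps)$ are of the form $\text{const}\cdot\log(\Delta/\eps)$, so the leading term of the recurrence is the product of the two constants times $\log\Delta\cdot\log(\Delta/\eps)$ regardless of which factor absorbs the shrunken error; this is why your numbers come out the same as the paper's. Replace your recurrence with the one displayed above and the proof is then fully correct, matching the paper's argument up to an index shift and the choice of base case ($i=0$ in yours, $i=1$ in the paper's).
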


\begin{proof}
By prove the statement by induction on $i$. For a large enough constant $c$, it holds that $h_1(\eps) \leq 2^{2^{3}} \cdot(c\log(\Delta)) \log(\Delta/\eps)$. Now, consider some arbitrary $i$ and assume that $h_i(\eps) \leq 2^{2^{i+2}} \cdot(c\log(\Delta))^i \log(\Delta/\eps)$ for some large enough constant $c$. We show that this implies $h_{i+1}(\eps) \leq  2^{2^{(i+1)+2}} \cdot(c\log(\Delta))^{i+1} \log(\Delta/\eps)$.
By the sequential composition lemma (\cref{lem:sequential_composition}) and assuming that $c$ is large enough, we have

\begin{align*}
  h_{i+1}(\eps) &\leq h_{i}((\eps/2)/\Delta^{g_{i+1}(\eps/2) + 1}) + g_{i+1}(\eps/2)  \\
                &\leq 2^{2^{i+2}} \cdot(c\log(\Delta))^i \log(\Delta \cdot \Delta^{g_{i+1}(\eps/2) + 1}/(\eps/2)) + g_{i+1}(\eps/2)  \\
                &\leq 2 \cdot 2^{2^{i+2}} \cdot(c\log(\Delta))^i \log(\Delta \cdot \Delta^{g_{i+1}(\eps/2) + 1}/(\eps/2)) \\
                &\leq 2 \cdot 2^{2^{i+2}} \cdot(c\log(\Delta))^i \log(\Delta^{(c/10) \cdot 2^{2^{i+2}}\log(\Delta/\eps)}/(\eps/2)) \\
                &\leq 4 \cdot 2^{2^{i+2}} \cdot(c\log(\Delta))^i \log(\Delta^{(c/10) \cdot 2^{2^{i+2}}\log(\Delta/\eps)}) \\
                &\leq (4/10) \cdot 2^{2^{i+2}} \cdot 2^{2^{i+2}}  (c \log(\Delta))^{i+1} \log(\Delta/\eps) \\
                &\leq 2^{2^{(i+1)+2}} \cdot(c\log(\Delta))^{i+1} \log(\Delta/\eps),
\end{align*}

as desired.

\end{proof}

\begin{lemma}\label{lm:tail0}
Let $u$ be an arbitrary vertex. For each $\eps \in (0,0.01]$, let $f(\eps)$ denote the smallest $i \in \mathbb{N}$ such that $u$ is settled after the $i$-th round with probability at least $1 - \eps$. There exists a fixed $c \in \mathbb{R}$ independent of $\Delta$ such that $f(\eps) \leq \lceil 1 + \log \log \frac{1}{c} \log_\Delta(1/\eps)\rceil$.
\end{lemma}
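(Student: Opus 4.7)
The plan is to bound, for a fixed vertex $u$, the probability that $u$ is still unsettled after $i$ rounds by a quantity that decreases at a doubly-exponential rate matching the growth $d_i=2^{2^i}$.

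First I would establish a per-round survival estimate of the form $\mathbb{P}(u\in U_i\mid u\in U_{i-1})\le \Delta^{-\Omega(d_i)}$. Conditioning on $u\in U_{i-1}$ and on the hub $v\in H_i$ with $u\in C(v)$, the vertex $u$ can remain in $U_i$ only if it lies on one of the $\Delta$ paths $P_{v,\ell}$ emanating from $v$, or on a path $P_{v',\ell'}$ that an adjacent hub $v'$ selects toward $v$'s cluster. Either event requires the random boundary edge drawn uniformly from $E_{v,\ell}$ (respectively $E_{v',\ell'}$) to fall in the small sub-branch containing $u$. Using the structural invariants of the construction---in particular that $T[U_{i-1}]$ has minimum degree at least $2$, that every vertex of $H_{i-1}$ has degree $\Delta\ge 3$ in $T[U_{i-1}]$, and that the elements of $H_{i-1}$ lie densely (within distance $\sum_{j<i}d_j$) in $U_{i-1}$---one concludes that $|E_{v,\ell}|$ grows exponentially in $d_i$, while the number of boundary edges whose choice would keep $u$ on a surviving path is much smaller. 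A union bound over the $O(\Delta)$ hubs whose paths could conceivably pass through $u$ costs only a constant factor.

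Next, I would iterate the per-round estimate. Because the randomness of the MIS in round $i$ is independent of the randomness in earlier rounds (conditional on their outcomes), one obtains $\mathbb{P}(u\in U_i)\le \prod_{j=1}^{i}\Delta^{-\Omega(d_j)}\le \Delta^{-c\,d_i}$ for a suitable constant $c>0$, the last factor dominating because $d_j$ grows doubly-exponentially. Solving $\Delta^{-c\,d_i}\le \epsilon$ for $i$ gives $d_i\ge \tfrac{1}{c}\log_\Delta(1/\epsilon)$, i.e.\ $2^{2^i}\ge \tfrac{1}{c}\log_\Delta(1/\epsilon)$, which rearranges to $i\ge \log\log\!\bigl(\tfrac{1}{c}\log_\Delta(1/\epsilon)\bigr)$, exactly the bound in the lemma statement.

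\textbf{Main obstacle.} The delicate step will be the per-round survival estimate. The naive bound $B_u/|E_{v,\ell(u)}|$ for the event that $u$ lies on $P_{v,\ell(u)}$ is of order one whenever $u$ happens to sit close to its assigned hub $v$; to handle that case one exploits the fact that $v$ itself is produced by a random MIS, so with high probability $u$ ends up sufficiently deep inside $C(v)$ rather than adjacent to $v$. Combining this observation with the exponential expansion of branches inside a cluster (coming from the density of degree-$\Delta$ vertices in $H_{i-1}$) is what produces the clean $\Delta^{-\Omega(d_i)}$ bound, and tracking the constants carefully is what yields the specific form $\lceil 1+\log\log(\tfrac{1}{c}\log_\Delta(1/\epsilon))\rceil$ asserted in the lemma.
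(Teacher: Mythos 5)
Your route is different from the paper's, and it has a gap. The paper never conditions on the past or estimates $\mathbb{P}(u\in U_i\mid u\in U_{i-1})$. Instead it runs the algorithm on a finite $\Delta$-regular graph of large girth, establishes a \emph{deterministic} upper bound of $\Delta^{-\Omega(d_i/d_{i-1})}$ on the \emph{fraction} of vertices unsettled after round $i$ (each cluster $C(v)$ contains at least $(\Delta-1)^{\Omega(d_i/d_{i-1})}$ vertices yet contributes only $O(\Delta\, d_i)$ unsettled ones), and then converts this fraction into a per-vertex probability by symmetry, passing to the infinite tree by a local-global transfer. This sidesteps exactly the issue you flag as the ``main obstacle'': once you condition on $u\in U_{i-1}$, the distribution of the round-$i$ MIS and of the random boundary-edge choices on the random graph $T[U_{i-1}]$ is awkward to control, and your sketch leaves that step unresolved. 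The paper's counting-plus-symmetry argument makes the problem disappear; your per-vertex conditional approach would have to actually solve it.

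There is also a quantitative error that causes your final arithmetic not to match the lemma, even though you claim it does. You assert the per-round survival probability is $\Delta^{-\Omega(d_i)}$, but the branching inside $T[U_{i-1}]$ is produced by the $H_{i-1}$ vertices, which Invariant (4) only guarantees to occur within pairwise distance $\sum_{j<i}d_j\approx d_{i-1}$; so a cluster of radius $\Theta(d_i)$ has only $(\Delta-1)^{\Theta(d_i/d_{i-1})}$ vertices, not $\Delta^{\Theta(d_i)}$. The correct per-round exponent is therefore $d_i/d_{i-1}=2^{2^{i-1}}$, not $d_i=2^{2^i}$. Your computation solves $\Delta^{-c\,d_i}\le\eps$ and obtains $i\ge\log\log\bigl(\tfrac1c\log_\Delta(1/\eps)\bigr)$, which is not the bound $1+\log\log(\cdots)$ in the lemma statement; the missing ``$+1$'' reappears precisely when the exponent is corrected from $d_i$ to $d_i/d_{i-1}=2^{2^{i-1}}$.
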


\begin{proof}

Let $i \in \mathbb{N}$ be arbitrary. We show that a given vertex is settled after the $i$-th round with probability at least $1 - O(1/\Delta^{\Omega(d_i/d_{i-1})})$. For the sake of analysis, we run the algorithm on a finite $\Delta$-regular high-girth graph instead of an infinite $\Delta$-regular tree. For now, we additionally assume that no vertex realizes that we don't run the algorithm on an infinite $\Delta$-regular tree. That is, we assume that for each vertex the coding radius to compute all its local information after the $i$-th round is much smaller than the girth of the graph.  

With this assumption, we give a deterministic upper bound on the fraction of vertices that are not settled after the $i$-th round. On the one hand, the number of vertices that are not settled after the $i$-th round can be upper bounded by $|H_i| \cdot \Delta \cdot O(d_i)$. On the other hand, we will show that the fraction of vertices that are contained in $H_i$ is upper bounded by $1/\Delta^{\Omega(d_i/d_{i-1})}$. We do this by showing that $C(v)$ contains $\Delta^{\Omega(d_i/d_{i-1})}$ many vertices for a given $v \in H_i$. Combining these two bounds directly implies that the fraction of unsettled vertices is smaller than 

\[\frac{\Delta \cdot O(d_i)}{\Delta^{\Omega(d_i/d_{i-1})}} = 1/\Delta^{\Omega(d_i/d_{i-1})}.\]

Let $v \in H_i$ be arbitrary. We show that $|C(v)| = \Delta^{\Omega(d_i/d_{i-1})}$. Using Invariants (3) and (4) together with a simple induction argument, one can show that there are at least $(\Delta-1)^{\lfloor  D / ((2\sum_{j=0}^{i-1}d_{j}) + 1)\rfloor}$ vertices contained in $H_{i-1}$ and whose distance to $v$ is at most $D$ in the graph induced by the vertices in $U_{i-1}$. Furthermore, from the way we defined the clustering $C(v)$ and the fact that two vertices in $H_i$ have a distance of at least $d_i$ in the graph $T[U_{i-1}]$, it follows that all vertices in $U_{i-1}$ having a distance of at most $d_i/2 -1$ to $v$  are contained in the cluster $C(v)$. Hence, the total number of vertices in $C(v)$ is at least $(\Delta-1)^{\lfloor  (d_i/2 - 1) / ((2\sum_{j=0}^{i-1}d_{j}) + 1)\rfloor} = \Delta^{\Omega(d_i/d_{i-1})}$, as promised.

Now we remove the assumption that no vertex realizes that we don't run the algorithm on an infinite $\Delta$-regular tree. If a vertex does realize that we don't run the algorithm on an infinite $\Delta$-regular tree, then we consider the vertex as being unsettled after the $i$-th round. By considering graphs with increasing girth, we can make the expected fraction of vertices that realize that we are not on an infinite $\Delta$-regular tree arbitrarily small.  Combining this observation with the previous discussion, this implies that the expected fraction of vertices that are not settled after the $i$-th round is at most $1/\Delta^{\Omega(d_i/d_{i-1})}$. By symmetry, each vertex has the same probability of being settled after the $i$-th round. Hence, the probability that a given vertex is settled after the $i$-th round when run on a graph with sufficiently large girth is $1 - 1/\Delta^{\Omega(d_i/d_{i-1})}$, and the same holds for each vertex when we run the algorithm on an infinite $\Delta$-regular tree. Thus, there exists a constant $c$ such that the probability that a given vertex is unsettled after the $i$-th round is at most $1/\Delta^{c \cdot d_i/d_{i-1}} = 1/\Delta^{c \cdot 2^{2^{i-1}}}$. Setting $i = \lceil 1 + \log \log \frac{1}{c} \log_\Delta(1/\eps)\rceil$ finishes the proof.
\end{proof}

We are now finally ready to finish the proof of \cref{thm:one_ended_forest}. Let $u$ be an arbitrary vertex and $\eps \in (0,0.01]$. We need to compute an upper bound on the coding radius that is necessary for $u$ to know all the vertices in $\mathcal{T}^{\leftarrow}(u)$ with probability at least $1-\eps$. To compute such an upper bound for the  required coding radius it suffices to find an $i^*$ and an $R^*$ such that the following holds. First, $u$ is settled after $i^*$ rounds with probability at least $1 - \eps/2$. Second, $u$ knows for each edge in its $O(d_{i^*})$-hop neighborhood whether it is oriented in the partial orientation $O_{i^*}$, and if yes, in which direction, by only considering its $R^*$-hop neighborhood with probability at least $1 - \eps/2$.  By a union bound, both of these events occur with probability at least $1-\eps$. Moreover, if both events occur $u$ knows all the vertices in the set $\mathcal{T}^{\leftarrow}(u)$. The reason is as follows. If $u$ is settled after round $i^*$, then it follows from the previous analysis that only vertices in the $O(d_{i^*})$-hop neighborhood of $u$ can be contained in $\mathcal{T}^{\leftarrow}(u)$. Moreover, for each vertex in its $O(d_{i^*})$-hop neighborhood, $u$ can determine if the vertex is contained in $\mathcal{T}^{\leftarrow}(u)$ if it knows all the edge orientations on the unique path between itself and that vertex after the $i^*$-th round. Hence, it remains to find concrete values for $i^*$ and $R^*$. According to \cref{lm:tail0}, we can choose $i^* = \lceil 1 + \log \log \frac{1}{c} \log_\Delta (2/\eps)\rceil$ for some large enough constant $c$. Moreover, it follows from a union bound that all vertices in the $O(d_{i^*})$-hop neighborhood around $u$ know with probability at least $1-\eps/2$ the orientation of all its incident edges according to $O_{i^*}$ by only considering their $h_{i^*}((\eps/2)/\Delta^{O(d_{i^*})})$-hop neighborhood. Hence, we can set 

\begin{align*}
R^* &= O(d_{i^*}) + h_{i^*}((\eps/2)/\Delta^{O(d_{i^*})}) \\
    &\leq O(d_{i^*}) + 2^{2^{i^* + 2}} \cdot (c \log(\Delta))^{i^*}\log(\Delta^{O(d_{i^*})}/\eps) \\
    &= O(2^{2^{i^*}}\cdot 2^{2^{i^* + 2}} (c \log(\Delta))^{i^*}\log(\Delta/\eps)) \\
    &= O(2^{2^{i^* + 3}} (c \log(\Delta))^{i^*}\log(\Delta/\eps)) \\
    &= O(2^{2^{\log \log \frac{1}{c} \log (2/\eps) + 5}} (c \log(\Delta))^{\log \log \frac{1}{c} \log (2/\eps) + 2}\log(\Delta/\eps)) \\
    &= \log(1/\eps)^{32} \cdot \log(\Delta/\eps) \cdot \log\log(1/\eps)^{O(\log\log(\Delta))}.
\end{align*}

As $\Delta = O(1)$, it therefore holds that

\[P(R(v)) = \poly(\log(1/\eps)) \geq 1 - \eps, \]

as desired.

\section{$\baire = \local(O(\log n))$}
\label{sec:baire}

In this section, we show that on $\Delta$-regular trees the classes $\baire$ and $\local(O(\log(n)))$ are the same.
At first glance, this result looks rather counter-intuitive.
This is because in finite $\Delta$-regular trees every vertex can see a leaf of distance $O(\log(n))$, while there are no leaves at all in an infinite $\Delta$-regular tree.
However, there is an intuitive reasons why these classes are the same: in both setups there is a technique to decompose an input graph into a hierarchy of subsets. Furthermore, the existence of a solution that is defined inductively with respect to these decompositions can be characterized by the same combinatorial condition of Bernshteyn \cite{Bernshteyn_work_in_progress}.
We start with a high-level overview of the decomposition techniques used in both contexts.

\paragraph{\rake\ and \compress}
The hierarchical decomposition in the context of distributed computing is based on a variant of a decomposition algorithm of
Miller and Reif~\cite{MillerR89}.
Their original decomposition algorithm works as follows.
Start with a tree $T$, and repeatedly apply the following two operations alternately: \rake\ (remove all degree-1 vertices) and \compress\ (remove all degree-2 vertices).
Then $O(\log n)$ iterations suffice to remove all vertices in $T$~\cite{MillerR89}. To view it another way, this produces a decomposition of the vertex set $V$ into $2L - 1$ layers \[V = \VR{1} \cup \VC{1} \cup \VR{2} \cup \VC{2} \cup \VR{3} \cup \VC{3} \cup \cdots \cup \VR{L},\] 
with $L= O(\log n)$, where $\VR{i}$ is the set of vertices removed during the $i$-th \rake\ operation and $\VC{i}$ is the set of vertices removed during the $i$-th \compress\ operation. We will use a variant~\cite{chang_pettie2019time_hierarchy_trees_rand_speedup} of this decomposition in the proof of \cref{thm:ellfull_to_logn}.


Variants of this decomposition turned out to be useful in designing  $\LOCAL$ algorithms~\cite{chang_pettie2019time_hierarchy_trees_rand_speedup,ChangHLPU20,chang2020n1k_speedups}. 
In our context, we assume that the given LCL satisfies a certain combinatorial condition and then find a solution inductively, in the reversed order of the construction of the decomposition.
Namely, in the \rake\ step we want to be able to existentially extend the inductive partial solution to all relative degree $1$-vertices (each $v \in \VR{i}$ has degree at most 1 in the subgraph induced by $\VR{i}  \cup \cdots \cup \VR{L}$) and in the \compress\ step we want to extend the inductive partial solution to paths with endpoints labeled from the induction (the vertices in $\VC{i}$ form degree-2 paths in the subgraph induced by $\VC{i} \cup \cdots \cup \VR{L}$).

\paragraph{$\toast$}
Finding a hierarchical decomposition in the context of descriptive combinatorics is tightly connected with the notion of \emph{Borel hyperfiniteness}.
Understanding what Borel graphs are Borel hyperfinite is a major theme in descriptive set theory \cite{doughertyjacksonkechris,gaojackson,conley2020borel}.
It is known that grids, and generally polynomial growth graphs are hyperfinite, while, e.g., acyclic graphs are not in general hyperfinite \cite{jackson2002countable}.
A strengthening of hyperfiniteness that is of interest to us is called a \emph{toast} \cite{gao2015forcing,conleymillerbound}.
A $q$-toast, where $q\in \mathbb{N}$, of a graph $G$ is a collection $\fD$ of fintie subsets of $G$ with the property that (i) every pair of vertices is covered by an element of $\fD$ and (ii) the boundaries of every $D\not=E\in \fD$ are at least $q$ apart.
The idea to use a toast structure to solve LCLs appears in \cite{conleymillerbound} and has many applications since then \cite{gao2015forcing,marksunger}.
This approach has been formalized in \cite{grebik_rozhon2021toasts_and_tails}, where the authors introduce $\toast$ algorithms.
Roughly speaking, an LCL $\Pi$ admits a $\toast$ algorithm if there is $q\in \mathbb{N}$ and a partial extending function (the function is given a finite subset of a tree that is partially colored and outputs an extension of this coloring on the whole finite subset) that has the property that whenever it is applied inductively to a $q$-toast, then it produces a $\Pi$-coloring.
An advantage of this approach is that once we know that a given Borel graph admits, e.g., a Borel toast structure and a given LCL $\Pi$ admits a $\toast$ algorithm, then we may conclude that $\Pi$ is in the class $\borel$.
Similarly for $\measure$, $\baire$ or $\olocal$, we refer the reader to \cite{grebik_rozhon2021toasts_and_tails} for more details and results concerning grids.

In the case of trees there is no way of constructing a Borel toast in general, however, it is a result of Hjorth and Kechris \cite{hjorth1996borel} that every Borel graph is hyperfinite on a comeager set for every compatible Polish topology.
A direct consequence of \cite[Lemma~3.1]{marks2016baire} together with a standard construction of toast via Voronoi cells gives the following strengthening to toast.
We include a sketch of the proof for completeness.

\begin{restatable}{proposition}{BaireToast}
\label{pr:BaireToast}
Let $\fG$ be a Borel graph on a Polish space $(X,\tau)$ with degree bounded by $\Delta\in \mathbb{N}$.
Then for every $q>0$ there is a Borel $\fG$-invariant $\tau$-comeager set $C$ on which $\fG$ admits a Borel $q$-toast.
\end{restatable}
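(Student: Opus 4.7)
The plan is to build the toast via a multi-scale Voronoi construction on an invariant comeager set, with the boundary-separation condition handled by a Baire-genericity argument that is exactly the content of Marks' Lemma 3.1. First, I would fix a rapidly growing sequence of radii $(r_n)_{n \in \mathbb{N}}$ with $r_1 > 10 q$ and $r_{n+1} > 10 r_n + 10 q$. For each $n$, applying the Kechris--Solecki--Todor\v{c}evi\'c theorem \cite{KST} to the power graph $\fG^{r_n}$ (which still has bounded degree) yields a Borel maximal $r_n$-independent set $I_n \subseteq X$, so that every vertex lies at graph-distance at most $r_n$ from $I_n$, and the centers of $I_n$ are pairwise at distance $> r_n$.

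Next, for each $x \in I_n$ I would define a Voronoi-type cell $D_{x,n}$ of diameter at most $3 r_n$ by breaking ties with an integer perturbation $\rho_{x,n} \in \{0, 1, \dots, 10 q\}$: put
\[
D_{x,n} \;=\; \bigl\{ y \in X : d(y,x) + \rho_{x,n} \le d(y,x') + \rho_{x',n} \text{ for all } x' \in I_n \bigr\}.
\]
Since $\fG$ has degree bounded by $\Delta$ and $D_{x,n}$ has diameter $\le 3 r_n$, each $D_{x,n}$ is finite. Because the $I_n$ are $r_n$-dense, every pair $\{u,v\} \subseteq X$ that lies within distance $r_{n-1}$ of each other is jointly contained in some $D_{x,n}$ for $n$ large, so condition (i) of a $q$-toast is automatic once we let $\fD = \{D_{x,n} : x \in I_n,\ n \in \mathbb{N}\}$.

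The heart of the matter is choosing the perturbations $\rho_{x,n}$ so that the boundaries of distinct cells lie at graph-distance at least $q$ apart, both within a single scale $n$ and across different scales. For each fixed finite local configuration, the set of perturbation vectors that produce colliding boundaries is a nowhere-dense subset of the finite product of choice spaces $\{0,\dots,10 q\}^{I_n}$. Marks' Lemma 3.1 from \cite{marks2016baire} provides exactly what is needed: it lets one make Baire-measurable choices of local data on an $\fG$-invariant $\tau$-comeager Borel set $C \subseteq X$ that avoid any countable family of such locally-meager ``bad'' events. Applying it to the countable family of bad events indexed by pairs of cells (within and across scales) yields Borel-measurable perturbations $\rho_{x,n}$ on $C$ making the toast $q$-separated.

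The main obstacle will be the bookkeeping for cross-scale boundary separation, since cells at level $n+1$ can nest many cells at level $n$ and the boundaries could in principle accumulate. This is what forces the rapid geometric growth $r_{n+1} \gg r_n + q$: each level-$(n+1)$ boundary is determined only by the $I_{n+1}$-centers within distance $\sim r_{n+1}$, while all level-$n$ boundaries sit well inside cells and can be perturbed independently within a constant-size local window of size $O(q)$. With this separation of scales the cross-level conditions also become local and nowhere-dense in the perturbation space, so they are absorbed into the same single application of Marks' lemma, yielding the desired Borel $q$-toast on the $\fG$-invariant $\tau$-comeager set $C$.
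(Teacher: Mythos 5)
Your multi-scale Voronoi strategy is the right starting point, but there is a genuine gap in the cross-scale boundary separation, together with a misreading of Marks' lemma that obscures where the comeager set actually comes from.

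The separation gap: the perturbations $\rho_{x,n}\in\{0,\dots,10q\}$ move any individual Voronoi boundary by at most $O(q)$, but the $q$-toast condition requires the boundary of a level-$(n+1)$ cell to stay at graph-distance at least $q$ from \emph{every} nearby level-$n$ boundary. A level-$(n+1)$ Voronoi boundary runs through a region containing on the order of $r_{n+1}/r_n$ level-$n$ cells and will generically straddle many of them, meaning a level-$n$ cell has vertices on both sides of the larger boundary. Once that happens, separating the two boundaries requires pushing the larger boundary entirely outside the smaller cell, a shift of order $r_n\gg 10q$ that bounded tie-breaks cannot supply; the set of ``bad'' perturbations is then the whole choice space, not a nowhere-dense subset, so the genericity argument has nothing to bite on. Your claim that ``all level-$n$ boundaries sit well inside cells'' silently assumes a nesting of cells across scales which the raw Voronoi construction simply does not provide. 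The paper manufactures exactly that nesting by a fattening/absorption step: the level-$(n+1)$ cell $C_{n+1}(x)$ is grown from a seed ball $R_{n+1}(x)$ by repeatedly swallowing every level-$i$ ball $B_i(y)$ (for $i\le n$) that it touches, which forces lower-level boundaries to lie strictly in the interior of the higher-level cell and makes $q$-separation automatic from the radius hierarchy $f(n)$. That absorption could a priori run forever, and this potential non-termination is precisely what the comeager restriction controls.

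On Marks' lemma: Lemma~3.1 of \cite{marks2016baire} is not a generic-choice device that lets you avoid a countable family of locally meager bad events. It produces, for an arbitrary prescribed growth rate $f$, a sequence of Borel maximal $f(n)$-discrete sets $A_n$ whose union $C=\bigcup_n A_n$ is $\fG$-invariant and $\tau$-comeager; the paper takes $C$ to \emph{be} this union and uses the $A_n$ as the cell centers, so the comeager set is delivered directly by the lemma rather than by integrating out bad perturbations. By contrast, applying \cite{KST} at each scale as you propose gives globally $r_n$-dense independent sets $I_n$, so your cells cover all of $X$; had your separation argument worked, you would have produced a Borel $q$-toast on all of $X$, which is impossible (e.g.\ on the free part of the shift of a free group it would contradict the failure of Borel hyperfiniteness). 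That the covering holds everywhere while the separation can only hold comeagerly is the tension your proposal does not resolve.
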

\begin{proof}[Proof sketch]
Let $\{A_n\}_{n\in \mathbb{N}}$ be a sequence of MIS with parameter $f(n)$ as in \cite[Lemma~3.1]{marks2016baire} for a sufficiently fast growing function $f(n)$, e.g., $f(n)=(2q)^{n^2}$.
Then $C=\bigcup_{n\in \mathbb{N}}A_n$ is a Borel $\tau$-comeager set that is $\fG$-invariant.
We produce a toast structure in a standard way, e.g., see~\cite[Appendix~A]{grebik_rozhon2021toasts_and_tails}.

Let $B_n(x)$ denote the ball of radius $f(n)/3$ and $R_n(x)$ the ball of radius $f(n)/4$ around $x\in A_n$.
Iteratively, define cells $\fD_n=\{C_{n}(x)\}_{x\in A_n}$ as follows.
Set $C_1(x)=R_1(x)$.
Suppose that $\fD_n$ has been defined and set
\begin{itemize}
	\item $H^{n+1}(x,n+1):=R_{n+1}(x)$ for every $x\in A_{n+1}$,
	\item if $1\le i\le n$ and $\left\{H^{n+1}(x,i+1)\right\}_{x\in A_{k+1}}$ has been defined, then we put
	$$H^{n+1}(x,i)=\bigcup \left\{B_i(y):H^{n+1}(x,i+1)\cap B_i(y)\not=\emptyset\right\}$$
	for every $x\in A_{n+1}$,
	\item set $C_{n+1}(x):=H^{n+1}(x,1)$ for every $x\in A_{n+1}$, this defines $\mathcal{D}_{n+1}$.
\end{itemize}
The fact that $\fD=\bigcup_{n\in\mathbb{N}} \fD_n$ is a $q$-toast on $C$ follows from the fact that $C=\bigcup_{n\in \mathbb{N}}A_n$ together with the fact that the boundaries are $q$ separated which can be shown as in \cite[Appendix~A]{grebik_rozhon2021toasts_and_tails}.
\end{proof}

Therefore to understand LCLs in the class $\baire$ we need understand what LCLs on trees admit $\toast$ algorithm.
It turns out that these notions are equivalent, again by using the combinatorial characterization of Bernshteyn \cite{Bernshteyn_work_in_progress} that we now discuss.

\paragraph{Combinatorial Condition -- $\ell$-full set}
In both decompositions, described above, we need to extend a partial coloring along paths that have their endpoints colored from the inductive step.
The precise formulation of the combinatorial condition that captures this demand was extracted by Bernshteyn \cite{Bernshteyn_work_in_progress}.
He proved that it characterizes the class $\baire$ for Cayley graphs of virtually free groups.
Note that this class contains, e.g., $\Delta$-regular trees with a proper edge $\Delta$-coloring.

\begin{definition}[Combinatorial condition -- an $\ell$-full set]
\label{def:ellfull}
Let $\Pi = (\Sigma, \fV, \fE)$ be an LCL and $\ell\geq 2$.
A set $\fV' \subseteq \fV$ is \emph{$\ell$-full} whenever the following is satisfied.
Take a path with at least $\ell$ vertices, and add half-edges to it so that each vertex has degree $\Delta$.
Take any $c_1, c_2 \in \fV'$ and label arbitrarily the half-edges around the endpoints with $c_1$ and $c_2$, respectively.
Then there is a way to label the half-edges around the remaining $\ell-2$ vertices with configurations from $\fV'$ such that all the $\ell-1$ edges on the path have valid edge configuration on them. 
\end{definition}

Now we are ready to formulate the result that combines Bernshteyn's result \cite{Bernshteyn_work_in_progress} (equivalence between (1.) and (2.), and the moreover part) with the main results of this section.
This also shows the remaining implications in \cref{fig:big_picture_trees}.

\begin{theorem}\label{thm:MainBaireLog}
Let $\Pi$ be an LCL on regular trees. 
Then the following are equivalent:
\begin{enumerate}
    \item $\Pi\in \baire$,
    \item $\Pi$ admits a $\toast$ algorithm,
    \item $\Pi$ admits an $\ell$-full set,
    \item $\Pi\in \local(O(\log(n)))$.
\end{enumerate}
Moreover, any of the equivalent conditions is necessary for $\Pi\in \fiid$.
\end{theorem}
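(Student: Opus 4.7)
The plan is to close two groups of implications: the descriptive-combinatorial cycle $(3)\Rightarrow(2)\Rightarrow(1)\Rightarrow(3)$, and the distributed--combinatorial equivalence $(3)\Leftrightarrow(4)$. The implication $(1)\Rightarrow(3)$ and the ``moreover'' part are Bernshteyn's \cite{Bernshteyn_work_in_progress}, so I only need to take care of $(3)\Rightarrow(2)\Rightarrow(1)$ and the new $(3)\Leftrightarrow(4)$.

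For $(3)\Rightarrow(2)$, given an $\ell$-full set $\fV'\subseteq \fV$, I would build a $\TOAST$ algorithm that labels vertices inductively over a $q$-toast for $q$ sufficiently large compared to $\ell$, using only configurations from $\fV'$. When a new toast cell $D$ is processed, its already-labeled part comes from strictly smaller cells whose boundaries are at distance $\geq q$ from the boundary of $D$; the freshly exposed region therefore consists of finite tree pieces with boundary vertices carrying labels from $\fV'$ separated by paths of length $\geq \ell$. One labels degree-$1$ vertices in the unlabeled region with any element of $\fV'$ (a ``rake'' step) and labels the vertices on each long boundary-to-boundary path by invoking the $\ell$-full property. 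Combining this with Proposition~\ref{pr:BaireToast}, which produces Borel $q$-toasts on comeager sets, immediately yields $(2)\Rightarrow(1)$.

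For $(3)\Rightarrow(4)$, I would apply the rake-and-compress decomposition of Miller--Reif~\cite{MillerR89} in the quantitative form used by Chang--Pettie~\cite{chang_pettie2019time_hierarchy_trees_rand_speedup}, which produces $L=O(\log n)$ layers $\VR{1}\cup\VC{1}\cup\cdots\cup \VR{L}$ in $O(\log n)$ deterministic rounds. By slightly modifying the decomposition (merging short compress paths into adjacent rake layers, or using an $\ell$-dependent threshold) I can ensure every compress path has at least $\ell$ internal vertices. Labels are then assigned in reverse layer order, using only elements of $\fV'$: each new rake vertex has at most one already-labeled neighbor and is labeled by any $c\in\fV'$ compatible with that single edge constraint (which exists because $\fV'\subseteq\fV$ is a nonempty ``extendable'' family), and each long compress path has both endpoints already labeled from $\fV'$, so the $\ell$-full property provides a compatible interior labeling. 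This gives a deterministic $O(\log n)$-round algorithm for $\Pi$.

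The hard part, and the place where genuinely new technical ideas are needed, is $(4)\Rightarrow(3)$. Given a deterministic $O(\log n)$-round algorithm $\fA$ solving $\Pi$, I would recursively construct a sequence $\fT_0,\fT_1,\ldots,\fT_K$ of families of rooted, layered, partially labeled trees, mirroring the hypothetical layers of a rake-and-compress decomposition: $\fT_i$ contains trees obtained by attaching ``compress-path'' and ``rake-neighbor'' pieces to trees in $\fT_{i-1}$, equipped with all identifier assignments and the induced partial output produced by $\fA$. The candidate $\fV'$ is the set of vertex configurations that appear at the root of some complete $\Pi$-correct completion of some sufficiently deep tree in this hierarchy. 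The $\ell$-full condition is then verified by gluing, for any prescribed pair of endpoint configurations $c_1,c_2\in\fV'$ and any interpolating path of length $\geq \ell$, the two witnessing trees along the path and extending identifiers globally; the key point is that $\fA$, being a correct $O(\log n)$-round algorithm, must output a valid $\Pi$-labeling on this glued input whose restriction to the interior of the path consists of configurations that, by construction, also belong to $\fV'$. The main obstacle I anticipate is arranging this gluing so that it is simultaneously compatible with all identifier assignments and with the recursive definition of $\fV'$, which is precisely the place where nontrivial ideas beyond the Chang--Pettie framework are required; in particular, ensuring that the depth of the hierarchy $K$ is bounded by a function of $\Pi$ alone (independently of $n$) will be the delicate combinatorial ingredient.
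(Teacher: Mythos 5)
Your plan decomposes the theorem exactly as the paper does: $(3)\Rightarrow(2)$ via a $\toast$ algorithm (cf.\ Proposition~\ref{pr:toastable}), $(2)\Rightarrow(1)$ via Proposition~\ref{pr:BaireToast}, $(1)\Rightarrow(3)$ and the $\fiid$ claim deferred to Bernshteyn, $(3)\Rightarrow(4)$ via a Chang--Pettie-style rake-and-compress with an $\ell$-dependent compress-path length (cf.\ Proposition~\ref{thm:ellfull_to_logn}), and $(4)\Rightarrow(3)$ via a recursive construction of partially labeled trees (cf.\ Theorem~\ref{thm:logn_to_ellfull}). Two small remarks on the parts you sketched. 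In $(3)\Rightarrow(2)$ the paper does not rake inside a toast cell: it picks one boundary vertex $u$ of the cell, orients every edge toward $u$, and colors inward from $u$ in a single pass, invoking $\ell$-fullness only on the designated length-$\ell$ paths $V_i$ that abut the inner cells $D_i$; your rake-then-bridge variant also works but you still need the one-step extendability fact (for any $c\in\fV'$ and $\ta\in c$ there exist $c'\in\fV'$ and $\ta'\in c'$ with $\{\ta,\ta'\}\in\fE$), which you should derive from $\ell$-fullness rather than assert as ``$\fV'$ is a nonempty extendable family.''

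The genuine gap is $(4)\Rightarrow(3)$, and you flag it yourself. What you describe --- glue two witness trees along a path and ask $\fA$ to label the interior --- is the right intuition, but by itself it does not yield a single finite $\ell$ working for all lengths $\geq\ell$, nor does it control identifiers or bound the depth of the recursion independently of $n$. The paper's proof supplies exactly the missing machinery: an equivalence relation $\simm$ on rooted/bipolar trees defined via the function $h_{(T,S)}$, a subtree-replacement lemma and a pumping lemma for bipolar trees (Lemmas~\ref{lem:replace} and~\ref{lem:pump}) that let you stretch the compress paths to arbitrary length without changing the equivalence class; a fixed-point family $W^\ast$ with $\class(Z_f(W^\ast))=\class(W^\ast)$ built by iterating $Z_f$ and taking unions (Lemmas~\ref{lem:W-aux-1}--\ref{lem:W-aux-3}), which bounds the depth by a function of $|\Sigma|$ and $\Delta$ alone; a monotone stabilizing sequence $R_1\subseteq R_2\subseteq\cdots$ of realized root configurations from which $\fV'$ is extracted (Lemmas~\ref{lem:Rsets1}, \ref{lem:Rsets2}); and a feasibility argument that a suitable labeling function $f$ exists, obtained by locally simulating the $n^{o(1)}$-round algorithm with locally generated distinct identifiers on trees of size $\mathrm{poly}(w)$ whose labeled sites are at pairwise distance $>w$. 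Until these ingredients (or equivalents) are supplied, the $(4)\Rightarrow(3)$ direction --- and hence the theorem --- is not proved.
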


Next we discuss the proof of \cref{thm:MainBaireLog}.
We refer the reader to Bernshteyn's paper \cite{Bernshteyn_work_in_progress} for full proofs in the case of $\baire$ and $\fiid$, here we only sketch the argument for completeness.
We also note that instead of using the toast construction, he used a path decomposition of acyclic graphs of Conley, Marks and Unger \cite{conley2020measurable}.

\subsection{Sufficiency}

We start by showing that the combinatorial condition is sufficient for $\baire$ and $\local(O(\log(n)))$.
Namely, it follows from the next results together with \cref{pr:BaireToast} that (2.) implies all the other conditions in \cref{thm:MainBaireLog}.
As discussed above the main idea is to color inductively along the decompositions.

\begin{proposition}\label{pr:toastable}
Let $\Pi=(\Sigma, \fV, \fE)$ be an LCL that admits $\ell$-full set $\fV'\subseteq \fV$ for some $\ell>0$.
Then $\Pi$ admits a $\toast$ algorithm that produces a $\Pi$-coloring for every $(2\ell+2)$-toast $\fD$.
\end{proposition}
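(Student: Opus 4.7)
The plan is to describe a $\toast$ extension function and verify that iterating it over a $(2\ell+2)$-toast $\fD$ produces a $\Pi$-coloring. I process the elements of $\fD$ in a well-order refining strict inclusion, invoking the extension function on each $D \in \fD$ with the partial coloring produced by previous extensions on strictly smaller elements. The maintained invariant is that every already-colored vertex carries a configuration from $\fV'$ and every edge constraint with both endpoints in previously colored regions is satisfied. Since every pair of vertices lies in a common toast element by property (i), every vertex and edge constraint of $\Pi$ is witnessed during some extension, so the limit labeling solves $\Pi$ on the whole tree.

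The substance is the extension step. When processing $D$, let $D'_1, \dots, D'_k \subsetneq D$ be the maximal previously processed toast elements inside $D$; the uncolored region is $U := D \setminus \bigcup_i D'_i$. The $(2\ell+2)$-toast property gives $d(\partial D'_i, \partial D'_j) \geq 2\ell+2$ for $i \neq j$ and $d(\partial D'_i, \partial D) \geq 2\ell+2$, so any two colored islands inside $D$, and each colored island together with $\partial D$, are separated by at least $2\ell+1$ vertices of $U$. Because the ambient graph is a tree, every vertex of $U$ has at most one neighbor in $\bigcup_i D'_i$ (otherwise two vertices in distinct $\partial D'_i, \partial D'_j$ would lie at graph distance $2$, contradicting the toast separation), so at most one half-edge of any $U$-vertex is already prescribed by a previously colored neighbor.

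To color $U$ I iterate applications of $\ell$-fullness along paths of length at least $\ell$. I first fix a sparse skeleton of $\fV'$-configurations on selected interior vertices of $U$, including one at each branching vertex of the subtree $U \subseteq D$ and one per long corridor, so that $U$ decomposes into paths of length at least $\ell$ whose two endpoints each carry a prescribed $\fV'$-arrangement (inherited from some $D'_i$, from $\partial D$ via freely assigned labels, or from the skeleton). A single application of $\ell$-fullness to each such path fills its interior with $\fV'$-configurations respecting both endpoint arrangements and every edge constraint along the path. The main obstacle is the skeleton construction: skeleton vertices must be placed so that every remaining uncolored segment is an honest path of length at least $\ell$ with $\fV'$-labeled endpoints, which is where the separation $2\ell+2$ is used crucially---it provides exactly the slack needed to insert a skeleton vertex between any two obstacles while keeping the two resulting sub-paths of length at least $\ell$, and the tree structure ensures the decomposition can be made along honest paths rather than more complicated subgraphs. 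Once this decomposition is in place the extension closes, and induction over $\fD$ completes the proof.
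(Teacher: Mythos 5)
Your overall structure---processing the toast in an inclusion-refining order, maintaining the invariant that colored vertices carry $\fV'$-configurations with valid edge constraints, and passing to a limit---matches the paper. The genuine gap is the skeleton step. You propose to pin a $\fV'$-configuration ``at each branching vertex of the subtree $U$'' and then treat the leftover segments as paths of length at least $\ell$. But $U$ is an induced subtree of a $\Delta$-regular tree with $\Delta>2$, so almost every vertex of $U$ branches, and adjacent branching vertices are the rule rather than the exception (picture $U$ as the annulus between a small ball $D'_1$ and a large ball $D$: it is exponentially large and branches everywhere). Nothing in the toast hypothesis separates these branch points: the $2\ell+2$ bound constrains distances between boundaries of \emph{distinct toast pieces}, not distances between branch points interior to $U$. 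If you independently pin $\fV'$-configurations on two adjacent branch points there is no reason the shared edge carries a legal configuration, and you cannot repair it with $\ell$-fullness because the segment has a single edge. The decomposition of $U$ into long paths with $\fV'$-pinned endpoints that your argument needs therefore does not exist in general, and the $2\ell+2$ separation does not provide the claimed ``slack'' at branch points.

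The paper avoids this by never pinning an independent skeleton. For each component $A$ of the uncolored region it fixes a single vertex $u\in A$ on the boundary of the enclosing piece $E$, orients $A$ towards $u$, and colors $A$ \emph{greedily} outward from $u$, at each step choosing any $\fV'$-configuration compatible with the one just placed on the parent. This greedy propagation uses only the elementary consequence of $\ell$-fullness that every $(c,\ta)$ with $\ta\in c\in\fV'$ admits a compatible $(c',\ta')$ across an edge---no path-length hypothesis at all---so branching costs nothing. The full strength of $\ell$-fullness is reserved for the designated length-$\ell$ paths $V_i$ running from each boundary vertex $v_i$ of a colored sub-piece towards $u$, and the $2\ell+2$ separation is used precisely, and only, to make the $V_i$ pairwise disjoint so that these $\ell$-full extensions never interfere. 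If you insist on a ``skeleton'' picture, the correct skeleton is the whole greedy coloring of $A$ minus the paths $V_i$, not a sparse set of independently pinned branch points. Rewriting your extension step along these lines closes the gap.
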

\begin{proof}[Proof sketch]
Our aim is to build a partial extending function.
Set $q:=2\ell+2$.
Let $E$ be a piece in a $q$-toast $\fD$ and suppose that $D_1,\dots,D_k\in \fD$ are subsets of $E$ such that the boundaries are separated.
Suppose, moreover, that we have defined inductively a coloring of half-edges of vertices in $D=\bigcup D_i$ using only vertex configurations from $\fV'$ such that every edge configuration $\fE$ is satisfied for every edge in $D$.

We handle each connected component of $E\setminus D$ separately.
Let $A$ be one of them.
Let $u\in A$ be a boundary vertex of $E$.
Such an vertex exists since every vertex in $E$ has degree $\Delta$.
The distance of $u$ and any $D_i$ is at least $2\ell+2$ for every $i\in [k]$.
We orient all the edges from $A$ towards $u$.
Moreover if $v_i\in A$ is a boundary vertex of some $D_i$ we assign to $v_i$ a path $V_i$ of length $\ell$ towards $u$.
Note that $V_i$ and $V_j$ have distance at least $1$, in particular, are disjoint for $i\not=j\in [k]$ .
Now,  until you encounter some path $V_i$, color any in manner half-edges of vertices in $A$ inductively starting at $u$ in such a way that edge configurations $\fE$ are satisfied on every edge and only vertex configurations from $\fV'$ are used.
Use the definition of $\ell$-full set to find a coloring of any such $V_i$ and continue in a similar manner until the whole $A$ is colored.
\end{proof}

\begin{proposition}[$\ell$-full $\Rightarrow$ $\local(O(\log n))$]\label{thm:ellfull_to_logn}
Let $\Pi = (\Sigma, \fV, \fE)$ be an LCL with an $\ell$-full set $\fV' \subseteq \fV$. Then $\Pi$ can be solved in $O(\log n)$ rounds in  $\local$.
\end{proposition}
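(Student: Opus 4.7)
The strategy is to combine the rake-and-compress decomposition of Miller and Reif, in a strengthened form due to Chang and Pettie, with the $\ell$-full property to color vertices inductively. First, I would compute in $O(\log n)$ rounds a decomposition of $V$ into layers $V = \VR{1} \cup \VC{1} \cup \VR{2} \cup \VC{2} \cup \cdots \cup \VR{L}$ with $L = O(\log n)$ that refines the standard alternation of rake and compress by additionally requiring that each connected component of $\VC{i}$ in the subgraph induced by $V_{\geq i} = \VR{i} \cup \VC{i} \cup \VR{i+1} \cup \cdots$ is a path on at least $\ell$ vertices. The decomposition has the structural properties one expects: each vertex $v \in \VR{i}$ has at most one neighbor in $V_{>i}$ (its ``parent''), and each vertex $v \in \VC{i}$ has its two outside-neighbors located at the endpoints of the path-component containing it, with those endpoints attached to vertices in $V_{>i}$.

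Having computed the decomposition, I would assign vertex configurations from $\fV'$ in reverse layer order, from $\VR{L}$ down to $\VR{1}$. The base case is the topmost layer, which consists of constant-sized components (of the form of an isolated vertex or a single edge); since $\fV'$ is nonempty (because $\ell$-fullness is nonvacuous) and by $\ell$-fullness one can extend any single endpoint label to a length-$\ell$ path and read off a compatible neighbor, we can label these components directly using configurations from $\fV'$. For the inductive step at a compress layer $\VC{i}$, each component is a path on $\geq \ell$ vertices whose (at most two) endpoints are adjacent to vertices already labeled with configurations $c_1, c_2 \in \fV'$; the $\ell$-full property applied verbatim produces an extension of the labeling along the path using configurations in $\fV'$ that respects the edge constraints $\fE$. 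For the inductive step at a rake layer $\VR{i}$, each vertex $v$ has at most one already-labeled parent $u$ with $c_u \in \fV'$, and we need to choose $c_v \in \fV'$ whose half-edge toward $u$ forms a valid edge configuration with the corresponding half-edge of $u$; this is possible because applying $\ell$-fullness to an auxiliary path of length $\ell$ with both endpoints labeled by $c_u$ yields a configuration in $\fV'$ assigned to $u$'s neighbor that, by construction, is compatible with $c_u$, and this configuration can be used for $v$. Each layer is processed in $O(1)$ rounds since the relevant information (parents and path-structure) lies within $O(\ell) = O(1)$ hops, giving a total complexity of $O(L) = O(\log n)$.

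The main obstacle is the construction of the strengthened rake-and-compress decomposition: the usual Miller-Reif decomposition does not guarantee that compress paths are long enough to invoke $\ell$-fullness. I would rely on the variant of Chang and Pettie, in which the compress step is restricted to paths of length at least $\ell$ while short degree-$2$ paths are absorbed into subsequent rake layers. Verifying that this modified procedure still removes all vertices in $O(\log n)$ iterations and can be implemented deterministically in $O(\log n)$ $\LOCAL$ rounds is the key technical ingredient; once this decomposition is in place, the $\ell$-fullness condition is precisely tailored to carry out the inductive extension along rake and compress layers without ever getting stuck.
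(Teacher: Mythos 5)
Your proof takes essentially the same approach as the paper's: it uses the Chang--Pettie variant of rake-and-compress (parameterized so that compress paths have $\Theta(\ell)$ vertices), then labels layers in reverse order using $\ell$-fullness directly for the compress paths and the single-endpoint consequence of $\ell$-fullness for the rake vertices. The only cosmetic differences are bookkeeping of the path-length parameter (the paper takes $\ell' = \max\{1,\ell-2\}$ so the \emph{extended} path has $\geq \ell$ vertices, while you require the compress path itself to have $\geq \ell$ vertices — both work) and your explicit derivation of the rake-step compatibility via an auxiliary path, which the paper states as a ``simple observation.''
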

\begin{proof}
The proof  uses a variant of the rake-and-compress decomposition considered  in~\cite{chang_pettie2019time_hierarchy_trees_rand_speedup}.

\paragraph{The Decomposition}
The decomposition is parameterized an integer $\ell' \geq 1$, and it decomposes the vertices of $T$ into $2L - 1$ layers \[V = \VR{1} \cup \VC{1} \cup \VR{2} \cup \VC{2} \cup \VR{3} \cup \VC{3} \cup \cdots \cup \VR{L},\] 
with $L= O(\log n)$. We write $\GC{i}$ to denote the subtree induced by the vertices  $\left(\bigcup_{j=i+1}^{L} \VR{j}\right) \cup \left( \bigcup_{j=i}^{L-1} \VC{j}\right)$. Similarly, $\GR{i}$ is the subtree induced by the vertices  
$\left(\bigcup_{j=i}^L \VR{j}\right) \cup \left( \bigcup_{j=i}^{L-1} \VC{j}\right)$.
The sets $\VR{i}$ and $\VC{i}$ are required to satisfy the following requirements. 
\begin{itemize}
    \item Each $v \in \VR{i}$ has degree at most one in the graph $\GR{i}$.
    \item Each $v \in \VC{i}$ has degree exactly two in the graph $\GC{i}$. Moreover, the $\VC{i}$-vertices in $\GC{i}$ form paths with $s$ vertices, with $\ell' \leq s \leq 2 \ell'$.
\end{itemize}
For any given constant $\ell' \geq 1$, it was shown in~\cite{chang_pettie2019time_hierarchy_trees_rand_speedup} that such a decomposition of a tree $T$ can be computed in $O(\log n)$ rounds. See \cref{fig:rake_and_compress} for  an example of such a decomposition with $\ell' = 4$.

\paragraph{The Algorithm}
Given such a decomposition with $\ell' = \max\{1, \ell - 2\}$, $\Pi$ can be solved in $O(\log n)$ rounds by labeling the vertices in this order: $\VR{L}$, $\VC{L-1}$, $\VR{L-1}$, $\ldots$, $\VR{1}$, as follows. The algorithm only uses the vertex configurations in the $\ell$-full set $\fV'$.

\paragraph{Labeling $\VR{i}$}
Suppose  all vertices in $\VR{L}, \VC{L-1}, \VR{L-1}, \ldots, \VC{i}$ have been labeled using $\fV'$.  Recall that each $v \in \VR{i}$ has degree at most one in the graph $\GR{i}$.
If $v \in \VR{i}$ has no neighbor in $\VR{L} \cup \VC{L-1} \cup \VR{L-1} \cup \cdots \cup \VC{i}$, then we can label the half edges surrounding $v$ by any $c \in \fV'$.
Otherwise, $v \in \VR{i}$ has exactly one neighbor $u$ in $\VR{L} \cup \VC{L-1} \cup \VR{L-1} \cup \cdots \cup \VC{i}$. Suppose the vertex configuration of $u$ is $c$, where the half-edge label on $\{u,v\}$ is $\ta \in c$.  A simple observation from the definition of $\ell$-full sets is that for any $c \in \fV'$ and any $\ta \in c$, there exist $c' \in \fV'$ and $\ta' \in c'$ in such a way that $\{\ta, \ta'\} \in \fE$.
Hence we can label the half edges surrounding $v$ by  $c' \in \fV'$  where the half-edge label on $\{u,v\}$ is $\ta' \in c'$.

\paragraph{Labeling $\VC{i}$} Suppose  all vertices in $\VR{L}, \VC{L-1}, \VR{L-1}, \ldots, \VR{i+1}$ have been labeled using $\fV'$. Recall that the   $\VC{i}$-vertices in $\GC{i}$ form degree-2 paths $P=(v_1, v_2, \ldots, v_s)$, with $\ell' \leq s \leq 2 \ell'$.
Let $P'= (x, v_1, v_2, \ldots, v_s, y)$ be the path resulting from appending to $P$ the neighbors of the two end-points of $P$ in $\GC{i}$. The two vertices  $x$ and $y$ are in $\VR{L} \cup \VC{L-1} \cup \VR{L-1} \cup \cdots \cup \VR{i+1}$, so they have been assigned half-edge labels using $\fV'$. Since  $P'$ contains at least $\ell'+2 \geq \ell$ vertices, the definition of $\ell$-full sets  ensures that we can label $v_1, v_2, \ldots, v_s$ using vertex configurations in $\fV'$ in such a way that the half-edge labels on $\{x, v_1\}, \{v_1, v_2\}, \ldots, \{v_s, y\}$ are all in $\fE$.
\end{proof}

\begin{figure}[h!]
    \centering
    \includegraphics[width= .8\textwidth]{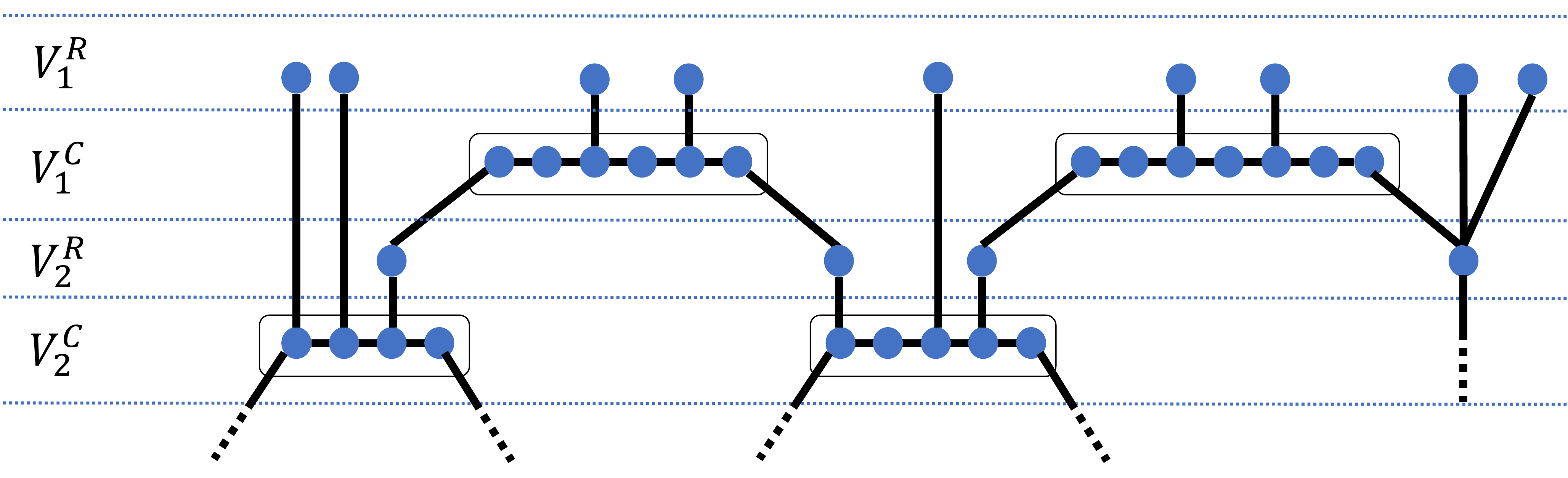}
    \caption{The variant of the rake-and-compress  decomposition used in the proof of \cref{thm:ellfull_to_logn}. }
    \label{fig:rake_and_compress}
\end{figure}

\subsection{Necessity}

We start by sketching that (2.) in \cref{thm:MainBaireLog} is necessary for $\baire$ and $\fiid$.

\begin{theorem}[Bernshteyn \cite{Bernshteyn_work_in_progress}]
Let $\Pi=(\Sigma,\fV,\fE)$ be an LCL and suppose that $\Pi\in \baire$ or $\Pi\in \fiid$.
Then $\Pi$ admits an $\ell$-full set $\fV'\subseteq\fV$ for some $\ell>0$.
\end{theorem}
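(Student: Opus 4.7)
The plan is to treat the $\fiid$ case first and then indicate how the $\baire$ case is analogous, with Baire category arguments replacing probabilistic ones. Let $X$ be an fiid $\Pi$-coloring of $T_\Delta$ with distribution $\mu$, and fix a reference vertex $o \in T_\Delta$. Define
\[
\fV' \;=\; \{c \in \fV : \mu(\text{the vertex configuration of } X \text{ at } o \text{ is } c) > 0\}.
\]
By $\Aut(T_\Delta)$-invariance of $\mu$, this set does not depend on the choice of $o$, and by $\sigma$-additivity over the finite set $\fV$, almost surely every vertex of $T_\Delta$ is assigned a configuration from $\fV'$. So $\fV' \subseteq \fV$ is nonempty, and any labeling sampled from $\mu$ uses only configurations in $\fV'$.

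To verify the combinatorial condition, I would fix a path $(v_1, \ldots, v_\ell)$ in $T_\Delta$, configurations $c_1, c_\ell \in \fV'$, and arbitrary half-edge labelings around $v_1$ and $v_\ell$ realizing $c_1$ and $c_\ell$. The goal is to show that for $\ell$ large enough, there is positive $\mu$-probability that $X$ simultaneously realizes both prescribed labelings at $v_1$ and $v_\ell$; almost surely on such an event the remainder of the path carries configurations from $\fV'$, which then provide the required extension. The heart of the argument is an asymptotic independence step: although a general fiid is not literally a finite-range function of the underlying iid data, one can approximate the map $F$ producing $X$ by a sequence of finitary coding maps $F_r$ of radius $r$, whose outputs differ from $X$ only on a set of arbitrarily small density. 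Applying $F_r$ with $r \ll \ell$, the restrictions to the $r$-neighborhoods of $v_1$ and $v_\ell$ are functions of disjoint blocks of the iid data and are therefore independent, so the joint event in which the approximation realizes the desired labelings at both endpoints has probability close to the product of the positive marginals. Transferring back to $X$ by making the approximation error small enough yields the positivity statement for $\mu$.

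For the $\baire$ case the structure is the same. Working, say, in the universal $\Delta$-regular Borel forest (or the Borel forest obtained from the shift action of a free product of cyclic groups, together with the compatible Polish topology on its space of colorings), take a Baire measurable $\Pi$-coloring $f$ defined on a comeager set, and let $\fV'$ consist of those configurations that appear non-meagerly in the image of $f$; finitely many configurations exhaust $\fV$, so $\fV'$ is nonempty and generically every vertex is labeled from $\fV'$. Kuratowski--Ulam replaces Fubini, and the topological mixing of the shift action replaces asymptotic probabilistic independence, so the same scheme — join two non-meager endpoint events along a long enough path via a comeager common refinement — produces, on a nonempty (in fact comeager in the relevant fiber) set of inputs, an extension of the prescribed endpoint labeling whose interior vertices carry configurations from $\fV'$.

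The main obstacle is the correlation-decay / asymptotic-independence step in the fiid case: unlike ffiid one does not have a uniform finite coding radius, so one must quantify carefully how fast $F_r \to F$ in distribution, choose $\ell$ as a function of the resulting approximation rate, and verify that the bad set on which the approximation fails to solve $\Pi$ contributes negligibly compared with the (fixed, positive) product of marginal probabilities at $v_1$ and $v_\ell$. Once this quantitative control is in place, defining $\ell$ to be any integer for which the approximation error at radius $\lfloor \ell/3 \rfloor$ is smaller than half of $\mu(X_o\text{ realizes }c_1)\cdot \mu(X_o\text{ realizes }c_\ell)$ — minimized over $c_1,c_\ell \in \fV'$ — yields a valid choice, and the combinatorial $\ell$-full condition of \cref{def:ellfull} is verified.
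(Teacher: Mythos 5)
Your proposal follows the same overall scheme as the paper's proof sketch: define $\fV'$ as the positive-probability (resp.\ non-meager) vertex configurations and exploit approximate independence between the two endpoints of a long path to realize any prescribed pair of endpoint configurations simultaneously, then read the middle of the path off the resulting $\Pi$-coloring. For $\fiid$, the paper invokes a correlation-decay result of \cite{BGHV} as a black box; you instead carry out the finitary (cylinder) approximation $F_r$ of the coding map directly, using disjointness of the iid blocks at radius $r \ll \ell$. That is precisely the ``similar argument'' to the Baire case that the paper alludes to but does not write out, and since correlation decay for fiid processes is itself established via such finitary approximation, the two routes are essentially the same in substance. For $\baire$, the paper restricts $F$ to a comeager set of continuity so that each element of $\fV'$ is determined by a finite window, and disjointness of windows gives the freedom for long paths; you invoke topological mixing of the shift plus Kuratowski--Ulam to conclude the two non-meager endpoint cylinders have non-meager intersection once the path is long. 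Again this yields the same conclusion by a closely related mechanism. Two points worth tightening: the $\ell$-full condition quantifies over all half-edge \emph{arrangements} realizing $c_1,c_\ell$, not just the multisets. In the fiid case every arrangement has equal probability by $\Aut(T_\Delta)$-invariance, so this only costs a $\Delta!$ factor; but for the shift action of the free product the vertex stabilizer is trivial, so you should either vary which of the $\Delta$ generator-edges is taken as the first path edge, or otherwise argue that every arrangement occurs non-meagerly. Also, your threshold ``smaller than half the product of marginals'' is not quite enough: accounting for both the marginal perturbation $(p-\eps)^2$ and the $2\eps$ loss from transferring $F_r$ back to $F$ requires roughly a quarter of that product, though this is only bookkeeping.
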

\begin{proof}[Proof Sketch]

We start with $\baire$.
Suppose that every Borel acyclic $\Delta$-regular graph admits a Borel solution on a $\tau$-comeager set for every compatible Polish topology $\tau$.
In particular, this holds for the Borel graph induced by the standard generators of the free product of $\Delta$-copies of $\mathbb{Z}_2$ on the free part of the shift action on the alphabet $\{0,1\}$ endowed with the product topology.
Let $F$ be such a solution.
Write $\fV'\subseteq \fV$ for the configurations of half-edge labels around vertices that $F$ outputs on a non-meager set.
Let $C$ be a comeager set on which $F$ is continuous. Then, every element of $\fV'$ is encoded by some finite window in the shift on $C$, that is, for each element there are a $k \in \mathbb{N}$ and function $s:B(1,k) \to \{0,1\}$ such that $F$ is constant on the set $N_s \cap C$ (where $N_s$ is the basic open neighbourhood determined by $s$, and $B(1,k)$ is the $k$-neighbourhood of the identity in the Cayley graph of the group).
Since $\fV'$ is finite, we can take $t>0$ to be the maximum of such $k$'s.
It follows by standard arguments that $\fV'$ is $\ell$-full for $\ell>2t+1$.

A similar argument works for the $\fiid$, however, for the sake of brevity, we sketch a shorter argument that uses the fact that there must be a correlation decay for factors of iid's. Let $\Pi\in \fiid$.
That is, there is an $\operatorname{Aut}(T)$-equivariant measurable function from iid's on $T$ (without colored edges this time) into the space of $\Pi$-colorings.
Let $\fV'$ be the set of half-edges configurations around vertices that have non-zero probability to appear.
Let $u,v\in T$ be vertices of distance $k_0\in \mathbb{N}$.
By \cite{BGHV} the correlation between the configurations around $u$ and $v$ tends to $0$ as $k_0\to \infty$.
This means that if the distance is big enough, then all possible pairs of $\fV'$ configurations need to appear.
\end{proof}

To finish the proof of \cref{thm:MainBaireLog} we need to demonstrate the following theorem.  Note that  $\local(n^{o(1)}) =  \local(O(\log n))$ according to the $\omega(\log n)$ -- $n^{o(1)}$ complexity gap~\cite{chang_pettie2019time_hierarchy_trees_rand_speedup}.

\begin{restatable}{theorem}{logcomb}\label{thm:logn_to_ellfull}
Let $\Pi = (\Sigma, \fV, \fE)$ be an LCL solvable in  $\local(n^{o(1)})$ rounds.
Then there exists an $\ell$-full set  $\fV' \subseteq \fV$ for some $\ell \geq 2$.
\end{restatable}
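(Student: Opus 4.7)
By the $\omega(\log n)$--$n^{o(1)}$ complexity gap of Chang and Pettie~\cite{chang_pettie2019time_hierarchy_trees_rand_speedup}, I may assume that $\fA$ is a deterministic $\local$-algorithm of local complexity $t(n) = C \log n$ for some constant $C$, correct on all sufficiently large $\Delta$-regular trees equipped with unique IDs drawn from a polynomial range. My plan follows the sketch given in the introduction: recursively construct a sequence of sets of rooted, layered, partially labeled trees, where the partial labelings come from simulating $\fA$, and then take $\fV'$ to be the configurations obtained from all correct completions of the stabilized set.

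First I would fix an auxiliary depth $r$ (to be chosen in terms of $|\fV|$) and consider rooted $\Delta$-regular trees truncated at depth $r$ whose internal vertices carry distinct IDs from a suitable polynomial range. Define $\fT_0$ to be the set of such layered trees equipped with the half-edge labelings produced by $\fA$ at the root of some $\Delta$-regular completion large enough that $t(n)\le r$; since $\fA$ solves $\Pi$, every root configuration appearing in $\fT_0$ lies in $\fV$. Then set recursively $\fT_{k+1}\subseteq \fT_k$ to consist of those layered trees whose leaves admit extensions by members of $\fT_k$ that remain jointly consistent with some $\fA$-output on a global completion. Because the family lives in the finite space of labeled depth-$r$ rooted trees, the descending chain stabilizes at some $\fT_\infty\neq \emptyset$. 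A key ingredient here will be the ID-graph technique from \cref{sec:marks}: a high-girth ID graph lets us relabel the IDs of pieces glued together so that global uniqueness holds while every local $t$-view is preserved, so that ``realizable as an $\fA$-output'' is preserved under the gluings used in the recursion.

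Let $\fV' \subseteq \fV$ be the set of root vertex configurations that appear at the root of some $\tau\in \fT_\infty$. To show that $\fV'$ is $\ell$-full for $\ell := 2r+1$, take any $c_1, c_2\in \fV'$ witnessed by $\tau_1, \tau_2 \in \fT_\infty$. I would attach $\tau_1$ and $\tau_2$ at the two endpoints of a length-$\ell$ bare path and fill its interior inductively: at each step pick an element of $\fT_\infty$ whose root configuration matches the half-edge label already prescribed from the previously chosen side, where the existence of such an element is exactly the content of the stabilization of the chain. The resulting labeling satisfies the vertex constraint at every vertex (each configuration is a genuine $\fA$-output at some layered tree in $\fT_\infty$) and the edge constraint at every edge (the gluings are performed along matching half-edge labels), which is exactly the extension required by the definition of $\ell$-fullness.

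The main obstacle I expect lies in the recursive definition of $\fT_{k+1}$: for the inductive stitching along the path to succeed, $\fT_\infty$ must be closed not only under one-sided extension but also under simultaneous prescription of root half-edge labels on the sides where further pieces are glued in. Establishing this two-sided closure, while keeping $\fT_\infty$ non-empty, is presumably the source of the ``additional nontrivial ideas'' beyond the Chang--Pettie machinery alluded to in the introduction, and will likely force the recursion to track partial labels on both sides of the root separately and to rely on the ID-graph device to maintain unique identifiers across the chain of gluings.
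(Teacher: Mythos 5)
Your proposal does not follow the paper's proof, and as written it has genuine gaps that prevent it from closing. The paper's argument is built on an equivalence relation $\simm$ on rooted and bipolar trees (two trees are equivalent if the same boundary data admit completions to correct labelings), a pumping lemma for bipolar trees that lets tree sizes be controlled while preserving equivalence classes, a gluing procedure $Z_f$ parameterized by a labeling function $f$ that assigns configurations by simulating $\fA$ at roots that are far apart, a fixed-point set $W^\ast$ with $\class(Z_f(W^\ast))=\class(W^\ast)$, a notion of layers, and a chain of sets $R_i$ recording which tuples $(c_1,\ldots,c_{\Delta-2},\ta_1,\ldots,\ta_{\Delta-2},\fI)$ of subtree classes, real half-edge labels, and virtual half-edge labels are realizable at a non-zero-layer vertex. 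The stabilization $R_1=R_2=R_3$ and the path-finding lemmas then produce the $\ell$-full set. None of this machinery appears in your proposal; the "descending chain $\fT_{k+1}\subseteq\fT_k$ of depth-$r$ labeled trees" is a different construction that you do not fully specify, and the two real obstructions are exactly the ones you flag and then leave open.

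Concretely, three things would have to be repaired. First, you claim the chain stabilizes at $\fT_\infty\neq\emptyset$, but finiteness only gives stabilization; non-emptiness requires exhibiting a tree that survives every refinement step, and your definition of "admits extensions by members of $\fT_k$ jointly consistent with some $\fA$-output" does not obviously admit such a witness (nor is it clear what "gluing a depth-$r$ rooted tree to a leaf" means for the layered structure). Second, and more fundamentally, $\ell$-fullness quantifies over an arbitrary choice of half-edge labels $\ta\in c_1$, $\ta'\in c_2$ at the two endpoints of the path; your inductive stitching only ever matches an already-chosen half-edge label forward, so it handles one orientation but not the simultaneous two-sided prescription. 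The paper overcomes this by recording, in $R_i$, which real and virtual half-edge labels are realizable around a layered vertex jointly with the equivalence classes of its pendant subtrees, and then proving the path-finding lemmas (\cref{lem:find_path_1}--\cref{lem:find_path_4}) by routing through the constructed trees in $W^\ast$ using the pumping lemma to hit arbitrary path lengths $\geq\ell$; nothing analogous exists in your sketch. Third, you invoke the ID-graph device to preserve uniqueness under gluing, but the paper does not use the ID graph here at all: because $\fA$ has sublinear complexity relative to the size $w^{O(k)}$ of the constructed trees, the simulation points are at distance more than $2t$ apart, so distinct local identifier assignments can simply be generated independently and extended globally. The ID graph solves a different problem (Marks-type lower bounds), and invoking it here does not address the actual consistency issue, which is about completing partial colorings rather than about ID collisions.
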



The rest of the section is devoted to the proof of \cref{thm:logn_to_ellfull}. We start with the high-level idea of the proof.
A natural attempt for showing $\local(n^{o(1)})$ $\Rightarrow$ $\ell$-full is to simply take any $\local(n^{o(1)})$ algorithm $\fA$ solving $\Pi$, and then take $\fV'$ to be all vertex configurations that can possibly occur in an output of $\fA$. It is not hard to see that this approach does not work in general, because the algorithm might use a special strategy to label vertices with degree smaller than $\Delta$. Specifically, there might be some vertex configuration $c$ used by $\fA$ so that some $\ta \in c$ will only be used to label \emph{virtual} half edges. It will be problematic to include $c$ in $\fV'$.

To cope with this issue, we do not deal with general bounded-degree trees. Instead, we construct recursively a sequence $(W^\ast_1, W^\ast_2, \ldots, W^\ast_L)$ of sets of rooted, layered, and \emph{partially labeled} tree in a special manner. A tree $T$ is included in $W^\ast_i$ if it can be constructed by gluing a multiset of rooted trees in  $W^\ast_{i-1}$ and a new root vertex $r$ in a certain fixed manner. A vertex is said to be in layer $i$ if it is introduced during the $i$-th step of the construction, i.e., it is introduced as the root $r$ during the construction of  $W^\ast_i$ from $W^\ast_{i-1}$. All remaining vertices are said to be in layer 0.

We  show that each $T \in W^\ast_L$ admits a correct labeling that extends the given partial labeling, as these partial labelings are computed by a simulation of $\fA$.
Moreover, in these correct labelings, the variety of possible configurations of half-edge labels around vertices in different non-zero layers is the same for each layer.
This includes vertices of non-zero layer whose half-edges are labeled by the given partial labeling.
We simply pick $\fV'$ to be the set of all configurations of half-edge labels around vertices that can appear in a non-zero layer in a correct labeling of a tree $T \in W^\ast_L$.
Our construction ensures that each $c \in \fV'$ appears as the labeling of some degree-$\Delta$ vertex in some tree that we consider.

The proof that $\fV'$ is an $\ell$-full set is based on finding paths  using vertices of non-zero layers connecting  two vertices with any two  vertex configurations in  $\fV'$ in different lengths. These paths exist because the way rooted trees in $W^\ast_{i-1}$ are glued together in the construction of $W^\ast_{i}$ is sufficiently flexible.
The reason that we need $\fA$ to have complexity $\local(n^{o(1)})$ is that the construction of the trees can be parameterized by a number $w$ so that all the trees have size polynomial in $w$ and the vertices needed to be assigned labeling are at least distance $w$ apart from each other. Since the number of rounds of $\fA$ executed on trees of size $w^{O(1)}$ is much less than $w$, each labeling assignment can be calculated locally and independently. 
The construction of the trees as well as the analysis are based on a machinery developed in~\cite{chang_pettie2019time_hierarchy_trees_rand_speedup}.
 Specifically, we will consider the equivalence relation $\simm$ defined in~\cite{chang_pettie2019time_hierarchy_trees_rand_speedup} and prove some of its properties, including a pumping lemma for bipolar trees. The exact definition of  $\simm$  in this paper is  different from the one in~\cite{chang_pettie2019time_hierarchy_trees_rand_speedup} because the mathematical formalism describing LCL problems in this paper is different from the one in~\cite{chang_pettie2019time_hierarchy_trees_rand_speedup}.
After that, we will consider a procedure for gluing trees parameterized by a labeling function $f$ similar to the one used in~\cite{chang_pettie2019time_hierarchy_trees_rand_speedup}. 
We will apply this procedure iteratively to generate a set of trees. We will show that the desired  $\ell$-full set $\fV' \subseteq \fV$ can be constructed by considering the set of all possible correct labeling of these trees.

\paragraph{The Equivalence Relation $\simm$} 
We consider trees with a list of designated vertices $v_1, v_2, \ldots, v_k$ called \emph{poles}. A \emph{rooted tree} is a tree with one pole $r$, and a \emph{bipolar tree} is a tree with two poles $s$ and $t$. 
For a tree $T$ with its poles $S=(v_1, v_2, \ldots, v_k)$ with $\deg(v_i) = d_i < \Delta$, we denote by $h_{(T,S)}$ the function that maps each choice of the \emph{virtual} half-edge labeling surrounding the poles of $T$ to $\yes$ or $\no$, indicating whether such a partial labeling can be completed into a correct complete labeling of $T$. 
More specifically, consider 
\[X=(\fI_1, \fI_2, \ldots,  \fI_k),\]
where $\fI_i$ is a size-$(\Delta - d_i)$ multiset of labels in $\Sigma$, for each $1 \leq i \leq k$. Then $h_{(T,S)}(X) = \yes$ if there is a correct labeling of $T$ such that the $\Delta - d_i$ virtual half-edge labels surrounding $v_i$ are labeled by $\fI_i$,  for each $1 \leq i \leq k$. This definition can be generalized to the case $T$ is already partially labeled in the sense that some of the half-edge labels have been fixed. In this case, $h_{(T,S)}(X) = \no$ whenever $X$ is incompatible with the given partial labeling.

Let $T$ be a tree with poles $S = (v_1, v_2, \ldots, v_k)$ and let $T'$ be another tree with poles $S' = (v_1', v_2', \ldots, v_k')$ such that $\deg(v_i) = \deg(v_i') = d_i$ for each $1 \leq i \leq k$.
Then we write $T_1 \simm T_2$ if $h_{(T,S)} = h_{(T',S')}$.

Given an LCL problem $\Pi$, it is clear that the number of equivalence classes of rooted trees and bipolar trees w.r.t.~$\simm$ is finite.
For a rooted tree $T$,  denote by $\class(T)$ the equivalence class of $T$. For a bipolar tree $H$, denote by $\type(H)$ the equivalence class of $H$.

\paragraph{Subtree Replacement}
The following lemma provides a sufficient condition that the equivalence class of a tree $T$ is invariant of the equivalence class of its subtree $T'$.
We note that a real half edge in $T$ might become virtual in its subtree $T'$.
Consider a vertex $v$ in $T'$ and its neighbor $u$ that is in $T$ but not in $T'$. Then the half edge $(v, \{u,v\})$  is real in $T$ and virtual in $T'$.

\begin{lemma}[Replacing subtrees]\label{lem:replace}
Let $T$ be a tree with poles $S$.
Let $T'$ be a connected subtree of $T$ induced by $U \subseteq V$, where $V$ is the set of vertices in $T$.
We identify a list of designated vertices $S' = (v_1', v_2', \ldots, v_k')$ in $U$ satisfying the following two conditions to be the poles of $T'$.
\begin{itemize}
    \item $S \cap U \subseteq S'$.
    \item Each edge $e=\{u,v\}$ connecting $u \in U$ and $ v \in V \setminus U$ must satisfy $u \in S'$.
\end{itemize}
Let $T''$ be another tree with poles $S'' = (v_1'', v_2'', \ldots, v_k'')$ that is in the same equivalence class as $T'$.
Let $T^\ast$ be the result of replacing $T'$ by $T''$ in $T$ by identifying $v_i' = v_i''$ for each $1 \leq i \leq k$.
Then $T^\ast$ is in the same equivalence class as $T$. 
\end{lemma}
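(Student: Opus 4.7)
\textbf{Proof plan for Lemma \ref{lem:replace}.}

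The plan is to prove $h_{(T,S)} = h_{(T^{\ast},S)}$ by showing both functions agree on every input $X$; by the obvious symmetry (since $T'' \simm T'$ means $T' \simm T''$), it is enough to establish one direction, so I would fix $X$, assume $h_{(T,S)}(X) = \yes$, and produce a witness for $h_{(T^{\ast},S)}(X) = \yes$.

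The first step is to extract the ``interface data.'' Let $L$ be a correct labeling of $T$ extending $X$. I would then define $Y$ to be the virtual half-edge labeling around $S'$ in the standalone tree $T'$ that is induced by $L$. Here the two conditions imposed on $S'$ play their essential roles: (i) because $S\cap U\subseteq S'$, every virtual half-edge of $T$ at a pole $v_i'\in S$ is still a virtual half-edge of $T'$ at $v_i'$, and I let $Y$ agree with $X$ on it; (ii) because every edge from $U$ to $V\setminus U$ has its $U$-endpoint in $S'$, every real half-edge of $T$ that becomes virtual upon passing to $T'$ is attached to some pole $v_i'\in S'$, and I let $Y$ agree with $L$ on it. By construction, the restriction of $L$ to half-edges of $T'$ together with $Y$ on the virtual ones is a correct labeling of $T'$ extending $Y$, so $h_{(T',S')}(Y) = \yes$. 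Invoking $T'\simm T''$ gives $h_{(T'',S'')}(Y) = \yes$, and hence a correct labeling $L''$ of $T''$ extending $Y$.

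The second step is the gluing. I would define a half-edge labeling $L^{\ast}$ on $T^{\ast}$ by using $L''$ on all half-edges of $T''$ and $L$ on all half-edges of the $T\setminus T'$ part (which is unchanged in $T^{\ast}$). Three kinds of local constraints need to be checked:
\begin{itemize}
    \item Edge constraints for edges lying entirely inside $T''$ or entirely inside $T\setminus T'$ are immediate from correctness of $L''$ and $L$ respectively.
    \item Edge constraints at an interface edge $e=\{v_i'',w\}$ with $w\in V\setminus U$: on the $w$-side the label is $L(v,w)$ from the unchanged part, while on the $v_i''$-side the label is $L''$ of a real half-edge of $T''$ that is virtual in $T''$ proper, which equals $Y$, which by definition equals $L$ on the corresponding half-edge in $T$. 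So the pair of labels on $e$ in $T^{\ast}$ is exactly the pair on the corresponding edge in $T$, and the latter satisfies $\fE$.
    \item Vertex constraints at non-pole vertices of $T^{\ast}$ come from correctness of $L$ or $L''$. The only nontrivial case is at a pole $v_i''=v_i'$; here the multiset of $\Delta$ labels surrounding $v_i''$ in $T^{\ast}$ is obtained by combining real half-edges inside $T''$ (labeled by $L''$), real half-edges going to $V\setminus U$ (labeled by $L$), and virtual half-edges (labeled by $X$). By the definition of $Y$, this multiset coincides with the multiset surrounding $v_i''$ in $T''$ under $L''$ (with its virtual half-edges filled in by $Y$), which lies in $\fV$ by correctness of $L''$.
\end{itemize}
Finally, $L^{\ast}$ extends $X$ on the virtual half-edges of $T^{\ast}$ around $S$, since these coincide with the virtual half-edges of $T$ around $S$ (by condition (i) above) and $L$ extends $X$. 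This produces the desired correct labeling of $T^{\ast}$.

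The routine check is the verification of constraints in the gluing step; there is nothing combinatorially deep, but it is the place where one must carefully track how a half-edge's status (real/virtual) changes when passing between $T$, $T'$, $T''$, and $T^{\ast}$. The mild conceptual subtlety---and the only place the two hypotheses on $S'$ are used---is precisely the definition of $Y$: if either hypothesis failed, some virtual half-edge of $T'$ would not be captured by $X$ or by $L$ along an interface edge, and the gluing would have nothing to match against.
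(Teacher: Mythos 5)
Your proposal is correct and follows essentially the same approach as the paper: reduce by symmetry to one direction, take a correct labeling of $T$ extending $X$, extract the induced virtual half-edge configuration at the poles of $T'$, invoke $T' \simm T''$ to obtain a matching correct labeling of $T''$, and glue. The paper's proof is slightly terser in the verification step but the decomposition of the check and the essential use of the two hypotheses on $S'$ are the same.
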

\begin{proof}
To prove the lemma, by symmetry, it suffices to show that starting from any correct labeling $\mathcal{L}$ of $T$, it is possible to find a correct labeling  $\mathcal{L}^\ast$ of $T^\ast$ in such a way that the multiset of the 
virtual half-edge labels surrounding each pole in $S$ remain the same.

Such a correct labeling $\mathcal{L}^\ast$  of $T^\ast$ is constructed as follows. If $v \in V \setminus U$, then we simply adopt the given labeling $\mathcal{L}$ of $v$ in $T$.
Next, consider the vertices in $U$.
Set $X=(\fI_1, \fI_2, \ldots, \fI_k)$ to be the one compatible with the labeling $\mathcal{L}$ of $T$ restricted to the subtree $T'$ in the sense that $\fI_i$ is the multiset of the virtual half-edge labels surrounding the pole $v_i'$ of $T'$, for each $1 \leq i \leq k$. We must have $h_{T',S'}(X) = \yes$.
Since $T'$ and $T''$ are in the same equivalence class, we have $h_{T'',S''}(X) = \yes$, and so we can find a correct labeling $\mathcal{L}''$ of $T''$ that is also compatible with $X$. Combining this labeling $\mathcal{L}''$ of the vertices $U$ in $T''$ with the  labeling  $\mathcal{L}$ of the vertices  $V \setminus U$ in $T$, we obtain a desired  labeling  $\mathcal{L}^\ast$ of $T^\ast$.

We verify that $\mathcal{L}^\ast$  gives a correct labeling of $T^\ast$. Clearly, the size-$\Delta$ multiset that labels each vertex $v \in V$ is in $\fV$, by the correctness of $\mathcal{L}$  and $\mathcal{L}''$. Consider any edge $e=\{u,v\}$ in  $T^\ast$. Similarly, if $\{u,v\} \subseteq V \setminus U$ or $\{u,v\} \cap (V \setminus U) = \emptyset$, then  the size-$2$ multiset that labels $e$ is in $\fE$, by the correctness of $\mathcal{L}$  and $\mathcal{L}''$. For the case that $e=\{u,v\}$ connects a vertex $u \notin V$ and a vertex $v \in V \setminus U$, we must have $u \in S'$ by the lemma statement. Therefore, the label of the half edge $(u,e)$ is the same in both  $\mathcal{L}$  and $\mathcal{L}^\ast$  by our choice of $\mathcal{L}''$. Thus, the size-$2$ multiset that labels $e$ is in $\fE$, by the correctness of $\mathcal{L}$.

We verify that  the 
virtual half-edge labels surrounding each pole in $S$ are the same in both  $\mathcal{L}$  and $\mathcal{L}^\ast$. Consider a pole $v \in S$. If $v \in V \setminus U$, then the labeling of $v$ is clearly the same in both $\mathcal{L}$  and $\mathcal{L}^\ast$.
If $v \notin V\setminus U$, then the condition $S \cap U \subseteq S'$ in the statement implies that $v \in S'$. In this case, the way we pick $\mathcal{L}''$ ensures that the 
virtual half-edge labels surrounding $v \in S'$ are the same in both  $\mathcal{L}$  and $\mathcal{L}^\ast$.
\end{proof}

In view of the proof of \cref{lem:replace}, as long as the conditions in \cref{lem:replace} are met, we are able to abstract out a subtree by its equivalence class when reasoning about correct labelings of a tree. This observation will be applied repeatedly in the subsequent discussion.

\paragraph{A Pumping Lemma} We will prove a pumping lemma of bipolar trees using \cref{lem:replace}. 
Suppose $T_i$ is a tree with a root $r_i$ for each $1 \leq i \leq k$, then $H=(T_1, T_2, \ldots, T_k)$ denotes the bipolar tree resulting from concatenating the roots $r_1, r_2, \ldots, r_k$ into a path $(r_1, r_2, \ldots, r_k)$ and setting the two poles of $H$ by $s = r_1$ and $t = r_k$.
A simple consequence of \cref{lem:replace} is that $\type(H)$ is determined by  $\class(T_1), \class(T_2), \ldots, \class(T_k)$.
We have the following pumping lemma. 

\begin{lemma}[Pumping lemma]\label{lem:pump}
There exists a finite number $\Lpump > 0$ such that as long as $k \geq \Lpump$, any  bipolar tree $H=(T_1, T_2, \ldots, T_k)$ can be decomposed into $H = X \circ Y \circ Z$ with $0 < |Y| < k$ so that $X \circ Y^i \circ Z$ is in the same equivalence class as $H$ for each $i \geq 0$. 
\end{lemma}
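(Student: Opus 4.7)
The goal is a standard pumping argument that leverages the finiteness of the set of $\simm$-equivalence classes of bipolar trees (for the fixed LCL $\Pi$), together with the subtree-replacement principle \cref{lem:replace}. Here is how I would structure the argument.

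First, for each $1 \le j \le k$, consider the prefix $P_j := (T_1, T_2, \ldots, T_j)$, regarded as a bipolar tree with poles $(s_j, t_j) = (r_1, r_j)$. Note that the degree of the left pole in $P_j$ equals $\deg_{T_1}(r_1) + 1$ for every $j \ge 2$ (so it is the same across all prefixes), while the degree of the right pole equals $\deg_{T_j}(r_j) + 1 \in \{1, 2, \ldots, \Delta\}$. The equivalence relation $\simm$ only compares bipolar trees whose corresponding pole degrees agree, so I will bucket the prefixes according to $\deg_{T_j}(r_j)$; within each bucket, $\simm$ partitions the prefixes into finitely many classes (only depending on $\Pi$ and the pole degrees, as stated after the definition of $\simm$). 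Let $N$ be the total number of $\simm$-classes of bipolar trees across all possible pairs of pole degrees, and set $\Lpump := \Delta \cdot N + 1$ (a constant depending only on $\Pi$).

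Now assume $k \ge \Lpump$. By pigeonhole, some bucket contains more than $N$ indices, and among those a further pigeonhole gives $1 \le i < j \le k$ with $\deg_{T_i}(r_i) = \deg_{T_j}(r_j)$ and $P_i \simm P_j$. Define
\[
X := (T_1, \ldots, T_i), \qquad Y := (T_{i+1}, \ldots, T_j), \qquad Z := (T_{j+1}, \ldots, T_k),
\]
so that $H = X \circ Y \circ Z$ and $0 < |Y| = j - i < k$. By construction $X = P_i$ and $X \circ Y = P_j$ are $\simm$-equivalent bipolar trees with matching pole-degree profile $(\deg_{T_1}(r_1)+1,\ \deg_{T_i}(r_i)+1)$.

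The final step is to show $X \circ Y^\alpha \circ Z \simm H$ for every $\alpha \ge 0$ by induction on $\alpha$, using \cref{lem:replace}. For the inductive step, inside any tree of the form $W = X \circ Y^\alpha \circ Z$, the subtree spanned by the first $i$ components is a copy of $X$ with poles $(r_1, r_i)$; I verify the two hypotheses of \cref{lem:replace}: (a) the pole $r_1$ of the ambient bipolar tree $W$ lies in this subtree and is a designated pole of $X$, while the right pole of $W$ does not lie in this subtree, so $S \cap U \subseteq S'$; and (b) the unique edge leaving this subtree is the path edge at $r_i$, whose $U$-endpoint is the pole $r_i \in S'$. Thus \cref{lem:replace} applies and I may replace $X$ by the $\simm$-equivalent bipolar tree $X \circ Y$, obtaining $X \circ Y^{\alpha+1} \circ Z \simm W$. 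The case $\alpha = 0$ is obtained by the symmetric replacement inside $H$ of the bipolar subtree $X \circ Y$ (with poles $(r_1, r_j)$) by its $\simm$-equivalent $X$; the hypotheses of \cref{lem:replace} are checked in the same way.

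I expect the only delicate point to be bookkeeping of the degree conditions at the right pole: both the requirement that $\simm$ only relates bipolar trees with matching pole degrees (forcing the refinement of the pigeonhole by $\deg_{T_i}(r_i)$) and the need, when applying \cref{lem:replace}, to have the half-edge interface at the replaced pole match on both sides (so that the leftover edge $\{r_i, r_{i+1}\}$ in $H$ can equally well be viewed as the edge $\{r_j, r_{i+1}\}$ after the substitution). Given how \cref{lem:replace} is phrased in terms of virtual half-edges at the poles, this matching is immediate once the pole-degree constraint is guaranteed, so no further technical work is required.
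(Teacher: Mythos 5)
Your proof is correct and follows essentially the same approach as the paper: a pigeonhole over the prefixes $(T_1,\dots,T_j)$ to locate two $\simm$-equivalent ones, then repeated applications of \cref{lem:replace} to pump. The extra bookkeeping you do around pole degrees is harmless but not actually needed: since $\simm$ is only defined between bipolar trees with matching pole degrees, any two prefixes falling into the same equivalence class automatically have the same right-pole degree, so the plain pigeonhole with $\Lpump = N+1$ (as in the paper) already suffices and your factor of $\Delta$ is redundant. The one organizational difference is that you apply \cref{lem:replace} directly inside $W = X\circ Y^{\alpha}\circ Z$ at each inductive step, whereas the paper first establishes $\type(X\circ Y^{i})=\type(X)$ for all $i$ and then does a single replacement of $X\circ Y$ by $X\circ Y^{i}$ inside $H$; both are valid and equivalent in content.
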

\begin{proof}
Set $\Lpump$ to be the number of equivalence classes for bipolar trees plus one. By the pigeon hole principle, there exist $1 \leq a < b \leq k$ such that $(T_1, T_2, \ldots, T_a)$ and $(T_1, T_2, \ldots, T_b)$ are in the same equivalence class. Set $X = (T_1, T_2, \ldots, T_a)$, $Y = (T_{a+1}, T_{a+2}, \ldots, T_b)$, and $Z = (T_{b+1}, T_{b+2}, \ldots, T_k)$. As we already know that $\type(X) = \type(X\circ Y)$, \cref{lem:replace} implies that 
$\type(X\circ Y) = \type(X^2\circ Y)$ by replacing $X$ by $X\circ Y$ in the bipolar tree $X\circ Y$.
Similarly, $\type(X \circ Y^i)$ is the same for for each $i \geq 0$. Applying \cref{lem:replace} again to replace $X \circ Y$ by $X \circ Y^i$ in $H = X \circ Y \circ Z$, we conclude that  $X \circ Y^i \circ Z$ is in the same equivalence class as $H$ for each $i \geq 0$. 
\end{proof}


\paragraph{A Procedure for Gluing Trees} Suppose that we have a set of rooted trees $W$. We  devise a procedure that generates a new set of rooted trees by gluing the rooted trees in $W$ together. 
This procedure is parameterized by a labeling function $f$. Consider a bipolar tree \[H = (T_1^l, T_2^l, \ldots, T_{\Lpump}^l, T^m, T_1^r, T_2^r, \ldots, T_{\Lpump}^r)\] 
where $T^m$ is formed by attaching the roots $r_1, r_2, \ldots, r_{\Delta - 2}$ of the rooted trees $T_1^m, T_2^m, \ldots, T_{\Delta -2}^m$ to the root $r^m$ of $T^m$.

The labeling function $f$ assigns the half-edge labels surrounding $r^m$ based on 
\[\type(H^l),  \class(T_{1}^m), \class(T_{2}^m), \ldots, \class(T_{\Delta - 2}^m), \type(H^r)\]
where $H^l = (T_1^l, T_2^l, \ldots, T_{\Lpump}^l)$ and $H^r = (T_1^r, T_2^r, \ldots, T_{\Lpump}^r)$.

We write  $T^{m}_\ast$ to denote the result of applying $f$ to label the root $r^m$ of $T^{m}$ in the bipolar tree $H$, and we write \[H_\ast = (T_1^l, T_2^l, \ldots, T_{\Lpump}^l, T_\ast^m, T_1^r, T_2^r, \ldots, T_{\Lpump}^r)\] to denote the result of applying $f$ to $H$.  We make the following observation.

\begin{lemma}[Property of $f$]\label{lem:labeling_function}
The two equivalence classes
$\class(T_\ast^m)$ and $\type(H_\ast)$ are determined by \[\type(H^l),  \class(T_{1}^m), \class(T_{2}^m), \ldots, \class(T_{\Delta - 2}^m), \type(H^r),\] and the labeling function $f$.
\end{lemma}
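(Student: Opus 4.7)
The plan is to apply the subtree replacement \cref{lem:replace} twice, in sequence, routing through the intermediate equivalence class $\class(T_\ast^m)$. I will first show that $\class(T_\ast^m)$ is determined by $f$ together with $\class(T_1^m), \ldots, \class(T_{\Delta - 2}^m)$, and then show that $\type(H_\ast)$ is determined by $\type(H^l)$, $\class(T_\ast^m)$, and $\type(H^r)$. Composing the two conclusions with the fact that $f$ is itself a function of the listed data yields the lemma.

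For the first step, by the very definition of the labeling function, the $\Delta$ half-edge labels placed on $r^m$ depend only on $\type(H^l), \class(T_1^m), \ldots, \class(T_{\Delta - 2}^m), \type(H^r)$. So take any alternative rooted tree $(T_\ast^m)'$ with root $r^m$ carrying the same half-edge labeling at $r^m$ and with child subtrees respectively $\simm$-equivalent to $T_1^m, \ldots, T_{\Delta - 2}^m$. I would replace each child subtree one at a time inside $T_\ast^m$ and check the hypotheses of \cref{lem:replace}: the only pole of the ambient rooted tree is $r^m$, which does not lie in any child subtree, and the unique edge crossing the boundary of the child subtree $T_i^m$ is $\{r_i, r^m\}$, whose $T_i^m$-endpoint is the sole pole $r_i$. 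Hence each swap preserves the $\simm$-class, giving $T_\ast^m \simm (T_\ast^m)'$.

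For the second step, I would regard $H_\ast$ as a concatenation along a path of the three subtrees $H^l$ (bipolar with poles $r_1^l$ and $r_{\Lpump}^l$), $T_\ast^m$ (rooted at $r^m$), and $H^r$ (bipolar with poles $r_1^r$ and $r_{\Lpump}^r$). Each is eligible for replacement via \cref{lem:replace}: for $H^l$ inside $H_\ast$ (whose poles are $r_1^l$ and $r_{\Lpump}^r$), one has $S \cap U = \{r_1^l\} \subseteq S'$, and the unique boundary edge $\{r_{\Lpump}^l, r^m\}$ meets $U$ at $r_{\Lpump}^l \in S'$; the check for $H^r$ is symmetric, and for $T_\ast^m$ we have $S \cap U = \emptyset$ while both boundary edges $\{r_{\Lpump}^l, r^m\}$ and $\{r^m, r_1^r\}$ touch $U$ at the sole pole $r^m$. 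Swapping each subtree independently for any $\simm$-equivalent replacement therefore preserves $\type(H_\ast)$, so $\type(H_\ast)$ is a function of $\type(H^l), \class(T_\ast^m), \type(H^r)$.

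The argument is essentially bookkeeping: the only thing that could go wrong is mis-identifying the pole sets of the subtrees being swapped, so the main (minor) obstacle is verifying the pole-containment conditions of \cref{lem:replace} at each of the replacements above. No new combinatorial idea beyond \cref{lem:replace} is required.
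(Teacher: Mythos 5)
Your proof is correct and follows essentially the same route as the paper: apply \cref{lem:replace} once (swapping each child $T_i^m$ inside $T_\ast^m$) to determine $\class(T_\ast^m)$ from the labeling at $r^m$ and the classes $\class(T_i^m)$, and once more (decomposing $H_\ast$ into $H^l$, $T_\ast^m$, $H^r$) to determine $\type(H_\ast)$. You merely make explicit the pole-containment checks that the paper's proof leaves implicit.
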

\begin{proof}
By \cref{lem:replace}, once the half-edge labelings of the root $r^m$ of $T^{m}$ is fixed, $\class(T_\ast^m)$ is determined by $\class(T_{1}^m), \class(T_{2}^m), \ldots, \class(T_{\Delta - 2}^m)$. 
Therefore, indeed $\class(T_\ast^m)$ is determined by
\[\type(H^l),  \class(T_{1}^m), \class(T_{2}^m), \ldots, \class(T_{\Delta - 2}^m), \type(H^r),\] and the labeling function $f$.
Similarly, applying  \cref{lem:replace} to the decomposition of $H_\ast$ into $H^l$, $T_\ast^m$, and $H^r$, we infer that  $\type(H_\ast)$ depends only on $\type(H^l)$, $\class(T_\ast^m)$, and $\type(H^r)$.
\end{proof}

The three sets of trees $X_f(W)$, $Y_f(W)$, and $Z_f(W)$ are constructed as follows.

\begin{itemize}
    \item $X_f(W)$ is the set of all rooted trees  resulting from appending $\Delta-2$ arbitrary rooted trees $T_1, T_2, \ldots, T_{\Delta-2}$ in $W$ to a new root vertex $r$.
    \item  $Y_f(W)$ is the set of bipolar trees constructed as follows. For each choice of $2\Lpump +1$ rooted trees $T_1^l, T_2^l, \ldots, T_{\Lpump}^l, T^m, T_1^r, T_2^r, \ldots, T_{\Lpump}^r$ from $X_f(W)$, concatenate them into a bipolar tree 
    \[H = (T_1^l, T_2^l, \ldots, T_{\Lpump}^l, T^m, T_1^r, T_2^r, \ldots, T_{\Lpump}^r),\]  let $H_\ast$ be the result of applying the labeling function $f$ to $H$, and then add $H_\ast$ to $Y_f(W)$.
    \item $Z_f(W)$ is the set of rooted trees  constructed as follows. For each \[H_\ast = (T_1^l, T_2^l, \ldots, T_{\Lpump}^l, T_\ast^m, T_1^r, T_2^r, \ldots, T_{\Lpump}^r) \in Y_f(W),\] add $(T_1^l, T_2^l, \ldots, T_{\Lpump}^l, T_\ast^m)$ to $Z_f(W)$,  where we set the root of $T_1^l$ as the root, and  add $(T_\ast^m, T_1^r, T_2^r, \ldots, T_{\Lpump}^r)$ to $Z_f(W)$, where we set the root  of ${T_{\Lpump}^r}$ as the root. 
\end{itemize}

We write $\class(S) = \bigcup_{T \in S} \{\class(T)\}$ and  $\type(S) = \bigcup_{H \in S} \{\type(H)\}$, and 
we make the following observation.

\begin{lemma}[Property of  $X_f(W)$, $Y_f(W)$, and $Z_f(W)$]\label{lem:sets_of_trees}
The three sets of equivalence classes $\class(X_f(W))$, $\type(Y_f(W))$, and $\class(Z_f(W))$ depend only on $\class(W)$ and the labeling function $f$.
\end{lemma}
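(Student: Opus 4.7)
The plan is to argue in three steps, one for each of $X_f(W)$, $Y_f(W)$, $Z_f(W)$, in each step combining \cref{lem:replace} with (for the latter two) \cref{lem:labeling_function}. The underlying idea is that every tree appearing in the construction is obtained by attaching subtrees along a single designated vertex, so the equivalence class of the result depends only on the equivalence classes of the constituent pieces.

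First I would handle $\class(X_f(W))$. Each $T \in X_f(W)$ is obtained by attaching $\Delta-2$ rooted trees $T_1, \ldots, T_{\Delta-2} \in W$ to a new root $r$. View each $T_i$ as a subtree of $T$ with the single pole $r_i$ (its root), which is exactly the vertex where it is connected to $r$; the conditions of \cref{lem:replace} are therefore met. Iterating \cref{lem:replace}, replacing each $T_i$ by any $\simm$-equivalent rooted tree preserves the equivalence class of $T$. Hence $\class(T)$ is determined by the tuple $(\class(T_1), \ldots, \class(T_{\Delta-2}))$, and consequently $\class(X_f(W))$ is determined by $\class(W)$.

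Next, for $\type(Y_f(W))$: any $H_\ast \in Y_f(W)$ is obtained by applying $f$ to some bipolar tree
\[
H = (T_1^l, T_2^l, \ldots, T_{\Lpump}^l, T^m, T_1^r, T_2^r, \ldots, T_{\Lpump}^r)
\]
with all components in $X_f(W)$, where $T^m$ is itself built from rooted trees $T_1^m, \ldots, T_{\Delta-2}^m \in W$ by the rule defining $X_f(W)$. By \cref{lem:labeling_function},
\[
\type(H_\ast) \text{ is determined by } \type(H^l),\ \class(T_1^m), \ldots, \class(T_{\Delta-2}^m),\ \type(H^r),\ f.
\]
By the first step, $\class(X_f(W))$ depends only on $\class(W)$; another application of \cref{lem:replace} (now to the poles $s,t$ of the bipolar trees $H^l$ and $H^r$, choosing each inner root as a pole so that the hypotheses are satisfied) shows that $\type(H^l)$ and $\type(H^r)$ depend only on the equivalence classes of their constituent rooted trees, hence only on $\class(W)$. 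Therefore $\type(Y_f(W))$ is determined by $\class(W)$ and $f$.

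Finally, for $\class(Z_f(W))$: every rooted tree in $Z_f(W)$ has the form $(T_1^l, \ldots, T_{\Lpump}^l, T^m_\ast)$ with root the root of $T_1^l$, or symmetrically $(T^m_\ast, T_1^r, \ldots, T_{\Lpump}^r)$, arising from some $H_\ast \in Y_f(W)$. By \cref{lem:labeling_function}, $\class(T^m_\ast)$ is determined by $\class(W)$ and $f$; by the previous two paragraphs the classes of the other components are determined by $\class(W)$ alone. A final application of \cref{lem:replace}, viewing each $T_i^l$ (respectively $T_i^r$) and $T^m_\ast$ as a subtree of the concatenation with its own root as pole, shows that the equivalence class of the concatenated rooted tree depends only on the equivalence classes of the components. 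Hence $\class(Z_f(W))$ depends only on $\class(W)$ and $f$.

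The proof is essentially a bookkeeping exercise once \cref{lem:replace} and \cref{lem:labeling_function} are available. The only subtlety I anticipate is verifying the pole conditions of \cref{lem:replace} at each subtree replacement, namely that every subtree we wish to swap contains in its pole set both the original poles of the ambient tree that lie inside it and every vertex of attachment to the rest of the tree; in each of the three constructions above this is automatic because subtrees are always attached through a single distinguished root.
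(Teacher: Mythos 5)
Your proof is correct and takes essentially the same approach as the paper, which dispatches this lemma in one line as "a simple consequence of Lemma~\ref{lem:replace} and Lemma~\ref{lem:labeling_function}." You have simply made explicit the bookkeeping that the paper leaves implicit, including the verification of the pole conditions, which is exactly the intended argument.
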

\begin{proof}
This is a simple consequence of \cref{lem:replace,lem:labeling_function}.
\end{proof}

\paragraph{A Fixed Point $W^\ast$}
Given a fixed labeling function $f$,
we want to find a set of rooted trees $W^\ast$ that is a \emph{fixed point} for the procedure $Z_f$ in the sense that \[\class(Z_f(W^\ast)) = \class(W^\ast).\] 
To find such a set $W^\ast$, we construct a two-dimensional array of rooted trees $\{W_{i,j}\}$, as follows.

\begin{itemize}
    \item For the base case, $W_{1,1}$ consists of only the one-vertex rooted tree.
    \item Given that $W_{i,j}$ as been constructed, we define $W_{i,j+1} = Z_f(W_{i,j})$.
    \item Given that $W_{i,j}$ for all positive integers $j$ have been constructed,  $W_{i+1,1}$ is defined as follows. Pick $b_i$ as the smallest index such that $\class(W_{i, b_i}) = \class(W_{i, a_i})$ for some $1 \leq a_i < b_i$. By the pigeon hole principle, the index $b_i$ exists, and it is upper bounded by $2^C$, where $C$ is the number of equivalence classes for rooted trees. We set 
    \[W_{i+1,1} = W_{i, a_i} \cup W_{i, a_i + 1} \cup \cdots \cup W_{i, b_i - 1}.\]
\end{itemize}

We show that the  sequence $\class(W_{i,a_i}), \class(W_{i,a_i+1}), \class(W_{i,a_i+2}), \ldots$ is periodic with a period $c_i = b_i - a_i$.

\begin{lemma}\label{lem:W-aux-1}
For any $i \geq 1$ and for any $j \geq a_i$, we have $\class(W_{i,j}) = \class(W_{i,j+c_i})$, where $c_i = b_i - a_i$.
\end{lemma}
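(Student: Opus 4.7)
The plan is to prove the lemma by a straightforward induction on $j \geq a_i$, leveraging the fact established in \cref{lem:sets_of_trees} that $\class(Z_f(W))$ depends only on $\class(W)$ (and the fixed labeling function $f$).

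\textbf{Base case} ($j = a_i$): By the very choice of $b_i$ in the construction, we have $\class(W_{i, a_i}) = \class(W_{i, b_i}) = \class(W_{i, a_i + c_i})$, which is exactly the claim for $j = a_i$.

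\textbf{Inductive step}: Assume that $\class(W_{i,j}) = \class(W_{i,j+c_i})$ for some $j \geq a_i$. By the construction of the sequence we have $W_{i, j+1} = Z_f(W_{i,j})$ and $W_{i, j+1+c_i} = Z_f(W_{i, j+c_i})$. Since $\class(W_{i,j}) = \class(W_{i,j+c_i})$ by the inductive hypothesis, \cref{lem:sets_of_trees} gives
\[
\class(W_{i, j+1}) \;=\; \class(Z_f(W_{i,j})) \;=\; \class(Z_f(W_{i, j+c_i})) \;=\; \class(W_{i, j+1+c_i}),
\]
completing the induction.

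There is no substantive obstacle here: the entire content of the lemma is packaged in \cref{lem:sets_of_trees}, whose job is precisely to ensure that the operation $W \mapsto Z_f(W)$ factors through the equivalence-class set $\class(W)$. Once this is in hand, periodicity of the sequence $\class(W_{i, a_i}), \class(W_{i, a_i+1}), \ldots$ with period $c_i$ is immediate from the fact that $\class(W_{i, a_i}) = \class(W_{i, b_i})$ together with repeated application of $Z_f$.
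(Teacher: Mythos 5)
Your proof is correct and uses exactly the same idea as the paper: invoke \cref{lem:sets_of_trees} to see that $\class(W_{i,j})$ is determined by $\class(W_{i,j-1})$, and then propagate the equality $\class(W_{i,a_i})=\class(W_{i,b_i})$ forward. The paper states this in two sentences and leaves the induction implicit; you have simply written it out in full.
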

\begin{proof}
By \cref{lem:sets_of_trees}, $\class(W_{i,j})$  depends only on $\class(W_{i,j-1})$. Hence the lemma follows from the fact that $\class(W_{i, b_i}) = \class(W_{i, a_i})$.
\end{proof}

Next, we show that $\class(W_{2,1}) \subseteq \class(W_{3, 1}) \subseteq \class(W_{4, 1}) \subseteq  \cdots$.

\begin{lemma}\label{lem:W-aux-2}
For any $i \geq 2$, we have $\class(W_{i,1}) \subseteq \class(W_{i,j})$ for each $j > 1$, and so $\class(W_{i,1}) \subseteq \class(W_{i+1, 1})$.
\end{lemma}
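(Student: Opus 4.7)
The plan is to first record a simple monotonicity property of the gluing procedure and then run a short induction. Monotonicity: if $W \subseteq W'$ then $Z_f(W) \subseteq Z_f(W')$, because every tree in $Z_f(W)$ is obtained from trees in $X_f(W) \subseteq X_f(W')$ by the recipe of the previous subsection. Combined with \cref{lem:sets_of_trees}, this upgrades to the form we will actually apply: whenever $\class(W) \subseteq \class(W')$, also $\class(Z_f(W)) \subseteq \class(Z_f(W'))$, since any constituent tree in the construction may be swapped for an equivalent one by \cref{lem:replace}.

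The heart of the argument is to establish the base case $\class(W_{i,1}) \subseteq \class(W_{i,2})$. Recall that for $i\ge 2$ we have $W_{i,1}=\bigcup_{j=a_{i-1}}^{b_{i-1}-1} W_{i-1,j}$, so it suffices to verify $\class(W_{i-1,j}) \subseteq \class(W_{i,2}) = \class(Z_f(W_{i,1}))$ for every $a_{i-1}\le j\le b_{i-1}-1$. If $j>a_{i-1}$, then $W_{i-1,j}=Z_f(W_{i-1,j-1})$ with $W_{i-1,j-1}\subseteq W_{i,1}$, and monotonicity immediately gives the inclusion. If $j=a_{i-1}$, we appeal to the periodicity statement \cref{lem:W-aux-1} to rewrite $\class(W_{i-1,a_{i-1}}) = \class(W_{i-1,b_{i-1}}) = \class(Z_f(W_{i-1,b_{i-1}-1}))$; since $b_{i-1}-1$ lies in the range $[a_{i-1},b_{i-1}-1]$, we have $W_{i-1,b_{i-1}-1}\subseteq W_{i,1}$ and monotonicity finishes this case.

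Once $\class(W_{i,1})\subseteq \class(W_{i,2})$ is in hand, a straightforward induction yields $\class(W_{i,j})\subseteq \class(W_{i,j+1})$ for every $j\ge 1$: applying $Z_f$ to both sides of an inclusion preserves it, so $\class(W_{i,j+1}) = \class(Z_f(W_{i,j}))\subseteq \class(Z_f(W_{i,j+1})) = \class(W_{i,j+2})$. Telescoping the chain gives $\class(W_{i,1})\subseteq \class(W_{i,j})$ for all $j>1$. For the final assertion, observe that $W_{i+1,1} = \bigcup_{j=a_i}^{b_i-1} W_{i,j}$ contains $W_{i,a_i}$, so (trivially if $a_i=1$, otherwise by the just-proved chain) $\class(W_{i,1})\subseteq \class(W_{i,a_i})\subseteq \class(W_{i+1,1})$.

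The only step that requires genuine care is the boundary case $j=a_{i-1}$ above: it is precisely here that the definition of $W_{i+1,1}$ as the union starting at the periodicity index $a_i$, rather than at $1$, is used essentially, together with \cref{lem:W-aux-1}. Everything else reduces to monotonicity of $Z_f$ and the inclusions built into the construction of $W_{i,1}$.
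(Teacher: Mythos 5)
Your proof is correct, and it reaches the conclusion by a route that is organized rather differently from the paper's, even though the two arguments share the same two ingredients (monotonicity of $Z_f$ and the periodicity provided by \cref{lem:W-aux-1}). The paper's proof is a ``one-shot'' index-chasing argument: it observes that for each fixed $j$, iterating $Z_f$ on the union defining $W_{i,1}$ shows that $W_{i,j}$ contains a shifted full period $\bigcup_l W_{i-1,l}$, and then invokes \cref{lem:W-aux-1} once to conclude that this shifted union has exactly the classes of $W_{i,1}$. Your proof instead isolates a clean monotonicity principle (that $\class(Z_f(\cdot))$ is a monotone function of $\class(\cdot)$, which is the content of \cref{lem:sets_of_trees} combined with set-level monotonicity of $Z_f$), uses periodicity only once---to handle the single boundary index $l=a_{i-1}$ in the base case $\class(W_{i,1})\subseteq\class(W_{i,2})$---and then propagates the inclusion along $j$ by applying the monotone operator. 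This is a slightly stronger intermediate result (the whole chain $\class(W_{i,j})$ is non-decreasing in $j$, not merely dominated by $\class(W_{i,1})$ from below) and is arguably more modular, at the cost of needing to state the class-level monotonicity of $Z_f$ explicitly; the paper's version avoids that step but does more arithmetic with the indices of the union. Both proofs finish the second assertion identically via $W_{i,a_i}\subseteq W_{i+1,1}$. One cosmetic remark: the equality $\class(W_{i-1,a_{i-1}})=\class(W_{i-1,b_{i-1}})$ is just the defining property of $a_{i-1},b_{i-1}$; invoking \cref{lem:W-aux-1} for it is harmless (it is the $j=a_{i-1}$ instance) but slightly indirect.
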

\begin{proof}
Since $W_{i,1} = \bigcup_{a_{i-1} \leq l \leq b_{i-1} - 1} W_{i-1, l}$, we have \[\bigcup_{a_{i-1}+j-1 \leq l \leq b_{i-1} +j} W_{i-1, l} \subseteq W_{i,j}\] according to the procedure of constructing $W_{i,j}$. By \cref{lem:W-aux-1}, \[  \class\left(\bigcup_{a_{i-1} \leq l \leq b_{i-1} - 1} W_{i-1, l}\right) = \class\left(\bigcup_{a_{i-1}+j-1 \leq l \leq b_{i-1} +j} W_{i-1, l}\right)\]  for all $j \geq 1$, and so we have $\class(W_{i,1}) \subseteq \class(W_{i,j})$ for each $j > 1$. The claim $\class(W_{i,1}) \subseteq \class(W_{i+1, 1})$ follows from the fact that $W_{i+1,1} = \bigcup_{a_{i} \leq l \leq b_{i} - 1} W_{i, l}$.
\end{proof}

Set $i^\ast$ to be the smallest index $i \geq 2$ such that $\class(W_{i,1}) = \class(W_{i+1, 1})$. By the pigeon hole principle and \cref{lem:W-aux-2}, the index $i^\ast$ exists, and it is upper bounded by $C$, the number  of equivalence classes for rooted trees. We set \[W^\ast = W_{i^\ast, 1}.\] 

The following lemma shows that $\class(Z_f(W^\ast)) = \class(W^\ast)$, as needed.

\begin{lemma}\label{lem:W-aux-3}
For any $i \geq 2$, if $\class(W_{i,1}) = \class(W_{i+1, 1})$, then $\class(W_{i,j})$ is the same for all $j \geq 1$. 
\end{lemma}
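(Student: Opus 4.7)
The plan is to deduce the stable periodicity from the hypothesis by first pinning down the equivalence classes on the range $[a_i, b_i - 1]$, then extending along the period using Lemma~\ref{lem:W-aux-1}, and finally pulling the equality back to all $j \geq 1$ by a forward induction driven by Lemma~\ref{lem:sets_of_trees}.

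First, recall that by construction $W_{i+1,1} = \bigcup_{l=a_i}^{b_i-1} W_{i,l}$, so taking $\class(\cdot)$ on both sides and using the hypothesis yields $\class(W_{i,1}) = \bigcup_{l=a_i}^{b_i-1} \class(W_{i,l})$. On the other hand, Lemma~\ref{lem:W-aux-2} says $\class(W_{i,1}) \subseteq \class(W_{i,l})$ for every $l \geq 1$. Combining the two, each $\class(W_{i,l})$ with $l \in [a_i, b_i - 1]$ is both contained in and contains $\class(W_{i,1})$, hence $\class(W_{i,l}) = \class(W_{i,1})$ throughout the range $[a_i, b_i - 1]$. Now invoke Lemma~\ref{lem:W-aux-1}: the sequence $(\class(W_{i,l}))_{l \geq a_i}$ is periodic with period $c_i = b_i - a_i$, so the equality $\class(W_{i,l}) = \class(W_{i,1})$ propagates to every $l \geq a_i$. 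In particular, $\class(W_{i, a_i + 1}) = \class(W_{i,1})$.

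Second, Lemma~\ref{lem:sets_of_trees} tells us that $\class(W_{i,j+1}) = \class(Z_f(W_{i,j}))$ depends only on $\class(W_{i,j})$; call this functional dependence $\phi$. Since $\class(W_{i, a_i}) = \class(W_{i, 1})$, we obtain
\[
\class(W_{i,2}) = \phi(\class(W_{i,1})) = \phi(\class(W_{i, a_i})) = \class(W_{i, a_i + 1}) = \class(W_{i,1}).
\]
Iterating: if $\class(W_{i,j}) = \class(W_{i,1})$, then $\class(W_{i,j+1}) = \phi(\class(W_{i,j})) = \phi(\class(W_{i,1})) = \class(W_{i,2}) = \class(W_{i,1})$. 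A straightforward induction on $j$ then shows $\class(W_{i,j}) = \class(W_{i,1})$ for every $j \geq 1$, which is the desired conclusion.

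There is no serious technical obstacle: the argument is a careful chaining of the three preceding lemmas. The only subtle point is the sandwich step that turns the set-theoretic equality $\class(W_{i,1}) = \bigcup_{l=a_i}^{b_i-1}\class(W_{i,l})$ into pointwise equalities $\class(W_{i,l}) = \class(W_{i,1})$; this relies crucially on the inclusion provided by Lemma~\ref{lem:W-aux-2}. Once that is in hand, periodicity and the functional behaviour of $Z_f$ under $\class(\cdot)$ do the rest.
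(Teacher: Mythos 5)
Your proof is correct and follows essentially the same route as the paper: you combine the two inclusions (one from the hypothesis and the union structure of $W_{i+1,1}$, the other from Lemma~\ref{lem:W-aux-2}) to pin down $\class(W_{i,l}) = \class(W_{i,1})$ on the window $[a_i, b_i-1]$, extend via the periodicity of Lemma~\ref{lem:W-aux-1}, and then push forward with the functional dependence from Lemma~\ref{lem:sets_of_trees}. The only cosmetic difference is that you make the map $\phi$ explicit where the paper invokes it implicitly by stating $\class(Z_f(W_{i,1})) = \class(W_{i,1})$.
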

\begin{proof}
By \cref{lem:W-aux-1} and the way we construct $W_{i+1, 1}$, we have \[\class(W_{i, j}) \subseteq \class(W_{i+1, 1}) \ \ \text{for each} \ \ j \geq a_i.\]
By \cref{lem:W-aux-2}, we have  \[\class(W_{i, 1}) \subseteq \class(W_{i, j}) \ \ \text{for each} \ \ j > 1.\]
Therefore, $\class(W_{i,1}) = \class(W_{i+1, 1})$ implies that $\class(W_{i, j}) = \class(W_{i,1})$ for each $j \geq a_i$. Hence we must have $\class(Z_f(W_{i,1})) = \class(W_{i,1})$, and so $\class(W_{i,j})$ is the same for all $j \geq 1$. 
\end{proof}

We remark that simply selecting $W^\ast$ to be any set such that $\class(Z_f(W^\ast)) = \class(W^\ast)$ is not enough for our purpose. As we will later see, it is crucial that the set $W^\ast$ is constructed by iteratively applying the function $Z_f$ and taking the union of previously constructed sets.





\paragraph{A Sequence of Sets of Trees}
We define $W_1^\ast = W^\ast$ and $W_i^\ast = Z_f(W_{i-1}^\ast)$ for each $1 <  i \leq L$, where $L$ is some sufficiently large number to be determined.

The way we choose $W^\ast$ guarantees that $\class(W_i^\ast) = \class(W^\ast)$  for all $1 \leq i \leq L$. 
For convenience, we write $X_i^\ast = X_f(W_i^\ast)$ and $Y_i^\ast = Y_f(W_i^\ast)$.
Similarly, $\class(X_i^\ast)$ is the same for all $1 \leq i \leq L$ and $\type(Y_i^\ast)$ is the same for all $1 \leq i \leq L$.

Our analysis will rely on the assumption that all rooted trees in $W^\ast$ admit correct labelings. Whether this is true depends only on $\class(W^\ast)$, which depends only on the labeling function $f$. We say that $f$ is \emph{feasible} if it leads to a set $W^\ast$ where all the rooted trees therein admit correct labelings. The proof that a feasible labeling function $f$ exists is deferred.

The assumption that $f$ is feasible implies that all trees in $W_i^\ast$, $X_i^\ast$, and $Y_i^\ast$, for all $1 \leq i \leq L$, admit correct labelings. All rooted trees in $X_i^\ast$ admit correct labelings because they are subtrees of the rooted trees in $W_{i+1}^\ast$. A correct labeling of any bipolar tree $H \in Y_i^\ast$ can be obtained by combining any correct labelings of the two rooted trees in $W_{i+1}^\ast$ resulting from $H$.



\paragraph{Layers of Vertices}
We assign a layer number $\lambda(v)$ to each vertex $v$ in a tree  based on the step that $v$ is introduced in the construction \[W_1^\ast \rightarrow W_2^\ast \rightarrow \cdots \rightarrow W_L^\ast.\]

If a vertex $v$ is introduced as the root vertex of a tree in $X_i^\ast = X_f(W_i^\ast)$, then we say that the layer number of $v$ is $\lambda(v) = i \in \{1,2, \ldots, L\}$. A vertex $v$ has  $\lambda(v) = 0$ if it belongs to a tree in $W_1^\ast$.

For any vertex $v$ with $\lambda(v) = i$ in a tree $T \in X_j^\ast$ with $i \leq j$, we write $T_v$ to denote the subtree of $T$ such that $T_v \in X_i^\ast$ where $v$ is the root of $T_v$.

We construct a sequence of sets $R_1, R_2, \ldots, R_L$ as follows.
We go over all rooted trees $T \in X_L^\ast$, all possible correct labeling $\mathcal{L}$ of $T$, and all vertices $v$ in $T$ with $\lambda(v) = i \in \{1,2, \ldots, L\}$. Suppose that the $\Delta-2$ rooted trees in the construction of $T_v \in X_i^\ast = X_f(W_i^\ast)$ are $T_1$, $T_2$, $\ldots$, $T_{\Delta-2}$, and let $r_i$ be the root of $T_i$. Consider the following parameters.
\begin{itemize}
    \item $c_i = \class(T_i)$.
    \item $\ta_i$ is the real half-edge label of $v$ in $T_v$ for the edge $\{v, r_i\}$, under the correct labeling $\mathcal{L}$ of $T$ restricted to $T_v$.
    \item $\fI$ is the size-$2$ multiset of virtual half-edge labels of  $v$ in $T_v$, under the correct labeling $\mathcal{L}$ of $T$ restricted to $T_v$.
\end{itemize}
Then we add 
$(c_1, c_2, \ldots, c_{\Delta-2}, \ta_1, \ta_2, \ldots, \ta_{\Delta-2}, \fI)$ to $R_i$.


\begin{lemma}\label{lem:Rsets1}
For each $1 \leq i < L$, $R_i$ is determined by $R_{i+1}$.
\end{lemma}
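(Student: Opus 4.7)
The plan is to establish an explicit characterization of $R_i$ in terms of $R_{i+1}$: I claim that a tuple $\tau$ lies in $R_i$ if and only if there exist a tree $T' \in X_{i+1}^\ast$, a correct labeling $\mathcal{L}'$ of $T'$, and a layer-$i$ vertex $v \in T'$, such that the boundary tuple of the root $w$ of $T'$ under $\mathcal{L}'$ lies in $R_{i+1}$, and the tuple of $v$ under $\mathcal{L}'$ equals $\tau$. Since $X_{i+1}^\ast$ is fixed by the construction, this characterization exhibits $R_i$ as a function of $R_{i+1}$.

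The forward direction will be immediate. Given $\tau \in R_i$ witnessed by a layer-$i$ vertex $v$ in $(T, \mathcal{L}) \in X_L^\ast \times \{\text{correct labelings}\}$, I observe that $v$ must lie in a unique layer-$(i+1)$ ancestor subtree $T_w$ (such a $w$ exists because any layer-$i$ vertex of an $X_L^\ast$-tree with $i < L$ is nested inside an $X_{i+1}^\ast$ subtree). Setting $T' := T_w$ and $\mathcal{L}' := \mathcal{L}|_{T_w}$, the boundary tuple of $\mathcal{L}'$ at $w$ lies in $R_{i+1}$ by definition, and $v$ remains a layer-$i$ vertex of $T'$ realizing $\tau$.

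The reverse direction is the substantive step. Given $(T', \mathcal{L}', \tau')$ with $\tau' \in R_{i+1}$ and a layer-$i$ vertex $v \in T'$ realizing $\tau$, I would invoke the definition of $R_{i+1}$ to produce a witness $(T^{\mathrm{orig}}, \mathcal{L}^{\mathrm{orig}}, w^{\mathrm{orig}}) \in X_L^\ast \times \{\text{correct labelings}\} \times \{\text{layer-$(i+1)$ vertices}\}$ whose tuple equals $\tau'$. The subtrees $T_{w^{\mathrm{orig}}}$ and $T'$ both lie in $X_{i+1}^\ast$ and have identical child-subtree equivalence classes $c'_1, \dots, c'_{\Delta-2}$ (as recorded in $\tau'$), so \cref{lem:sets_of_trees} yields $T_{w^{\mathrm{orig}}} \simm T'$. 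Applying \cref{lem:replace} with $T := T^{\mathrm{orig}}$, $T' := T_{w^{\mathrm{orig}}}$, and $T'' := T'$, I can replace $T_{w^{\mathrm{orig}}}$ by $T'$ inside $T^{\mathrm{orig}}$, obtaining a tree $T^\ast$ that admits a correct labeling extending $\mathcal{L}^{\mathrm{orig}}|_{T^{\mathrm{orig}} \setminus T_{w^{\mathrm{orig}}}}$; among all such extensions I pick the one whose restriction to $T'$ is $\mathcal{L}'$ (which is available since both candidate labelings agree on the virtual half-edge labels at the pole $w$, matching $\fI'$ in $\tau'$). Then $v$ appears as a layer-$i$ vertex in $T^\ast$ with tuple $\tau$.

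The hard part will be verifying that $T^\ast$ is an element of $X_L^\ast$ and not merely equivalent to one, because $X_L^\ast$ is defined as a specific set of trees, not as a union of equivalence classes. I would handle this by a downward induction on $L - (i+1)$: each outer enclosing subtree of $T_{w^{\mathrm{orig}}}$ is produced by $Z_f$ or $X_f$ from subtrees one layer down, and by \cref{lem:sets_of_trees} the output of $Z_f$ and $X_f$ at the level of equivalence classes is unchanged when inputs are replaced by $\simm$-equivalent trees. Moreover, \cref{lem:labeling_function} ensures that the labels introduced by $f$ at middle vertices of bipolar concatenations depend only on the equivalence classes of the input trees, so the $f$-labels on $T^\ast$ coincide with those on $T^{\mathrm{orig}}$. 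Hence at each layer the modified enclosing subtree is again an element of the corresponding $W_j^\ast$ or $X_j^\ast$, and propagating up we conclude $T^\ast \in X_L^\ast$. With this established, $v$ realizes $\tau$ at layer $i$ in a correctly labeled element of $X_L^\ast$, so $\tau \in R_i$, completing the reverse inclusion and the lemma.
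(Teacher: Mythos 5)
Your proof is essentially the paper's proof in a slightly different guise, and the route is sound. The paper's argument also re-characterizes $R_i$ one layer down from $R_{i+1}$: it enumerates trees $T' \in W_{i+1}^\ast$, filters the admissible virtual half-edge labels at the root of $T'$ through a function $\phi$ extracted from $R_{i+1}$ (namely, $\ta \in \phi(c)$ iff some tuple of $R_{i+1}$ has $c$ and $\ta$ in matching slots), and records the tuples of the spine vertices. Your version packages the exact same information differently: instead of looking at $W_{i+1}^\ast$ trees plus the $\phi$ filter at their roots, you look at the containing $X_{i+1}^\ast$ tree $T_w$ and require its root tuple to be in $R_{i+1}$. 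These are equivalent because the $\phi$ filter is precisely what "the root tuple of the $X_{i+1}^\ast$ parent is in $R_{i+1}$" projects down to. The forward direction (restricting a witness $(T,\mathcal{L},v)$ to $T_w$) matches the paper; for the converse you spell out a subtree-replacement-plus-hierarchical-induction argument that the paper essentially asserts in the sentence ``a correct labeling of $T' \ldots$ can be extended to a correct labeling of a tree $T \in X_L^\ast \ldots$ if and only if $\ldots$''. So you are in fact being more explicit than the paper about the nontrivial direction.

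Two small points worth fixing. First, you cite \cref{lem:sets_of_trees} to conclude $T_{w^{\mathrm{orig}}} \simm T'$ from the equality of their child-subtree classes; the statement you actually need is the consequence of \cref{lem:replace} noted just before \cref{lem:pump} (the class of a rooted tree in $X_f(W)$ is determined by the classes of its $\Delta-2$ children), not \cref{lem:sets_of_trees}, which only concerns the \emph{sets} $\class(X_f(W))$, etc. Second, your closing sentence ``Since $X_{i+1}^\ast$ is fixed by the construction, this characterization exhibits $R_i$ as a function of $R_{i+1}$'' is imprecise: $X_{i+1}^\ast$ does vary with $i$. What actually makes the map $R_{i+1}\mapsto R_i$ independent of $i$ — which is needed downstream in \cref{lem:Rsets2} — is that $\class(X_{i+1}^\ast)$ is the same for every $i$, together with the fact that the set of achievable tuples at a vertex is determined by the classes of its child subtrees (again via \cref{lem:replace}). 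The paper makes this invariance explicit (``this construction of $R_i$ only depends on $\class(W_i^\ast)$, the labeling function $f$, and $\phi$''), and your writeup should too.
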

\begin{proof}
We consider the following alternative way of constructing $R_i$ from $R_{i+1}$.
Each rooted tree $T'$ in $W_{i+1}^\ast = Z_f(W_i^\ast)$ can be described as follows.
\begin{itemize}
    \item Start with a path $(r_1, r_2, \ldots, r_{\Lpump + 1})$, where $r_1$ is the root of $T'$.
    \item For each $1 \leq j \leq \Lpump+1$, append $\Delta-2$ rooted trees $T_{j,1}, T_{j,2}, \ldots, T_{j, \Delta-2} \in W_i^\ast$ to $r_j$.
    \item Assign the labels to the half edges surrounding $r_{\Lpump + 1}$ according to the labeling function $f$.
\end{itemize}

 Now, consider the function $\phi$ that maps each equivalence class $c$ for rooted trees to a subset of $\Sigma$ defined as follows:
$\ta \in \phi(c)$ if there exist \[(c_1, c_2, \ldots, c_{\Delta-2}, \ta_1, \ta_2, \ldots, \ta_{\Delta-2}, \fI) \in R_{i+1}\] and $1 \leq j \leq \Delta -2$ such that $c = c_j$ and $\ta = \ta_j$.

We go over all possible $T' \in W_{i+1}^\ast = Z_f(W_i^\ast)$.
Note that the root $r$ of $T'$ has exactly one virtual half edge. For each $\tb \in \Sigma$ such that $\{\ta, \tb\} \in \fE$ for some $\ta \in \phi(\class(T'))$, we go over all  possible correct labelings $\mathcal{L}$ of $T'$ where the virtual half edge of $r$ is labeled $\tb$. For each $1 \leq j \leq \Lpump+1$, consider the following parameters.
\begin{itemize}
    \item $c_l = \class(T_{j,l})$.
    \item $\ta_l$ is the half-edge label of $r_j$ for the edge $\{r_j, r_{j,l}\}$.
    \item $\fI$ is the size-$2$ multiset of the remaining two half-edge labels of  $v$.
\end{itemize}
Then we add 
$(c_1, c_2, \ldots, c_{\Delta-2}, \ta_1, \ta_2, \ldots, \ta_{\Delta-2}, \fI)$ to $R_i$. 

This construction of $R_i$ is equivalent to the original construction of $R_i$ because a correct labeling of $T' \in W_i^\ast$ can be extended to a correct labeling of a tree $T \in X_L^\ast$ that contains $T'$ as a subtree if and only if the virtual half-edge label of the root $r$ of $T'$ is $\tb \in \Sigma$ such that $\{\ta, \tb\} \in \fE$ for some $\ta \in \phi(\class(T'))$.

It is clear that this construction of $R_i$ only depends on $\class(W_i^\ast)$, the labeling function $f$, and the function $\phi$, which depends only on $R_{i+1}$. Since the labeling function $f$ is fixed and $\class(W_i^\ast)$ is the same for all $i$, we conclude that $R_{i}$ depends only on $R_{i+1}$.
\end{proof}

\begin{lemma}\label{lem:Rsets2}
We have $R_1 \subseteq R_2 \subseteq \cdots \subseteq R_L$.
\end{lemma}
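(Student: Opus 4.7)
The plan is to argue by downward induction on $i$, exploiting \cref{lem:Rsets1} together with a monotonicity property of the map $F\colon R_{i+1}\mapsto R_i$ that the proof of \cref{lem:Rsets1} implicitly defines. Since $\class(W_i^\ast)=\class(W^\ast)$ for every $i$ (by construction of $W^\ast$) and the labeling function $f$ is fixed, that proof in fact exhibits the same map $F$ for all $i$, so $R_i=F(R_{i+1})$ with $F$ independent of $i$. Given this, it will suffice to verify (a) $F$ is monotone under set inclusion, and (b) the base case $R_{L-1}\subseteq R_L$; then if $R_{i+1}\subseteq R_{i+2}$ we immediately conclude $R_i=F(R_{i+1})\subseteq F(R_{i+2})=R_{i+1}$, yielding the full chain.

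For (a), I will unpack that $F(S)$ is built via the auxiliary function $\phi_S$ in the proof of \cref{lem:Rsets1}, which reads tuples from $S$ and outputs, for each rooted-tree equivalence class $c$, the set of admissible half-edge labels on the edge from a parent; then for each $T'\in W_{i+1}^\ast$ and each $\tb\in\Sigma$ with $\{\ta,\tb\}\in\fE$ for some $\ta\in\phi_S(\class(T'))$, one collects tuples at spine vertices of $T'$ from the correct labelings of $T'$ whose root virtual half-edge is labeled $\tb$. A larger $S$ produces a pointwise larger $\phi_S$, hence more admissible $\tb$'s, hence more correct labelings to draw tuples from, so monotonicity of $F$ should follow immediately.

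For (b), given $\tau=(c_1,\dots,c_{\Delta-2},\ta_1,\dots,\ta_{\Delta-2},\fI)\in R_{L-1}$ witnessed by $v, T, \mathcal{L}$ with $\lambda(v)=L-1$, I will choose trees $T_j'\in W_L^\ast$ with $\class(T_j')=c_j$ — which exist because $\class(W_L^\ast)=\class(W^\ast)=\class(W_{L-1}^\ast)$ — and form $T'$ by attaching $T_1',\dots,T_{\Delta-2}'$ to a new root $v'$; then $T'\in X_L^\ast$, $\lambda(v')=L$, and $T'_{v'}=T'$. The remaining task is to exhibit a correct labeling of $T'$ in which $v'$ receives exactly the half-edge labels prescribed by $\tau$, and this is where I expect the main obstacle, because \cref{lem:replace} as stated only preserves the multisets of virtual half-edge labels at the poles, whereas $\tau$ additionally records the real half-edge labels $\ta_j$ to the children. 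The resolution I plan is a mild strengthening of \cref{lem:replace}: once the full labeling at the root is fixed, the extendability of that labeling to the whole tree reduces, for each child subtree, to the question of admitting a correct labeling with a prescribed label on its edge to the root, which is a class-invariant condition per child. With this refinement, extendability in $T'$ follows from extendability in $T_v$ — the latter being witnessed by $\mathcal{L}|_{T_v}$ — so $v'$ can receive $\tau$ in some correct labeling of $T'$, giving $\tau\in R_L$ and closing the induction.
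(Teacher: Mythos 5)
Your argument matches the paper's: the base case $R_{L-1}\subseteq R_L$ is handled by replacing each child subtree with a class-equal tree from $W_L^\ast$ and observing that the prescribed root configuration still extends, and the inductive step uses that the map $R_{i+1}\mapsto R_i$ of Lemma~\ref{lem:Rsets1} is monotone because it is driven by the pointwise-monotone function $\phi$. The ``mild strengthening'' you flag is exactly the class-invariance of the per-child extendability check once the root's full configuration is fixed, which the paper uses implicitly (it is already visible in the proof of Lemma~\ref{lem:replace}, which preserves the labeling on $V\setminus U$ and hence at the root).
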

\begin{proof}
For the base case, we show that $R_{L-1} \subseteq R_L$. In fact, our proof will show that $R_{i} \subseteq R_L$ for each $1 \leq i < L$. Consider any 
\[(c_1, c_2, \ldots, c_{\Delta-2}, \ta_1, \ta_2, \ldots, \ta_{\Delta-2}, \fI) \in R_i.\] 
Then there is a rooted tree $T \in X_i^\ast$ that is formed by attaching $\Delta-2$ rooted trees of equivalence classes $c_1, c_2, \ldots, c_{\Delta-2}$ to the root vertex so that if we label the half edges surrounding the root vertex according to  $\ta_1, \ta_2, \ldots, \ta_{\Delta-2}$, and  $\fI$, then this partial labeling can be completed into a correct labeling of $T$.

Because $\class(W_i^\ast) = \class(W_L^\ast)$, there is also a rooted tree $T' \in X_L^\ast$ that is formed by attaching $\Delta-2$ rooted trees of equivalence classes $c_1, c_2, \ldots, c_{\Delta-2}$ to the root vertex. Therefore, if we label the root vertex of $T'$ in the same way as we do for $T$, then this partial labeling can also be completed into a correct labeling of $T'$. Hence we must have 
\[(c_1, c_2, \ldots, c_{\Delta-2}, \ta_1, \ta_2, \ldots, \ta_{\Delta-2}, \fI) \in R_L.\] 


Now, suppose that we already have $R_i \subseteq R_{i+1}$ for some $1 < i < L$. We will show that  $R_{i-1} \subseteq R_{i}$. 
Denote by $\phi_{i}$ and $\phi_{i+1}$ the function $\phi$ in \cref{lem:Rsets1} constructed from  $R_{i}$ and $R_{i+1}$. 
We have $\phi_i(c) \subseteq \phi_{i+1}(c)$ for each equivalence class $c$, because $R_i \subseteq R_{i+1}$. Therefore, in view of the alternative construction described in the proof of \cref{lem:Rsets1}, we have $R_{i-1} \subseteq R_{i}$.
\end{proof}

\paragraph{The Set of Vertex Configurations $\fV'$}
By \cref{lem:Rsets1,lem:Rsets2}, if we pick $L$ to be sufficient large, we can have  $R_1 = R_2 = R_3$.
More specifically, if we pick 
\[L \geq C^{\Delta-2} \cdot \binom{|\Sigma|+1}{|\Sigma|-1} + 3,\]
then there exists an index $3 \leq i \leq L$ such that $R_i = R_{i-1}$, implying that $R_1 = R_2 = R_3$. Here $C$ is the number of equivalence classes for rooted trees and $\binom{|\Sigma|+1}{|\Sigma|-1}$ is the number of size-$2$ multisets of elements from $\Sigma$.


The set $\fV'$ is defined by including all size-$\Delta$ multisets $\{\ta_1, \ta_2, \ldots, \ta_{\Delta-2}\} \cup \fI$ such that  \[(c_1, c_2, \ldots, c_{\Delta-2}, \ta_1, \ta_2, \ldots, \ta_{\Delta-2}, \fI) \in R_{1}\] for some $c_1, c_2, \ldots, c_{\Delta-2}$.


\paragraph{The Set $\fV'$  is $\ell$-full}
To show that the set $\fV'$  is $\ell$-full, we  consider the subset $\fV^\ast \subseteq \fV'$ defined by the set of vertex configurations used by the labeling function $f$ in the construction of $Y_f(W_i^\ast)$. The definition of $\fV^\ast$ is invariant of $i$ as $\class(W_i^\ast)$ is the same for all $i$. Clearly, for each $x \in \fV^\ast$, there is a rooted tree $T \in W_2^\ast$ where the root of a subtree $T' \in X_1^\ast$ of $T$ has its size-$\Delta$ multiset of half-edge labels fixed to be $x$ by $f$, and so  $\fV^\ast \subseteq \fV'$.

For notational simplicity, we write $x \overset{k}{\leftrightarrow} x'$ if there is a correct labeling of a $k$-vertex path $(v_1, v_2, \ldots, v_k)$ using only vertex configurations in $\fV'$ so that the vertex configuration of $v_1$ is $x$ and the vertex configuration of $v_k$ is $x'$. If it is further required that the half-edge label of $v_1$ for the edge $\{v_1, v_2\}$ is $\ta \in x$, then we write $(x,\ta) \overset{k}{\leftrightarrow} x'$. The notation $(x,\ta) \overset{k}{\leftrightarrow} (x', \ta')$ is defined similarly.

\begin{lemma}\label{lem:find_path_1}
For any $x \in \fV' \setminus \fV^\ast$ and $\ta \in x$, there exist a vertex configuration $x' \in \fV^\ast$ and a number $2 \leq k \leq 2 \Lpump + 1$ such that $(x,\ta) \overset{k}{\leftrightarrow} x'$.
\end{lemma}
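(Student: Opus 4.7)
The plan is to fix a tuple $\sigma=(c_1,\ldots,c_{\Delta-2},\ta_1,\ldots,\ta_{\Delta-2},\fI)\in R_1$ giving rise to $x$, and then to case-split on whether the specified label $\ta$ can be identified with some $\ta_j$ in such a tuple or only with an element of $\fI$. Throughout I plan to use the equalities $R_1=R_2=R_3$ to switch freely between layer-$1$, layer-$2$, and layer-$3$ realisations of $\sigma$ inside a correct labeling $\mathcal{L}$ of some $T\in X_L^\ast$, exploiting the fact that every layer-$i$ path inside $T$ has exactly $\Lpump+1$ vertices with a ``special'' endpoint (the vertex assigned by $f$ during the construction), whose configuration therefore lies in $\fV^\ast$.

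In the easy case, when some tuple giving rise to $x$ has $\ta=\ta_j$ for some $j$, I would invoke $R_1=R_2$ to pick a correct labeling $\mathcal{L}$ of some $T\in X_L^\ast$ in which a layer-$2$ vertex $w$ realises $\sigma$, so that in $\mathcal{L}$ the half-edge of $w$ carrying the label $\ta$ is exactly the edge to the root $u_0$ of one of $w$'s attached $W_2^\ast$-subtrees. Concatenating $w$ with the $\Lpump+1$ layer-$1$ vertices $u_0,u_1,\ldots,u_\Lpump$ making up that subtree's layer-$1$ path gives an abstract path of $\Lpump+2\le 2\Lpump+1$ vertices; every configuration is in $\fV'$ (the $u_i$ by definition of $\fV'$, and $w$ by $R_1=R_2$), and the endpoint $u_\Lpump$ is the special vertex labelled by $f$, hence has configuration in $\fV^\ast$.

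In the hard case, when $\ta$ can only ever appear in the $\fI$-part of any tuple giving rise to $x$, I take a layer-$2$ realisation $w$ of $\sigma$ so that the half-edge of $w$ carrying $\ta$ points either to a layer-$2$ neighbour $w'$ along $w$'s layer-$2$ path or to $w$'s layer-$3$ parent $w'$; in either case $w'$'s configuration is in $\fV'$ by $R_2=R_1$ or $R_3=R_1$. From $w'$ I would then continue either along $w'$'s own layer-$i$ path towards its special end (if the outgoing direction pointing there is still available at $w'$ after the incoming label has been removed), or by descending via a child-edge label of $w'$ distinct from the incoming one to a layer-$1$ root $u_0$ and then walking $u_0$'s layer-$1$ path of $\Lpump+1$ vertices to the special layer-$1$ vertex. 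Either alternative keeps the total vertex count below $2\Lpump+1$, using the fact that $\Lpump$ is at least the number of equivalence classes of bipolar trees (plus one) by \cref{lem:pump}.

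The step I expect to be the main technical obstacle is the label-multiset bookkeeping at the ``turn'' vertex $w'$: I need to verify that, after removing the incoming half-edge label used to arrive at $w'$, the multiset of $w'$'s configuration still contains a label sitting on a half-edge that points in a useful direction (along $w'$'s layer-$i$ path towards the special end, or towards some layer-$(i-1)$ child of $w'$). My plan for this is to use the flexibility granted by $R_1=R_2=R_3$ to reselect the realisation of $w'$ whenever a label-multiplicity collision blocks the desired turn, and, if necessary, to apply the pumping lemma (\cref{lem:pump}) to produce an alternative $W_i^\ast$-subtree around $w'$ providing a distinct compatible child-edge label; the abstract path we build is then the transcription of a concrete path inside $T$, so its $\fV'$-validity and $\fE$-edge-compatibility follow directly from the correctness of $\mathcal{L}$.
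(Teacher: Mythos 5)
Your overall strategy matches the paper's: realize the pair $(x,\ta)$ at a layer-$2$ vertex in a single concrete tree $T$ with a single correct labeling $\mathcal{L}$, follow a concrete path in $T$ to a vertex whose half-edge labels were fixed by $f$ (hence of configuration in $\fV^\ast$), and transcribe that concrete path into the abstract $k$-vertex path witnessing $(x,\ta)\overset{k}{\leftrightarrow}x'$. Your case split (can $\ta$ be realized as one of the child labels $\ta_j$, or only as an element of $\fI$) is a valid alternative to the paper's split by which of the $\Delta$ neighbors the $\ta$-labeled half-edge of $v$ points to; both are exhaustive, and your easy case and the paper's Case~1, as well as your hard case and the paper's Cases~2--3, are near-identical.

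The genuine gap is the ``technical obstacle'' you flag at the turn vertex $w'$ and the remedy you propose for it. Once you have committed to one concrete path $(u_1,\dots,u_k)$ inside one tree $T$ with one labeling $\mathcal{L}$, there is no label-multiset bookkeeping to do: all half-edge labels of every $u_j$ are already determined by $\mathcal{L}$, so $\fE$-compatibility at every internal vertex holds automatically, and you never get to --- or need to --- ``remove the incoming label'' and re-choose. The only thing you must verify is that a suitable \emph{direction} exists in the tree, and that is a structural fact: from $w'$ you always have at least one of (i) the layer-$i$ path edge away from the direction of arrival, or (ii) a child edge, free; the concrete $\mathcal{L}$-labels then ride along for free. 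The paper's proof exploits exactly this, and indeed \cref{lem:pump} is never used in the proof of \cref{lem:find_path_1}. Your proposed remedy --- reselecting the realisation of $w'$ or using the pumping lemma to produce a different subtree --- is not only unnecessary but, if actually carried out, would undo the whole advantage of the concrete-path argument: switching $w'$'s realisation to a different tree $T''$ with labeling $\mathcal{L}''$ reintroduces precisely the gluing problem you were worried about, since you would now have to show the two half-edge labelings of $w'$ are consistent as one multiset in $\fV'$. You give no argument for that, so as written the hard case is not closed.

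Two smaller inaccuracies. First, you claim $\fV'$-membership of the configurations along the path ``follows directly from the correctness of $\mathcal{L}$''; this is not so --- correctness of $\mathcal{L}$ yields membership in $\fV$, not $\fV'$. What gives $\fV'$-membership is that every vertex on the path has layer in $\{1,2,3\}$ together with $R_1=R_2=R_3$ and the definition of $\fV'$ via $R_1$; your construction does stay within layers $1$--$3$, so this is fixable, but the reasoning should be spelled out. Second, if $w'$ is a layer-$3$ vertex, descending along a child edge lands on a layer-$2$ root (the root of a $W_3^\ast$ subtree), not a layer-$1$ root, and the subsequent walk is along a layer-$2$ path; the bound $k\le 2\Lpump+1$ still holds, but the layers are off by one as stated.
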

\begin{proof}
Let $T \in W_L^\ast$ be chosen so that there is a correct labeling $\mathcal{L}$ where $x$ is a vertex configuration of some vertex $v$ with $\lambda(v) = 2$.

To prove the lemma, it suffices to show that for each of the $\Delta$ neighbors $u$ of $v$, it is possible to find a path $P = (v,u, \ldots, w)$ meeting the following conditions.
\begin{itemize}
    \item $w$ is a vertex whose half-edge labels have been fixed by $f$. This ensures that the vertex configuration of $w$ is in $\fV^\ast$.
    \item All vertices in $P$ are within layers 1,2, and 3. This ensures that the vertex configuration of all vertices in $P$ are in $\fV'$.
    \item The number of vertices $k$ in $P$ satisfies $2 \leq k \leq 2 \Lpump + 1$. 
\end{itemize}

We divide the proof into three cases. Refer to \cref{fig:ell_full_construction1} for an illustration, where squares are vertices whose half-edge labels have been fixed by $f$.

\paragraph{Case 1} Consider the subtree $T_v \in X_2^\ast$ of $T$ whose root is $v$. In view of the construction of the set $X_f(W_2^\ast)$, $v$ has $\Delta-2$ children $u_1, u_2, \ldots, u_{\Delta-2}$ in $T_v$, where the subtree $T_i$ rooted at $u_i$ is a rooted tree in $W_2^\ast$. 

For each $1 \leq i \leq \Delta-2$, according to the structure of the trees in the set $W_2^\ast = Z_f(W_1^\ast)$, there is a path $(u_i = w_1, w_2, \ldots, w_{\Lpump+1})$ in $T_i$ containing only layer-1 vertices, where the half-edge labels of $w_{\Lpump+1}$ have been fixed by $f$. Hence $P = (v, u_i = w_1, w_2, \ldots, w_{\Lpump+1})$ is a desired path with $k = \Lpump + 2$ vertices.
    
    
    
\paragraph{Case 2}  Consider the subtree $T' \in W_3^\ast = Z_f(W_2^\ast)$ that contains $v$ in $T$.
Similarly, according to the structure of the trees in the set $W_3^\ast = Z_f(W_2^\ast)$, there is a path $(r = w_1', w_2', \ldots, w_{\Lpump+1}')$ in $T'$ containing only layer-2 vertices so that $r$ is the root of $T'$, $v = w_i'$ for some $1 \leq i' \leq \Lpump+1$, and the  half-edge labels of $w_{\Lpump+1}'$ have been fixed by $f$.
Since  $x \in \fV' \setminus \fV^\ast$, we have $v \neq w_{\Lpump+1}'$. Hence $P = (v = w_i', w_{i+1}', \ldots, w_{\Lpump+1}')$ is a desired path with $2 \leq k \leq \Lpump + 1$ vertices.



\paragraph{Case 3} There is only one remaining neighbor of $v$ to consider. In view of the construction of $X_f(W_3^\ast)$, there is a layer-3 vertex $v'$ adjacent to the vertex $r$, the root of the tree $T' \in W_3^\ast$ considered in the previous case. If the half-edge labels of $v'$ have been fixed by $f$, then $P = (v = w_i', w_{i-1}', \ldots, w_{1}'=r, v')$ is a desired path.
Otherwise, similar to the analysis in the previous case, we can find a path $P' = (v', \ldots, w)$ connecting $v'$ to a vertex $w$ whose half-edge labels  have been fixed by $f$. All vertices in $P'$ are of layer-3, and the number of vertices in $P'$ is within $[2, \Lpump + 1]$. 
Combining $P'$ with the path $(v = w_i', w_{i-1}', \ldots, w_{1}'=r, v')$, we obtain the desired path $P$ whose number of vertices satisfies $2 \leq k  \leq  2\Lpump + 1$.
\end{proof}

\begin{figure}[h!]
    \centering
    \includegraphics[width= .75\textwidth]{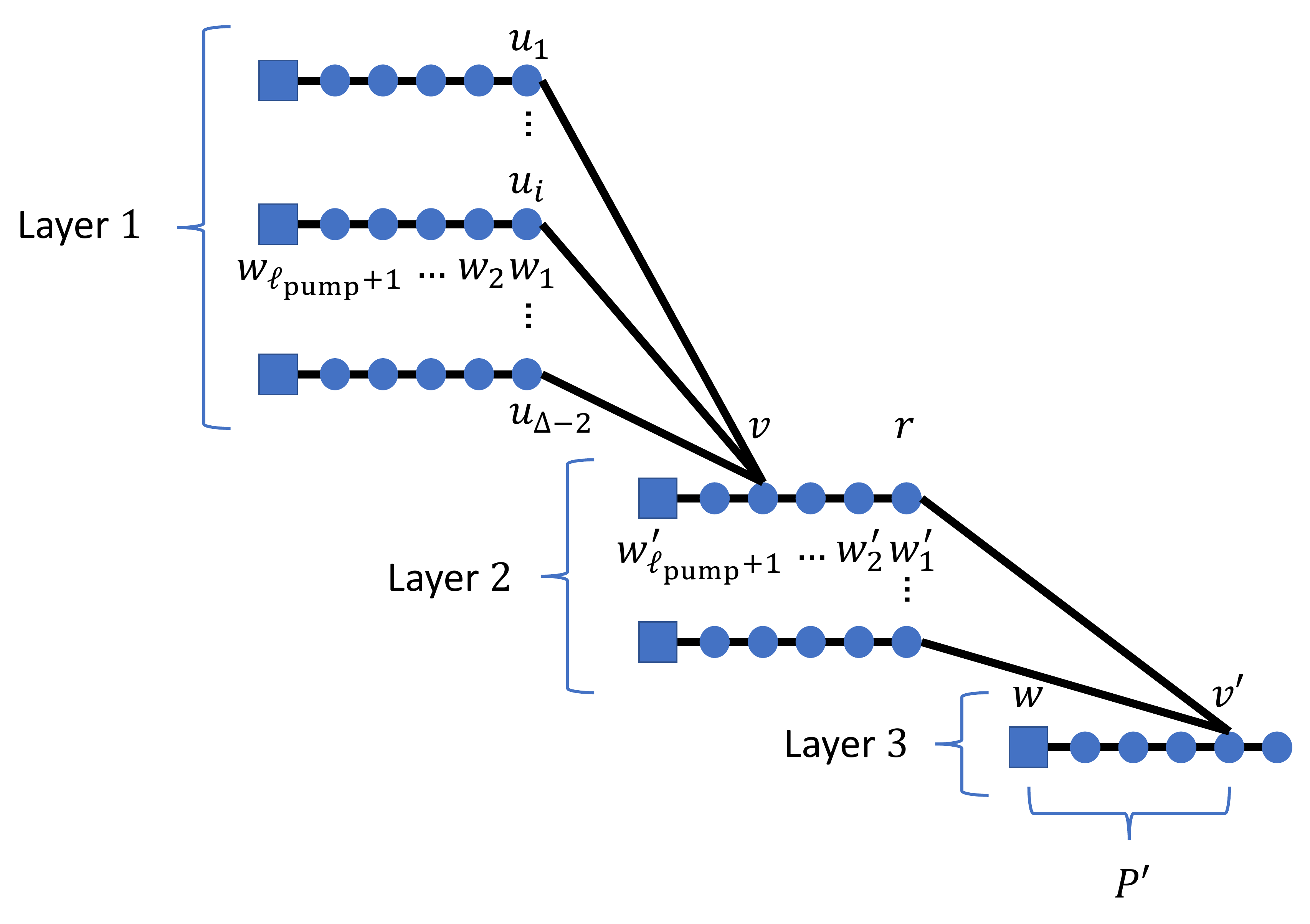}
    \caption{An illustration for the proof of \cref{lem:find_path_1}.}
    \label{fig:ell_full_construction1}
\end{figure}

For \cref{lem:find_path_2}, note that if $\ta$ appears more than once in the multiset $x$, then we still have $\ta \in x \setminus \{\ta\}$.

\begin{lemma}\label{lem:find_path_2}
For any $\ta \in x \in \fV^\ast$, $\ta' \in x' \in \fV^\ast$, and $0 \leq t \leq \Lpump - 1$, we have  $(x,\tb) \overset{k}{\leftrightarrow} (x', \tb')$ for some $\tb \in x \setminus \{\ta\}$ and $\tb' \in x' \setminus \{\ta'\}$
 with $k = 2 \Lpump + 4 + t$.
\end{lemma}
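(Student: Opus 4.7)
Plan. The approach is to embed the required path inside some tree in $X_L^\ast$ by exploiting the two-level bipolar structure that is already built into the recursive construction. The endpoints $v_x, v_{x'}$ will be the $f$-labeled layer-$1$ vertices of two $W_2^\ast$-subtrees placed at chosen positions along the main path of a bipolar tree in $Y_f(W_2^\ast)$; the path will descend from $v_x$ (distance $\Lpump+1$) to a layer-$2$ ``star center'' $u_x$, traverse $d:=t+1$ steps along the main path to $u_{x'}$, and then ascend a further $\Lpump+1$ edges to $v_{x'}$, giving exactly $2\Lpump+2+d=2\Lpump+3+t$ edges and hence $k=2\Lpump+4+t$ vertices.

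First, since $x,x'\in\fV^\ast$, pick bipolar trees $\tilde H_x,\tilde H_{x'}\in Y_f(W_1^\ast)$ whose middle roots are labeled $x,x'$ by $f$. Cutting each in $Z_f$ gives two candidate $W_2^\ast$-trees containing the labeled vertex, differing in which of the two path-direction half-edges of that vertex survives. Select the cut so that the surviving path-direction half-edge of $v_x$ carries a label $\tb$ with $\tb\in x\setminus\{\ta\}$: if at least one of the two path-direction half-edges is labeled differently from $\ta$, the corresponding cut gives $\tb\neq\ta$, and otherwise both are labeled $\ta$, in which case $\ta$ has multiplicity $\geq 2$ in $x$ and either cut yields $\tb=\ta\in x\setminus\{\ta\}$. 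Do the same to obtain $T_{x'}$ and $\tb'$.

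Next, form $X_x\in X_f(W_2^\ast)$ by attaching $T_x$ together with $\Delta-3$ arbitrary $W_2^\ast$-trees to a fresh layer-$2$ root $u_x$; in $X_x$ the vertex $v_x$ sits at distance $\Lpump+1$ from $u_x$. Construct $X_{x'}$ analogously. Now build the bipolar tree $\tilde H=(X_1,\dots,X_{2\Lpump+1})\in Y_f(W_2^\ast)$ by setting $X_1:=X_x$, $X_{1+d}:=X_{x'}$, and filling the remaining slots with arbitrary choices from $X_f(W_2^\ast)$; this is valid because $1+d\le 1+\Lpump\le 2\Lpump+1$. Apply $f$ at the middle and cut in $Z_f$ to land inside $W_3^\ast$; then grow the result into some $T^\ast\in X_L^\ast$ by iteratively applying $X_f, Y_f, Z_f$, using the free choice available at each step to keep our distinguished subtree intact.

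Finally, invoke feasibility of $f$ to obtain a correct labeling $\mathcal{L}^\ast$ of $T^\ast$. Under $\mathcal{L}^\ast$, the endpoints $v_x,v_{x'}$ necessarily carry the configurations $x,x'$ fixed by $f$, and in particular the starting half-edges along our path are labeled $\tb,\tb'$. All interior vertices of the path lie at layer $1$ or $2$, and since $L$ was chosen so that $R_1=R_2=R_3$, each of their local tuples already belongs to $R_1$, so every interior vertex configuration lies in $\fV'$. The edge constraints along the path hold by correctness of $\mathcal{L}^\ast$, and this witnesses $(x,\tb)\overset{k}{\leftrightarrow}(x',\tb')$. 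The main obstacle is the half-edge label bookkeeping in the first step: one must argue that the two-cut freedom of $Z_f$ is enough to realize a path-start label in $x\setminus\{\ta\}$ no matter where $\ta$ sits inside $x$; the distance-control argument (choosing $d=t+1$ positions apart on a main path of length $2\Lpump+1$) is then a straightforward counting exercise relying only on $t\le\Lpump-1$.
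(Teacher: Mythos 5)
Your proposal is correct and follows essentially the same strategy as the paper's proof: find $W_2^\ast$-trees $T_{x,\ta}, T_{x',\ta'}$ obtained by cutting bipolar trees in $Y_f(W_1^\ast)$ so that the $f$-fixed vertex has the right path-direction half-edge label, embed them at positions $i' - i = t+1$ apart along the level-$2$ main path of a $W_3^\ast$-tree, extend to a full tree in $X_L^\ast$, and read off the path configurations under a correct labeling guaranteed by feasibility, with $R_1 = R_2$ ensuring the interior configurations lie in $\fV'$. Your explicit two-case analysis of the cut-selection (one path-direction half-edge differs from $\ta$ versus both equal $\ta$, forcing multiplicity $\geq 2$) makes a step the paper leaves implicit, and the choice of positions $1$ and $1+d$ on a $(2\Lpump+1)$-path is just a concrete instance of the paper's $i, i'$ with both falling on the same side of the cut.
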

\begin{proof}
For any $\ta \in x \in \fV^\ast$, we can find a rooted tree $T_{x, \ta} \in W_2^\ast$ such that the path $(v_1$, $v_2$, $\ldots$, $v_{\Lpump +1})$ of the layer-1 vertices in  $T_{x, \ta}$ where $v_1 = r$ is the root of   $T_{x, \ta}$ satisfies the property that the half-edge labels of $v_{\Lpump +1}$ have been fixed to $x$ by $f$ where the half-edge label for $\{v_{\Lpump}, v_{\Lpump+1}\}$ is in $x \setminus \{\ta\}$.

Now, observe that for any choice of $(\Delta-2)(\Lpump+1)$ rooted trees \[\{T_{i,j}\}_{1 \leq i \leq \Lpump+1, 1 \leq j \leq \Delta-2}\] in $W_2^\ast$, there is a rooted tree $T' \in W_3^\ast = Z_f(W_2^\ast)$  formed by appending $T_{i, 1}$, $T_{i,2}$, $\ldots$, $T_{i, \Delta-2}$ to $u_i$ in the path $(u_1$, $u_2$, $\ldots$, $u_{\Lpump+1})$ and fixing the half-edge labeling of $u_{\Lpump+1}$ by $f$. All vertices in $(u_1, u_2, \ldots, u_{\Lpump+1})$ are of layer-2.

We choose any $T' \in W_2^\ast$ with $T_{i,j} = T_{x, \ta}$ and $T_{i',j'} = T_{x', \ta'}$ such that $i' - i = t + 1$.
The possible range of $t$ is $[0, \Lpump - 1]$.
Consider any $T \in W_L^\ast$ that contains $T'$ as its subtree, and consider any correct labeling of $T$.
Then the path $P$ resulting from concatenating the path $(v_{\Lpump +1}, v_{\Lpump}, \ldots, v_1)$ in $T_{x, \ta}$, the path $(u_i, u_{i+1}, \ldots, u_{i'})$, and the path $(v_1', v_2', \ldots, v_{\Lpump +1}')$ in $T_{x', \ta'}$
shows that $(x,\tb) \overset{k}{\leftrightarrow} (x', \tb')$ 
for $k = (\Lpump+1) + (t+2) + (\Lpump+1) = 2\Lpump + 4 + t$. See \cref{fig:ell_full_construction2} for an illustration.

For the rest of the proof, we show that the desired rooted tree $T_{x, \ta} \in W_2^\ast$ exists for any $\ta \in x \in \fV^\ast$.
For any $x \in \fV^\ast$, we can find a bipolar tree
\[H_\ast = (T_1^l, T_2^l, \ldots, T_{\Lpump}^l, T_\ast^m, T_1^r, T_2^r, \ldots, T_{\Lpump}^r) \in Y_1^\ast = Y_f(W_1^\ast)\]
such that the vertex configuration of the root $r^m$ of  $T_\ast^m$ is fixed to be $x$ by the labeling function $f$.
Then, for any $\ta \in x$, at least one of the two rooted trees in $W_2^\ast = Z_f(W_1^\ast)$ resulting from cutting $H_\ast$ satisfies the desired requirement. 
\end{proof}

\begin{figure}[h!]
    \centering
    \includegraphics[width= .65\textwidth]{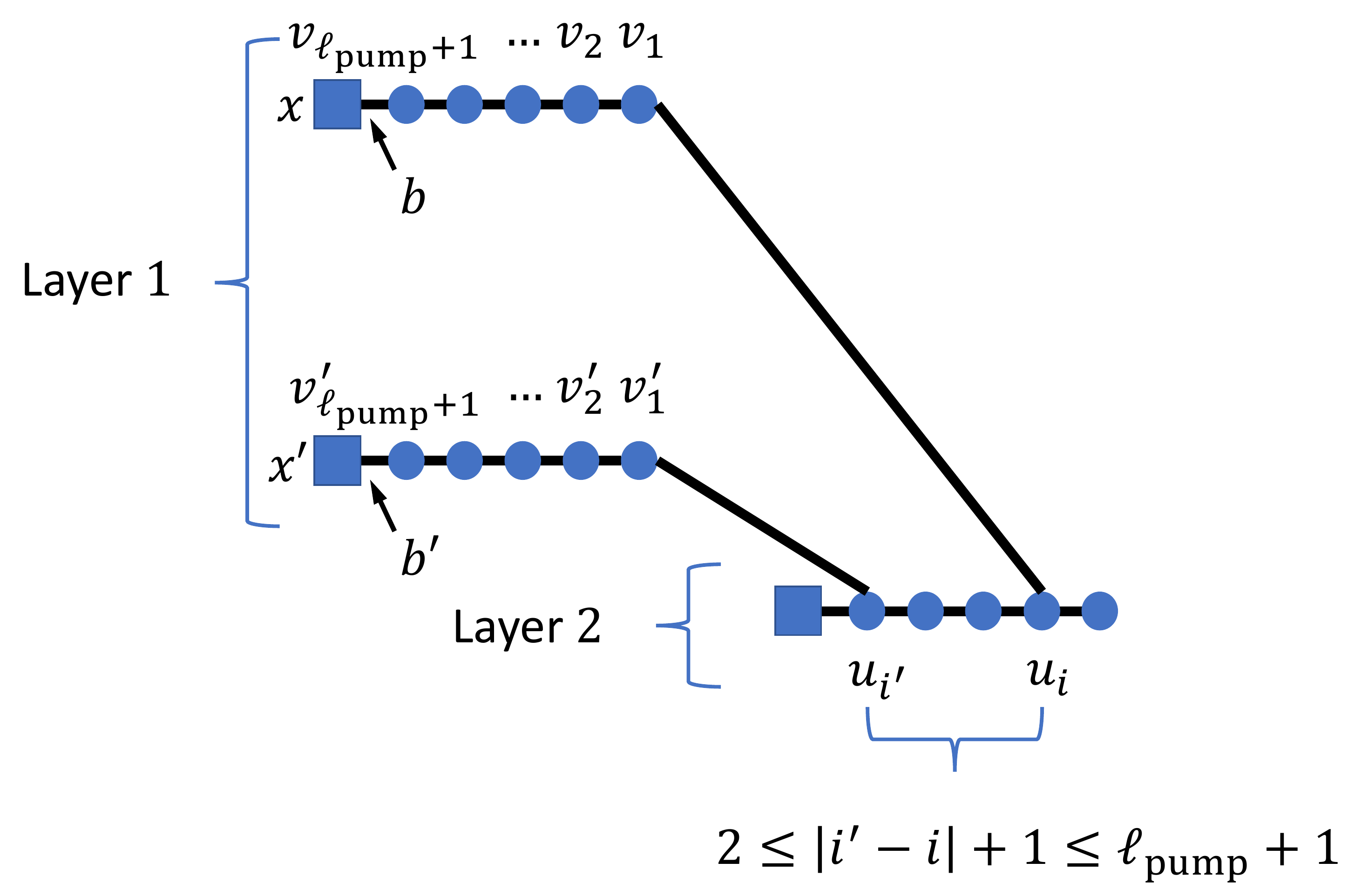}
    \caption{An illustration for the proof of \cref{lem:find_path_2}.}
    \label{fig:ell_full_construction2}
\end{figure} 

In the subsequent discussion, the length of a path refers to the number of edges in a path. In particular, the length of a $k$-vertex path is $k-1$.

The following lemma is proved by iteratively applying \cref{lem:find_path_2} via intermediate vertex configurations $\tilde{x} \in \fV^\ast$. 

\begin{lemma}\label{lem:find_path_3}
For any $\ta \in x \in \fV^\ast$ and $\ta' \in x' \in \fV^\ast$,  we have  $(x,\tb) \overset{k}{\leftrightarrow} (x', \tb')$ for some $\tb \in x \setminus \{\ta\}$ and $\tb' \in x' \setminus \{\ta'\}$
 for all $k \geq 6 \Lpump + 10$.
\end{lemma}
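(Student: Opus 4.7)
The idea is to realize every target length $k \geq 6\Lpump + 10$ by concatenating several sub-paths produced by \cref{lem:find_path_2}, using a fixed auxiliary vertex configuration $\tilde{x} \in \fV^\ast$ as the bridge between consecutive sub-paths.

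First, I would analyze which lengths are achievable by a chain of $j$ such sub-paths. A single invocation of \cref{lem:find_path_2} produces paths of length $2\Lpump + 4 + t$ for any $t \in \{0,1,\dots,\Lpump-1\}$, giving a window of $\Lpump$ consecutive integers. Gluing $j$ sub-paths end-to-end loses one vertex per junction, so the total number of vertices ranges over the full interval $[j(2\Lpump+3)+1,\; j(3\Lpump+2)+1]$. For $j=3$ this interval starts exactly at $6\Lpump+10$, and since $\Lpump$ (being at least the number of equivalence classes of bipolar trees plus one) is sufficiently large, the intervals for $j = 3,4,5,\dots$ overlap consecutively and together cover $[6\Lpump+10,\infty)$. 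So for each target $k \geq 6\Lpump+10$, I can pick $j \geq 3$ and $t_1,\dots,t_j \in \{0,\dots,\Lpump-1\}$ with $\sum_{i=1}^j (2\Lpump+4+t_i) - (j-1) = k$.

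Second, I would construct the sub-paths $P_1,\dots,P_j$ inductively, threading the ``forbidden'' half-edge labels supplied by \cref{lem:find_path_2} so that the gluings at intermediate vertices are consistent. Fix any $\tilde{\ta} \in \tilde{x}$. Apply \cref{lem:find_path_2} with parameter $t_1$ to produce $P_1$ from $(x,\tb_1)$ to $(\tilde{x},\alpha_1)$, forbidding $\ta$ at $x$ and $\tilde{\ta}$ at $\tilde{x}$, so $\tb_1 \in x \setminus \{\ta\}$ and $\alpha_1 \in \tilde{x} \setminus \{\tilde{\ta}\}$. For each $2 \leq i < j$, apply \cref{lem:find_path_2} with parameter $t_i$ to produce $P_i$ from $(\tilde{x},\beta_i)$ to $(\tilde{x},\alpha_i)$, forbidding $\alpha_{i-1}$ at the starting $\tilde{x}$ and $\tilde{\ta}$ at the ending $\tilde{x}$, so $\beta_i \in \tilde{x} \setminus \{\alpha_{i-1}\}$ and $\alpha_i \in \tilde{x} \setminus \{\tilde{\ta}\}$. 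Finally, apply \cref{lem:find_path_2} with parameter $t_j$ to produce $P_j$ from $(\tilde{x},\beta_j)$ to $(x',\tb')$, forbidding $\alpha_{j-1}$ and $\ta'$, so $\beta_j \in \tilde{x} \setminus \{\alpha_{j-1}\}$ and $\tb' \in x' \setminus \{\ta'\}$.

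Finally, I would verify that the concatenation is a valid correctly labeled path of length $k$ using only configurations from $\fV'$. At the junction between $P_i$ and $P_{i+1}$, the shared vertex has the common configuration $\tilde{x}$; its back half-edge (inside $P_i$) carries label $\alpha_i$, and its front half-edge (inside $P_{i+1}$) carries label $\beta_{i+1}$. Because $\beta_{i+1} \in \tilde{x} \setminus \{\alpha_i\}$ in the multiset sense, these two labels occupy \emph{distinct} slots of the multiset $\tilde{x}$, so the gluing is legal, and the surrounding edge constraints are satisfied since each $P_i$ is individually correctly labeled. Every internal vertex retains a configuration in $\fV'$ inherited from \cref{lem:find_path_2}, so the overall path witnesses $(x,\tb) \overset{k}{\leftrightarrow} (x',\tb')$ with $\tb \in x \setminus \{\ta\}$ and $\tb' \in x' \setminus \{\ta'\}$, as required. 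The only real delicate point is the multiset-level consistency at the junctions, and this is handled precisely by the fact that \cref{lem:find_path_2} lets us forbid one label of our choice at each endpoint.
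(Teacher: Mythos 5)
Your proof is correct and follows essentially the same strategy as the paper's: repeatedly glue sub-paths from \cref{lem:find_path_2} at a fixed intermediate configuration $\tilde{x}\in\fV^\ast$, threading the ``forbidden'' label at each junction so that the multiset constraint at the shared vertex is respected, and cover all lengths $k\geq 6\Lpump+10$ by a small number-theoretic argument. The only cosmetic difference is that you vary the number $j$ of sub-paths each with free parameter $t_i$, whereas the paper fixes three variable sub-paths and pads with $b$ extra copies of fixed length $2\Lpump+3$; both reduce to the same overlapping-intervals observation (and both tacitly use that $\Lpump$ is at least $5$, which is harmless since $\Lpump$ may always be increased).
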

\begin{proof}
By applying \cref{lem:find_path_2} for three times,
we infer that \cref{lem:find_path_2}  also works for any path length $k - 1 = (k_1 - 1) + (k_2 - 1) + (k_3 - 1)$, where $k_i - 1 = 2 \Lpump + 3 + t_i$ for $0 \leq t_i \leq \Lpump-1$. 

Specifically, we first apply \cref{lem:find_path_2} to find a path  realizing $(x,\tb) \overset{k_1}{\leftrightarrow} \tilde{x}$ for some  $\tilde{x} \in \fV^\ast$, and for some $\tb \in x \setminus \{\ta\}$.
Let $\tilde{\ta} \in \tilde{x}$ be the real half-edge label used in the last vertex of the path. 
We extend the length of this path by $k_2 - 1$ by attaching to it the path realizing $(\tilde{x},\tilde{\tb}) \overset{k_2}{\leftrightarrow} \tilde{x}$, for some $\tilde{\tb} \in \tilde{x} \setminus \{\tilde{\ta}\}$.
Finally, let $\tilde{\tc} \in \tilde{x}$ be the real half-edge label used in the last vertex of the current path. 
We extend the length of the current path by $k_3$ by attaching to it the path realizing $(\tilde{x},\tilde{\td}) \overset{k_3}{\leftrightarrow} (x', \tb')$, for some $\tilde{\td} \in \tilde{x} \setminus \{\tilde{\tx}\}$, and for some $\tb' \in x' \setminus \{\ta'\}$.
The resulting path realizes  $(x,\tb) \overset{k}{\leftrightarrow} (x', \tb')$ for some $\tb \in x \setminus \{\ta\}$ and $\tb' \in x' \setminus \{\ta'\}$.

Therefore,  \cref{lem:find_path_2} also works with path length of the form $k-1 = 6\Lpump + 9  + t$, for any $t \in [0, 3\Lpump - 3]$. 

Any $k-1 \geq 6 \Lpump + 9$ can be written as $k-1 = b(2\Lpump +3) + (6\Lpump + 9 + t)$, for some $t \in [0, 3\Lpump - 3]$ and some integer $b \geq 0$.
Therefore,  similar to the above, we can find a path showing $(x,\tb) \overset{k}{\leftrightarrow} (x', \tb')$ by first   applying \cref{lem:find_path_2} with  path length
$2\Lpump +3$ for $b$ times, and then applying the above variant of \cref{lem:find_path_2} to extend the path length by $6\Lpump + 9  + t$.
\end{proof}

We show that \cref{lem:find_path_1,lem:find_path_3} imply that $\fV'$  is $\ell$-full for some $\ell$.  

\begin{lemma}[$\fV'$  is $\ell$-full] \label{lem:find_path_4}
The set $\fV'$  is $\ell$-full for $\ell = 10 \Lpump + 10$. 
\end{lemma}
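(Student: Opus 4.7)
The proof combines Lemmas \ref{lem:find_path_1} and \ref{lem:find_path_3}. Given $c_1, c_2 \in \fV'$ and an arbitrary labeling of the half-edges at $v_1$ and $v_\ell$ (with $\ta_1 \in c_1$ the label on the $v_1$-side of $\{v_1, v_2\}$ and $\ta_\ell \in c_2$ the label on the $v_\ell$-side of $\{v_{\ell-1}, v_\ell\}$), I will decompose the path $P = (v_1, \ldots, v_\ell)$ into an initial segment $(v_1, \ldots, v_{k_1})$, a middle segment $(v_{k_1}, \ldots, v_{\ell-k_3+1})$, and a final segment $(v_{\ell-k_3+1}, \ldots, v_\ell)$ that share endpoints.

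For each endpoint configuration lying in $\fV' \setminus \fV^\ast$, I apply Lemma \ref{lem:find_path_1} to label the corresponding short segment: on the initial side this gives a path of length $k_1 - 1$ with $2 \leq k_1 \leq 2\Lpump + 1$, ending at a vertex with configuration $\tilde{c}_1 \in \fV^\ast$, and with the forward half-edge label at $v_1$ equal to the adversary's $\ta_1$. The final side is handled symmetrically. If an endpoint configuration already lies in $\fV^\ast$, I take the corresponding segment to be trivial ($k_1 = 1$, $\tilde{c}_1 = c_1$), using the fact that an $\fV^\ast$-configuration is realized at a root $r^m$ of some tree in $W_2^\ast$, which provides enough structural flexibility to embed $v_1$ so that the adversary's chosen label $\ta_1$ appears on the outgoing half-edge. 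After this step, both junction vertices carry $\fV^\ast$-configurations, with prescribed ``backward'' half-edge labels $\alpha_1 \in \tilde{c}_1$ and $\alpha_3 \in \tilde{c}_2$ imposed by the initial/final segments.

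The middle segment is then handled by Lemma \ref{lem:find_path_3} with parameters $\ta = \alpha_1$ and $\ta' = \alpha_3$, yielding a labeling whose forward half-edges at the junction vertices are some $\tb \in \tilde{c}_1 \setminus \{\alpha_1\}$ and $\tb' \in \tilde{c}_2 \setminus \{\alpha_3\}$. Since $\tb \neq \alpha_1$ (and $\tb' \neq \alpha_3$) as labels, the multiset $\tilde{c}_1$ can be distributed over the two real half-edges of $v_{k_1}$ (receiving $\alpha_1$ and $\tb$) and the $\Delta - 2$ virtual half-edges (receiving the remainder), ensuring consistency at the junction; the same holds at $v_{\ell - k_3 + 1}$. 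Collectively, the three labelings combine into a valid labeling of all of $P$ using only configurations from $\fV'$.

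The main obstacle is making the ``$c_i \in \fV^\ast$'' endpoint case work cleanly, since Lemma \ref{lem:find_path_1} only handles the case $c_i \in \fV' \setminus \fV^\ast$; here one must exploit the structural information about $\fV^\ast$ coming from the labeling function $f$ acting on $W_2^\ast$-trees to honor the adversary's $\ta_i$. For the length count, the middle segment contains $\ell - k_1 - k_3 + 2$ vertices, so Lemma \ref{lem:find_path_3} requires $\ell - k_1 - k_3 + 2 \geq 6\Lpump + 10$; combined with $k_1, k_3 \leq 2\Lpump + 1$, this gives $\ell \geq 10\Lpump + 10$, matching the claimed bound.
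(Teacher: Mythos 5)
Your decomposition — split the $\ell$-vertex path into a short initial segment, a long middle, and a short final segment; handle the ends with Lemma~\ref{lem:find_path_1} and the middle with Lemma~\ref{lem:find_path_3}; then balance the length count to arrive at $\ell = 10\Lpump + 10$ — is exactly the paper's argument, and the arithmetic $\ell - 1 \geq 2\cdot 2\Lpump + (6\Lpump + 9)$ matches.

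The one place where you deviate is the handling of an endpoint $c_i \in \fV^\ast$, and your proposed fix there does not go through. You suggest taking the corresponding outer segment to be trivial ($k_1 = 1$, $\tilde{c}_1 = c_1$) and invoking ``structural flexibility'' from the fact that $c_1$ is assigned by $f$ to a root in $W_2^\ast$ to force the outgoing half-edge label of $v_1$ to equal the adversary's $\ta_1$. But in the trivial-segment case, $v_1$ itself becomes the junction, so its forward half-edge label is produced by Lemma~\ref{lem:find_path_3}, which only promises \emph{some} $\tb \in \tilde{c}_1 \setminus \{\ta\}$ for whatever parameter $\ta$ you feed it --- you do not get to prescribe $\tb = \ta_1$. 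Passing $\ta \ne \ta_1$ makes $\ta_1$ a \emph{possible} output but not a guaranteed one, and passing $\ta = \ta_1$ actively forbids it. So ``structural flexibility'' has to be made precise before the trivial-segment route can close this case. (To be fair, the paper's own write-up simply invokes Lemma~\ref{lem:find_path_1} for both endpoints without commenting on the hypothesis $x \in \fV' \setminus \fV^\ast$; you are right that this case needs attention. The resolution should come from revisiting Lemma~\ref{lem:find_path_1}'s proof --- Case~1 there does not use $x \notin \fV^\ast$ and already yields a short non-trivial path into a child subtree for $\Delta - 2$ of the choices of $\ta_1$, and one needs a similar explicit path for the remaining two half-edges when $v$ is itself an $f$-fixed vertex --- rather than from a trivial segment plus Lemma~\ref{lem:find_path_3}.) Apart from this loose end, which you flagged honestly, the proposal is the paper's proof.
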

\begin{proof}
We show that for any target path length $k-1 \geq 10 \Lpump + 9$, and for any $\ta \in x \in \fV^\ast$ and $\ta' \in x' \in \fV^\ast$, we have $(x,\ta) \overset{k}{\leftrightarrow} (x',\ta')$.

By \cref{lem:find_path_1}, there exists a vertex configuration $\tilde{x} \in \fV^\ast$ so that we can find a path $P$ realizing $(x,\ta) \overset{\ell_1}{\leftrightarrow} \tilde{x}$ for some path length  $1 \leq (\ell_1-1) \leq 2 \Lpump$.
Similarly, there exists a vertex configuration $\tilde{x}' \in \fV^\ast$ so that we can find a path $P'$ realizing $(x',\ta') \overset{\ell_2}{\leftrightarrow} \tilde{x}'$ for some path length   $1 \leq  (\ell_2-1) \leq 2 \Lpump$.

Let $\tilde{\ta}$ be the real half-edge label for $\tilde{x}$ in $P$, and let $\tilde{\ta}'$ be the real half-edge label for $\tilde{x}'$ in $P'$.
We apply \cref{lem:find_path_3} to find a path $\tilde{P}$ realizing  $(x,\tb) \overset{\tilde{\ell}}{\leftrightarrow} (x', \tb')$ for some $\tb \in \tilde{x} \setminus \{\tilde{\ta}\}$ and $\tb' \in \tilde{x}' \setminus \{\tilde{\ta}'\}$ with path length  
\[\tilde{\ell}-1 = (k-1) - (\ell_1-1) - (\ell_2-1) \geq  6 \Lpump + 9.\] 
The path formed by concatenating $P_1$, $\tilde{P}$, and $P_2$ shows that $(x,\ta) \overset{k}{\leftrightarrow} (x',\ta')$.
\end{proof}

\paragraph{A Feasible Labeling Function $f$ exists} We show that a feasible labeling function $f$ exists given that $\Pi$ can be solved in $\local(n^{o(1)})$ rounds. We will construct a labeling function $f$ in such a way  each equivalence class in $\class(W^\ast)$ contains only rooted trees that admit legal labeling. That is, for each  $c \in \class(W^\ast)$, the mapping $h$ associated with $c$ satisfies $h(X) = \yes$ for some $X$.

We will consider a series of modifications in the construction \[W  \rightarrow  X_f(W) \rightarrow Y_f(W) \rightarrow Z_f(W)\] that do not alter the sets of equivalence classes in these sets, conditioning on the assumption that all trees that we have processed admit correct labelings. That is, whether $f$ is feasible is invariant of the modifications.

\paragraph{Applying the Pumping Lemma} Let $w > 0$ be some target length for the pumping lemma. In the construction of $Y_f(W)$, when we process a bipolar tree 
\[H = (T_1^l, T_2^l, \ldots, T_{\Lpump}^l, T^m, T_1^r, T_2^r, \ldots, T_{\Lpump}^r),\] we apply  \cref{lem:pump} to the two subtrees $H^l = (T_1^l, T_2^l, \ldots, T_{\Lpump}^l)$ and $H^r = (T_1^r, T_2^r, \ldots, T_{\Lpump}^r)$ to obtain two new bipolar trees $H_+^l$ and $H_+^r$.
The $s$-$t$ path in the new trees $H_+^l$ and $H_+^r$ contains $w + x$ vertices, for some $0 \leq x < \Lpump$. The equivalence classes do not change, that is, $\type(H^l) = \type(H_+^l)$ and $\type(H^r) = \type(H_+^r)$.

We replace $H^l$ by $H_+^l$ and replace $H^r$ by $H_+^r$ in the bipolar tree $H$.
Recall that the outcome of applying the labeling function $f$ to the root  $r$ of the rooted tree $T_{\Lpump+1}$ depends only on \[\type(H^l),  \class(T_{1}^m), \class(T_{2}^m), \ldots, \class(T_{\Delta - 2}^m), \type(H^r),\] so applying the pumping lemma to $H^l$ and $H^r$ during the construction of $Y_f(W)$ does not alter $\class(T_\ast^m)$ and  $\type(H^\ast)$ for the resulting bipolar tree $H^\ast$  and its middle rooted tree $T_\ast^m$, by \cref{lem:labeling_function}.

\paragraph{Reusing Previous Trees} During the construction of the fixed point $W^\ast$, we remember all bipolar trees to which we have applied the feasible function $f$.

During the construction of $Y_f(W)$. Suppose that we are about to process a bipolar tree  $H$, and there is already some  other bipolar tree $\tilde{H}$  to which we have applied $f$ before so that 
$\type(H^l)=\type(\tilde{H}^l)$,  $\class(T_{i}^m) = \class(\tilde{T}_{i}^m)$ for each $1 \leq i \leq \Delta-2$, and $\type(H^r)=\type(\tilde{H}^r)$.
Then we replace $H$ by $\tilde{H}$, and then we process $\tilde{H}$ instead.

By \cref{lem:labeling_function}, this modification does not alter $\class(T_\ast^m)$ and  $\type(H^\ast)$ for the resulting bipolar tree $H^\ast$.

\paragraph{Not Cutting Bipolar Trees} We consider the following different construction of $Z_f(W)$ from $Y_f(W)$. For each $H^\ast \in Y_f(W)$, we simply add two copies of $H^\ast$ to $Z_f(W)$, one of them has $r = s$ and the other one has $r = t$.
That is, we do not cut the bipolar tree $H^\ast$, as in the original construction of $Z_f(W)$.

In general, this modification might alter $\class(Z_f(W))$.
However, it is not hard to see that if all  trees in $Y_f(W)$ already admit correct labelings, then  $\class(Z_f(W))$ does not alter after the modification.

Specifically, let $T$ be a rooted tree in  $Z_f(W)$ with the above modification. 
Then $T$ is identical to a bipolar tree  $H^\ast=(T_1, T_2, \ldots, T_k) \in Y_f(W)$, where there exists some $1 < i < k$ such that the root $r_i$ of $T_i$ has its half-edge labels fixed by $f$.
Suppose that the root of $T$ is the root $r_1$ of $T_1$.
If we do not have the above modification, then the rooted tree $T'$ added to $Z_f(W)$ corresponding to $T$ is $(T_1, T_2, \ldots, T_i)$, where the root of $T'$ is also $r_1$. 

Given the assumption that all bipolar trees in $Y_f(W)$ admit correct labelings, it is not hard to see that $\class(T') = \class(T)$. For any correct labeling $\mathcal{L}'$ of $T'$, we can extend the labeling to a correct labeling $\mathcal{L}$ of $T$ by labeling the remaining vertices according to any arbitrary correct labeling of $H^\ast$. This is possible because the labeling of $r_i$ has been fixed. For any correct labeling $\mathcal{L}$ of $T$, we can obtain a correct labeling $\mathcal{L}'$ of $T'$ by restricting $\mathcal{L}$ to  $T'$.


\paragraph{Simulating a $\LOCAL$ Algorithm} We will show that there is a labeling function $f$ that makes all the rooted trees in $W^\ast$ to admit correct solutions, where we apply the above three modifications in the construction of $W^\ast$. In view of the above discussion, such a function $f$ is feasible.

The construction of $W^\ast$ involves only a finite number $k$ of iterations of $Z_f$ applied to some previously constructed rooted trees. If we view the target length $w$ of the pumping lemma as a variable, then the size of a tree in $W^\ast$ can be upper bounded by $O(w^{k})$.

Suppose that we are given an arbitrary $n^{o(1)}$-round $\LOCAL$ algorithm $\mathcal{A}$ that solves $\Pi$. It is known~\cite{chang_pettie2019time_hierarchy_trees_rand_speedup,chang2020n1k_speedups} that randomness does not help for LCL problems on bounded-degree trees with round complexity $\Omega(\log n)$, so we assume that 
$\mathcal{A}$ is deterministic.

The runtime of $\mathcal{A}$ on a tree of size 
$O(w^{k})$ can be made to be at most $t = w/10$ if $w$ is chosen to be sufficiently large. 

We pick the labeling function $f$ as follows. 
Whenever we encounter a new bipolar tree $H$ to which we need to apply $f$, we simulate  $\mathcal{A}$ locally at the root $r^m$ of the middle rooted tree $T^m$ in $H$. Here we assume that the pumping lemma was applied before simulating $\mathcal{A}$. Moreover, when we do the simulation, we assume that the number of vertices is $n = O(w^{k})$. 

To make the simulation possible, we just locally generate arbitrary distinct identifiers in the radius-$t$ neighborhood $S$ of $r^m$. This is possible because $t = w/10$  is so small that for any vertex $v$ in any tree $T$ constructed by recursively applying $Z_f$ for at most $k$ iterations, the radius-$(t+1)$ neighborhood of $v$ intersects at most one such $S$-set. Therefore, it is possible to complete the identifier assignment to the rest of the vertices in $T$ so that for any edge $\{u,v\}$, the set of identifiers seen by $u$ and $v$ are distinct in an execution of a $t$-round algorithm. 

Thus, by the correctness of $\mathcal{A}$, the labeling of all edges $\{u,v\}$ are configurations in $\fE$ and the labeling of all vertices $v$ are configurations in $\fV$.
Our choice of $f$ implies that all trees in $W^\ast$ admit correct labeling, and so $f$ is feasible. Combined with \cref{lem:find_path_4}, we have proved that  $\local(n^{o(1)})$  $\Rightarrow$ $\ell$-full.



\section{Concluding Remarks}\label{sec:concludingRemarksTrees}

Here we collect some open problems we find interesting.

\begin{enumerate}
    \item What is the relation of $\ffiid, \measure, \fiid$? In particular is perfect matching in the class $\ffiid$ or $\measure$? 
    \item Are there LCL problems whose uniform complexity is nontrivial and slower than $\poly\log 1/\eps$?
    \item Suppose that $\Pi_G$ is a homomorphism LCL, is it true that $\Pi_G\in\borel$ if and only if $\Pi_G\in O(\log^*(n))$?
    \item Are there finite graphs of chromatic number bigger than $2\Delta-2$ such that the corresponding homomorphism LCL is not in $\borel$? See also Remark \ref{r:hedet}.
\end{enumerate}

\subsection*{Acknowledgements} 
We would like to thank Anton Bernshteyn, Endre Csóka, Mohsen Ghaffari, Jan Hladký, Steve Jackson, Alexander Kechris, Edward Krohne, Oleg Pikhurko, Brandon Seward, Jukka Suomela, and Yufan Zheng for insightful discussions. 
We especially thank Yufan Zheng for   suggesting potentially hard instances of homomorphism LCL problems.


\bibliographystyle{alpha}
\bibliography{ref}

\appendix 

\section{Missing Proof of \cref{thm:FIIDinMEASURE}}
\label{app:ktulu}

\begin{proof}
We define an acyclic $\Delta$ regular graph $\fT$ on some standard probability space $(X,\mu)$ and then show that any measurable solution of $\Pi$ on $\fT$ implies that $\Pi\in\fiid$.

Suppose that there is a fixed distinguished vertex $*$ in $T_\Delta$, the root.
Let $\Aut^*(T_\Delta)$ be the subgroup that fixes the root.
Then it is easy to see that $\Aut^*(T_\Delta)$ is a compact group.
It is a basic fact that $Y=[0,1]^{T_\Delta}$ with the product Borel structure is a standard Borel space and the product Lebesgue measure $\lambda$ is a Borel probability measure on $Y$.
Consider the shift action $\Aut^*(T_\Delta)\curvearrowright Y$ defined as
$$\alpha\cdot x(v)=x(\alpha^{-1}(v)),$$
where $v\in T_\Delta$ and $\alpha\in \Aut^*(T_\Delta)$.
Since $\Aut^*(T_\Delta)$ is compact, we have that the quotient space
$$X:=[0,1]^{T_\Delta}/\Aut^*(T_\Delta)$$
is a standard Borel space, see \cite{kechrisclassical}.
Moreover, since the shift action preserves $\lambda$, we have that
$$\mu:=\lambda/\Aut^*(T_\Delta)$$
is a Borel probability measure on $X$.

We define $\fT$ on $(X,\mu)$ as follows.
Let $[x],[y]\in X$, where $x,y\in Y$ and $[{-}]$ denotes the $\Aut^*(T_\Delta)$-equivalence class.
We put $([x],[y])\in \fT$ if and only if there is a neighbor $v$ of the root in $x$ such that $y$ is isomorphic to $x$ with $v$ as the root.
Note that this definition is independent of the choice of representatives of $[x]$ and $[y]$.
Moreover, $\fT$ is acyclic and $\Delta$-regular with probability $1$.
A standard argument shows that every measurable solution to an LCL $\Pi$ on $\fT$ yields a fiid solution by composition with the quotient map.
\end{proof}

\section{Missing Proof of \cref{thm:classification_local_uniform_trees}} 
\label{sec:LLL}

\begin{proof}
It remains to verify that the LLL algorithms of  Fischer and Ghaffari~\cite[Section 3.1.1]{fischer-ghaffari-lll} and   Chang~et~al.~\cite[Section 5.1]{ChangHLPU20} 
can be turned into an algorithm that does not rely on the knowledge of $n$. As discussed in~\cite[Section 5.1]{ChangHLPU20}, these two algorithms can be combined together to solve the tree-structured LLL problem in $O(\log \log n)$ rounds when the underlying tree has bounded degree.

Before we explain the main ideas in these algorithms, consider the following setup. Let $\mathcal{A}_1$ and $\mathcal{A}_2$ be two distributed algorithms that do not rely  on knowing $n$ and that have a round complexity of $g_1(n)$ and $g_2(n)$, respectively. We now want to compose these two algorithms in the following sense. We are interested in the output of $\mathcal{A}_2$ when the input of $\mathcal{A}_2$ is equal to the output of $\mathcal{A}_1$. Then, we can compute this composition with a distributed algorithm that works without the knowledge of $n$ and which has a round complexity of $O(g_1(n) + g_2(n))$. This statement might seem obvious at first sight, but one needs to be careful. In particular, as the resulting algorithm needs to work without the knowledge of $n$ it cannot compute the value $g_1(n)$. Therefore, it cannot explicitly tell the vertices in which round they should start to run the second algorithm. However, this is not a problem as vertices can start to run the second algorithm as soon as their neighbors are ready. In subsequent rounds, vertices might need to temporarily stall as some neighbors might not have received all the necessary information from all of their neighbors and so on, but this is not a problem. From this discussion, it should be easy to see that the algorithm indeed finishes after $O(g_1(n) + g_2(n))$ rounds. In case the algorithms are randomized the failure probability of the resulting algorithm might increase up to a factor of $2$.

This composition result makes it easier to verify that the algorithms of~\cite{ChangHLPU20,fischer-ghaffari-lll} indeed works without the knowledge of $n$, as we can verify that this is the case for all its subroutines in isolation.


The pre-shattering phase of the LLL algorithm~\cite[Section 3.1.1]{fischer-ghaffari-lll} consists of first computing a $\poly(\Delta)$-coloring of $T^k$ --- the graph obtained from $T$ by connecting any two vertices of distance at most $k$ in $T$ by an edge --- for some $k = O(1)$. 
It directly follows from the results of  \cite{Korman_Sereni_Viennot2012Pruning_algorithms_+_oblivious_coloring,HolroydSchrammWilson2017FinitaryColoring} --- they adapt Linial's coloring algorithm in such a way that it works without the knowledge of $n$ --- that the $\poly(\Delta)$-coloring of $T^k$ can be computed without the knowledge of $n$. Afterwards, an $O(1)$-round routine follows that only uses the computed coloring as its input. Hence, the pre-shattering phase works without the knowledge of $n$. 
Once the pre-shattering phase is complete a subset of the vertices ``survive" and the post-shattering phase is executed on the graph induced by these vertices. Importantly, with high probability all the connected components of the induced graph have size $O(\log n)$. See~\cite[Lemma~6]{fischer-ghaffari-lll}.

The post-shattering phase of the LLL algorithm~\cite[Section 5.1]{ChangHLPU20} starts by decomposing each connected component with the following variant of the rake-and-compress process, which consists of a repeated application of the following two operations.

\begin{description}
    \item[\rake:] Remove all leaves and isolated vertices.
    \item[\compress:]  Remove all vertices that belong to some path $P$ such that (i) all vertices in $P$ have degree at most $2$ and (ii) the number of vertices in $P$ is at least a fixed constant $\ell$. 
\end{description}

Alternating these two operations on a tree with $N$ vertices decomposes the whole tree in $O(\log N)$ steps. In our case $N = O(\log n)$ with high probability and therefore the procedure finishes after $O(\log \log n)$ rounds with high probability. Importantly, the rake-and-compress algorithm works without the knowledge of $n$. At the end of the rake-and-compress procedure, the only important information each vertex needs to know is in which iteration it got removed.

After this information is computed, the algorithm of~\cite{ChangHLPU20} computes in $O(\log^* n)$ rounds a certain coloring variant on a certain subgraph of the input tree. This subgraph can be locally constructed given the rake-and-compress decomposition.
Again, it is a simple corollary of the results of \cite{Korman_Sereni_Viennot2012Pruning_algorithms_+_oblivious_coloring,HolroydSchrammWilson2017FinitaryColoring}  that this coloring variant can be computed without the knowledge of $n$ in $O(\log^* n)$ rounds.

The coloring variant together with the rake-and-compress decomposition is then used to compute a so-called $(2, O(\log N))$ network decomposition of the graph $T^k$ for some $k = O(1)$ in $O(\log N)$ rounds~\cite[Section~6.1]{ChangHLPU20}. We do not explain how this network decomposition is computed in detail, but the idea is to first compute the local output of all the vertices that got removed in the very last rake-and-compress iteration in $O(1)$ rounds. Directly afterwards, the local output of all the vertices that got removed one iteration before gets computed in additional $O(1)$ rounds and so on. Hence the local information of all  vertices is computed within $O(\log N) \cdot O(1) = O(\log N)$ rounds.  From this description, it is clear that this procedure works without the knowledge of $n$.

Finally, once the network decomposition of $T^k$ is computed, it directly follows from the algorithm description of~\cite{ChangHLPU20} that the final computation can be performed by a distributed algorithm without the knowledge of $n$ in $O(\log N)$ rounds.

Hence we have shown that the $O(\log \log n)$-round LLL algorithm~\cite{ChangHLPU20,fischer-ghaffari-lll} works without the knowledge of $n$.
\end{proof}

\section{Missing Proof of \cref{thm:OneTwoEndBorel}} 
\label{sec:oneortwo}

\begin{proof}[Proof of \cref{thm:OneTwoEndBorel}]
Let $x$ be a vertex of degree strictly bigger than $2$.
A \emph{star} $S(x)$ around $x$ consists of $x$ and  vertices of degree $2$ that are connected with $x$ by a path with all inner vertices of degree $2$.
It is clear that $S(x)$ is connected in $\fG$.

Recall that if $F$ is a function we define the iterated preimage of a vertex $x$ as $F^{\leftarrow}(x)=\bigcup_{n\in \mathbb{N}} F^{-n}(x)$.
We first specify a canonical one ended orientation on $S(x)$, or, equivalently, a function $F$ with finite iterated preimages. (In what follows we interchange freely the notion of one ended orientation and function with finite iterated preimages.)
That is to say, if $S(x)$ is infinite we orient things towards infinity in a one ended fashion (that is always possible), otherwise we fix an orientation towards any of the boundary points, i.e., there is exactly one point that is directed outside of $S(x)$.
We refer to this orientation as \emph{the canonical orientation}.

The inductive construction produces an orientation of some vertices together with doubly infinite lines.
The orientation points either to infinity (is one-ended) or towards these doubly infinite lines, this takes $(\aleph_0+1)$-many steps.

For a graph $\fG'$ we define a graph $\fH'$ on the vertices of degree strictly bigger than $2$, where $(x,y)\in \fH'$ if there is a path from $x$ to $y$ in $\fG'$ that has at most one inner vertex of degree strictly bigger than $2$.
Since, in our situation, $\fH'$ is always Borel and has degree bounded by $\Delta^2$, we can pick a Borel maximal independent set $\fM'$ of $\fH'$ by \cite{KST}.

Inductively along $\mathbb{N}$ do the following:
Suppose that we are in stage $k\in \mathbb{N}$ and we have a Borel set $\fO_k$ and $\fG_k:=\fG\upharpoonright \fO_k$ with the property that every vertex has degree at least $2$.
We start with $\fG_0=\fG$ and $\fO_0=X$.
Pick a Borel maximal independent set $\fM_k$ in $\fH_k$ that is defined from $\fG_k$.
Add to the domain of $F$ every vertex from the star $S_k(x)$ in $\fG_k$, where $x\in \fM_k$, and set
$$\fO_{k+1}=\fO_k\setminus \bigcup_{x\in \fM_k}S(x).$$
Define $F$ so that it corresponds to the canonical orientation on each $S_k(x)$.
By the definition we have that $F(y)\in S_k(x)$ for every $y\in S_k(x)$, possibly up to one point $z\in S_k(x)$ that satisfies $F(z)\in \fO_{k+1}$.
It is easy to see that every $x\in \fO_{k+1}$ has degree at least $2$ in $\fG_{k+1}=\fG\upharpoonright \fO_{k+1}$.
This follows from the definition of $\fM_k$.

Set $\fO_{\infty}=\bigcap_{k\in \mathbb{N}} \fO_k$ and write $\fG_\infty:=\fG\upharpoonright\fO_\infty$.
It follows from the inductive (finitary) construction that every vertex $x\in \fO_\infty$ has degree at least $2$ in $\fG_\infty$.
Moreover, the function $F$ satisfies $\dom(F)=X\setminus \fO_\infty$ and  has finite iterated preimages.
This is because, first, for every $x\in \dom(F)$ there is $k\in \mathbb{N}$ such that $x\in \fO_k\setminus \fO_{k+1}$ and $F^{-1}(x)\subseteq X\setminus \fO_{k+1}$.
Second, $F^{\leftarrow}(x)\cap \fO_{k}$ is finite by the definition of $F$ on stars.
It follows inductively that $F^{\leftarrow}(x)\cap \fO_{l}$ is finite for every $l\le k$, hence, $F^\leftarrow(x)$ is finite, whenever $x\in \dom(F)$.
If $x\not\in \dom(F)$, then $F^{-1}(x)\subseteq \dom(F)$ is finite and the claim follows.

Let $x\in \fO_\infty$ have degree strictly bigger than $2$ and write $C_1,\dots,C_\ell$ for the connected components of $\fG_\infty\setminus \{x\}$ in the connected component of $x$ in $\fG_\infty$.
Note that $\ell\le \Delta$.
We claim that at least one of the sets $C_i$ is a one ended line.
Suppose not, and write $z_1,\dots,z_\ell$ for the closest splitting points in $C_1,\dots, C_\ell$ and $p_i$ for the paths that connect $x$ with $z_i$ for every $i\le \ell$.
There is $k\in \mathbb{N}$ large enough such that the degree of $x,z_1,\dots,z_\ell$ is the same in $\fG_\infty$ as in $\fG_k$ and $p_i$ is a path in $\fG_k$ whose inner vertices have degree $2$.
Note that $\fG_\infty\subseteq \fG_k$ because $\fO_\infty\subseteq \fO_k$.
Since $\fM_k$ was maximal in $\fH_k$, there is $y\in \fM_k$ such that $(x,y)\in \fH_k$.
This is because $x\not\in \fM_k$.
Similar reasoning implies that that $y\not=z_i$ for any $i\le \ell$.
Let $q$ be the path that connects $x$ and $y$ in $\fG_k$.
By the choice of $k$ we have that $q$ extends one of the paths $p_i$.
Let $y'$ be the last point on $q$ such that $y'\in \fO_\infty$.
Since the degree of $z_i$ is the same in $\fG_k$ as in $\fG_\infty$ we have that $y'\not=z_i$.
The degree of $y'$ in $\fG_\infty$ is at least $2$.
Suppose it were $3$ in $\fG_k$, then $y'=y$ because we must have $(x,y)\in \fH_k$.
In that case removing $S(y)$ would decrease the degree of $z_i$ in $\fG_{k+1}$ and consequently in $\fG_\infty$.
Therefore $y'$ has degree $2$ in $\fG_k$.
But then $y'$ is not the last vertex on $q$ such that $y'\in \fO_\infty$, i.e., the other neighbor of $y'$, that is the same in $\fG_k$ and in $\fG_\infty$ must be a vertex on $q$, a contradiction.

Consider now the graph $\fH_\infty$, where $(x,y)\in \fH_\infty$ if and only if $x,y\in \fO_\infty$ have degree strictly bigger than $2$ and there is a path from $x$ to $y$ in $\fG_\infty$ with all inner points having degree $2$ in $\fG_\infty$.
Consider any Borel $(\Delta+1)$-coloring of $\mathcal{H}_\infty$ a decomposition of all vertices of degree strictly bigger than $2$ in $\fG_\infty$ into $\fH_\infty$-independent Borel sets $D_0,\dots,D_\Delta$.
Define a one ended orientation of stars of the form $S(x)$, where $x\in D_i$, inductively according to $i\le \Delta$ using the canonical orientation, i.e., in step $i\le \Delta$ we work with the graph
$$\fG\upharpoonright \left(\fO_{\infty}\setminus \bigcup_{j<i}\bigcup_{x\in D_j} S(x)\right).$$
Note that by the previous argument we have that every such star $S(x)$ is infinite and consequently, this defines a valid extension of $F$, i.e., iterated preimages are still finite.
It remains to realize that complement of $\dom(F)$ consists of doubly infinite lines.
This finishes the proof.
\end{proof}

\end{document}